\theoremstyle{definition}
\newtheorem{Definition}{Definition}[subsection]
\theoremstyle{plain}
\newtheorem{Theorem}[Definition]{Theorem}
\theoremstyle{plain}
\theoremstyle{plain}
\newtheorem{Proposition}[Definition]{Proposition}
\theoremstyle{plain}
\newtheorem{Lemma}[Definition]{Lemma}
\theoremstyle{plain}
\newtheorem{Corollary}[Definition]{Corollary}
\theoremstyle{plain}
\theoremstyle{plain}
\theoremstyle{plain}
\theoremstyle{definition}
\newtheorem{Construction}[Definition]{Construction}
\theoremstyle{definition}
\newtheorem{Example}[Definition]{Example}
\theoremstyle{definition}
\theoremstyle{remark}
\newtheorem{Remark}[Definition]{Remark}
\theoremstyle{plain}
\newcommand{\thistheoremname}{}
\newtheorem*{genericthm*}{\thistheoremname}
\newenvironment{namedthm*}[1]
  {\renewcommand{\thistheoremname}{#1}%
   \begin{genericthm*}}
  {\end{genericthm*}}
\title{Drinfeld Centers and Morita Equivalence Classes of Fusion 2-Categories}
\author{Thibault D. Décoppet}
\date{October 2023}
\begin{document}

\bibliographystyle{alpha}

    \maketitle
    \hspace{1cm}
    \begin{abstract}
        We prove that the Drinfeld center of a fusion 2-category is invariant under Morita equivalence. We go on to show that the concept of Morita equivalence between connected fusion 2-categories corresponds to a notion of Witt equivalence between braided fusion 1-categories. A strongly fusion 2-category is a fusion 2-category whose braided fusion 1-category of endomorphisms of the monoidal unit is $\mathbf{Vect}$ or $\mathbf{SVect}$. We prove that every fusion 2-category is Morita equivalent to the 2-Deligne tensor product of a strongly fusion 2-category and an invertible fusion 2-category. We proceed to show that every fusion 2-category is Morita equivalent to a connected fusion 2-category. As a consequence, we find that every rigid algebra in a fusion 2-category is separable. This implies in particular that every fusion 2-category is separable. Conjecturally, separability ensures that a fusion 2-category is 4-dualizable. We define the dimension of a fusion 2-category, and prove that it is always non-zero. Finally, we show that the Drinfeld center of any fusion 2-category is a finite semisimple 2-category.
    \end{abstract}
    
\tableofcontents

\section*{Introduction}
\addcontentsline{toc}{section}{Introduction}

Given a monoidal 1-category $\mathcal{C}$, one can form its Drinfeld center $\mathcal{Z}(\mathcal{C})$, that is the braided monoidal 1-category whose objects are pairs consisting of an object of $\mathcal{C}$ together with a coherent half-braiding. This construction categorifies the notion of the center of an algebra, and was first considered by Drinfeld in unpublished notes as an abstraction of the double construction for Hopf algebras \cite{Dri}. The construction of the Drinfeld center as a braided monoidal 1-category subsequently appeared in the category theory literature in \cite{JS}, and its relation with the double of a Hopf algebra was made precise in \cite{Ma}. The importance of the Drinfeld center in quantum algebra was then reinforced in \cite{Mu2}. Namely, it was shown by M\"uger that the Drinfeld center of a fusion 1-category of non-zero global dimension is a braided fusion 1-category. Further, it was later proven in \cite{ENO1} that every fusion 1-category over an algebraically closed field of characteristic zero has non-zero global dimension, so that the Drinfeld center of any such fusion 1-category is a braided fusion 1-category. In addition, it was established in \cite{O2} that the Drinfeld center of fusion 1-categories is invariant under Morita equivalence. This categorifies the well-known result that the center of an algebra is invariant under Morita equivalence. Furthermore, it was shown in \cite{ENO2} that the Drinfeld center of a fusion 1-category completely characterizes its Morita equivalence class.

The non-vanishing of the global dimension of fusion 1-categories is also a crucial property for the constructions of invariants of 3-manifolds. Namely, this hypothesis is needed in order to extend the Turaev-Viro construction to spherical fusion 1-categories \cite{BW}, and to extend the Reshetikhin-Turaev construction to modular 1-categories \cite{T1}. Fusion 1-categories are also related to topological quantum field theories through the cobordism hypothesis of \cite{BD} and \cite{L}, which asserts that framed fully extended TQFTs can be constructed by specifying a fully dualizable object in a symmetric monoidal higher category. Conceptually, full dualizability should be thought of as a very strong finiteness condition. In \cite{DSPS13}, the authors considered the symmetric monoidal 3-category $\mathbf{TC}$ of finite tensor 1-categories, and exhibited fully dualizable objects, i.e. 3-dualizable objects, therein. More precisely, they showed that a fusion 1-category $\mathcal{C}$ is a fully dualizable object of $\mathbf{TC}$ if and only if it is separable, meaning that $\mathcal{C}$ as a $\mathcal{C}$-$\mathcal{C}$-bimodule 1-category is equivalent to the 1-category of modules over a separable algebra in $\mathcal{C}\boxtimes\mathcal{C}^{mop}$. In addition, it was shown in \cite{DSPS13} that a fusion 1-category is separable if and only if its Drinfeld center is finite semisimple. But, over an algebraically closed field of characteristic zero, this last condition is always satisfied as we have recalled above, so that every fusion 1-category over such a field is separable, and therefore yields a framed fully extended 3-dimensional TQFT. Moreover, two fusion 1-categories are equivalent as objects of $\mathbf{TC}$ if and only if they are Morita equivalent. In particular, Morita equivalent fusion 1-categories yield equivalent 3-dimensional TQFTs.

Fusion 2-categories were introduced in \cite{DR} as a categorification of the notion of a fusion 1-category over an algebraically closed field of characteristic zero, and spherical fusion 2-categories were used to define a state-sum invariant of 4-manifolds. Further, Douglas and Reutter conjectured that fusion 2-categories are fully dualizable objects, that is 4-dualizable objects, in an appropriate symmetric monoidal 4-category. Additionally, we expect that the 4-dimensional TQFT constructed from a fusion 2-category using the cobordism hypothesis depends only on its Morita equivalence class as defined in \cite{D8}. Using the theory of higher condensations introduced in \cite{GJF}, and under some assumptions on the behaviour of higher colimits, the construction of the appropriate symmetric monoidal 4-category was given in \cite{JF}. Moreover, Johnson-Freyd sketched a characterization of the fully dualizable objects of this 4-category. In a different context, it was proven in \cite{BJS} that every braided fusion 1-category is a fully dualizable object of the symmetric monoidal 4-category $\mathbf{BrFus}$ of braided fusion 1-categories. This is related to the theory of fusion 2-categories. Namely, to any braided fusion 1-category $\mathcal{B}$, one can associate the fusion 2-category $\mathbf{Mod}(\mathcal{B})$ of finite semisimple $\mathcal{B}$-module 1-categories. Such fusion 2-categories are called connected, and it was shown in \cite{JFR} that connected fusion 2-categories have finite semisimple Drinfeld centers.

In the present article, we prove that every fusion 2-category is separable using the previous work of the author on fusion 2-categories in \cite{D3}, \cite{D4}, \cite{D7}, and \cite{D8}. As a byproduct, we show that, over an algebraically closed field of characteristic zero, every fusion 2-category is Morita equivalent to a connected fusion 2-category. Moreover, we prove that the Drinfeld center of any fusion 2-category is a finite semisimple 2-category.

\subsection*{Drinfeld Centers of Fusion 1-Categories}
\addcontentsline{toc}{subsection}{Drinfeld Centers of Fusion 1-Categories}

We now recall in more detail some of the relevant properties of fusion 1-categories. Given any fusion 1-category $\mathcal{C}$, we can canonically view $\mathcal{C}$ as a left $\mathcal{C}\boxtimes\mathcal{C}^{mop}$-module 1-category. Thanks to a result of Ostrik \cite{O1}, there exists an algebra $A$ in $\mathcal{C}\boxtimes\mathcal{C}^{mop}$ such that $\mathcal{C}$ as a left $\mathcal{C}\boxtimes\mathcal{C}^{mop}$-module 1-category is equivalent to the 1-category of right $A$-modules in $\mathcal{C}\boxtimes\mathcal{C}^{mop}$. We say that $\mathcal{C}$ is a separable fusion 1-category if $A$ is a separable algebra. In fact, $A$ admits a canonical Frobenius algebra structure, and $A$ is a special Frobenius algebra if and only if it is a separable algebra. It was established in \cite{DSPS13} that $A$ is special if and only if the global dimension of $\mathcal{C}$ is non-zero. But, over an algebraically closed field of characteristic zero, it was proven in \cite{ENO1} that every fusion 1-category has non-vanishing global dimension, so that every fusion 1-category over such a field is separable. Further, this implies that, over an algebraically closed field of characteristic zero, the Drinfeld center of any fusion 1-category is a finite semisimple 1-category. To see this, recall from \cite{O2} that the Drinfeld center $\mathcal{Z}(\mathcal{C})$ is equivalent to the tensor 1-category of $\mathcal{C}\boxtimes\mathcal{C}^{mop}$-module endofunctors on $\mathcal{C}$. As explained in \cite{O1}, this implies that $\mathcal{Z}(\mathcal{C})$ is equivalent to the 1-category of $A$-$A$-bimodules in $\mathcal{C}\boxtimes\mathcal{C}^{mop}$. Now, the algebra $A$ is separable if and only if the associated 1-category of $A$-$A$-bimodules in $\mathcal{C}\boxtimes\mathcal{C}^{mop}$ is finite semisimple. But, we have seen above that the algebra $A$ is separable for any fusion 1-category $\mathcal{C}$ over an algebraically closed field of characteristic zero, so that $\mathcal{Z}(\mathcal{C})$ is indeed a finite semisimple 1-category. Another key property of the Drinfeld center is that it is invariant under Morita equivalence of fusion 1-categories \cite{O2}. This result was refined further in \cite{ENO3}, where it is shown that two fusion 1-categories are Morita equivalent if and only if their Drinfeld centers are equivalent as braided fusion 1-categories.

\subsection*{Drinfeld Centers and Morita Equivalence Classes of Fusion 2-Categories}
\addcontentsline{toc}{subsection}{Drinfeld Centers and Morita Equivalence Classes of Fusion 2-Categories}

Let $\mathfrak{C}$ be a multifusion 2-category over an algebraically closed field of characteristic zero. The Drinfeld center $\mathscr{Z}(\mathfrak{C})$ of $\mathfrak{C}$ was defined in \cite{BN}. The objects of $\mathscr{Z}(\mathfrak{C})$ are given by objects of $\mathfrak{C}$ equipped with a half braiding and coherence data. In particular, $\mathscr{Z}(\mathfrak{C})$ is canonically a braided monoidal 2-category. We establish our first theorem using the notion of Morita equivalence between multifusion 2-categories developed in \cite{D8}.

\begin{namedthm*}{Theorem \ref{thm:MoritaInvarianceDrinfeldCenter}}
Let $\mathfrak{C}$ and $\mathfrak{D}$ be Morita equivalent multifusion 2-categories, then there is an equivalence $\mathscr{Z}(\mathfrak{C})\simeq\mathscr{Z}(\mathfrak{D})$ of braided monoidal 2-categories.
\end{namedthm*}

\noindent We note that, unlike in the case of fusion 1-categories, Morita equivalence classes of fusion 2-categories are not completely characterized by their Drinfeld centers. This failures can be attributed to the existence of non-trivial invertible fusion 2-categories, that is fusion 2-categories whose Drinfeld center is $\mathbf{2Vect}$. For instance, given any non-degenerate braided fusion 1-category $\mathcal{B}$, the fusion 2-category $\mathbf{Mod}(\mathcal{B})$ is invertible.

We then investigate the notion of Morita equivalence between connected fusion 2-categories, that is fusion 2-categories of the form $\mathbf{Mod}(\mathcal{B})$ with $\mathcal{B}$ a braided fusion 1-category. Specifically, for any symmetric fusion 1-category $\mathcal{E}$, we introduce a notion of Witt equivalence between braided fusion 1-categories with symmetric center $\mathcal{E}$, which we compare with the concept of Witt equivalence over $\mathcal{E}$ studied in \cite{DNO}. We obtain a 2-categorical characterization of our notion of Witt equivalence.

\begin{namedthm*}{Theorem \ref{thm:WittEquivalence}}
Let $\mathcal{B}_1$ and $\mathcal{B}_2$ be two braided fusion 1-categories. The fusion 2-categories $\mathbf{Mod}(\mathcal{B}_1)$ and $\mathbf{Mod}(\mathcal{B}_2)$ are Morita equivalent if and only if $\mathcal{B}_1$ and $\mathcal{B}_2$ have the same symmetric center $\mathcal{E}$ and they are Witt equivalent.
\end{namedthm*}

\noindent This theorem generalizes example 5.4.6 of \cite{D8}, where it is shown that if $\mathcal{B}_1$ and $\mathcal{B}_2$ are non-degenerate braided fusion 1-categories, then $\mathbf{Mod}(\mathcal{B}_1)$ and $\mathbf{Mod}(\mathcal{B}_2)$ are Morita equivalent fusion 2-categories if and only if $\mathcal{B}_1$ and $\mathcal{B}_2$ are Witt equivalent in the sense of \cite{DMNO}. We use the above result to classify Morita autoequivalences of connected symmetric fusion 2-categories. In addition, we explain how the notion of Witt equivalence over $\mathcal{E}$ from \cite{DNO} is recovered by that of Morita equivalence between fusion 2-categories equipped with a central functor from $\mathbf{Mod}(\mathcal{E})$.

We go on to study fusion 2-categories up to Morita equivalence. Recall from \cite{JFY} that a strongly fusion 2-category is fusion 2-category whose braided fusion 1-category of endomorphisms of the monoidal unit is the symmetric monoidal 1-category of (super) vector spaces. Using this notion, we establish the following theorem, whose proof is inspired by the classification of topological orders studied in \cite{JF} and relies on the minimal non-degenerate extension conjecture established in \cite{JFR}.

\begin{namedthm*}{Theorem \ref{thm:stronglyfusioninvertible}}
Every fusion 2-category is Morita equivalent to the 2-Deligne tensor product of a strongly fusion 2-category and an invertible fusion 2-category.
\end{namedthm*}

\noindent We note that the invertible fusion 2-category supplied by the above theorem is in general not unique. The strongly fusion 2-category is not uniquely determined either, as will be explained in \cite{JF3}. We then prove that every strongly fusion 2-category is Morita equivalent to a connected fusion 2-category, which yields the following theorem.

\begin{namedthm*}{Theorem \ref{thm:Moritaconnected}}
Every fusion 2-category is Morita equivalent to a connected fusion 2-category.
\end{namedthm*}

\noindent In particular, we obtain a complete classification of the Morita equivalence classes of fusion 2-categories in terms of quotients of the Witt groups $\mathcal{W}(\mathcal{E})$ of braided fusion 1-categories over the symmetric fusion 1-category $\mathcal{E}$ introduced in \cite{DNO}. Moreover, if $\mathcal{E}$ is Tannakian, we subsequently describe $\mathcal{W}(\mathcal{E})$ as a set.

An algebra in a fusion 2-category is called rigid if its multiplication 1-morphism has a right adjoint as a 1-morphism of bimodules. Rigid algebras in $\mathbf{2Vect}$ are exactly multifusion 1-categories. We therefore view the concept of a rigid algebra in a fusion 2-category as a generalization, or more precisely an internalization, of the definition of a multifusion 1-category. Further, the property of being separable for multifusion 1-categories can be generalized to a notion of separability for rigid algebras in fusion 2-categories. Thanks to our last theorem, we deduce the result below, which internalizes corollary 2.6.8 of \cite{DSPS13}, and affirmatively answers a question raised in \cite{JFR}.

\begin{namedthm*}{Theorem \ref{thm:algebrarigidseparable}}
Every rigid algebra in a fusion 2-category is separable.
\end{namedthm*}

\noindent We also explain how this result implies that every rigid algebra in a multifusion 2-category is separable.

We move on to study the separability of multifusion 2-categories. More precisely, any multifusion 2-category $\mathfrak{C}$ may be viewed as a left $\mathfrak{C}\boxtimes \mathfrak{C}^{mop}$-module 2-category. By the main theorem of \cite{D4}, there exists a canonical rigid algebra $\mathcal{R}_{\mathfrak{C}}$ in $\mathfrak{C}\boxtimes \mathfrak{C}^{mop}$ such that $\mathfrak{C}$ is equivalent to the 2-category of right $\mathcal{R}_{\mathfrak{C}}$-modules in $\mathfrak{C}\boxtimes \mathfrak{C}^{mop}$. We say that $\mathfrak{C}$ is separable if $\mathcal{R}_{\mathfrak{C}}$ is a separable algebra. Thanks to our last theorem above, we obtain the following categorification of corollary 2.6.8 of \cite{DSPS13}.

\begin{namedthm*}{Corollary \ref{cor:separable}}
Every multifusion 2-category is separable.
\end{namedthm*}

\noindent Further, by relying on the sketch given in \cite{JF}, we outline an argument showing that separability is equivalent to 4-dualizability for multifusion 2-categories. 

In case $\mathfrak{C}$ is a fusion 2-category, the rigid algebra $\mathcal{R}_{\mathfrak{C}}$ is connected in the sense that its unit 1-morphism is simple. This allows us to use the construction of section 3.2 of \cite{D7} to associate to $\mathcal{R}_{\mathfrak{C}}$ a scalar, called the dimension. Accordingly, we define the dimension $\mathrm{Dim}(\mathfrak{C})$ of the fusion 2-category $\mathfrak{C}$ to be the dimension of the canonical rigid algebra $\mathcal{R}_{\mathfrak{C}}$. Using this notion, we obtain a categorification of theorem 2.3 of \cite{ENO1}.

\begin{namedthm*}{Corollary \ref{cor:nonzerodimension}}
The dimension $\mathrm{Dim}(\mathfrak{C})$ of any fusion 2-category is non-zero.
\end{namedthm*}

\noindent Finally, we explain how to derive the following corollary from our previous theorems.

\begin{namedthm*}{Corollary \ref{cor:F2Ccenter}}
The Drinfeld center of any multifusion 2-category is a finite semisimple 2-category.
\end{namedthm*}

\noindent These last results have many immediate consequences. In particular, they imply that the Drinfeld center commutes with the 2-Deligne tensor product for multifusion 2-categories. We also find that the Drinfeld center of any fusion 2-category is equivalent as a braided fusion 2-category to the Drinfeld center of a strongly fusion 2-category.

\subsection*{Outline}
\addcontentsline{toc}{subsection}{Outline}

In section \ref{sec:preliminaries}, we review the definitions of a finite semisimple 2-category and of a fusion 2-category. We survey some examples of fusion 2-categories including connected fusion 2-categories and strongly fusion 2-categories. In addition, we recall the construction of the 2-Deligne tensor product, as well as the definition of a separable algebra in a fusion 2-category. We also review the Morita theory of fusion 2-categories.

Next, in section \ref{sec:center}, we begin by recalling the definition of the Drinfeld center of a monoidal 2-category, and go on to uncover its elementary properties. For instance, we show that the Drinfeld center of a rigid monoidal 2-category is a rigid monoidal 2-category. We go on to study more specifically the properties of the Drinfeld centers of multifusion 2-categories over an algebraically closed field of characteristic zero. In particular, we show that the Drinfeld center is invariant under Morita equivalence of fusion 2-categories.

Then, in section \ref{sec:connected}, we examine in detail the Morita theory of connected fusion 2-categories. We establish that the notion of Morita equivalence between connected fusion 2-categories corresponds to a notion of Witt equivalence between braided fusion 1-categories. As an application, we classify Morita autoequivalences of connected symmetric fusion 2-categories.

In section \ref{sec:F2CmoduloMorita}, we show that every fusion 2-category is Morita equivalent to the 2-Deligne tensor product of a strongly fusion 2-category and an invertible fusion 2-category. Subsequently, we prove that every fusion 2-category is Morita equivalent to a connected fusion 2-category by showing that every strongly fusion 2-category is Morita equivalent to a connected fusion 2-category.

Finally, in section \ref{sec:separable}, we prove that every rigid algebra in a fusion 2-category is separable. As a consequence, we show that every fusion 2-category is separable. Then, we sketch an argument showing that separability is equivalent to 4-dualizability for fusion 2-categories. We continue by defining the dimension of a fusion 2-category, which we argue is always non-vanishing. We then show that the Drinfeld center of any fusion 2-category is a finite semisimple 2-category, and that taking Drinfeld centers commutes with the 2-Deligne tensor product.

\subsection*{Acknowledgments}

I would like to express my gratitude towards Christopher Douglas and David Reutter for their extensive feedback on an earlier version of this manuscript.

\section{Preliminaries}\label{sec:preliminaries}

\subsection{Finite Semisimple 2-Categories}\label{sub:semisimple2categories}

We begin by reviewing the definition of a 2-condensation monad introduced in \cite{GJF} as a categorification of the notions of an idempotent. More precisely, we recall the unpacked version of this definition given in section 1.1 of \cite{D1}.

\begin{Definition}
A 2-condensation monad in a 2-category $\mathfrak{C}$ is an object $A$ of $\mathfrak{C}$ equipped with a 1-morphism $e:A\rightarrow A$ and two 2-morphisms $\mu:e\circ e\Rightarrow e$ and $\delta:e\circ e\Rightarrow e$ such that $\mu$ is associative, $\delta$ is coassociative, the Frobenius relations hold, i.e. $\delta$ is a 2-morphism of $e$-$e$-bimodules, and $\mu\cdot \delta=Id_e$.
\end{Definition}

\noindent Categorifying the notion of split surjection, \cite{GJF} gave the definition of a 2-condensation, which we recall below.

\begin{Definition}
A 2-condensation in a 2-category $\mathfrak{C}$ is a pair of objects $A,B$ in $\mathfrak{C}$ together with two 1-morphisms $f:A\rightarrow B$ and $g:B\rightarrow A$ and two 2-morphisms $\phi:f\circ g\Rightarrow Id_B$ and $\gamma:Id_B\Rightarrow f\circ g$ such that $\phi\cdot\gamma=Id_{Id_B}$.
\end{Definition}

\noindent We say that a 2-condensation monad splits if it can be extended to a 2-condensation (see definition 1.1.4 of \cite{D1} for details).

Let us now fix a field $\mathds{k}$. Following \cite{GJF}, we will call a $\mathds{k}$-linear 2-category locally Cauchy complete if its $Hom$-categories are Cauchy complete, that is they have direct sums and idempotents split. Furthermore, a Cauchy complete linear 2-category is a locally Cauchy complete 2-category that has direct sums for objects and such that 2-condensation monads split. Given an arbitrary locally Cauchy complete 2-category $\mathfrak{C}$, one can form its Cauchy completion $Cau(\mathfrak{C})$, which is Cauchy complete (see \cite{GJF}, and also \cite{DR} for a slightly different approach). Further, it was shown in proposition 1.2.5 of \cite{D1} that $Cau(\mathfrak{C})$ is 3-universal with respect to linear 2-functors from $\mathfrak{C}$ to a Cauchy complete linear 2-category. Let us also mention that, given two Cauchy complete linear 2-category $\mathfrak{C}$ and $\mathfrak{D}$, one can form their Cauchy completed tensor product $\mathfrak{C}\widehat{\otimes}\mathfrak{D}$, which is 3-universal with respect to bilinear 2-functor from $\mathfrak{C}\times\mathfrak{D}$ to a Cauchy complete linear 2-category (see proposition 3.4 of \cite{D3}).

From now on, we will assume that $\mathds{k}$ is an algebraically closed field of characteristic zero. We are ready to review the definitions of a semisimple 2-category and of a finite semisimple 2-category introduced in \cite{DR} (more precisely, we review the variants given in \cite{D1}).

\begin{Definition}
A linear 2-category is semisimple if it is locally semisimple, has right and left adjoints for 1-morphisms, and is Cauchy complete.
\end{Definition}

\noindent An object $C$ of a semisimple 2-category $\mathfrak{C}$ is called simple if the identity 1-morphism $Id_C$ is a simple object of the semisimple 1-category $End_{\mathfrak{C}}(C)$.

\begin{Definition}
A semisimple linear 2-category is finite if it is locally finite semisimple and it has finitely many equivalence classes of simple objects.
\end{Definition}

\noindent Given a multifusion 1-category $\mathcal{C}$, the 2-category $\mathbf{Mod}(\mathcal{C})$ of finite semisimple right $\mathcal{C}$-module 1-categories is a finite semisimple 2-category (see theorem 1.4.8 of \cite{DR}). Moreover, by theorem 1.4.9 of \cite{DR}, every finite semisimple 2-category is of this form. If $\mathfrak{C}$ and $\mathfrak{D}$ are finite semisimple, then it follows from theorem 3.7 of \cite{D3} that $\mathfrak{C}\widehat{\otimes}\mathfrak{D}$ is a finite semisimple 2-category, which we denote by $\mathfrak{C}\boxtimes\mathfrak{D}$, and call the 2-Deligne tensor product of $\mathfrak{C}$ and $\mathfrak{D}$.

Given two simple objects $C$ and $D$ of a finite semisimple 2-category $\mathfrak{C}$. The finite semisimple 1-category $Hom_{\mathfrak{C}}(C,D)$ may be non-zero. If this is the case, we say that $C$ and $D$ are connected. We emphasize that two non-equivalent simple objects may be connected. This is in sharp contrast with the decategorified setting, in which the Schur lemma prevents such behaviour. Further, it was shown in section 1.2.3 of \cite{DR} that being connected defines an equivalence relation on the set of simple objects.

\begin{Definition}
Let $\mathfrak{C}$ be a finite semisimple 2-category. We write $\pi_0(\mathfrak{C})$ for the quotient of the set of simple objects of $\mathfrak{C}$ under the relation of being connected.
\end{Definition}

\noindent We say that a finite semisimple 2-category $\mathfrak{C}$ is connected if $\pi_0(\mathfrak{C})$ is a singleton. For instance, if $\mathcal{C}$ is a fusion 1-category, then $\mathbf{Mod}(\mathcal{C})$ is connected. Further, we will abuse the terminology, and identify a connected component, i.e.\ an equivalence class in $\pi_0(\mathfrak{C})$, with the finite semisimple sub-2-category of $\mathfrak{C}$ that is spanned by the simple objects in this equivalence class and full on 1-morphisms.

\subsection{Fusion 2-Categories}\label{sub:exampleF2C}

We recall the definition of a fusion 2-category first introduced in \cite{DR} (more precisely, we review the variant considered in \cite{D2}), and give a number of key examples. Throughout, we work over $\mathds{k}$, an algebraically closed field of characteristic zero.

\begin{Definition}
A multifusion 2-category is a rigid monoidal finite semisimple 2-category. A fusion 2-category is multifusion 2-category whose monoidal unit is a simple object.
\end{Definition}

\begin{Example}
We write $\mathbf{2Vect}$ for the 2-category of finite semisimple ($\mathds{k}$-linear) 1-categories, also called 2-vector spaces. The Deligne tensor product endows $\mathbf{2Vect}$ with the structure of a fusion 2-category.
\end{Example}

\begin{Example}\label{ex:connectedfusion2categories}
Let $\mathcal{B}$ be a braided fusion 1-category, then the relative Deligne tensor product over $\mathcal{B}$ endows the 2-category $\mathbf{Mod}(\mathcal{B})$ with a rigid monoidal structure, so that $\mathbf{Mod}(\mathcal{B})$ is a connected fusion 2-category. In fact, every connected fusion 2-category is of this form (see section 2.4 of \cite{D2}). Further, connected fusion 2-categories are ubiquitous. Namely, if $\mathfrak{C}$ is an arbitrary fusion 2-category, then $\mathfrak{C}^0$, the connected component of the identity of $\mathfrak{C}$, is a connected fusion 2-category by proposition 2.4.5 of \cite{D2}. More precisely, if we let $\Omega\mathfrak{C}$ denote the braided fusion 1-category of endormophisms of the monoidal unit of $\mathfrak{C}$, then $\mathfrak{C}^0\simeq \mathbf{Mod}(\Omega\mathcal{B})$ as fusion 2-categories.
\end{Example}

\begin{Example}\label{ex:2representations}
Let $G$ be a finite group. We write $\mathbf{2Rep}(G)$ for the 2-category of finite semisimple 1-categories equipped with an action by $G$. The Deligne tensor product endows the finite semisimple 2-category $\mathbf{2Rep}(G)$ with a symmetric monoidal structure. In fact, it follows from lemma 1.3.8 of \cite{D8} that there is an equivalence of monoidal 2-categories $\mathbf{2Rep}(G)\simeq \mathbf{Mod}(\mathbf{Rep}(G))$, so that $\mathbf{2Rep}(G)$ is a fusion 2-category.
\end{Example}

The following classes of fusion 2-categories were first singled out in \cite{JFY}, and will play a key role in the proof of our main theorem.

\begin{Definition}\label{def:stronglyfusion}
A bosonic strongly fusion 2-category is fusion 2-category $\mathfrak{C}$ for which $\Omega\mathfrak{C}\simeq\mathbf{Vect}$ as braided fusion 1-categories. A fermionic strongly fusion 2-category is fusion 2-category $\mathfrak{C}$ for which $\Omega\mathfrak{C}\simeq\mathbf{SVect}$, the symmetric fusion 1-category of super vector spaces.
\end{Definition}

\begin{Example}\label{ex:twistedgroupgraded2vectorspaces}
Let $G$ be a finite group. We use $\mathbf{2Vect}_G$ to denote the finite semisimple 2-category of $G$-graded 2-vector spaces. The convolution product turns $\mathbf{2Vect}_G$ into a fusion 2-category. Furthermore, given a 4-cocycle $\pi$ for $G$ with coefficients in $\mathds{k}$, we can form the fusion 2-category $\mathbf{2Vect}^{\pi}_G$ by twisting the structure 2-isomorphisms of $\mathbf{2Vect}_G$ using $\pi$ (see construction 2.1.16 of \cite{DR}). In fact, it follows from theorem A of \cite{JFY} that every bosonic strongly fusion 2-category is of this form.
\end{Example}

\begin{Remark}
Let us write $\mathbf{2SVect}:=\mathbf{Mod}(\mathbf{SVect})$ for the fusion 2-category of super 2-vector spaces. Give a finite group $G$, we can consider the fusion 2-category $\mathbf{2SVect}_G$ of $G$-graded super 2-vector spaces. One can use a 4-cocycle for $G$ with coefficients in the (extended) supercohomology of \cite{GJF0} to obtain another fusion 2-category by twisting the coherence 2-isomorphisms of $\mathbf{2SVect}_G$. Such fusion 2-categories are examples of fermionic strongly fusion 2-categories. However, not every strongly fusion 2-category is of this form as can be seen from example 2.1.27 of \cite{DR}. Nonetheless, theorem B of \cite{JFY} shows that every object a fermionic strongly fusion 2-category is invertible, so that every connected component of such a fusion 2-category is equivalent to $\mathbf{2SVect}$.
\end{Remark}

\begin{Example}\label{ex:2groupgraded2vectorspaces}
Given a finite 2-group $\mathcal{G}$, we can consider the fusion 2-category $\mathbf{2Vect}_{\mathcal{G}}$ of $\mathcal{G}$-graded 2-vector spaces as in construction 2.1.13 of \cite{DR}. Succinctly, $\mathbf{2Vect}_{\mathcal{G}}$ is constructed as follows. We define a monoidal 2-category by $\mathfrak{X}:=\mathcal{G}\times \mathrm{B}^2\mathds{k}$, the free $\mathds{k}$-linear monoidal 2-category on $\mathcal{G}$. We then write $\widetilde{\mathfrak{X}}$ for its local Cauchy completion, and finally set $\mathbf{2Vect}_{\mathcal{G}}:=Cau(\widetilde{\mathfrak{X}})$. More generally, given a 4-cocycle $\pi$ for $\mathcal{G}$ with coefficients in $\mathds{k}^{\times}$, we can consider the fusion 2-category $\mathbf{2Vect}^{\pi}_{\mathcal{G}}$ of $\pi$-twisted $\mathcal{G}$-graded 2-vector spaces (see construction 2.1.16 of \cite{DR}). Namely, the 4-cocycle $\pi$ provides us with the Postnikov data necessary to form the 3-group $\mathcal{G}\times^{\pi} \mathrm{B}^2\mathds{k}^{\times}$, which we think of as a non-linear monoidal 2-category. After linearization, we obtain a linear monoidal 2-category $\mathfrak{Y}:=\mathcal{G}\times^{\pi} \mathrm{B}^2\mathds{k}$, which is a twisted variant of $\mathfrak{X}$ above. We then set $\mathbf{2Vect}^{\pi}_{\mathcal{G}}:=Cau(\widetilde{\mathfrak{Y}})$, with $\widetilde{\mathfrak{Y}}$ the local Cauchy completion of $\mathfrak{Y}$.
\end{Example}

\begin{Example}
Let $G$ be a finite group, and $\pi$ a 4-cocycle for $G$ with coefficients in $\mathds{k}^{\times}$. We may consider the braided monoidal 2-category $\mathscr{Z}(\mathbf{2Vect}_G^{\pi})$, the Drinfeld center of the fusion 2-category $\mathbf{2Vect}_G^{\pi}$. We review this definition in section \ref{sub:definitioncenter} below. It was shown directly in \cite{KTZ} that $\mathscr{Z}(\mathbf{2Vect}_G^{\pi})$ is a finite semisimple 2-category. In fact, $\mathscr{Z}(\mathbf{2Vect}_G^{\pi})$ is rigid as we will show in corollary \ref{cor:Drinfeldrigid}, so that it really is a fusion 2-category.
\end{Example}

\begin{Example}
Let $\mathfrak{C}$ and $\mathfrak{D}$ be two fusion 2-categories. It follows from theorem 5.6 and lemma 5.7 of \cite{D3} that their 2-Deligne tensor product, which we denote by $\mathfrak{C}\boxtimes\mathfrak{D}$, is a fusion 2-category. In addition, it follows from lemma 4.3 of \cite{D3} that $(\mathfrak{C}\boxtimes\mathfrak{D})^0\simeq \mathfrak{C}^0\boxtimes\mathfrak{D}^0$. Further, by proposition 4.1 of \cite{D3}, we have that $\Omega (\mathfrak{C}\boxtimes\mathfrak{D})\simeq \Omega\mathfrak{C}\boxtimes \Omega\mathfrak{D}$ as braided fusion 1-categories.
\end{Example}

\subsection{Rigid and Separable Algebras}

Let us fix $\mathfrak{C}$ a fusion 2-category (over an algebraically closed field of characteristic zero), with monoidal product $\Box$ and monoidal unit $I$. An algebra, also called pseudo-monoid in \cite{DS}, in $\mathfrak{C}$ is an object $A$ of $\mathfrak{C}$ equipped with a multiplication 1-morphism $m:A\Box A\rightarrow A$, a unit 1-morphism $i:I\rightarrow A$, and coherence 2-isomorphisms ensuring that $m$ is associative and unital. We focus our attention on the following refinement of the notion of an algebra, which was introduced in \cite{G}, and was first considered in the context of fusion 2-categories in \cite{JFR}.

\begin{Definition}
A rigid algebra in $\mathfrak{C}$ is an algebra $A$ whose multiplication 1-morphism $m:A\Box A\rightarrow A$ has a right adjoint $m^*$ as an $A$-$A$-bimodule 1-morphism.
\end{Definition}

\noindent For our purposes, it is necessary to single out the following class of rigid algebras introduced in \cite{JFR}, and whose origin can be traced back to \cite{GJF}.

\begin{Definition}
A separable algebra in $\mathfrak{C}$ is a rigid algebra $A$ in $\mathfrak{C}$ equipped with a section of the counit $m\circ m^*\Rightarrow Id_A$ as an $A$-$A$-bimodule 2-morphism.
\end{Definition}

As we will briefly recall below, separable algebras can be used to define the notion of Morita equivalence between fusion 2-categories. This is thanks to the following result, which is a combination of theorem 3.1.6 of \cite{D7} and corollary 2.2.3 of \cite{D5}.

\begin{Theorem}\label{thm:bimodulefinitesemisimple}
Let $A$ be a rigid algebra in a fusion 2-category $\mathfrak{C}$. Then, $A$ is separable if and only if $\mathbf{Bimod}_{\mathfrak{C}}(A)$, the 2-category of $A$-$A$-bimodules in $\mathfrak{C}$, is finite semisimple. Further, if either of these conditions is satisfied, $\mathbf{Mod}_{\mathfrak{C}}(A)$, the 2-category of right $A$-modules in $\mathfrak{C}$, is finite semisimple.
\end{Theorem}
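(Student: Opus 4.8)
The plan is to realise the two 2-categories in question as 2-categories of algebras for monads on $\mathfrak{C}$ and to translate separability of $A$ into separability of the relevant monad. Concretely, the assignment $T(-):=A\Box(-)\Box A$ underlies a monad on $\mathfrak{C}$ whose multiplication is induced by $m$ on each factor and whose unit is induced by $i$; an algebra for $T$ is exactly an $A$-$A$-bimodule, so $\mathbf{Bimod}_{\mathfrak{C}}(A)\simeq \mathfrak{C}^{T}$, the Eilenberg--Moore 2-category, with forgetful 2-functor $U:\mathfrak{C}^{T}\to\mathfrak{C}$ and free left adjoint $F(C)=A\Box C\Box A$. Likewise $\mathbf{Mod}_{\mathfrak{C}}(A)\simeq\mathfrak{C}^{T'}$ for the one-sided monad $T'(-):=(-)\Box A$. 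The whole argument then rests on the categorified principle that a \emph{separable} monad on a finite semisimple 2-category has a finite semisimple 2-category of algebras.

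For the forward direction I would first verify that the separability datum $\sigma:Id_A\Rightarrow m\circ m^{*}$, applied on each tensor factor, upgrades $T$ to a separable monad, i.e. produces a section of the monad multiplication $T\circ T\Rightarrow T$ as a bimodule 2-morphism. Granting this, finite semisimplicity of $\mathfrak{C}^{T}$ splits into four checks. Local finite semisimplicity and the existence of adjoints for 1-morphisms follow from finite semisimplicity of $\mathfrak{C}$ together with rigidity of $A$: each category $Hom_{\mathbf{Bimod}}(M,N)$ is computed from the finite semisimple categories $Hom_{\mathfrak{C}}(UM,UN)$ by a construction that the separability datum renders a finite split limit, hence is again finite semisimple, while adjoints of bimodule 1-morphisms are produced from adjoints in $\mathfrak{C}$ using the rigid structure on $A$. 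Direct sums of objects are inherited from $\mathfrak{C}$. Finiteness of the set of simples holds because, the counit of $F\dashv U$ being split, every $T$-algebra $M$ is a condensate of the free bimodule $FU(M)$; since $U(M)$ is a finite sum of simples of $\mathfrak{C}$, every simple bimodule is a condensate of some $F(C)=A\Box C\Box A$ with $C$ among the finitely many simples of $\mathfrak{C}$.

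The crux, and the step I expect to be the main obstacle, is the remaining part of Cauchy completeness: I must show that every 2-condensation monad in $\mathfrak{C}^{T}$ splits. The strategy is to forget such a monad down to $\mathfrak{C}$, where it splits because $\mathfrak{C}$ is finite semisimple and hence Cauchy complete, and then to transport the resulting condensate back to a $T$-algebra by using the separability section to build a coherent bimodule structure on it. This is where all the genuinely 2-categorical difficulty is concentrated: unlike in the decategorified case one is splitting a condensation \emph{monad} rather than an idempotent, so one must check that the lifted unit, multiplication and comultiplication satisfy the associativity, Frobenius and $A$-bilinearity coherences, ensuring that the splitting takes place in $\mathbf{Bimod}_{\mathfrak{C}}(A)$ and not merely in $\mathfrak{C}$. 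The one-sided monad $T'$ is separable as well, and the identical transport argument yields finite semisimplicity of $\mathbf{Mod}_{\mathfrak{C}}(A)$, giving the last sentence of the statement.

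For the reverse direction I would recover the separability section directly from semisimplicity. By rigidity of $A$ the counit $\epsilon:m\circ m^{*}\Rightarrow Id_A$ is a 2-morphism of $A$-$A$-bimodules, that is a morphism in the 1-category $End_{\mathbf{Bimod}}(A)$, which under our hypothesis is finite semisimple. It therefore suffices to show that $\epsilon$ is an epimorphism, equivalently that the regular bimodule $A$ is a retract of $m\circ m^{*}$ in $\mathbf{Bimod}_{\mathfrak{C}}(A)$, since in a semisimple 1-category every epimorphism splits and the resulting section is exactly the required separability structure. That $\epsilon$ is a split epimorphism after forgetting the bimodule structure to $\mathfrak{C}$ follows from the triangle identities of the adjunction $m\dashv m^{*}$ together with the unitality of $A$; one then promotes this unstructured splitting to a genuine bimodule splitting using semisimplicity of $\mathbf{Bimod}_{\mathfrak{C}}(A)$, exactly as in the transport step above. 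In the monad language this shows that $\mathfrak{C}^{T}$ finite semisimple forces $T$, and hence $A$, to be separable. The forward implication together with the finiteness of $\mathbf{Mod}_{\mathfrak{C}}(A)$ reproduces the content attributed to theorem~3.1.6 of \cite{D7}, while the reverse implication matches corollary~2.2.3 of \cite{D5}.
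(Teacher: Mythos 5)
The first thing to note is that the paper contains no proof of Theorem \ref{thm:bimodulefinitesemisimple} to compare against: it is imported wholesale, as ``a combination of theorem 3.1.6 of \cite{D7} and corollary 2.2.3 of \cite{D5}'', and the route taken in those references goes through identifying $\mathbf{Bimod}_{\mathfrak{C}}(A)$ with a $2$-category of modules over a multifusion $1$-category (the endomorphism category of a generator) and then invoking the recognition theory of semisimple $2$-categories, rather than through an abstract ``separable monad'' theorem. Your reverse direction is correct and essentially complete, and it is the strongest part of the proposal: rigidity makes $\epsilon\colon m\circ m^{*}\Rightarrow Id_A$ a bimodule $2$-morphism, the triangle identities plus unitality show its image in $\mathfrak{C}$ is a split epimorphism, the forgetful functor on $Hom$-categories is faithful and hence reflects epimorphisms, and an epimorphism in the finite semisimple category $End_{\mathbf{Bimod}_{\mathfrak{C}}(A)}(A)$ splits; the section is exactly the separability datum.

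The forward direction, however, has two genuine gaps. First, the step you yourself flag as the crux would fail as you describe it. If $(M,e,\mu,\delta)$ is a $2$-condensation monad in $\mathbf{Bimod}_{\mathfrak{C}}(A)$ and $(UM,N,f,g,\phi,\gamma)$ splits its image in $\mathfrak{C}$, the transported action $b:=f\circ a_M\circ(g\Box A)$ satisfies $b\circ(N\Box i)\cong f\circ g$, and $f\circ g$ is \emph{not} isomorphic to $Id_N$: the $2$-morphism $\phi\colon f\circ g\Rightarrow Id_N$ merely admits a section. So the lifted structure is not even a unital module up to isomorphism, and there are no ``coherences to check''; the construction itself breaks. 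The correct repair is not the separability section but the $2$-universal property of condensation splittings (theorem 2.3.2 of \cite{GJF}): splittings are absolute, so $(-)\Box A$ preserves them, $e\Box A$ is a $2$-condensation monad split by $N\Box A$, and the bimodule structure of $e$ exhibits $a_M$ as a morphism of condensation monads, which therefore induces $b\colon N\Box A\rightarrow N$ with unitality and associativity supplied by universality. This is precisely the technique the paper itself uses in lemma \ref{lem:CenterCauchyComplete} for $\mathscr{Z}(\mathfrak{C})$, and it works for an \emph{arbitrary} algebra; your diagnosis that separability is what powers this step is therefore off --- its genuine role is in local semisimplicity and in exhibiting every bimodule as a condensate of a free one.

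Second, and unflagged: finiteness of the set of simple objects does not follow from ``every simple bimodule is a condensate of some $F(C)$ with $C$ among the finitely many simples of $\mathfrak{C}$''. An object of a $2$-category can a priori admit infinitely many pairwise inequivalent condensates; what is needed is that the finitely many free bimodules admit only finitely many inequivalent simple condensates, which amounts to the finiteness of equivalence classes of indecomposable finite semisimple module $1$-categories over the multifusion $1$-category $End_{\mathbf{Bimod}_{\mathfrak{C}}(A)}\bigl(\boxplus_i F(C_i)\bigr)$ --- a form of Ocneanu rigidity, and exactly the finiteness clause of theorem 1.4.8 of \cite{DR}. This is a substantive input tied to the characteristic-zero hypothesis, and nothing in your sketch invokes it; indeed your argument never uses characteristic zero at all, which is a warning sign. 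A related cosmetic but symptomatic issue is a recurring level-slip (``a section of the monad multiplication $T\circ T\Rightarrow T$'', ``the counit of $F\dashv U$ being split''): the separable structure is a section of the counit \emph{$2$-morphism} of an adjunction $\mu\dashv\mu^{*}$ of transformations, not of $\mu$ itself, and keeping the two levels apart is precisely where the difficulties above become visible.
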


\noindent It is in general difficult to prove that a rigid algebra is separable. Nonetheless, if $A$ is a connected rigid algebra, meaning that $i:I\rightarrow A$ is a simple 1-morphism, then one can associate to $A$ a scalar $\mathrm{Dim}_{\mathfrak{C}}(A)$ (see section 3.2 of \cite{D7}). It was established in theorem 3.2.4 of \cite{D7} that $A$ is separable if and only if $\mathrm{Dim}_{\mathfrak{C}}(A)$ is non-zero. This criterion can be effectively used to give examples of separable algebras.

\begin{Example}
Rigid algebras in $\mathbf{2Vect}$ are precisely multifusion 1-categories. It follows from corollary 2.6.8 of \cite{DSPS13} that every such rigid algebra is separable. More generally, if $G$ is a finite group, rigid algebras in $\mathbf{2Vect}_G$ are exactly $G$-graded multifusion 1-categories. It was argued in example 5.2.4 of \cite{D4} that every such rigid algebra is separable.
\end{Example}

\begin{Example}\label{ex:algebrasModB}
Let $\mathcal{B}$ be a braided fusion 1-category. In the terminology of \cite{BJS}, a $\mathcal{B}$-central monoidal 1-category is a monoidal linear 1-category $\mathcal{C}$ equipped with a braided monoidal functor $F:\mathcal{B}\rightarrow \mathcal{Z}(\mathcal{C})$ to the Drinfeld center of $\mathcal{C}$. By lemma 2.1.4 of \cite{D7}, rigid algebras in $\mathbf{Mod}(\mathcal{B})$ are exactly $\mathcal{B}$-central multifusion 1-categories. By combining proposition 3.3.3 of \cite{D7} together with theorem 2.3 of \cite{ENO1}, we find that every $\mathcal{B}$-central fusion 1-category is separable. More generally, every $\mathcal{B}$-central multifusion 1-category is separable as we explain below.

Namely, let $\mathcal{C}$ be a $\mathcal{B}$-central multifusion 1-category. We use $\mathcal{C}^{mop}$ to denote the multifusion 1-category obtained from $\mathcal{C}$ by reversing the order of the monoidal product. Note that there is a canonical $\mathcal{B}$-central structure on $\mathcal{C}^{mop}$. Then, inspection shows that there is an equivalence $$\mathbf{Bimod}_{\mathbf{Mod}(\mathcal{B})}(\mathcal{C})\simeq \mathbf{Mod}(\mathcal{C}^{mop}\boxtimes_{\mathcal{B}}\mathcal{C})$$ of 2-categories. Moreover, it follows from theorem 6.2 of \cite{Gre} that $\mathcal{C}^{mop}\boxtimes_{\mathcal{B}}\mathcal{C}$ is a rigid monoidal 1-category. Further, it is finite semisimple by proposition 3.5 of \cite{ENO2}. This shows that $\mathcal{C}$ is indeed separable.
\end{Example}

\begin{Remark}\label{rem:balancedbimdoule1categories}
The 2-category $\mathbf{Bimod}_{\mathbf{Mod}(\mathcal{B})}(\mathcal{C})$ is the 2-category of finite semisimple $\mathcal{B}$-balanced $\mathcal{C}$-$\mathcal{C}$-bimodule 1-categories in the sense of definition 3.24 \cite{Lau}. Namely, the author assumes that $\mathcal{C}$ is a $\mathcal{B}$-augmented monoidal 1-category as in definition 3.12 therein, but only the underlying $\mathcal{B}$-central structure on $\mathcal{C}$ is relevant for the definition of $\mathcal{B}$-balanced $\mathcal{C}$-$\mathcal{C}$-bimodule 1-categories. Similarly, let us also point out that the equivalence of 2-categories given above in example \ref{ex:algebrasModB} is essentially theorem 3.27 of \cite{Lau}.
\end{Remark}

\begin{Example}\label{ex:algebrasZ2VG}
Let $G$ be a finite group. Inspection shows that rigid algebras in $\mathscr{Z}(\mathbf{2Vect}_G)$ are precisely $G$-crossed multifusion 1-categories in the sense of \cite{Gal} (see \cite{T2} for the original definition). By corollary 3.3.6 of \cite{D7}, every $G$-crossed fusion 1-categories is a separable algebra in $\mathscr{Z}(\mathbf{2Vect}_G)$. Moreover, braided rigid algebras in $\mathscr{Z}(\mathbf{2Vect}_G)$ are exactly $G$-crossed braided multifusion 1-categories in the sense of \cite{Gal} (see also \cite{T2} for the first definition).
\end{Example}

\subsection{The Morita Theory of Fusion 2-Categories}

We end this section be reviewing the notion of Morita equivalence between fusion 2-categories introduced in \cite{D8}. Given a fusion 2-category $\mathfrak{C}$, a left $\mathfrak{C}$-module 2-category is a 2-category $\mathfrak{M}$ equipped with a coherent left action by $\mathfrak{C}$ (see definition 2.1.3 of \cite{D4} for details). If $\mathfrak{M}$ is a finite semisimple 2-category, then it follows from theorem 5.3.4 of \cite{D4} that there exists a rigid algebra $A$ in $\mathfrak{C}$, together with an equivalence $\mathfrak{M}\simeq \mathbf{Mod}_{\mathfrak{C}}(A)$ of left $\mathfrak{C}$-module 2-categories. We say that $\mathfrak{M}$ is separable if $A$ is separable, and note that this property does not depend on the algebra $A$ (see proposition 5.1.3 of \cite{D8}).

Given a separable left $\mathfrak{C}$-module 2-category $\mathfrak{M}$, we use $\mathfrak{C}^*_{\mathfrak{M}}$ to denote $\mathbf{End}_{\mathfrak{C}}(\mathfrak{M})$, the monoidal 2-category of left $\mathfrak{C}$-module 2-endofunctors on $\mathfrak{M}$ (see theorem 4.1.7 of \cite{D8}), and call it the dual tensor 2-category to $\mathfrak{C}$ with respect to $\mathfrak{M}$. On the other hand, given a separable algebra $A$ in $\mathfrak{C}$, it follows from theorem 3.2.8 of \cite{D8} that the finite semisimple 2-category $\mathbf{Bimod}_{\mathfrak{C}}(A)$ has a canonical monoidal structure given by the relative tensor product over $A$. The next result is given by combining theorem 5.3.2 of \cite{D8} with corollary 2.2.3 of \cite{D5}.

\begin{Theorem}\label{thm:dualfusion2category}
Let $A$ be a separable algebra in a fusion 2-category $\mathfrak{C}$. Then, $$\mathbf{End}_{\mathfrak{C}}(\mathbf{Mod}_{\mathfrak{C}}(A))\simeq \mathbf{Bimod}_{\mathfrak{C}}(A)^{mop}$$ is a multifusion 2-category.
\end{Theorem}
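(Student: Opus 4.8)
The plan is to realize the stated equivalence as a categorified Eilenberg--Watts theorem and then to verify directly the three defining properties of a multifusion 2-category, namely that $\mathbf{Bimod}_{\mathfrak{C}}(A)^{mop}$ is finite semisimple, monoidal, and rigid. First I would construct the comparison 2-functor by hand: to an $A$-$A$-bimodule $M$ I assign the endofunctor $F_M := (-)\Box_A M$ of $\mathbf{Mod}_{\mathfrak{C}}(A)$ given by relative tensor product over $A$ on the right, using only the right $A$-module structure on $M$ to land back in right $A$-modules. This $F_M$ is canonically a left $\mathfrak{C}$-module endofunctor, since the left $\mathfrak{C}$-action on $\mathbf{Mod}_{\mathfrak{C}}(A)$ takes place on the far side and the coherence datum $C\Box(N\Box_A M)\simeq (C\Box N)\Box_A M$ supplies the required $\mathfrak{C}$-linearity. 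From the associativity equivalence $F_{M\Box_A M'}\simeq F_{M'}\circ F_M$ together with $F_A\simeq Id$, the assignment $M\mapsto F_M$ upgrades to a monoidal 2-functor out of $\mathbf{Bimod}_{\mathfrak{C}}(A)^{mop}$, the monoidal reversal being forced precisely by this order swap under composition.

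Next I would prove that this 2-functor is an equivalence. The candidate inverse sends a left $\mathfrak{C}$-module endofunctor $F$ to the object $F(A)$, where $A$ is viewed as the regular right $A$-module; a left $A$-module structure on $F(A)$ is induced from the multiplication of $A$ and the $\mathfrak{C}$-module coherence of $F$, so that $F(A)$ becomes an $A$-$A$-bimodule. The crux is to show that the natural comparison $(-)\Box_A F(A)\Rightarrow F$ is an equivalence, which reduces to the statement that every right $A$-module is a 2-condensate of free modules of the form $C\Box A$, so that a $\mathfrak{C}$-module endofunctor is determined up to equivalence by its value on the regular module. This reconstruction step is where I expect the main obstacle to lie: making it precise requires controlling the existence and exactness of the relative tensor product $\Box_A$ and the splitting of the relevant 2-condensation monads, and it is exactly here that the separability datum on $A$ is indispensable. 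Granting the equivalence, I can transport the monoidal structure and conclude $\mathbf{End}_{\mathfrak{C}}(\mathbf{Mod}_{\mathfrak{C}}(A))\simeq \mathbf{Bimod}_{\mathfrak{C}}(A)^{mop}$ as monoidal 2-categories.

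It then remains to establish the two structural properties. Finite semisimplicity is immediate from Theorem \ref{thm:bimodulefinitesemisimple}: since $A$ is separable, $\mathbf{Bimod}_{\mathfrak{C}}(A)$ is a finite semisimple 2-category, and passing to the monoidal opposite preserves this. For rigidity I would produce left and right duals of an $A$-$A$-bimodule $M$ from the ambient rigidity of $\mathfrak{C}$: the adjoint $M^*$ of $M$ as a $1$-morphism of $\mathfrak{C}$ inherits an $A$-$A$-bimodule structure, and the separability section of the counit of $A$ supplies the evaluation and coevaluation exhibiting $M^*$ as a dual of $M$ with respect to $\Box_A$. Verifying the triangle identities at the level of bimodule $2$-morphisms is the routine but lengthy part of this step. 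Assembling the monoidal equivalence with finite semisimplicity and rigidity shows that $\mathbf{Bimod}_{\mathfrak{C}}(A)^{mop}$ is a rigid monoidal finite semisimple $2$-category, hence a multifusion $2$-category, as claimed.
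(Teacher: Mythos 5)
The paper itself offers no proof of Theorem \ref{thm:dualfusion2category}: it is imported as the combination of theorem 5.3.2 of \cite{D8} with corollary 2.2.3 of \cite{D5} (the monoidal structure on $\mathbf{Bimod}_{\mathfrak{C}}(A)$ being theorem 3.2.8 of \cite{D8}). Measured against that cited literature, your first two steps are exactly the standard argument: the comparison 2-functor $M\mapsto (-)\Box_A M$ with the monoidal reversal forced by composition order, and the categorified Eilenberg--Watts inversion $F\mapsto F(A)$, reduced to the facts that every right $A$-module is a 2-condensate of free modules $C\Box A$ and that all 2-functors preserve splittings of 2-condensation monads (the same generation-by-free-modules mechanism the paper uses in its proof of Lemma \ref{lem:module2Deligne}). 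Your localization of where separability enters --- existence of $\Box_A$ and splitting of the relevant condensation monads --- is correct, as is quoting Theorem \ref{thm:bimodulefinitesemisimple} for finite semisimplicity.

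The gap is in your rigidity step. A small slip first: an $A$-$A$-bimodule $M$ is an \emph{object} of $\mathfrak{C}$, not a 1-morphism, so the ambient datum is its dual object $M^{\sharp}$, not an adjoint. The substantive problem is your claim that the mate bimodule structure on $M^{\sharp}$ together with the separability section supplies evaluation and coevaluation for $\Box_A$. The natural candidate, the composite $M^{\sharp}\Box M\rightarrow I\xrightarrow{\,i\,}A$ built from the ambient evaluation and the unit of $A$, is $A$-balanced but is \emph{not} a 1-morphism of $A$-$A$-bimodules for the mate structure, so it does not descend to the required $\mathrm{ev}_A:M^{\sharp}\Box_A M\rightarrow A$. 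This already fails for algebras in $\mathbf{Vect}$: for $f\otimes m\mapsto f(m)1_A$, left $A$-linearity would demand $f(ma)1_A=f(m)a$, which is false, e.g.\ for $A=\mathds{k}\times\mathds{k}$ and $M=A$. Repairing this requires twisting the bimodule structure on $M^{\sharp}$ using the categorified Frobenius datum $m^{*}$ of the rigid algebra $A$ and inserting the separability section appropriately; those constructions and the ensuing swallowtail verifications are the real content of the step, not a routine coda. The cleaner route --- the one behind the paper's citations, and the one the paper itself uses for Corollary \ref{cor:Drinfeldrigid} via propositions 4.2.3 and 4.2.4 of \cite{D8} --- is to prove rigidity on the other side of your equivalence: every left $\mathfrak{C}$-module 2-endofunctor of the finite semisimple 2-category $\mathbf{Mod}_{\mathfrak{C}}(A)$ admits left and right adjoints, and these adjoints are again module 2-functors because rigidity of $\mathfrak{C}$ allows one to take mates of the module coherence data; adjoints are precisely duals for the composition product, so $\mathbf{End}_{\mathfrak{C}}(\mathbf{Mod}_{\mathfrak{C}}(A))$ is rigid, and $\mathbf{Bimod}_{\mathfrak{C}}(A)^{mop}$ inherits rigidity by transport across the monoidal equivalence you have already established.
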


\noindent In order to ensure that the multifusion 2-category of theorem \ref{thm:dualfusion2category} is fusion, we need to impose a condition on the algebra $A$. More precisely, we say that an algebra is indecomposable if it can not be written as the direct sum of two non-trivial algebras. For instance, any connected algebra is indecomposable. We use an analogous terminology for module 2-categories. The next result is obtained by combing theorem 5.4.3, which characterizes Morita equivalence between multifusion 2-categories, and section 5.2 of \cite{D8}.

\begin{Theorem}\label{thm:MoritaF2C}
For any two fusion 2-categories $\mathfrak{C}$ and $\mathfrak{D}$, the following are equivalent:
\begin{enumerate}
    \item There exists an indecomposable separable left $\mathfrak{C}$-module 2-category $\mathfrak{M}$, and an equivalence of monoidal 2-categories $\mathfrak{D}^{mop}\simeq \mathfrak{C}_{\mathfrak{M}}^*$.
    \item There exists an indecomposable separable algebra $A$ in $\mathfrak{C}$, and an equivalence of monoidal 2-categories $\mathfrak{D}\simeq \mathbf{Bimod}_{\mathfrak{C}}(A)$.
\end{enumerate}
If either of the above conditions is satisfied, we say that $\mathfrak{C}$ and $\mathfrak{D}$ are Morita equivalent. Furthermore, Morita equivalence defines an equivalence relation between fusion 2-categories.
\end{Theorem}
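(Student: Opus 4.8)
The plan is to prove the equivalence of conditions $(1)$ and $(2)$ by translating back and forth between separable module 2-categories and separable algebras, and then to verify separately that Morita equivalence is reflexive, symmetric, and transitive. For the direction $(2)\Rightarrow(1)$, given an indecomposable separable algebra $A$ with $\mathfrak{D}\simeq\mathbf{Bimod}_{\mathfrak{C}}(A)$, I would set $\mathfrak{M}:=\mathbf{Mod}_{\mathfrak{C}}(A)$, which is finite semisimple by Theorem \ref{thm:bimodulefinitesemisimple}, hence a separable $\mathfrak{C}$-module 2-category, and which is indecomposable because $A$ is. Theorem \ref{thm:dualfusion2category} then yields $\mathfrak{C}^*_{\mathfrak{M}}=\mathbf{End}_{\mathfrak{C}}(\mathbf{Mod}_{\mathfrak{C}}(A))\simeq\mathbf{Bimod}_{\mathfrak{C}}(A)^{mop}\simeq\mathfrak{D}^{mop}$, giving $(1)$. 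Conversely, for $(1)\Rightarrow(2)$, I would invoke theorem 5.3.4 of \cite{D4} to write $\mathfrak{M}\simeq\mathbf{Mod}_{\mathfrak{C}}(A)$ for a rigid algebra $A$; since separability of $\mathfrak{M}$ is intrinsic and independent of the chosen algebra (proposition 5.1.3 of \cite{D8}), the algebra $A$ is separable, and indecomposability of $\mathfrak{M}$ forces $A$ to be indecomposable. Applying Theorem \ref{thm:dualfusion2category} once more identifies $\mathfrak{D}^{mop}\simeq\mathfrak{C}^*_{\mathfrak{M}}\simeq\mathbf{Bimod}_{\mathfrak{C}}(A)^{mop}$, whence $\mathfrak{D}\simeq\mathbf{Bimod}_{\mathfrak{C}}(A)$.

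The delicate point running through both directions is that Theorem \ref{thm:dualfusion2category} only guarantees a multifusion 2-category, whereas $\mathfrak{D}$ is required to be fusion, i.e. to have a simple monoidal unit. This is exactly where indecomposability is used: the monoidal unit of $\mathbf{Bimod}_{\mathfrak{C}}(A)$ is $A$ itself regarded as an $A$-$A$-bimodule, and I would check that this unit is simple precisely when $A$ is indecomposable as an algebra, together with the parallel statement that indecomposability of the representing algebra matches indecomposability of the $\mathfrak{C}$-module 2-category. Thus the indecomposable separable hypotheses are exactly what promotes the multifusion duals to fusion duals.

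For reflexivity, one takes $A=I$, the monoidal unit, which is connected (its unit $1$-morphism is $Id_I$, simple in $\Omega\mathfrak{C}$), hence indecomposable, and trivially separable, with $\mathbf{Bimod}_{\mathfrak{C}}(I)\simeq\mathfrak{C}$. For symmetry I would pass to the module formulation $(1)$ and use the double-dual phenomenon: the same 2-category $\mathfrak{M}$ is canonically an indecomposable separable module 2-category over $\mathfrak{C}^*_{\mathfrak{M}}$, and the biadjoint computation gives $(\mathfrak{C}^*_{\mathfrak{M}})^*_{\mathfrak{M}}\simeq\mathfrak{C}$; unwinding the $mop$ conventions then exhibits $\mathfrak{C}$ as a dual tensor 2-category of $\mathfrak{D}$, so that $\mathfrak{D}\sim\mathfrak{C}$.

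The main obstacle is transitivity. Given indecomposable separable bimodule 2-categories $\mathfrak{M}$ witnessing $\mathfrak{C}\sim\mathfrak{D}$ and $\mathfrak{N}$ witnessing $\mathfrak{D}\sim\mathfrak{E}$, I would form the relative tensor product $\mathfrak{M}\boxtimes_{\mathfrak{D}}\mathfrak{N}$, which is naturally a $\mathfrak{C}$-$\mathfrak{E}$-bimodule 2-category, and argue that it provides the required witnessing datum. The genuine work lies in three compatibilities, all drawn from the machinery of \cite{D8} (theorem 5.4.3): that this relative tensor product exists and is again finite semisimple; that it remains separable and indecomposable, so that it is admissible data for condition $(1)$; and that there is a composition law $\mathbf{End}_{\mathfrak{C}}(\mathfrak{M}\boxtimes_{\mathfrak{D}}\mathfrak{N})\simeq\mathfrak{E}^{mop}$ recovering $\mathfrak{E}$ up to $mop$. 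This is the categorification of the statement that Morita contexts compose, and I expect the preservation of separability under the relative tensor product, together with the compatibility of that product with the formation of $\mathfrak{C}$-module endomorphism 2-categories, to be the genuinely laborious steps.
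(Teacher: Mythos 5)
Your proposal is correct in outline, but it is worth noting that the paper itself does not prove this statement at all: Theorem \ref{thm:MoritaF2C} sits in the preliminaries and is imported wholesale from \cite{D8}, by combining theorem 5.4.3 there (the characterization of Morita equivalence) with section 5.2 (which handles the passage from multifusion to fusion via indecomposability). Your reconstruction of the equivalence $(1)\Leftrightarrow(2)$ is exactly the intended combination of the quoted ingredients --- theorem 5.3.4 of \cite{D4}, proposition 5.1.3 of \cite{D8}, and Theorem \ref{thm:dualfusion2category} --- and your identification of indecomposability as the hypothesis that promotes the multifusion dual to a fusion one is precisely the content of section 5.2 of \cite{D8}. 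Where your route genuinely diverges is in the equivalence-relation part: in \cite{D8}, Morita equivalence is defined as equivalence of objects in a Morita 3-category of separable algebras (equivalently, of separable module 2-categories), so reflexivity, symmetry and transitivity come for free from the 3-categorical framework; you instead verify them by hand. Your transitivity argument via $\mathfrak{M}\boxtimes_{\mathfrak{D}}\mathfrak{N}$ is exactly the composition law of that 3-category made explicit, and your symmetry argument is corollary 5.4.4 of \cite{D8} (the double-dual theorem, quoted in the paper's proof of Proposition \ref{prop:DrinfeldIdentification}). The trade-off is that the structural approach absorbs, once and for all, the verifications you must perform piecemeal: that relative tensor products of separable module 2-categories exist and remain separable and indecomposable, and that duals compose. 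You correctly flag these as the laborious steps but do not carry them out, which is acceptable here since the paper defers to \cite{D8} for the same content. One point you gloss over that deserves care: in the symmetry step, $\mathfrak{M}$ with its $\mathfrak{C}^*_{\mathfrak{M}}\simeq\mathfrak{D}^{mop}$-action is naturally a \emph{right} $\mathfrak{D}$-module 2-category, so exhibiting a \emph{left} $\mathfrak{D}$-module witnessing $\mathfrak{D}\sim\mathfrak{C}$ requires passing to an opposite module 2-category and checking that separability and indecomposability survive; ``unwinding the $mop$ conventions'' hides a real (if routine) construction.
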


For later use, we also record the following technical lemmas.

\begin{Lemma}\label{lem:module2Deligne}
Let $\mathfrak{C}$ and $\mathfrak{D}$ be two multifusion 2-categories. For any separable algebras $A$ in $\mathfrak{C}$ and $B$ in $\mathfrak{D}$, there is an equivalence $$\mathbf{Mod}_{\mathfrak{C}}(A)\boxtimes\mathbf{Mod}_{\mathfrak{D}}(B)\simeq \mathbf{Mod}_{\mathfrak{C}\boxtimes\mathfrak{D}}(A\boxtimes B)$$ of left $\mathfrak{C}\boxtimes\mathfrak{D}$-module 2-categories.
\end{Lemma}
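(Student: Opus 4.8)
The plan is to build an explicit comparison 2-functor out of the external product of modules, verify it is fully faithful on free modules, and then bootstrap to an equivalence using that every module is a 2-condensate of a free one. Sending a right $A$-module $M$ in $\mathfrak{C}$ and a right $B$-module $N$ in $\mathfrak{D}$ to the object $M\boxtimes N$ of $\mathfrak{C}\boxtimes\mathfrak{D}$, equipped with the right $A\boxtimes B$-action obtained by tensoring the two actions, defines a bilinear 2-functor
$$\mathbf{Mod}_{\mathfrak{C}}(A)\times\mathbf{Mod}_{\mathfrak{D}}(B)\longrightarrow \mathbf{Mod}_{\mathfrak{C}\boxtimes\mathfrak{D}}(A\boxtimes B),$$
which by the universal property of the 2-Deligne tensor product factors through a linear 2-functor $F\colon\mathbf{Mod}_{\mathfrak{C}}(A)\boxtimes\mathbf{Mod}_{\mathfrak{D}}(B)\to \mathbf{Mod}_{\mathfrak{C}\boxtimes\mathfrak{D}}(A\boxtimes B)$. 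Since the left $\mathfrak{C}\boxtimes\mathfrak{D}$-actions on both sides are induced by left multiplication and $(C\boxtimes D)\Box(M\boxtimes N)=(C\Box M)\boxtimes(D\Box N)$, the functor $F$ is a morphism of left $\mathfrak{C}\boxtimes\mathfrak{D}$-module 2-categories. I would also record that $A\boxtimes B$ is separable: a right adjoint of its multiplication $m_A\boxtimes m_B$ as a bimodule 1-morphism is $m_A^*\boxtimes m_B^*$, and tensoring the sections of the two counits yields the required bimodule section, so the target is finite semisimple by Theorem~\ref{thm:bimodulefinitesemisimple}.

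Next I would check that $F$ is fully faithful. Because $A$ and $B$ are separable, every $A$-module is a 2-condensate of a free module $C\Box A$ and likewise over $\mathfrak{D}$; as $F$ is linear it preserves 2-condensations, so it suffices to verify full faithfulness on free modules and extend to arbitrary condensates. On free modules the free-forgetful adjunction gives $Hom_{\mathbf{Mod}_{\mathfrak{C}}(A)}(C_1\Box A, C_2\Box A)\simeq Hom_{\mathfrak{C}}(C_1, C_2\Box A)$, and similarly over $\mathfrak{D}$ and over $\mathfrak{C}\boxtimes\mathfrak{D}$. Combining these with the defining property $Hom_{\mathfrak{C}\boxtimes\mathfrak{D}}(C_1\boxtimes D_1, C_2'\boxtimes D_2')\simeq Hom_{\mathfrak{C}}(C_1,C_2')\boxtimes Hom_{\mathfrak{D}}(D_1,D_2')$ of the 2-Deligne tensor product, together with the identity $(C_2\Box A)\boxtimes(D_2\Box B)=(C_2\boxtimes D_2)\Box(A\boxtimes B)$, both Hom-categories reduce to $Hom_{\mathfrak{C}}(C_1, C_2\Box A)\boxtimes Hom_{\mathfrak{D}}(D_1, D_2\Box B)$, and one checks that $F$ realizes precisely this identification.

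Finally, for essential surjectivity I would use that, by separability of $A\boxtimes B$, every $A\boxtimes B$-module is a 2-condensate of a free module $E\Box(A\boxtimes B)$ with $E\in\mathfrak{C}\boxtimes\mathfrak{D}$. Since $\mathfrak{C}\boxtimes\mathfrak{D}$ is generated under direct sums and 2-condensations by external products $C\boxtimes D$, and $(C\boxtimes D)\Box(A\boxtimes B)=(C\Box A)\boxtimes(D\Box B)$ lies in the image of $F$, the essential image of the fully faithful functor $F$ contains a generating set and is closed under 2-condensations; hence $F$ is essentially surjective, and therefore an equivalence of left $\mathfrak{C}\boxtimes\mathfrak{D}$-module 2-categories.

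The main obstacle I expect is not the conceptual skeleton, which mirrors the familiar 1-categorical statement, but the 2-categorical bookkeeping: making precise the Hom-category factorization of the 2-Deligne tensor product on non-free objects, checking that the free-forgetful adjunctions and the section coming from separability interact coherently so that full faithfulness genuinely descends along 2-condensations, and verifying that $F$ carries all the module-functor coherence data rather than merely matching objects and Hom-categories.
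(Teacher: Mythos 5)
Your proposal is correct and follows essentially the same route as the paper: both construct the comparison 2-functor from the bilinear external product via the 3-universal property of the 2-Deligne tensor product, observe compatibility with the left $\mathfrak{C}\boxtimes\mathfrak{D}$-module structures, and then conclude it is an equivalence by reducing to free modules --- your explicit fully-faithful-plus-essentially-surjective bookkeeping along 2-condensations is precisely what the paper packages as ``both sides are generated under Cauchy completion by free modules, and the functor restricts to an equivalence between these generating full sub-2-categories'' (via the Hom-factorization of the 2-Deligne product and the free-forgetful adjunction). Your additional remark that $A\boxtimes B$ is separable is a sensible supplement, since it guarantees the target is finite semisimple, hence Cauchy complete, as the universal property requires.
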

\begin{proof}
Thanks to the 3-universal property of the 2-Deligne tensor product obtained in theorem 3.7 of \cite{D3}, the bilinear 2-functor $$\begin{tabular}{ccc}$\mathbf{Mod}_{\mathfrak{C}}(A)\times\mathbf{Mod}_{\mathfrak{D}}(B)$& $\rightarrow$ & $\mathbf{Mod}_{\mathfrak{C}\boxtimes\mathfrak{D}}(A\boxtimes B).$\\ $(M,N)$&$\mapsto$ & $M\boxtimes N$\end{tabular}$$ induces a linear 2-functor $\mathbf{K}:\mathbf{Mod}_{\mathfrak{C}}(A)\boxtimes\mathbf{Mod}_{\mathfrak{D}}(B)\rightarrow \mathbf{Mod}_{\mathfrak{C}\boxtimes\mathfrak{D}}(A\boxtimes B).$ Furthermore, this 2-functor is compatible with the left $\mathfrak{C}\boxtimes\mathfrak{D}$-module structures. Now, it follows from the proof of proposition 3.1.2 of \cite{D7} that $\mathbf{Mod}_{\mathfrak{C}\boxtimes\mathfrak{D}}(A\boxtimes B)$ is generated under Cauchy completion by the full sub-2-category on the objects of the form $(C\Box D)\boxtimes (A\Box B)$ with $C$ in $\mathfrak{C}$ and $D$ in $\mathfrak{D}$. Likewise, it follows from its construction that $\mathbf{Mod}_{\mathfrak{C}}(A)\boxtimes\mathbf{Mod}_{\mathfrak{D}}(B)$ is generated under Cauchy completion by the full sub-2-category on the objects of the form $(C\Box A)\boxtimes (D\Box B)$ for every $C$ in $\mathfrak{C}$ and $D$ in $\mathfrak{D}$. But, we have that $\mathbf{K}\big((C\Box A)\boxtimes (D\Box B)\big)=(C\Box D)\boxtimes (A\Box B)$. Furthermore, it follows by combining proposition 4.1 of \cite{D3} and lemma 3.2.13 of \cite{D4} that $\mathbf{K}$ induces an equivalence between these two full sub-2-categories. This establishes that $\mathbf{K}$ is an equivalence as desired.
\end{proof}

\begin{Lemma}\label{lem:bimodule2Deligne}
Let $\mathfrak{C}$ and $\mathfrak{D}$ be two multifusion 2-categories. For any separable algebras $A$ in $\mathfrak{C}$ and $B$ in $\mathfrak{D}$, there is an equivalence $$\mathbf{Bimod}_{\mathfrak{C}}(A)\boxtimes\mathbf{Bimod}_{\mathfrak{D}}(B)\simeq \mathbf{Bimod}_{\mathfrak{C}\boxtimes\mathfrak{D}}(A\boxtimes B)$$ of multifusion 2-categories.
\end{Lemma}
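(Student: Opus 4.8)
The plan is to mirror the proof of Lemma \ref{lem:module2Deligne}, first producing an equivalence of the underlying finite semisimple 2-categories, and then promoting it to a monoidal equivalence by showing that the 2-Deligne tensor product is compatible with the relative tensor products defining the two monoidal structures. Before anything else, I would record that $A\boxtimes B$ is a separable algebra in $\mathfrak{C}\boxtimes\mathfrak{D}$, so that by Theorem \ref{thm:dualfusion2category} both sides of the claimed equivalence are honestly multifusion 2-categories, with monoidal products given by the relative tensor products $-\otimes_{A\boxtimes B}-$ on the right and, componentwise, $-\otimes_A-$ and $-\otimes_B-$ on the left.

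The comparison 2-functor is built exactly as before. Sending a pair $(M,N)$ consisting of an $A$-$A$-bimodule $M$ and a $B$-$B$-bimodule $N$ to the object $M\boxtimes N$, equipped with its evident $A\boxtimes B$-$A\boxtimes B$-bimodule structure, is bilinear, so by the $3$-universal property of the 2-Deligne tensor product (theorem 3.7 of \cite{D3}) it induces a linear 2-functor $$\mathbf{K}:\mathbf{Bimod}_{\mathfrak{C}}(A)\boxtimes\mathbf{Bimod}_{\mathfrak{D}}(B)\rightarrow\mathbf{Bimod}_{\mathfrak{C}\boxtimes\mathfrak{D}}(A\boxtimes B).$$ That $\mathbf{K}$ is an equivalence of underlying 2-categories is then proven verbatim as in Lemma \ref{lem:module2Deligne}: applying the argument in the proof of proposition 3.1.2 of \cite{D7} to bimodules, the target is generated under Cauchy completion by the free bimodules $(A\boxtimes B)\Box(C\boxtimes D)\Box(A\boxtimes B)$ and the source by the objects $(A\Box C\Box A)\boxtimes(B\Box D\Box B)$; since $(A\Box C\Box A)\boxtimes(B\Box D\Box B)=(A\boxtimes B)\Box(C\boxtimes D)\Box(A\boxtimes B)$, the 2-functor $\mathbf{K}$ matches these generators, and combining proposition 4.1 of \cite{D3} with lemma 3.2.13 of \cite{D4} shows that it is fully faithful on them, hence an equivalence.

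The new content, and the step I expect to be the main obstacle, is equipping $\mathbf{K}$ with a monoidal structure. As the monoidal product on a 2-Deligne tensor product of multifusion 2-categories is computed componentwise, what must be produced is a natural equivalence $$(M\otimes_A M')\boxtimes(N\otimes_B N')\simeq(M\boxtimes N)\otimes_{A\boxtimes B}(M'\boxtimes N'),$$ together with the coherence data making $\mathbf{K}$ a monoidal 2-functor. The conceptual point is that, for a separable algebra, the relative tensor product is obtained (following theorem 3.2.8 of \cite{D8} and section 3 of \cite{D7}) by splitting a canonical 2-condensation monad on the $\Box$-product of the two bimodules. Since $\boxtimes$ is a linear 2-functor in each variable into a Cauchy complete 2-category, it carries 2-condensation monads to 2-condensation monads and preserves their splittings; moreover, under the interchange equivalence for $\boxtimes$, the 2-condensation monad computing $-\otimes_{A\boxtimes B}-$ on $(M\boxtimes N)\Box(M'\boxtimes N')$ is identified with the $\boxtimes$-product of the monads computing $-\otimes_A-$ and $-\otimes_B-$. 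Both relative tensor products thus arise as splittings of the same 2-condensation monad, which furnishes the equivalence above. I expect checking that this equivalence is compatible with the associativity and unit constraints — that is, assembling the full monoidal coherence of $\mathbf{K}$ — to be the most laborious part, though it is routine given the relevant universal properties.

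Finally, a strong monoidal 2-functor between multifusion 2-categories whose underlying linear 2-functor is an equivalence is automatically an equivalence of monoidal 2-categories. Combining this with the two previous steps shows that $\mathbf{K}$ is an equivalence of multifusion 2-categories, as desired.
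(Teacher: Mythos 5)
Your proposal is correct and takes essentially the same approach as the paper: the paper's own proof consists of the single line ``analogous to that of the previous lemma,'' i.e., exactly the mirroring of the proof of Lemma \ref{lem:module2Deligne} that you carry out, with the same comparison 2-functor $\mathbf{K}$, the same generation-under-Cauchy-completion argument, and the same citations. Your additional discussion of the monoidal structure on $\mathbf{K}$ --- identifying both relative tensor products as splittings of the same 2-condensation monad, which $\boxtimes$ preserves --- correctly fills in a detail that the paper's one-line proof leaves implicit.
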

\begin{proof}
The proof is analogous to that of the previous lemma.
\end{proof}

\begin{Remark}
In particular, the 2-Deligne tensor product is compatible with Morita equivalences of multifusion 2-categories.
\end{Remark}

\section{The Drinfeld Center}\label{sec:center}

In this section, we begin be reviewing the definition of the Drinfeld center of a monoidal 2-category. We go on to prove some of its elementary properties. Finally, we show that the Drinfeld center of a multifusion 2-category is invariant under Morita equivalence.

\subsection{Definition}\label{sub:definitioncenter}

Let $\mathfrak{C}$ be a monoidal 2-category in the sense of \cite{SP} with monoidal product $\Box$ and monoidal unit $I$. Thanks to \cite{Gur}, we may assume without loss of generality that $\mathfrak{C}$ is strict cubical in that it satisfies definition 2.26 of \cite{SP}. This ensures that almost all of the coherence data for $\mathfrak{C}$ is given by identities, so that we will often omit the symbols $\Box$ and $I$. The only exception being the interchanger 2-isomorphism $\phi^{\Box}$ witnessing that the 2-functor $\Box$ respects the composition of 1-morphisms. We now recall from \cite{Cr} the definition of the Drinfeld center $\mathscr{Z}(\mathfrak{C})$ of $\mathfrak{C}$, which is a braided monoidal 2-category (see also \cite{BN}). We do so using the variant of the graphical calculus of \cite{GS} introduced in \cite{D4}. For a version the definition of the Drinfeld center of a general monoidal 2-category, we refer the reader to \cite{KTZ}.

An object of $\mathscr{Z}(\mathfrak{C})$ consists of an object $Z$ of $\mathfrak{C}$ equipped with a half braiding, that is an adjoint 2-natural equivalence $b_Z$ given on $C$ in $\mathfrak{C}$ by $$(b_Z)_C:Z\Box C\xrightarrow{\simeq} C\Box Z,$$ together with an invertible modification $R_Z$ given on $C, D$ in $\mathfrak{C}$ by 
$$\begin{tikzcd}
Z\Box C\Box D \arrow[rr, "b_Z"] \arrow[rd, "b_Z1"'] & {} \arrow[d,Rightarrow, "R_Z",shorten > = 0.5ex] & C\Box D\Box Z \\
                                             & C\Box Z\Box D \arrow[ru, "1b_Z"']   &    
\end{tikzcd}$$

\noindent such that, for every objects $C,D,E$ in $\mathfrak{C}$, we have

\newlength{\drinfeld}
\settoheight{\drinfeld}{\includegraphics[width=37.5mm]{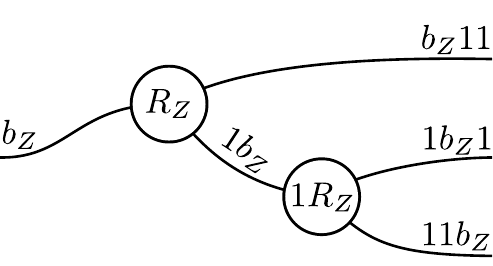}}

\begin{equation}\label{eqn:halfbraidedobject}
\begin{tabular}{@{}ccc@{}}

\includegraphics[width=37.5mm]{Pictures/center/halfbraidedobject1.pdf} & \raisebox{0.45\drinfeld}{$=$} &
\includegraphics[width=37.5mm]{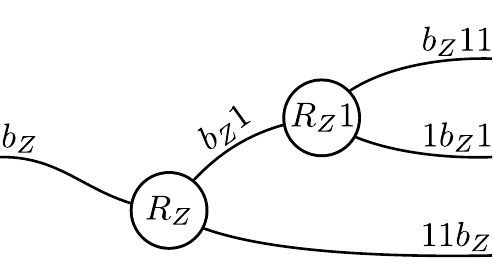}

\end{tabular}
\end{equation}

\noindent in $Hom_{\mathfrak{C}}(ZCDE, CDEZ)$.

A 1-morphism in $\mathscr{Z}(\mathfrak{C})$ from $(Y,b_Y,R_Y)$ to $(Z,b_Z,R_Z)$ consists of a 1-morphism $f:Y\rightarrow Z$ in $\mathfrak{C}$ together with an invertible modification $R_{f}$ given on $C$ in $\mathfrak{C}$ by $$\begin{tikzcd}
Y\Box C \arrow[r, "b_{Y}"] \arrow[d, "f1"']            & C\Box Y \arrow[d, "1f"] \\
Z\Box C \arrow[r, "{b_{Z}}"'] \arrow[ru,Rightarrow, "{R_{f}}",shorten > = 2ex,shorten <=2ex] & C\Box Z                
\end{tikzcd}$$

\noindent such that, for every objects $C,D$ in $\mathfrak{C}$, we have

\settoheight{\drinfeld}{\includegraphics[width=37.5mm]{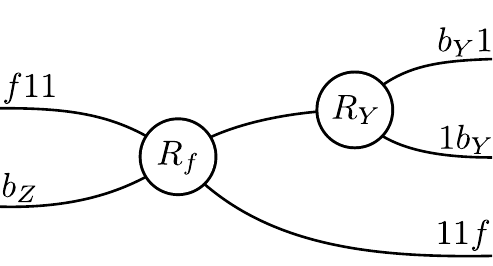}}

$$\begin{tabular}{@{}ccc@{}}

\includegraphics[width=37.5mm]{Pictures/center/halfbraided1morphism1.pdf} & \raisebox{0.45\drinfeld}{$=$} &
\includegraphics[width=45mm]{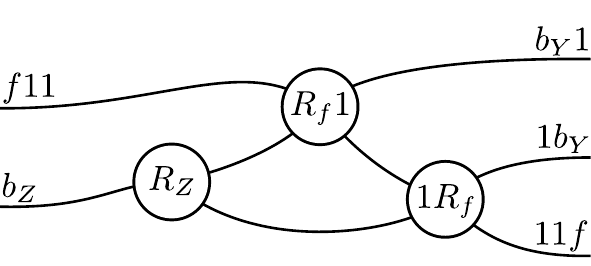}

\end{tabular}$$

\noindent in $Hom_{\mathfrak{C}}(YCD, CDZ)$.

A 2-morphism in $\mathscr{Z}(\mathfrak{C})$ from $(f,R_f)$ to $(g,R_g)$, two 1-morphisms from $(Y,b_y,R_Y)$ to $(Z,b_Z,R_Z)$, is a 2-morphism $\gamma:f\Rightarrow g$ in $\mathfrak{C}$ such that, for every object $C$ in $\mathfrak{C}$, we have

\settoheight{\drinfeld}{\includegraphics[width=30mm]{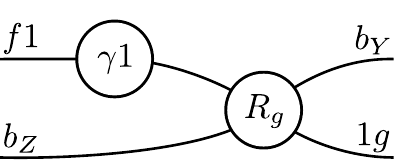}}

$$\begin{tabular}{@{}ccc@{}}

\includegraphics[width=30mm]{Pictures/center/halfbraided2morphism1.pdf} & \raisebox{0.45\drinfeld}{$=$} &
\includegraphics[width=30mm]{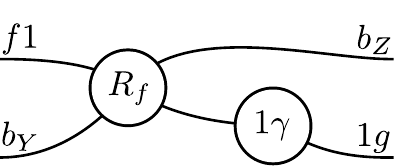}

\end{tabular}$$

\noindent in $Hom_{\mathfrak{C}}(YC, CZ)$.

The identity 1-morphism on the object $(Z,b_Z,R_Z)$ of $\mathscr{Z}(\mathfrak{C})$ is given by $(Id_Z, Id_{b_Z})$. Given two 1-morphisms $(f,R_f)$ from $(X,b_X,R_X)$ to $(Y,b_Y,R_Y)$ and $(g,R_g)$ from $(Y,b_Y,R_Y)$ to $(Z,b_Z,R_Z)$ in $\mathscr{Z}(\mathfrak{C})$ their composite is the 1-morphism $(g\circ f, (R_g\circ f1)\cdot (1g\circ R_f))$. This endows $\mathscr{Z}(\mathfrak{C})$ with the structure of a strict 2-category.

The monoidal structure of $\mathscr{Z}(\mathfrak{C})$ is given as follows. The monoidal unit is $I$, the monoidal unit of $\mathfrak{C}$, equipped with the identity adjoint 2-natural equivalence, and identity modification. The monoidal product of two objects $(Y,b_Y,R_Y)$ and $(Z,b_Z,R_Z)$ of $\mathscr{Z}(\mathfrak{C})$ is given by $$(Y,b_Y,R_Y)\Box (Z,b_Z,R_Z) = (Y\Box Z, (b_Y\Box 1) \circ (1\Box b_Z), R_{YZ}),$$ where $R_{YZ}$ is the invertible modification given on $C, D$ in $\mathfrak{C}$ by

\settoheight{\drinfeld}{\includegraphics[width=37.5mm]{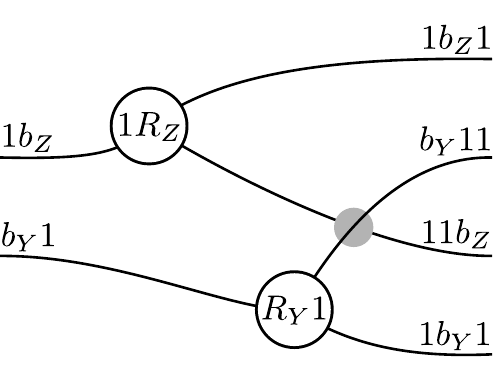}}

$$\begin{tabular}{@{}cc@{}}
 \raisebox{0.45\drinfeld}{$(R_{YZ})_{C,D}:=$} &
\includegraphics[width=37.5mm]{Pictures/center/RYZ.pdf}

\end{tabular}$$

\noindent in $Hom_{\mathfrak{C}}(YZCD,CDYZ)$. The monoidal product of two 1-morphisms $(f,R_f)$ and $(g,R_g)$ of $\mathscr{Z}(\mathfrak{C})$ is given by $(f,R_f)\Box (g,R_g) = (f\Box g,(R_f\Box g)\cdot (f\Box R_g)).$ Finally, the monoidal product of two 2-morphisms $\gamma$ and $\delta$ in $\mathscr{Z}(\mathfrak{C})$ is given by $\gamma\Box \delta$. Using the interchange $\phi^{\Box}$ of the monoidal product of $\mathfrak{C}$, we can upgrade the above assignment to a 2-functor that defines a strict cubical monoidal structure on $\mathscr{Z}(\mathfrak{C})$ as in \cite{Cr}. Furthermore, it follows from the definitions that the forgetful 2-functor $\mathscr{Z}(\mathfrak{C})\rightarrow \mathfrak{C}$ is monoidal.

Finally, in the notations of \cite{SP}, the braiding on $\mathscr{Z}(\mathfrak{C})$ is given as follows. For any two objects $(Y,b_Y,R_Y)$, and $(Z,b_Z,R_Z)$ of $\mathscr{Z}(\mathfrak{C})$, we define the braiding $b$ by $b_{Y,Z}:= (b_Y)_Z$. Further, $b$ is promoted to an adjoint 2-natural equivalence in an obvious way using the definitions of the objects and 1-morphisms in $\mathscr{Z}(\mathfrak{C})$. For any three objects $(X,b_X,R_X)$, $(Y,b_Y,R_Y)$, and $(Z,b_Z,R_Z)$ of $\mathscr{Z}(\mathfrak{C})$, we also define invertible modifications $R_{X,Y,Z}:= (R_X)_{Y,Z}$ and $S_{X,Y,Z}:= Id_{XYZ}$. It follows from \cite{Cr} that these assignments define a braiding on $\mathscr{Z}(\mathfrak{C})$.

\subsection{Elementary Properties}

As $\mathfrak{C}$ is a monoidal 2-category, it has a canonical structure of $\mathfrak{C}$-$\mathfrak{C}$-bimodule 2-category (see definition 2.1.1 of \cite{D4}). We begin by the following familiar observation, which will be generalized in proposition \ref{prop:DrinfeldIdentification} below.

\begin{Lemma}\label{lem:module2funcenter}
There is an equivalence $\mathscr{Z}(\mathfrak{C})\simeq \mathbf{End}_{\mathfrak{C}\mathrm{-}\mathfrak{C}}(\mathfrak{C})$ of monoidal 2-categories.
\end{Lemma}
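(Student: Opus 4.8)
The plan is to categorify the classical equivalence $\mathcal{Z}(\mathcal{C})\simeq \mathrm{End}_{\mathcal{C}\text{-}\mathcal{C}}(\mathcal{C})$ for monoidal 1-categories, the underlying idea being to evaluate a bimodule endofunctor at the monoidal unit. Concretely, I would construct a pair of mutually inverse monoidal 2-functors. In one direction, define $\Phi:\mathbf{End}_{\mathfrak{C}\text{-}\mathfrak{C}}(\mathfrak{C})\rightarrow \mathscr{Z}(\mathfrak{C})$ by sending a bimodule 2-endofunctor $F$ to the object $Z:=F(I)$. Since $\mathfrak{C}$ acts on itself, setting the module argument to $I$ the left and right module structures of $F$ supply adjoint 2-natural equivalences $l_C:F(C)\xrightarrow{\simeq} C\Box F(I)$ and $r_C:F(C)\xrightarrow{\simeq} F(I)\Box C$; the composite $l\circ r^{-1}$ is the desired half-braiding $(b_Z)_C:Z\Box C\xrightarrow{\simeq}C\Box Z$. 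The invertible modification $R_Z$ is then extracted from the 2-isomorphism of the bimodule structure of $F$ that witnesses the compatibility of the left and right actions, and the coherence \eqref{eqn:halfbraidedobject} for a half-braided object is precisely the translation of the bimodule functor coherence axioms under this dictionary. On 1-morphisms and 2-morphisms, $\Phi$ evaluates the module 2-natural transformations and modifications at $I$, producing the data $R_f$ and the 2-morphisms of $\mathscr{Z}(\mathfrak{C})$.

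In the other direction, define $\Psi:\mathscr{Z}(\mathfrak{C})\rightarrow \mathbf{End}_{\mathfrak{C}\text{-}\mathfrak{C}}(\mathfrak{C})$ by sending a half-braided object $(Z,b_Z,R_Z)$ to the 2-endofunctor $Z\Box(-)$. Its right module structure is carried by the (strict) associativity of $\Box$, while its left module structure is built from the half-braiding via $(b_Z)_C\Box 1:Z\Box C\Box(-)\xrightarrow{\simeq}C\Box Z\Box(-)$, the modification $R_Z$ providing exactly the coherence 2-isomorphism required of a bimodule 2-functor. This direction makes monoidality transparent: since $\Psi(Y)\circ\Psi(Z)=Y\Box Z\Box(-)=\Psi(Y\Box Z)$ essentially strictly, $\Psi$ is strong monoidal, and the half-braiding $(b_Y\Box 1)\circ(1\Box b_Z)$ on $Y\Box Z$ from the monoidal structure of $\mathscr{Z}(\mathfrak{C})$ matches the composite left module structure, with the modification $R_{YZ}$ of Section~\ref{sub:definitioncenter} arising from $R_Y$ and $R_Z$. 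To conclude that $\Psi$ is a biequivalence, I would show it is essentially surjective via the canonical equivalence $F\simeq F(I)\Box(-)$ coming from the left module structure of $F$, and that it is a local equivalence by checking that a bimodule 2-natural transformation $Z\Box(-)\Rightarrow Z'\Box(-)$ is the same, upon restriction to $I$, as a 1-morphism $Z\to Z'$ in $\mathscr{Z}(\mathfrak{C})$, and similarly for modifications.

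I expect the main obstacle to lie entirely in the coherence bookkeeping rather than in the conceptual structure. On objects and 1-morphisms the dictionary is immediate, but verifying that the half-braided object axiom \eqref{eqn:halfbraidedobject}, the compatibility of $R_f$, and the monoidal interchanger data correspond term-by-term to the pentagon- and hexagon-type axioms of a $\mathfrak{C}$-$\mathfrak{C}$-bimodule 2-functor is a substantial diagram chase, best carried out in the graphical calculus of \cite{D4}. A secondary point of care is that half-braidings and module structures are required to be \emph{adjoint} 2-natural equivalences, so one must check throughout that the constructed equivalences are promoted to adjoint equivalences compatibly, and that the interchanger $\phi^{\Box}$ is respected. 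In each case the identity to be verified reduces to coherence already imposed on a bimodule 2-functor, so no genuinely new relation appears; the real work is translating carefully enough to pin down the direction of every half-braiding and adjoint equivalence.
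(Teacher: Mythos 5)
The paper offers no argument for this lemma---its proof reads ``This is a straightforward verification. We leave the details to the keen reader.''---and your proposal is a correct outline of exactly that verification, via the standard dictionary between $\mathfrak{C}$-$\mathfrak{C}$-bimodule 2-endofunctors and half-braided objects (evaluation at $I$ one way, $Z\mapsto Z\Box(-)$ the other). The only quibble is a labeling slip: since you defined $\Psi(Z)=Z\Box(-)$, the equivalence $F\simeq F(I)\Box(-)$ witnessing essential surjectivity comes from the \emph{right} (not left) module structure of $F$.
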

\begin{proof}
This is a straightforward verification. We leave the details to the keen reader.
\end{proof}

Using $\mathfrak{C}^{mop}$ to denote $\mathfrak{C}$ with the opposite monoidal structure, we may equivalently think of $\mathfrak{C}$ as a $\mathfrak{C}$-$\mathfrak{C}$-bimodule 2-category or as a left $\mathfrak{C}\times\mathfrak{C}^{mop}$-module 2-category. By combining the above lemma with propositions 4.2.3 and 4.2.4 of \cite{D8}, we therefore obtain the following result.

\begin{Corollary}\label{cor:Drinfeldrigid}
If $\mathfrak{C}$ is a rigid monoidal 2-category, then its Drinfeld center $\mathscr{Z}(\mathfrak{C})$ is a rigid monoidal 2-category.
\end{Corollary}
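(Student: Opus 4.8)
The plan is to avoid manipulating the explicit half-braiding data defining $\mathscr{Z}(\mathfrak{C})$ and instead to transport the question through the identification of Lemma \ref{lem:module2funcenter}. That lemma gives an equivalence $\mathscr{Z}(\mathfrak{C})\simeq\mathbf{End}_{\mathfrak{C}\mathrm{-}\mathfrak{C}}(\mathfrak{C})$ of monoidal 2-categories, and rigidity is a property of a monoidal 2-category that is preserved under monoidal equivalence; so it suffices to prove that the monoidal 2-category of $\mathfrak{C}$-$\mathfrak{C}$-bimodule endofunctors on $\mathfrak{C}$ is rigid. Reinterpreting a bimodule structure as a left module structure over $\mathfrak{C}\times\mathfrak{C}^{mop}$, as recalled just before the statement, identifies this with $\mathbf{End}_{\mathfrak{C}\times\mathfrak{C}^{mop}}(\mathfrak{C})$, the monoidal 2-category of endofunctors of $\mathfrak{C}$ regarded as a left $\mathfrak{C}\times\mathfrak{C}^{mop}$-module 2-category.

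First I would record that the acting 2-category $\mathfrak{C}\times\mathfrak{C}^{mop}$ is itself rigid monoidal: a left (resp.\ right) dual in $\mathfrak{C}^{mop}$ is a right (resp.\ left) dual in $\mathfrak{C}$, and duals in a Cartesian product of monoidal 2-categories are computed componentwise, so the existence of duals in $\mathfrak{C}$ upgrades to the existence of duals in $\mathfrak{C}\times\mathfrak{C}^{mop}$. This places us within the hypotheses of the general results on module endofunctors in \cite{D8}.

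The heart of the argument is then to invoke propositions 4.2.3 and 4.2.4 of \cite{D8}: when the acting monoidal 2-category is rigid, these respectively produce a right dual and a left dual for every object of the associated 2-category of module endofunctors, the evaluation and coevaluation 1-morphisms being assembled by transporting the duality data of the acting 2-category through the module action. Applying both propositions to $\mathfrak{C}\times\mathfrak{C}^{mop}$ acting on $\mathfrak{C}$ shows that every object of $\mathbf{End}_{\mathfrak{C}\times\mathfrak{C}^{mop}}(\mathfrak{C})$ has a left and a right dual, hence that this monoidal 2-category is rigid. The equivalence of the first step then carries this rigidity back to $\mathscr{Z}(\mathfrak{C})$, completing the proof.

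The step I expect to demand the most attention is checking that propositions 4.2.3 and 4.2.4 apply in the present generality. Here $\mathfrak{C}$ is assumed only to be rigid monoidal, with no finiteness or semisimplicity hypothesis, so I must confirm that those propositions are formulated for arbitrary rigid monoidal 2-categories and their module 2-categories, rather than only in the finite semisimple or separable setting used elsewhere in this paper; granting this, the verification that the equivalence of Lemma \ref{lem:module2funcenter} is monoidal, and hence reflects duals, is routine. As a sanity check and possible alternative, one can attempt to exhibit duals directly inside $\mathscr{Z}(\mathfrak{C})$ by equipping a dual object $Z^{\vee}$ of $Z$ in $\mathfrak{C}$ with the half-braiding obtained as the mate of $b_Z$ under the duality and promoting the evaluation and coevaluation of $Z$ to 1-morphisms of the center; I expect this to yield the same duals, but at the cost of heavy diagrammatic bookkeeping with the coherence modifications $R_Z$, which the module-theoretic route sidesteps.
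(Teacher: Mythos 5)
Your proposal is correct and takes essentially the same route as the paper: the paper likewise deduces the corollary by combining Lemma \ref{lem:module2funcenter} with propositions 4.2.3 and 4.2.4 of \cite{D8}, after reinterpreting the $\mathfrak{C}$-$\mathfrak{C}$-bimodule structure on $\mathfrak{C}$ as a left $\mathfrak{C}\times\mathfrak{C}^{mop}$-module structure. Your closing caveat about verifying that those propositions hold for arbitrary rigid monoidal 2-categories (rather than only in the finite semisimple setting) is the right thing to check, but the argument is the paper's own.
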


We also record the following lemma, where $\mathscr{Z}(\mathfrak{C})^{rev}$ denotes the braided monoidal 2-category obtained from $\mathscr{Z}(\mathfrak{C})$ by using the opposite braiding. More precisely, given $(Y,b_Y,R_Y)$ and $(Z,b_Z,R_Z)$ in $\mathscr{Z}(\mathfrak{C})^{rev}$, the braiding on the objects is given by $b^{\bullet}_{Y,Z}:Y\Box Z\rightarrow Z\Box Y$, the prescribed adjoint equivalence of $b_{Y,Z}$.

\begin{Lemma}\label{lem:CenterOpRev}
There is an equivalence $\mathscr{Z}(\mathfrak{C}^{mop})\simeq \mathscr{Z}(\mathfrak{C})^{rev}$ of braided monoidal 2-categories.
\end{Lemma}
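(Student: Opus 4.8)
The plan is to construct an explicit equivalence of braided monoidal 2-categories by carefully tracking how the defining data of $\mathscr{Z}(\mathfrak{C})$ transforms when the monoidal product of $\mathfrak{C}$ is reversed. First I would set up notation: write $\Box^{\mathrm{op}}$ for the monoidal product of $\mathfrak{C}^{mop}$, so that $C\,\Box^{\mathrm{op}}\,D=D\,\Box\,C$. An object of $\mathscr{Z}(\mathfrak{C}^{mop})$ is an object $Z$ of $\mathfrak{C}$ together with a half braiding $(b^{\mathrm{op}}_Z)_C:Z\,\Box^{\mathrm{op}}\,C\xrightarrow{\simeq} C\,\Box^{\mathrm{op}}\,Z$, that is a 1-equivalence $C\,\Box\,Z\xrightarrow{\simeq}Z\,\Box\,C$ in $\mathfrak{C}$. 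The key observation is that such a half braiding for $\mathfrak{C}^{mop}$ is precisely the data of an adjoint inverse to a half braiding for $\mathfrak{C}$: given $(Z,b_Z,R_Z)\in\mathscr{Z}(\mathfrak{C})$, the prescribed adjoint equivalence $b_Z^{\bullet}$ of $b_Z$ supplies exactly the half-braiding 1-equivalence needed for $\mathscr{Z}(\mathfrak{C}^{mop})$, and the modification $R_Z$ transports to a modification $R_Z^{\mathrm{op}}$ for the reversed product by taking mates (pasting with the units and counits of the adjoint equivalence).

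The main technical step is to verify that this assignment on objects respects the coherence axiom \eqref{eqn:halfbraidedobject}. Since reversing the monoidal product reverses the order in which the half braiding is dragged past tensor factors, the hexagon-type equation for $\mathfrak{C}^{mop}$ evaluated on $C,D,E$ should correspond, after taking adjoint mates throughout, to the equation for $\mathfrak{C}$ evaluated in the reversed order on the same objects. I expect this to follow from a diagrammatic argument in the graphical calculus of \cite{D4}: the identity \eqref{eqn:halfbraidedobject} is invariant under the symmetry that simultaneously reflects the string diagram (reversing tensor order) and replaces each crossing by its adjoint-inverse crossing. I would then extend the assignment to 1-morphisms, sending $(f,R_f)$ to $(f,R_f^{\mathrm{op}})$ where $R_f^{\mathrm{op}}$ is again obtained from $R_f$ by taking mates with respect to the adjoint equivalences $b_Y^{\bullet}$ and $b_Z^{\bullet}$, and to 2-morphisms by the identity on underlying 2-morphisms. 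Because the underlying objects, 1-morphisms, and 2-morphisms in $\mathfrak{C}$ are unchanged, checking that this is a 2-functor, and that it is an equivalence, reduces to checking well-definedness of the mate construction, which is a formal consequence of the triangle identities for adjoint equivalences.

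Next I would check compatibility with the monoidal structures. The monoidal product in $\mathscr{Z}(\mathfrak{C}^{mop})$ tensors underlying objects via $\Box^{\mathrm{op}}$, so $(Y,\dots)\,\Box^{\mathrm{op}}\,(Z,\dots)$ has underlying object $Z\,\Box\,Y$ with the composite half braiding built in the opposite order; I would confirm that under the correspondence $b\leftrightarrow b^{\bullet}$ this matches the monoidal product of the images, up to the canonical coherence 2-isomorphisms provided by the interchanger $\phi^{\Box}$. Finally, for the braiding: by construction the braiding on $\mathscr{Z}(\mathfrak{C}^{mop})$ is $(b^{\mathrm{op}}_Y)_Z=b_Y^{\bullet}$ evaluated appropriately, which is by definition the reversed braiding $b^{\bullet}_{Y,Z}$ of $\mathscr{Z}(\mathfrak{C})^{rev}$, and the modifications $R,S$ controlling the braiding transport to those of the reversed braided structure. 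I expect the hardest part to be the object-level coherence verification in the previous paragraph: keeping track of all the units and counits of the adjoint equivalences while reversing tensor order is bookkeeping-intensive, and one must be careful that the mates are taken consistently so that the two invertible modifications $R_{X,Y,Z}$ and $S_{X,Y,Z}$ land on the correct (reversed) braiding data. Once that coherence is in hand, the remaining monoidal and braided compatibilities are routine applications of the definitions in Section \ref{sub:definitioncenter}.
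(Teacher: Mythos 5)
Your overall strategy---replace each half braiding by its prescribed adjoint equivalence, transport the modifications $R_Z$ and $R_f$ by taking mates, and verify the coherence axiom (\ref{eqn:halfbraidedobject}) diagrammatically---is exactly the ``direct inspection'' that the paper invokes (its entire proof is one sentence), and your assignments at the level of objects, 1-morphisms and 2-morphisms are correct.

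However, there is a genuine gap in your treatment of the monoidal structure, and it propagates into your braiding verification. Write $\mathbf{F}$ for your 2-functor. By the paper's formula for the product in a Drinfeld center, applied to $\mathfrak{C}^{mop}$, the product $\mathbf{F}(Y)\Box^{mop}\mathbf{F}(Z)$ in $\mathscr{Z}(\mathfrak{C}^{mop})$ has underlying object $Y\Box^{mop}Z=Z\Box Y$, whereas $\mathbf{F}(Y\Box Z)$---equivalently the image of the product taken in $\mathscr{Z}(\mathfrak{C})^{rev}$, whose monoidal structure is that of $\mathscr{Z}(\mathfrak{C})$---has underlying object $Y\Box Z$. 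These are genuinely distinct objects of $\mathfrak{C}$, and no 2-isomorphism, in particular no instance of the interchanger $\phi^{\Box}$, can relate them: 2-morphisms only connect parallel 1-morphisms. So your claim that the two products ``match up to the canonical coherence 2-isomorphisms provided by the interchanger'' fails as stated. The monoidal constraint of $\mathbf{F}$ is forced to be a non-trivial 1-equivalence $Y\Box Z\simeq Z\Box Y$, whose canonical choice is the half braiding $(b_Y)_Z$ (promoted to a 1-morphism in the center by an appropriate invertible modification), and one must then check the monoidal 2-functor coherences for this choice. This is not idle bookkeeping: it is precisely the conjugation by these constraint 1-equivalences that converts the braiding $(b_Y^{\bullet})_Z$ of $\mathscr{Z}(\mathfrak{C}^{mop})$ into the \emph{reversed} braiding of $\mathscr{Z}(\mathfrak{C})$ rather than the original one, i.e.\ it is where the superscript ``rev'' comes from. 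Your final paragraph, which reads off the braiding comparison ``by definition,'' presupposes that the products literally agree and therefore breaks at the same point. The phenomenon is already visible one categorical level down: for a monoidal 1-category $\mathcal{C}$, the classical equivalence $\mathcal{Z}(\mathcal{C}^{mop})\simeq\mathcal{Z}(\mathcal{C})^{rev}$ is the identity on underlying objects but its monoidal structure maps are given by half braidings, not by identities.
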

\begin{proof}
This follows by direct inspection.
\end{proof}

From now on, we will work over a fixed field $\mathds{k}$. In particular, we will assume that $\mathfrak{C}$ is a monoidal $\mathds{k}$-linear 2-category.

\begin{Lemma}\label{lem:CenterCauchyComplete}
Let $\mathfrak{C}$ be a Cauchy complete monoidal 2-category. Then, $\mathscr{Z}(\mathfrak{C})$ is Cauchy complete.
\end{Lemma}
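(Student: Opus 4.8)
The plan is to exploit the forgetful 2-functor $U:\mathscr{Z}(\mathfrak{C})\to\mathfrak{C}$ and to show that it creates all of the structure witnessing Cauchy completeness. Recall that three things must be checked: that the $Hom$-categories of $\mathscr{Z}(\mathfrak{C})$ are Cauchy complete (direct sums of 1-morphisms and splittings of idempotent 2-morphisms), that $\mathscr{Z}(\mathfrak{C})$ admits direct sums of objects, and that 2-condensation monads in $\mathscr{Z}(\mathfrak{C})$ split. In each case the strategy is identical: perform the construction on underlying objects and morphisms in $\mathfrak{C}$, where it is available by hypothesis, and then promote the result canonically to $\mathscr{Z}(\mathfrak{C})$ by equipping it with a half braiding. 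The point that makes this possible is that direct sums of objects and 1-morphisms, splittings of idempotents, and 2-condensations are all absolute colimits, and are therefore preserved by the linear 2-functors $C\Box(-)$ and $(-)\Box C$; this is exactly what is needed to transport the half-braiding data across each construction. (Alternatively, one could invoke Lemma \ref{lem:module2funcenter} and check that bimodule-endofunctor 2-categories inherit Cauchy completeness pointwise, but the direct route seems cleaner.)

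For local Cauchy completeness, given two 1-morphisms $(f,R_f),(g,R_g):(Y,b_Y,R_Y)\to(Z,b_Z,R_Z)$ I would form $f\oplus g$ in $\mathfrak{C}$ and equip it with the modification $R_f\oplus R_g$; the defining coherence condition for a 1-morphism of the center holds because both sides decompose along the direct-sum decomposition and agree summand-wise. Splitting an idempotent 2-morphism $\gamma:(f,R_f)\Rightarrow(f,R_f)$ is handled the same way: the underlying idempotent splits in $\mathfrak{C}$, and the resulting retract carries a unique half-braiding modification making the splitting maps into 2-morphisms of the center. Direct sums of objects are equally direct: $(Y,b_Y,R_Y)\oplus(Z,b_Z,R_Z)$ has underlying object $Y\oplus Z$, half braiding $b_Y\oplus b_Z$, and coherence modification $R_Y\oplus R_Z$, with the axiom \eqref{eqn:halfbraidedobject} following from the corresponding axioms for the two summands.

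The substantive case is the splitting of 2-condensation monads, and this is where I expect the real work to lie. Let $(A,e,\mu,\delta)$ be a 2-condensation monad in $\mathscr{Z}(\mathfrak{C})$ with $A=(Z,b_Z,R_Z)$ and $e=(e_0,R_e)$. Applying $U$ yields a 2-condensation monad in $\mathfrak{C}$, which splits by Cauchy completeness of $\mathfrak{C}$: we obtain a condensed object $X$ together with 1-morphisms $f:Z\to X$ and $g:X\to Z$ satisfying $f\circ g\simeq Id_X$ and $g\circ f\simeq e_0$. I would then define a half braiding on $X$ by conjugation, setting $(b_X)_C:=(C\Box f)\circ(b_Z)_C\circ(g\Box C)$. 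This is an adjoint 2-natural equivalence because $(-)\Box C$ and $C\Box(-)$ carry the splitting to splittings of $e_0\Box Id_C$ and $Id_C\Box e_0$, and the modification $R_e$ exhibits $(b_Z)_C$ as conjugating one idempotent to the other, so that $b_X$ is the induced equivalence between these two condensations. The coherence modification $R_X$, together with the half-braiding modifications exhibiting $f$ and $g$ as 1-morphisms of $\mathscr{Z}(\mathfrak{C})$, are assembled from $R_Z$, $R_e$, and the condensation data.

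The crux, and the main obstacle, is verifying that $(b_X,R_X)$ satisfies \eqref{eqn:halfbraidedobject} and that the promoted splitting maps and their 2-morphisms assemble into a 2-condensation \emph{inside} $\mathscr{Z}(\mathfrak{C})$. This requires repeatedly invoking the hypotheses that $\mu$ and $\delta$ are 2-morphisms of the center, hence compatible with $b_Z$ through $R_e$, together with the Frobenius and condensation relations, and then checking the resulting identities in the graphical calculus. Once this is done, the object $X$ so equipped splits the original 2-condensation monad in $\mathscr{Z}(\mathfrak{C})$, and combining the three cases shows that $\mathscr{Z}(\mathfrak{C})$ is Cauchy complete.
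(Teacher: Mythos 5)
Your overall strategy is the paper's: transport the problem along the forgetful 2-functor, split the underlying 2-condensation monad in $\mathfrak{C}$, and upgrade the splitting to $\mathscr{Z}(\mathfrak{C})$. Your treatment of the routine parts (direct sums of objects and of 1-morphisms, splitting of idempotent 2-morphisms in $Hom$-categories) is correct, since at the level of 2-morphisms the usual Karoubi conjugation argument does work inside each $Hom$-category. The crux step, however, contains a genuine error, and it is not one of deferred verification: you assert that the splitting of the underlying monad yields $f:Z\rightarrow X$, $g:X\rightarrow Z$ with $f\circ g\simeq Id_X$. This is false. A 2-condensation only provides 2-morphisms $\phi:f\circ g\Rightarrow Id_X$ and $\gamma:Id_X\Rightarrow f\circ g$ with $\phi\cdot\gamma=Id_{Id_X}$, so $Id_X$ is a \emph{condensate} of $f\circ g$, not equivalent to it; this weakening is exactly what distinguishes 2-condensations from adjoint equivalences. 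Concretely, in $\mathbf{2Vect}$ take $Z=X=\mathbf{Vect}$, $f=\mathds{k}^2\otimes(-)$, $g=Id$, with $\phi$, $\gamma$ the evident projection and inclusion: this splits the 2-condensation monad $e_0=\mathds{k}^2\otimes(-)$, yet $f\circ g=\mathds{k}^2\otimes(-)$ is not invertible. Consequently your conjugation formula $(b_X)_C:=(C\Box f)\circ(b_Z)_C\circ(g\Box C)$ is not an equivalence and cannot serve as a half braiding: since the axioms force $(b_Z)_I\simeq Id_Z$, at $C=I$ your formula gives $(b_X)_I\simeq f\circ g$. The example above lifts to a 2-condensation monad on the unit object of $\mathscr{Z}(\mathbf{2Vect})$, where your construction visibly fails.

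The repair is exactly the step the paper takes and your proposal bypasses. Since splittings of 2-condensation monads are absolute, both $X\Box C$ (splitting $e_0\Box C$) and $C\Box X$ (splitting $C\Box e_0$) are splittings, and the pair $\bigl((b_Z)_C,R_e\bigr)$ is an equivalence between these two 2-condensation monads; the 2-universal property of splittings (theorem 2.3.2 of \cite{GJF}) then \emph{induces} an equivalence $b_{X,C}:X\Box C\simeq C\Box X$, and likewise produces the 2-naturality data, the modification $R_X$ satisfying \eqref{eqn:halfbraidedobject}, and the upgrades of $f$, $g$, $\phi$, $\gamma$, $\theta$ to morphisms of $\mathscr{Z}(\mathfrak{C})$. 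The induced equivalence is not your naive conjugate but, in effect, a condensate of it, cut out by an idempotent 2-morphism built from $\phi$, $\gamma$, $\theta$ and the Frobenius data --- which is why invertibility survives. If you replace your explicit formula by this universal-property argument, the rest of your outline goes through.
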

\begin{proof}
Let $(Y,e,\mu,\delta)$ be a 2-condensation monad in $\mathscr{Z}(\mathfrak{C})$. By hypothesis, the underlying 2-condensation monad in $\mathfrak{C}$ can be split by a 2-condensation $(Y,Z,f,g,\phi,\gamma)$ and 2-isomorphism $\theta:g\circ f\cong e$. We begin by upgrading $Z$ to an object of $\mathscr{Z}(\mathfrak{C})$. To this end, recall that splittings of 2-condensation monads are preserved by all 2-functors. In particular, for any $C$ in $\mathfrak{C}$, $b_{Y,C}:Y\Box C\simeq C\Box Y$ induces an equivalence between two 2-condensation monads. By the 2-universal property of the splitting of 2-condensation monads (see theorem 2.3.2 of \cite{GJF}), this induces an equivalence $b_{Z,C}:Z\Box C\simeq C\Box Z$. Further, thanks to the 2-universal property these 1-morphisms assemble to given an adjoint 2-natural equivalence $b_Z$. Likewise, the invertible modification $R_Y$ induces an invertible modification $R_Z$ on $Z$, which satisfies equation (\ref{eqn:halfbraidedobject}). Similarly, one can endow both $f$ and $g$ with the structures of 1-morphisms in $\mathscr{Z}(\mathfrak{C})$, and check that $\phi$, $\gamma$, and $\theta$ are 2-morphisms in $\mathscr{Z}(\mathfrak{C})$. This concludes the proof.
\end{proof}

Let $\mathfrak{C}$ and $\mathfrak{D}$ be two Cauchy complete monoidal linear 2-categories. It follows from theorem 5.2 of \cite{D3} that the completed tensor product $\mathfrak{C}\widehat{\otimes}\mathfrak{D}$ inherits a monoidal structure. It is therefore natural to attempt to compare its Drinfeld center with that of $\mathfrak{C}$ and $\mathfrak{D}$. In this direction, we prove the following technical result.

\begin{Lemma}\label{lem:CenterProductWeak}
Let $\mathfrak{C}$ and $\mathfrak{D}$ be two Cauchy complete monoidal 2-categories. Then, there is a canonical braided monoidal 2-functor $\mathscr{Z}(\mathfrak{C})\widehat{\otimes} \mathscr{Z}(\mathfrak{C})\rightarrow \mathscr{Z}(\mathfrak{C}\widehat{\otimes} \mathfrak{D})$ that commutes with the forgetful monoidal 2-functors to $\mathfrak{C}\widehat{\otimes} \mathfrak{D}$ up to equivalence.
\end{Lemma}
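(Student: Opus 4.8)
The plan is to first construct a bilinear 2-functor $\mathscr{Z}(\mathfrak{C})\times\mathscr{Z}(\mathfrak{D})\rightarrow\mathscr{Z}(\mathfrak{C}\widehat{\otimes}\mathfrak{D})$ by assigning all the data componentwise on the generating objects $C\boxtimes D$ of $\mathfrak{C}\widehat{\otimes}\mathfrak{D}$, and then to descend it to $\mathscr{Z}(\mathfrak{C})\widehat{\otimes}\mathscr{Z}(\mathfrak{D})$ via the universal property of the completed tensor product. Concretely, given $(Y,b_Y,R_Y)$ in $\mathscr{Z}(\mathfrak{C})$ and $(W,b_W,R_W)$ in $\mathscr{Z}(\mathfrak{D})$, I would equip $Y\boxtimes W$ with the half braiding whose component at a generating object $C\boxtimes D$ is $(b_Y)_C\boxtimes(b_W)_D$, and with the invertible modification whose component at $(C\boxtimes D, C'\boxtimes D')$ is $(R_Y)_{C,C'}\boxtimes(R_W)_{D,D'}$. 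Since the interchanger of $\mathfrak{C}\widehat{\otimes}\mathfrak{D}$ restricted to generators factors as the exterior product of the interchangers of $\mathfrak{C}$ and $\mathfrak{D}$, the coherence equation (\ref{eqn:halfbraidedobject}) for this data reduces on generators to the exterior product of the corresponding equations for $R_Y$ and $R_W$, which hold by hypothesis. One then extends $b_{Y\boxtimes W}$ and $R_{Y\boxtimes W}$ to all of $\mathfrak{C}\widehat{\otimes}\mathfrak{D}$ exactly as in the proof of Lemma \ref{lem:CenterCauchyComplete}, transporting the data along splittings of 2-condensation monads using the universal property of the Cauchy completion.

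The assignment on 1-morphisms and 2-morphisms is analogous: a pair of 1-morphisms $(f,R_f)$ and $(g,R_g)$ is sent to $f\boxtimes g$ with the modification given on generators by $(R_f)_C\boxtimes(R_g)_D$, and a pair of 2-morphisms $\gamma,\delta$ to $\gamma\boxtimes\delta$; the defining equations for 1- and 2-morphisms in the center again reduce to exterior products of the hypotheses. This produces a bilinear 2-functor into $\mathscr{Z}(\mathfrak{C}\widehat{\otimes}\mathfrak{D})$, which is Cauchy complete by Lemma \ref{lem:CenterCauchyComplete} because $\mathfrak{C}\widehat{\otimes}\mathfrak{D}$ is. Hence the 3-universal property of the completed tensor product (proposition 3.4 of \cite{D3}) yields a linear 2-functor $\mathbf{F}\colon\mathscr{Z}(\mathfrak{C})\widehat{\otimes}\mathscr{Z}(\mathfrak{D})\rightarrow\mathscr{Z}(\mathfrak{C}\widehat{\otimes}\mathfrak{D})$.

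It remains to promote $\mathbf{F}$ to a braided monoidal 2-functor. Recalling from the definition of the center that the monoidal product of $(Y,b_Y,R_Y)$ and $(Y',b_{Y'},R_{Y'})$ has half braiding $(b_Y\Box 1)\circ(1\Box b_{Y'})$ and that the braiding is $b_{Y,Y'}=(b_Y)_{Y'}$, I would check on generating objects that $\mathbf{F}$ carries the monoidal product and braiding of $\mathscr{Z}(\mathfrak{C})\widehat{\otimes}\mathscr{Z}(\mathfrak{D})$, obtained from theorem 5.2 of \cite{D3}, to those of $\mathscr{Z}(\mathfrak{C}\widehat{\otimes}\mathfrak{D})$: both sides are computed componentwise, so the required structure 2-isomorphisms and modifications are exterior products of identities. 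These comparison data, initially defined on generators, extend uniquely by the universal property, and the braided monoidal coherence axioms of \cite{SP} follow from those already verified for the individual centers in \cite{Cr}. Finally, the compatibility with the forgetful 2-functors is immediate on generators, where $\mathbf{F}$ sends $Y\boxtimes W$ to $Y\boxtimes W$, and the two resulting composites to $\mathfrak{C}\widehat{\otimes}\mathfrak{D}$ agree by the universal property.

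The main obstacle is not any single conceptual step but the bookkeeping of extending componentwise data defined only on the generating objects $C\boxtimes D$ to the whole Cauchy completion, and verifying that every coherence of a braided monoidal 2-functor survives this extension. As in Lemma \ref{lem:CenterCauchyComplete}, this rests on all the structure being natural with respect to splittings of 2-condensation monads, so that the universal property of the Cauchy completion forces the extensions to be coherent; the genuinely new input is simply that the interchanger and monoidal data of $\mathfrak{C}\widehat{\otimes}\mathfrak{D}$ decompose as exterior products on generators, which is what makes the componentwise definitions consistent.
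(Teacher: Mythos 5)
Your proposal is correct and follows essentially the same route as the paper: define the bilinear 2-functor componentwise on the generating objects $C\widehat{\otimes}D$, extend the half-braiding data using that $\mathfrak{C}\widehat{\otimes}\mathfrak{D}$ is the Cauchy completion of its full sub-2-category on these generators, and then invoke Lemma \ref{lem:CenterCauchyComplete} together with the 3-universal property of the completed tensor product from proposition 3.4 of \cite{D3} to induce the desired 2-functor. Your treatment of the braided monoidal structure and the forgetful-functor compatibility is a more detailed spelling-out of what the paper dispatches with ``by construction.''
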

\begin{proof}
We claim that there is a bilinear 2-functor $$\begin{tabular}{ccc}$\mathscr{Z}(\mathfrak{C})\times\mathscr{Z}(\mathfrak{D})$& $\rightarrow$ & $\mathscr{Z}(\mathfrak{C}\widehat{\otimes}\mathfrak{D}).$\\ $(Y,Z)$&$\mapsto$ & $Y\widehat{\otimes} Z$\end{tabular}$$ Namely, given $Y$ in $\mathscr{Z}(\mathfrak{C})$ and $Z$ in $\mathscr{Z}(\mathfrak{D})$, we can equip the object $Y\widehat{\otimes} Z$ of $\mathfrak{C}\widehat{\otimes}\mathfrak{D}$ with the adjoint 2-natural equivalence $b_Y\widehat{\otimes} b_Z$, which is defined for all objects of $\mathfrak{C}\widehat{\otimes}\mathfrak{D}$ of the form $C\widehat{\otimes} D$. But, by the construction given in the proof of proposition 3.4 of \cite{D3}, $\mathfrak{C}\widehat{\otimes} \mathfrak{D}$ is the Cauchy completion of its full sub-2-category on the objects of the form $C\widehat{\otimes} D$ with $C$ in $\mathfrak{C}$ and $D$ in $\mathfrak{D}$. Thus, thanks to the 3-universal property of the Cauchy completion (see \cite{D1}), $b_Y\widehat{\otimes} b_Z$ can be canonically extended to an adjoint 2-natural equivalence defined on all the objects of $\mathfrak{C}\widehat{\otimes}\mathfrak{D}$. An analogous argument shows that the invertible modification $R_Y\widehat{\otimes} R_Z$ can be canonically extended to all of $\mathfrak{C}\widehat{\otimes}\mathfrak{D}$. Further, it follows from its construction that this extension satisfies equation (\ref{eqn:halfbraidedobject}), so that $Y\widehat{\otimes} Z$ is indeed an object of $\mathscr{Z}(\mathfrak{C}\widehat{\otimes}\mathfrak{D})$. This assignment is extended to morphisms using the same techniques.
 
Thanks to lemma \ref{lem:CenterCauchyComplete} above and the 3-universal property of the completed tensor product obtained in proposition 3.4 of \cite{D3}, the bilinear 2-functor above induces a linear 2-functor $\mathscr{Z}(\mathfrak{C})\widehat{\otimes}\mathscr{Z}(\mathfrak{D})\rightarrow \mathscr{Z}(\mathfrak{C}\widehat{\otimes}\mathfrak{D})$. By construction, this is a braided monoidal 2-functor, and it is compatible with the forgetful monoidal 2-functors.
\end{proof}

\subsection{Morita Invariance}

Let us now fix $\mathfrak{C}$ a multifusion 2-category over an algebraically closed field $\mathds{k}$ of characteristic zero, and $\mathfrak{M}$ a separable left $\mathfrak{C}$-module 2-category. Thanks to proposition 2.2.8 of \cite{D4}, we may assume without loss of generality that $\mathfrak{C}$ and $\mathfrak{M}$ are strict cubical in the sense of definition 2.2.7 of \cite{D4}. Now, it is shown in proposition 4.1.10 of \cite{D8} that $\mathfrak{M}$ can be canonically viewed as a left $\mathfrak{C}\times\mathfrak{C}_{\mathfrak{M}}^*$-module 2-category. Further, this left action is multilinear, so that we may view $\mathfrak{M}$ as a left $\mathfrak{C}\boxtimes\mathfrak{C}_{\mathfrak{M}}^*$-module 2-category.

\begin{Proposition}\label{prop:DrinfeldIdentification}
Let $\mathfrak{M}$ be a faithful separable left $\mathfrak{C}$-module 2-category. Then, there is an equivalence $$(\mathfrak{C}\boxtimes\mathfrak{C}_{\mathfrak{M}}^*)_{\mathfrak{M}}^*\simeq \mathscr{Z}(\mathfrak{C})$$ of monoidal 2-categories.
\end{Proposition}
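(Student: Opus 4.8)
The plan is to set $\mathfrak{D}:=\mathfrak{C}^*_{\mathfrak{M}}$ and to compute $(\mathfrak{C}\boxtimes\mathfrak{D})^*_{\mathfrak{M}}=\mathbf{End}_{\mathfrak{C}\boxtimes\mathfrak{D}}(\mathfrak{M})$ by peeling off the two commuting actions one at a time. An object of this $2$-category is an endofunctor $F$ of $\mathfrak{M}$ together with coherent data witnessing that $F$ commutes with the left $\mathfrak{C}$-action and with the left $\mathfrak{D}$-action; by the multilinearity of the $\mathfrak{C}\boxtimes\mathfrak{D}$-action this is the same as commuting separately with the $\mathfrak{C}$- and $\mathfrak{D}$-actions, and similarly for $1$- and $2$-morphisms. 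First I would record that asking $F$ to commute coherently with the $\mathfrak{D}$-action is exactly asking $F$ to be an object of $\mathbf{End}_{\mathfrak{D}}(\mathfrak{M})$; the residual datum is then a coherent system of equivalences $F\circ(C\cdot-)\simeq (C\cdot-)\circ F$ for $C$ in $\mathfrak{C}$, that is, a half braiding of $F$ against the $\mathfrak{C}$-action.

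The key external input is the biduality theorem for faithful separable modules from \cite{D8}: since $\mathfrak{M}$ is faithful, the canonical monoidal $2$-functor $\mathfrak{C}\to (\mathfrak{C}^*_{\mathfrak{M}})^*_{\mathfrak{M}}=\mathbf{End}_{\mathfrak{D}}(\mathfrak{M})$ sending $C\mapsto (C\cdot-)$ is an equivalence. Under this equivalence the original left $\mathfrak{C}$-action on $\mathfrak{M}$ is identified with the tautological evaluation action of $\mathbf{End}_{\mathfrak{D}}(\mathfrak{M})$ on $\mathfrak{M}$, since $(C\cdot-)$ evaluated at $m$ returns $C\cdot m$. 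Consequently the half-braiding datum of the previous paragraph is precisely a half braiding of $F$ against every object of $\mathbf{End}_{\mathfrak{D}}(\mathfrak{M})$.

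I would then match coherences directly against the definition of the Drinfeld center recalled in Section \ref{sub:definitioncenter}: the invertible modification $R_F$, and the constraint that it obey equation (\ref{eqn:halfbraidedobject}), are exactly the conditions produced by the compatibility of the commuting data with the associativity of the action, so that the assignment $F\mapsto (F,b_F,R_F)$ upgrades to an equivalence $\mathbf{End}_{\mathfrak{C}\boxtimes\mathfrak{D}}(\mathfrak{M})\simeq\mathscr{Z}(\mathbf{End}_{\mathfrak{D}}(\mathfrak{M}))$. The monoidal structure on both sides is composition of endofunctors of $\mathfrak{M}$, so this is a monoidal equivalence. Finally, since the Drinfeld center is functorial under monoidal equivalences, the biduality equivalence $\mathbf{End}_{\mathfrak{D}}(\mathfrak{M})\simeq\mathfrak{C}$ yields $\mathscr{Z}(\mathbf{End}_{\mathfrak{D}}(\mathfrak{M}))\simeq\mathscr{Z}(\mathfrak{C})$, which combined with the previous equivalence gives the claim. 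Alternatively, the first equivalence may be phrased through Lemma \ref{lem:module2funcenter} applied to the monoidal $2$-category $\mathbf{End}_{\mathfrak{D}}(\mathfrak{M})\simeq\mathfrak{C}$.

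I expect the main obstacle to be the bookkeeping in the third paragraph: verifying that the commuting data for the $\mathfrak{C}\boxtimes\mathfrak{D}$-module structure, once the $\mathfrak{D}$-part is absorbed into $\mathbf{End}_{\mathfrak{D}}(\mathfrak{M})$, assembles into exactly the half braiding, the modification $R$, and the coherence (\ref{eqn:halfbraidedobject}) defining $\mathscr{Z}$, with all adjoint $2$-natural equivalences and their invertible modifications identified. Conceptually the correspondence is forced, but checking it is a genuine if routine diagram chase in the graphical calculus; strictifying $\mathfrak{C}$ and $\mathfrak{M}$ as in the standing assumptions should keep this manageable.
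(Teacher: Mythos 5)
Your proposal is correct and takes essentially the same approach as the paper: both reduce $(\mathfrak{C}\boxtimes\mathfrak{C}^*_{\mathfrak{M}})^*_{\mathfrak{M}}$ to endofunctors of $\mathfrak{M}$ carrying commuting $\mathfrak{C}$- and $\mathfrak{C}^*_{\mathfrak{M}}$-module structures, invoke the biduality equivalence $\mathfrak{C}\simeq\mathbf{End}_{\mathfrak{C}^*_{\mathfrak{M}}}(\mathfrak{M})$ of corollary 5.4.4 of \cite{D8} as the key input, and identify the residual commuting $\mathfrak{C}$-datum with a half-braiding; the paper merely packages this as an explicit monoidal 2-functor $\mathbf{J}:\mathscr{Z}(\mathfrak{C})\to\mathbf{End}_{\mathfrak{C}\times\mathfrak{C}^*_{\mathfrak{M}}}(\mathfrak{M})$, $Z\mapsto Z\Box-$, whose essential surjectivity and full faithfulness are verified by exactly the bookkeeping you describe. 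The only inaccuracy is your parenthetical alternative via Lemma \ref{lem:module2funcenter}, which does not apply as stated (that lemma concerns $\mathfrak{C}$ as a bimodule over itself, not $\mathfrak{M}$), but your main argument never relies on it.
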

\begin{proof}
By definition, we have that $(\mathfrak{C}\boxtimes\mathfrak{C}_{\mathfrak{M}}^*)_{\mathfrak{M}}^*=\mathbf{End}_{\mathfrak{C}\boxtimes\mathfrak{C}_{\mathfrak{M}}^*}(\mathfrak{M})$. In fact, as the left action of $\mathfrak{C}\times\mathfrak{C}_{\mathfrak{M}}^*$ on $\mathfrak{M}$ is multilinear, it follows from the 3-universal property of the 2-Deligne tensor product obtained in \cite{D4} that $(\mathfrak{C}\boxtimes\mathfrak{C}_{\mathfrak{M}}^*)_{\mathfrak{M}}^*\simeq\mathbf{End}_{\mathfrak{C}\times\mathfrak{C}_{\mathfrak{M}}^*}(\mathfrak{M})$. Further, note that left $\mathfrak{C}\times\mathfrak{C}_{\mathfrak{M}}^*$-module 2-endofunctors on $\mathfrak{M}$ are exactly left $\mathfrak{C}_{\mathfrak{M}}^*$-module 2-endofunctors on $\mathfrak{M}$ equipped with a left $\mathfrak{C}$-module structure, whose coherence data consists of left $\mathfrak{C}_{\mathfrak{M}}^*$-module 2-natural transformations and left $\mathfrak{C}_{\mathfrak{M}}^*$-module modifications. More succinctly, $\mathfrak{C}\times\mathfrak{C}_{\mathfrak{M}}^*$-module 2-endofunctors are exactly 2-endofunctors equipped with commuting left actions by $\mathfrak{C}$ and $\mathfrak{C}_{\mathfrak{M}}^*$. Similar descriptions hold for left $\mathfrak{C}\times\mathfrak{C}_{\mathfrak{M}}^*$-module 2-natural transformations, as well as left $\mathfrak{C}\times\mathfrak{C}_{\mathfrak{M}}^*$-module modifications.

Given $(Z,b_Z,R_Z)$ in $\mathscr{Z}(\mathfrak{C})$, we can consider the 2-endofunctor $\mathbf{J}(Z)$ on $\mathfrak{M}$ given by $M\mapsto Z\Box M$. For any $F$ in $\mathfrak{C}_{\mathfrak{M}}^*$, the left $\mathfrak{C}$-module structure on $F$ yields an adjoint 2-natural equivalence $(\chi^F_{Z,M})^{\bullet}:F(Z\Box M)\simeq Z\Box F(M)$ and invertible modification, which endow $\mathbf{J}(Z)$ with a canonical left $\mathfrak{C}_{\mathfrak{M}}^*$-module structure. Furthermore, $\mathbf{J}(Z)$ carries a compatible left $\mathfrak{C}$-module structure given by $b^{\bullet}_{Z,C}\Box M: C\Box Z\Box M\simeq Z\Box C\Box M$. This assignment can be extended to morphisms in the obvious way, so that we have a 2-functor $\mathbf{J}:\mathscr{Z}(\mathfrak{C})\rightarrow \mathbf{End}_{\mathfrak{C}\times\mathfrak{C}_{\mathfrak{M}}^*}(\mathfrak{M})$. Further, it follows by inspection that $\mathbf{J}$ is monoidal.

Now, corollary 5.4.4 of \cite{D8} implies that the canonical monoidal 2-functor $\mathfrak{C}\rightarrow \mathbf{End}_{\mathfrak{C}_{\mathfrak{M}}^*}(\mathfrak{M})$ given by $C\mapsto \{M\mapsto C\Box M\}$ is an equivalence. In particular, objects of $\mathbf{End}_{\mathfrak{C}\times\mathfrak{C}_{\mathfrak{M}}^*}(\mathfrak{M})$ are identified with 2-endofunctors $M\mapsto Z\Box M$ for some $Z$ in $\mathfrak{C}$ equipped with a left action by $\mathfrak{C}$. More precisely, for every $C$ in $\mathfrak{C}$, we have a left $\mathfrak{C}_{\mathfrak{M}}^*$-module adjoint 2-natural equivalence $C\Box Z\Box M \simeq Z\Box C\Box M$. By the above equivalence of monoidal 2-categories, this corresponds exactly to an adjoint 2-natural equivalence $b_{Z,C}^{\bullet}:C\Box Z \simeq Z\Box C$. An analogous argument shows that the invertible modification witnessing the coherence of the left $\mathfrak{C}$-action corresponds precisely to the data of an invertible modification $R_Z$ satisfying (\ref{eqn:halfbraidedobject}). This shows that $\mathbf{J}$ is essentially surjective on objects. One proceeds similarly to show that $\mathbf{J}$ is essentially surjective on 1-morphisms and fully faithful on 2-morphisms. This concludes the proof.
\end{proof}

We are now ready to prove the main theorem of this section.

\begin{Theorem}\label{thm:MoritaInvarianceDrinfeldCenter}
Let $\mathfrak{C}$ and $\mathfrak{D}$ be Morita equivalent multifusion 2-categories, then there is an equivalence $\mathscr{Z}(\mathfrak{C})\simeq\mathscr{Z}(\mathfrak{D})$ of braided monoidal 2-categories.
\end{Theorem}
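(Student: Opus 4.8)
The plan is to deduce Morita invariance of the Drinfeld center directly from Proposition \ref{prop:DrinfeldIdentification}, which already does the essential work. The key observation is that Proposition \ref{prop:DrinfeldIdentification} identifies $\mathscr{Z}(\mathfrak{C})$ with $(\mathfrak{C}\boxtimes\mathfrak{C}^*_{\mathfrak{M}})^*_{\mathfrak{M}}$ for \emph{any} faithful separable left $\mathfrak{C}$-module 2-category $\mathfrak{M}$; the strategy is to exhibit a single module 2-category whose dual tensor 2-category is $\mathfrak{D}^{mop}$ (or $\mathfrak{D}$), so that the same expression computes $\mathscr{Z}(\mathfrak{D})$ as well.

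Concretely, since $\mathfrak{C}$ and $\mathfrak{D}$ are Morita equivalent, Theorem \ref{thm:MoritaF2C} supplies an indecomposable separable left $\mathfrak{C}$-module 2-category $\mathfrak{M}$ together with an equivalence of monoidal 2-categories $\mathfrak{D}^{mop}\simeq\mathfrak{C}^*_{\mathfrak{M}}$. First I would record that $\mathfrak{M}$ is faithful: indecomposability together with the fact that $\mathfrak{C}$ is fusion (so its unit is simple) forces the action to be faithful, which is what lets me invoke Proposition \ref{prop:DrinfeldIdentification}. Applying that proposition gives
$$\mathscr{Z}(\mathfrak{C})\simeq (\mathfrak{C}\boxtimes\mathfrak{C}^*_{\mathfrak{M}})^*_{\mathfrak{M}}$$
as monoidal 2-categories. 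The next step is to view $\mathfrak{M}$ as a module over the other factor: the dual of $\mathfrak{C}^*_{\mathfrak{M}}$ with respect to $\mathfrak{M}$ is $\mathfrak{C}$ itself (this is the standard "double dual" statement underlying the symmetry of Morita equivalence, and is exactly what makes Theorem \ref{thm:MoritaF2C} an equivalence relation). Thus $\mathfrak{M}$ is a faithful separable left $\mathfrak{C}^*_{\mathfrak{M}}$-module 2-category whose dual is $\mathfrak{C}$, and applying Proposition \ref{prop:DrinfeldIdentification} to $\mathfrak{C}^*_{\mathfrak{M}}$ yields
$$\mathscr{Z}(\mathfrak{C}^*_{\mathfrak{M}})\simeq (\mathfrak{C}^*_{\mathfrak{M}}\boxtimes\mathfrak{C})^*_{\mathfrak{M}}.$$

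The two right-hand sides agree up to the symmetry of the 2-Deligne tensor product $\mathfrak{C}\boxtimes\mathfrak{C}^*_{\mathfrak{M}}\simeq\mathfrak{C}^*_{\mathfrak{M}}\boxtimes\mathfrak{C}$ (which permutes the two commuting actions on $\mathfrak{M}$), so I obtain a monoidal equivalence $\mathscr{Z}(\mathfrak{C})\simeq\mathscr{Z}(\mathfrak{C}^*_{\mathfrak{M}})$. Finally, since $\mathfrak{D}^{mop}\simeq\mathfrak{C}^*_{\mathfrak{M}}$, Lemma \ref{lem:CenterOpRev} gives $\mathscr{Z}(\mathfrak{C}^*_{\mathfrak{M}})\simeq\mathscr{Z}(\mathfrak{D}^{mop})\simeq\mathscr{Z}(\mathfrak{D})^{rev}$; composing, $\mathscr{Z}(\mathfrak{C})\simeq\mathscr{Z}(\mathfrak{D})^{rev}$ as monoidal 2-categories. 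A small amount of care is needed to promote this to a \emph{braided} monoidal equivalence with the correct braiding rather than the reversed one—the braiding on $\mathscr{Z}(\mathfrak{C})$ is defined via the half-braidings $b_{Y,Z}:=(b_Y)_Z$, and under the identification $\mathbf{J}$ of Proposition \ref{prop:DrinfeldIdentification} the half-braiding is recovered from the commutation of the $\mathfrak{C}$- and $\mathfrak{C}^*_{\mathfrak{M}}$-actions, so swapping the two tensor factors of $\mathfrak{C}\boxtimes\mathfrak{C}^*_{\mathfrak{M}}$ is precisely what converts the braiding into its reverse, and these two reversals cancel.

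The main obstacle I anticipate is this last braiding bookkeeping: Proposition \ref{prop:DrinfeldIdentification} is stated only as an equivalence of \emph{monoidal} (not braided) 2-categories, so tracking how the half-braiding data transports under the identification $\mathbf{J}$ and under the swap of tensor factors requires unwinding the definition of $b_{Z,C}^{\bullet}$ in terms of the two commuting module structures. I would verify that the factor-swap equivalence $\mathfrak{C}\boxtimes\mathfrak{C}^*_{\mathfrak{M}}\simeq\mathfrak{C}^*_{\mathfrak{M}}\boxtimes\mathfrak{C}$ intertwines the role of "the $\mathfrak{C}$-action" and "the $\mathfrak{C}^*_{\mathfrak{M}}$-action" in the construction of $\mathbf{J}$, so that the half-braiding produced for $\mathscr{Z}(\mathfrak{C}^*_{\mathfrak{M}})$ matches, under $\mathfrak{D}^{mop}\simeq\mathfrak{C}^*_{\mathfrak{M}}$ and Lemma \ref{lem:CenterOpRev}, the original half-braiding on $\mathscr{Z}(\mathfrak{D})$. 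Everything else reduces to composing equivalences already established earlier in the section.
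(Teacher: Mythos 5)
Your proposal is correct and follows essentially the same route as the paper: the paper's proof likewise applies Proposition \ref{prop:DrinfeldIdentification} on both sides of the Morita equivalence --- phrased there as two equivalences $\mathbf{J}:\mathscr{Z}(\mathfrak{C})\simeq\mathbf{End}_{\mathfrak{C}\times\mathfrak{C}^*_{\mathfrak{M}}}(\mathfrak{M})$ and $\mathbf{K}:\mathscr{Z}(\mathfrak{C}^*_{\mathfrak{M}})\simeq\mathbf{End}_{\mathfrak{C}\times\mathfrak{C}^*_{\mathfrak{M}}}(\mathfrak{M})$, with corollary 5.4.4 of \cite{D8} playing the role of your double-dual step --- and then verifies that the resulting comparison 2-functor $\mathbf{L}$ reverses the braiding, so that this reversal cancels against the one coming from Lemma \ref{lem:CenterOpRev}, exactly the bookkeeping you anticipate. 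One caveat: the theorem is stated for \emph{multifusion} 2-categories, whereas Theorem \ref{thm:MoritaF2C} and your faithfulness argument (which uses simplicity of the monoidal unit) only cover the fusion case; the paper sidesteps this by invoking theorem 5.4.3 of \cite{D8} directly, which supplies a faithful separable module 2-category in the multifusion setting.
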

\begin{proof}
Thanks to theorem 5.4.3 of \cite{D8}, there exists a faithful separable left $\mathfrak{C}$-module 2-category $\mathfrak{M}$ and an equivalence $\mathfrak{D}\simeq (\mathfrak{C}_{\mathfrak{M}}^*)^{mop}$ of monoidal 2-categories. Thus, by lemma \ref{lem:CenterOpRev} it is enough to prove that $\mathscr{Z}(\mathfrak{C}^*_{\mathfrak{M}})^{rev}\simeq \mathscr{Z}(\mathfrak{C})$. Thanks to the proof of proposition \ref{prop:DrinfeldIdentification}, we know that the canonical monoidal 2-functor $\mathbf{J}:\mathscr{Z}(\mathfrak{C})\rightarrow \mathbf{End}_{\mathfrak{C}\times\mathfrak{C}_{\mathfrak{M}}^*}(\mathfrak{M})$ that sends $Z$ in $\mathscr{Z}(\mathfrak{C})$ to $M\mapsto Z\Box M$ is an equivalence. But, thanks to corollary corollary 5.4.4 of \cite{D8}, we have that $ \mathfrak{D}_{\mathfrak{M}}^*\simeq \mathfrak{C}^{mop}$ as monoidal 2-categories. This implies that the canonical monoidal 2-functor $\mathbf{K}:\mathscr{Z}(\mathfrak{C}_{\mathfrak{M}}^*)\rightarrow \mathbf{End}_{\mathfrak{C}\times\mathfrak{C}_{\mathfrak{M}}^*}(\mathfrak{M})$ that sends $F$ in $\mathscr{Z}(\mathfrak{C}_{\mathfrak{M}}^*)$ to $M\mapsto F(M)$ is an equivalence.

Now, given any object $Z$ in $\mathscr{Z}(\mathfrak{C})$, the left $\mathfrak{C}$-module 2-endofunctor $M\mapsto Z\Box M$ can be canonically upgraded to an object of $\mathscr{Z}(\mathfrak{C}_{\mathfrak{M}}^*)$. Namely, for every object $F$ of $\mathfrak{C}_{\mathfrak{M}}^*$, the left $\mathfrak{C}$-module structure on $F$ provides us with an adjoint 2-natural equivalence $\chi^F_{Z,M}:Z\Box F(M)\simeq F(Z\Box M)$, and an invertible modification satisfying (\ref{eqn:halfbraidedobject}). This define a half braiding on $M\mapsto Z\Box M$, and yields a monoidal 2-functor $\mathbf{L}:\mathscr{Z}(\mathfrak{C})\rightarrow \mathscr{Z}(\mathfrak{C}_{\mathfrak{M}}^*)$. Further, observe that the following diagram of monoidal 2-functors is weakly commutative: $$\begin{tikzcd}
\mathscr{Z}(\mathfrak{C}) \arrow[rd, "\mathbf{J}"'] \arrow[rr, "\mathbf{L}"] &                                                                       & \mathscr{Z}(\mathfrak{C}^*_{\mathfrak{M}}) \arrow[ld, "\mathbf{K}"] \\
                                                & \mathbf{End}_{\mathfrak{C}\times\mathfrak{C}_{\mathfrak{M}}^*}(\mathfrak{M}). &         
\end{tikzcd}$$ It follows from the first part of the proof that both $\mathbf{J}$ and $\mathbf{K}$ are equivalences, so that $\mathbf{L}$ is an equivalence of monoidal 2-categories. 

Finally, let $(Y,b_Y,R_Y)$ and $(Z,b_Z,R_Z)$ be objects of $\mathscr{Z}(\mathfrak{C})$. By the definition recalled in section \ref{sub:definitioncenter} above, the braiding on $\mathbf{L}(Y)$ and $\mathbf{L}(Z)$ in $\mathscr{Z}(\mathfrak{C}_{\mathfrak{M}}^*)$ is given by $$b_{\mathbf{L}(Y),\mathbf{L}(Z)}(M) = \chi^{\mathbf{L}(Z)}_{Y,M}= b^{\bullet}_{Z,Y}\Box M: Y\Box Z\Box M\rightarrow Z\Box Y\Box M.$$ Thus, $\mathbf{L}$ yields an equivalence $\mathscr{Z}(\mathfrak{C})\rightarrow \mathscr{Z}(\mathfrak{C}_{\mathfrak{M}}^*)^{rev}$ of braided monoidal 2-categories. This concludes the proof.
\end{proof}

\begin{Remark}\label{rem:BrauerPicardCenter}
The proof of theorem \ref{thm:MoritaInvarianceDrinfeldCenter} constructs for every Morita equivalence between $\mathfrak{C}$ and $\mathfrak{D}$ an equivalence $\mathscr{Z}(\mathfrak{C})\simeq\mathscr{Z}(\mathfrak{D})$ of braided monoidal 2-categories. With $\mathfrak{C} = \mathfrak{D}$, we expect that this assignment defines a group homomorphism $\Phi:\mathrm{BrPic}(\mathfrak{C})\rightarrow Aut^{br}(\mathscr{Z}(\mathfrak{C}))$, from the group of Morita autoequivalences of $\mathfrak{C}$ to the group of braided monoidal autoequivalences of $\mathscr{Z}(\mathfrak{C})$. This is a partial categorification of theorem 1.1 of \cite{ENO2}. On the other hand, it is well-known that two fusion 1-categories are Morita equivalent if and only if their Drinfeld centers are braided equivalent (see theorem 3.1 of \cite{ENO3}). This is not the case for fusion 2-categories! Namely, it follows from lemma 2.16 and theorem 2.52 of \cite{JFR} that, for any non-degenerate braided fusion 1-category $\mathcal{B}$, there is an equivalence $\mathscr{Z}(\mathbf{Mod}(\mathcal{B}))\simeq \mathbf{2Vect}$ of braided fusion 2-category (see also proposition 4.17 of \cite{DN}). Further, by example 5.4.6 of \cite{D8} or theorem \ref{thm:WittEquivalence} below, $\mathbf{Mod}(\mathcal{B})$ is Morita equivalent to $\mathbf{2Vect}$ if and only if $\mathcal{B}$ is Witt equivalent to $\mathbf{Vect}$. But, the Witt group of non-degenerate braided fusion 1-categories is known to be non-trivial (see \cite{DMNO}).
\end{Remark}

\begin{Corollary}\label{cor:Drinfeldgraded2vectorspaces}
There is an equivalence of braided fusion 2-categories $$\mathscr{Z}(\mathbf{2Vect}_G)\simeq\mathscr{Z}(\mathbf{2Rep}(G)).$$
\end{Corollary}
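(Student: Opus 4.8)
The plan is to exhibit a Morita equivalence between $\mathbf{2Vect}_G$ and $\mathbf{2Rep}(G)$ and then invoke Theorem \ref{thm:MoritaInvarianceDrinfeldCenter}, which will immediately upgrade the resulting equivalence of Drinfeld centers to a braided monoidal one. Recall from Example \ref{ex:2representations} that $\mathbf{2Rep}(G)\simeq\mathbf{Mod}(\mathbf{Rep}(G))$ is a fusion 2-category, and that it is symmetric monoidal, so that $\mathbf{2Rep}(G)^{mop}\simeq\mathbf{2Rep}(G)$. This corollary is the categorification of the classical Morita equivalence between $\mathbf{Vect}_G$ and $\mathbf{Rep}(G)$, and I would follow the same recipe one categorical level up.

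First I would endow $\mathbf{2Vect}$ with the structure of a left $\mathbf{2Vect}_G$-module 2-category $\mathfrak{M}$ via the forgetful monoidal 2-functor $\mathbf{2Vect}_G\to\mathbf{2Vect}$ sending a $G$-graded 2-vector space $\{\mathcal{V}_g\}_{g\in G}$ to its total 2-vector space $\bigoplus_{g\in G}\mathcal{V}_g$. This 2-functor is monoidal because the convolution product on $\mathbf{2Vect}_G$ becomes the Deligne tensor product after forgetting the grading. The module $\mathfrak{M}$ is faithful, and it is indecomposable since $\mathbf{2Vect}$ has a single connected component.

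Next I would establish separability of $\mathfrak{M}$. By theorem 5.3.4 of \cite{D4} there is a rigid algebra $A$ in $\mathbf{2Vect}_G$ with $\mathfrak{M}\simeq\mathbf{Mod}_{\mathbf{2Vect}_G}(A)$; as recalled above, $A$ is then a $G$-graded multifusion 1-category, and since $\mathfrak{M}$ is indecomposable $A$ may be taken connected, hence a $G$-graded fusion 1-category. Every such rigid algebra is separable, so $\mathfrak{M}$ is a faithful, indecomposable, separable module 2-category. The heart of the argument is then to compute the dual tensor 2-category $(\mathbf{2Vect}_G)^*_{\mathfrak{M}}=\mathbf{End}_{\mathbf{2Vect}_G}(\mathbf{2Vect})$. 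A $\mathbf{2Vect}_G$-module 2-endofunctor $F$ of $\mathbf{2Vect}$ is determined by the finite semisimple 1-category $\mathcal{V}:=F(\mathbf{Vect})$ it assigns to the generator together with its module coherence data; since each homogeneous object $\delta_g$ acts as the identity on the generator after forgetting the grading, this coherence data unwinds precisely to a coherent $G$-action on $\mathcal{V}$. Tracking these identifications on $1$- and $2$-morphisms as well should yield a monoidal equivalence $\mathbf{End}_{\mathbf{2Vect}_G}(\mathbf{2Vect})\simeq\mathbf{2Rep}(G)$.

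Granting this, Theorem \ref{thm:MoritaF2C}(1) with $\mathfrak{C}=\mathbf{2Vect}_G$ and $\mathfrak{M}=\mathbf{2Vect}$ gives $\mathbf{2Rep}(G)^{mop}\simeq(\mathbf{2Vect}_G)^*_{\mathfrak{M}}\simeq\mathbf{2Rep}(G)$, so that $\mathbf{2Vect}_G$ and $\mathbf{2Rep}(G)$ are Morita equivalent; Theorem \ref{thm:MoritaInvarianceDrinfeldCenter} then produces the desired braided monoidal equivalence $\mathscr{Z}(\mathbf{2Vect}_G)\simeq\mathscr{Z}(\mathbf{2Rep}(G))$. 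I expect the main obstacle to be the identification of the dual $\mathbf{End}_{\mathbf{2Vect}_G}(\mathbf{2Vect})$ with $\mathbf{2Rep}(G)$: one must carefully check that the full module coherence of an endofunctor corresponds exactly to the coherence of a $G$-action, and that this matches the monoidal structure arising from composition of endofunctors on one side and the Deligne tensor product on the other. Separability, by contrast, comes essentially for free from the classification of rigid algebras in $\mathbf{2Vect}_G$.
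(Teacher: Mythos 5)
Your proposal is correct and takes essentially the same route as the paper: the corollary is stated without proof precisely because $\mathbf{2Vect}_G$ and $\mathbf{2Rep}(G)$ are Morita equivalent fusion 2-categories, so Theorem \ref{thm:MoritaInvarianceDrinfeldCenter} immediately yields the braided equivalence of centers. Your explicit construction of that Morita equivalence --- checking that $\mathbf{2Vect}$ is an indecomposable separable $\mathbf{2Vect}_G$-module 2-category and computing the dual $\mathbf{End}_{\mathbf{2Vect}_G}(\mathbf{2Vect})\simeq\mathbf{2Rep}(G)$, i.e.\ condition (1) of Theorem \ref{thm:MoritaF2C} --- is simply the dual-side formulation of the equivalence $\mathbf{Bimod}_{\mathbf{2Rep}(G)}(\mathbf{Vect})\simeq\mathbf{2Vect}_G$ (example 5.1.9 of \cite{D8}, condition (2) of the same theorem) that the paper implicitly relies on.
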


\begin{Remark}
Corollary \ref{cor:Drinfeldgraded2vectorspaces} has one particularly noteworthy consequence, which we now explain. Namely, as $\mathscr{Z}(\mathbf{2Vect}_G)$ and $\mathscr{Z}(\mathbf{2Rep}(G))$ are equivalent as braided monoidal 2-categories, the associated (monoidal) 2-categories of algebras, and their homomorphisms are equivalent. In particular, this induces an equivalence between the full sub-2-categories on the rigid algebras. On one hand, rigid algebras in $\mathscr{Z}(\mathbf{2Vect}_G)$ are precisely $G$-crossed multifusion 1-categories. On the other hand, rigid algebras in $\mathscr{Z}(\mathbf{2Rep}(G))$ are exactly multifusion 1-categories $\mathcal{C}$ equipped with two braided monoidal functors $\mathbf{Rep}(G)\rightrightarrows \mathcal{Z}(\mathcal{C})$ lifting $\mathbf{Rep}(G)\rightarrow \mathcal{C}$. This correspondence appears to be new.

In addition, we also get an equivalence between the 2-categories of braided rigid algebras. More precisely, we get an equivalence between the 2-category of $G$-crossed braided multifusion 1-categories (see example \ref{ex:algebrasZ2VG}) and that of braided multifusion 1-categories equipped with a braided monoidal functor from $\mathbf{Rep}(G)$ (see table 1 of \cite{DN}). This is a well-known result in the theory of fusion 1-categories, which dates back to \cite{Ki} and \cite{Mu1}.
\end{Remark}

\section{The Morita Theory of Connected Fusion 2-Categories}\label{sec:connected}

In this section, we study the notion of Morita equivalence between connected fusion 2-categories. Throughout, we work over an algebraically closed field $\mathds{k}$ of characteristic zero.

\subsection{Comparison with Witt equivalence}\label{sub:MoritaconnectedF2Cs}

We begin by introducing a notion of Witt equivalence between arbitrary braided fusion 1-categories.

Given a braided fusion 1-category $\mathcal{B}$ with braiding $\beta$. We use $\mathcal{Z}_{(2)}(\mathcal{B})$ to denote the symmetric center of $\mathcal{B}$, that is the full fusion sub-1-category of $\mathcal{B}$ on those objects $B$ for which $$\beta_{C,B}\circ \beta_{B,C} = Id_{B\otimes C}$$ for every object $C$ of $\mathcal{B}$. It follows from the definition that $\mathcal{Z}_{(2)}(\mathcal{B})$ is a symmetric fusion 1-category.

Let $\mathcal{C}$ be a $\mathcal{B}$-central fusion 1-category $\mathcal{C}$. We write $\beta$ for the braiding on $\mathcal{Z}(\mathcal{C})$, and $F:\mathcal{B}\rightarrow \mathcal{Z}(\mathcal{C})$ for the braided monoidal functor supplying $\mathcal{C}$ with its central structure. Then, we use $\mathcal{Z}(\mathcal{C},\mathcal{B})$ to denote the centralizer of the image of $\mathcal{B}$ in $\mathcal{Z}(\mathcal{C})$, that is the full fusion sub-1-category of $\mathcal{Z}(\mathcal{C})$ on those objects $Z$ for which $$\beta_{F(B),Z}\circ \beta_{Z,F(B)} = Id_{Z\otimes F(B)}$$ for every object $B$ of $\mathcal{B}$.

\begin{Definition}\label{def:Wittequivalence}
Let $\mathcal{B}_1$ and $\mathcal{B}_2$ be two braided fusion 1-categories whose symmetric centers are given by the symmetric fusion 1-category $\mathcal{E}$. We say $\mathcal{B}_1$ and $\mathcal{B}_2$ are Witt equivalent provided that there exists a fusion 1-category $\mathcal{C}$, together with a fully faithful braided embedding $\mathcal{B}_1\hookrightarrow \mathcal{Z}(\mathcal{C})$ and a braided monoidal equivalence $\mathcal{B}_2\simeq \mathcal{Z}(\mathcal{C},\mathcal{B}_1)^{rev}$.
\end{Definition}

Let $\mathcal{E}$ be a symmetric fusion 1-category. A braided fusion 1-category over $\mathcal{E}$, also known as an $\mathcal{E}$-central braided fusion 1-category, is a pair $(\mathcal{B},F)$ consisting of a braided fusion 1-category $\mathcal{B}$ and a braided embedding $F:\mathcal{E}\hookrightarrow \mathcal{B}$. We say that $(\mathcal{B},F)$, a braided fusion 1-category over $\mathcal{E}$, is non-degenerate if $F$ induces an equivalence $\mathcal{E}\simeq\mathcal{Z}_{(2)}(\mathcal{E})$. In \cite{DNO}, the authors introduced a notion of Witt equivalence over $\mathcal{E}$ between non-degenerate braided fusion 1-categories over $\mathcal{E}$. More precisely, $(\mathcal{B}_1,F_1)$ and $(\mathcal{B}_2,F_2)$ are Witt equivalent over $\mathcal{E}$ if there exists a fusion 1-category $\mathcal{C}$, together with a fully faithful braided embedding $\mathcal{B}_1\hookrightarrow \mathcal{Z}(\mathcal{C})$ and a braided monoidal equivalence $\mathcal{B}_2\simeq \mathcal{Z}(\mathcal{C},\mathcal{B}_1)^{rev}$ such that the two braided functors $\mathcal{E}\rightarrow\mathcal{Z}(\mathcal{C})$ are equivalent. We now explain how this is related to the notion of Witt equivalence we have introduced above.

\begin{Lemma}\label{lem:comparisonWittequivalenceoverE}
Let $\mathcal{B}_1$ and $\mathcal{B}_2$ be two braided fusion 1-categories whose symmetric centers are given by the symmetric fusion 1-category $\mathcal{E}$. Then, $\mathcal{B}_1$ and $\mathcal{B}_2$ are Witt equivalent in the sense of definition \ref{def:Wittequivalence} if and only if there exists identifications $F_1:\mathcal{E}\simeq \mathcal{Z}_{(2)}(\mathcal{B}_1)$ and $F_2:\mathcal{E}\simeq \mathcal{Z}_{(2)}(\mathcal{B}_2)$ such that $(\mathcal{B}_1,F_1)$ and $(\mathcal{B}_2,F_2)$ are Witt equivalent over $\mathcal{E}$ as in definition 5.1 of \cite{DNO}.
\end{Lemma}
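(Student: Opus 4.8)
The plan is to exploit the observation that definition 5.1 of \cite{DNO} requires exactly one piece of data beyond definition \ref{def:Wittequivalence}: that the two induced braided functors $\mathcal{E}\rightarrow\mathcal{Z}(\mathcal{C})$ be equivalent. The reverse implication is then immediate. If $(\mathcal{B}_1,F_1)$ and $(\mathcal{B}_2,F_2)$ are Witt equivalent over $\mathcal{E}$, the witnessing fusion 1-category $\mathcal{C}$, together with the fully faithful braided embedding $\mathcal{B}_1\hookrightarrow\mathcal{Z}(\mathcal{C})$ and the braided equivalence $\mathcal{B}_2\simeq\mathcal{Z}(\mathcal{C},\mathcal{B}_1)^{rev}$, is precisely the data of definition \ref{def:Wittequivalence}, so $\mathcal{B}_1$ and $\mathcal{B}_2$ are Witt equivalent in the author's sense (the hypothesis $\mathcal{Z}_{(2)}(\mathcal{B}_i)\simeq\mathcal{E}$ being built in). All the content therefore lies in the forward direction, where I must manufacture suitable identifications $F_1$ and $F_2$ and verify the compatibility of functors.

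For the forward direction, suppose given $\mathcal{C}$ with a fully faithful braided embedding $\iota:\mathcal{B}_1\hookrightarrow\mathcal{Z}(\mathcal{C})$ and a braided equivalence $\psi:\mathcal{B}_2\simeq\mathcal{Z}(\mathcal{C},\mathcal{B}_1)^{rev}$; I identify $\mathcal{B}_1$ with its image $\iota(\mathcal{B}_1)$ and write $(-)'$ for the centralizer taken inside $\mathcal{Z}(\mathcal{C})$, so that $\mathcal{B}_1'=\mathcal{Z}(\mathcal{C},\mathcal{B}_1)$. The key structural input is that $\mathcal{Z}(\mathcal{C})$ is a non-degenerate braided fusion 1-category, being the Drinfeld center of a fusion 1-category; by M\"uger's double centralizer theorem we then have $(\mathcal{B}_1')'=\mathcal{B}_1$, and consequently $$\mathcal{Z}_{(2)}(\mathcal{B}_1')=\mathcal{B}_1'\cap(\mathcal{B}_1')'=\mathcal{B}_1'\cap\mathcal{B}_1=\mathcal{Z}_{(2)}(\mathcal{B}_1).$$ Since a symmetric fusion 1-category is unchanged by reversing its braiding, $\psi$ restricts to a braided equivalence $\psi_0:\mathcal{Z}_{(2)}(\mathcal{B}_2)\simeq\mathcal{Z}_{(2)}(\mathcal{B}_1)$ onto this common subcategory of $\mathcal{Z}(\mathcal{C})$. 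I will cite \cite{DMNO} for the non-degeneracy of Drinfeld centers and for the double centralizer theorem.

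With these identifications in hand, I take $F_1:\mathcal{E}\simeq\mathcal{Z}_{(2)}(\mathcal{B}_1)$ to be the given identification of $\mathcal{E}$ with the symmetric center of $\mathcal{B}_1$, and I define $F_2:=\psi_0^{-1}\circ F_1:\mathcal{E}\simeq\mathcal{Z}_{(2)}(\mathcal{B}_2)$. Each $F_i$ is a braided equivalence onto $\mathcal{Z}_{(2)}(\mathcal{B}_i)$, so $(\mathcal{B}_i,F_i)$ is non-degenerate over $\mathcal{E}$. By construction the two braided functors prescribed in definition 5.1 of \cite{DNO}, namely the composite $\mathcal{E}\xrightarrow{F_1}\mathcal{B}_1\hookrightarrow\mathcal{Z}(\mathcal{C})$ and the composite $\mathcal{E}\xrightarrow{F_2}\mathcal{B}_2\xrightarrow{\psi}\mathcal{Z}(\mathcal{C},\mathcal{B}_1)^{rev}\hookrightarrow\mathcal{Z}(\mathcal{C})$, both agree with the inclusion of $\mathcal{Z}_{(2)}(\mathcal{B}_1)$ into $\mathcal{Z}(\mathcal{C})$ (the second because $\psi_0\circ F_2=F_1$), hence are equivalent. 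This exhibits $(\mathcal{B}_1,F_1)$ and $(\mathcal{B}_2,F_2)$ as Witt equivalent over $\mathcal{E}$ and completes the forward direction.

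I expect the principal difficulty to be the precise identification, as \emph{subcategories} of $\mathcal{Z}(\mathcal{C})$, of the symmetric centers of $\mathcal{B}_1$ and of $\mathcal{Z}(\mathcal{C},\mathcal{B}_1)$, which rests squarely on the double centralizer theorem and on the non-degeneracy of the Drinfeld center. Once these are pinned down, the existential freedom in choosing $F_1$ and $F_2$ renders the compatibility condition of \cite{DNO} automatic, so the remaining verifications are routine. A secondary point requiring care is checking that the restriction of $\psi$ to symmetric centers is braided for the \emph{unreversed} structure on $\mathcal{Z}(\mathcal{C})$; this is exactly where the symmetry of $\mathcal{E}$, equivalently the invariance of a symmetric fusion 1-category under reversal of its braiding, is used.
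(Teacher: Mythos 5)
Your proof is correct, and it in fact manufactures the same identification as the paper does: your $F_2=\psi_0^{-1}\circ F_1$ is precisely the ``canonical identification'' $F_2$ appearing in the paper's proof. Where the two arguments genuinely diverge is in how they certify Witt equivalence over $\mathcal{E}$. You verify directly the formulation stated just before the lemma (one ambient fusion 1-category $\mathcal{C}$, a fully faithful braided embedding $\mathcal{B}_1\hookrightarrow\mathcal{Z}(\mathcal{C})$, an equivalence $\mathcal{B}_2\simeq\mathcal{Z}(\mathcal{C},\mathcal{B}_1)^{rev}$, and matching central functors $\mathcal{E}\rightarrow\mathcal{Z}(\mathcal{C})$), and your key input is M\"uger's double centralizer theorem in the non-degenerate category $\mathcal{Z}(\mathcal{C})$, which pins down $\mathcal{Z}_{(2)}(\mathcal{Z}(\mathcal{C},\mathcal{B}_1))=\mathcal{Z}_{(2)}(\mathcal{B}_1)$ as subcategories of $\mathcal{Z}(\mathcal{C})$ and renders the compatibility of the two functors automatic. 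The paper instead invokes proposition 4.3 of \cite{DNO} to obtain $\mathcal{B}_1\boxtimes_{\mathcal{E}}\mathcal{B}_2^{rev}\simeq\mathcal{Z}(\mathcal{C},\mathcal{E})$, so that $(\mathcal{B}_1,F_1)$ and $(\mathcal{B}_2^{rev},F_2^{rev})$ are inverse classes in the Witt group $\mathcal{W}(\mathcal{E})$. What the paper's route buys is a direct landing in the formalism of \cite{DNO}: definition 5.1 there is literally phrased via relative Deligne products, $\mathcal{B}_1\boxtimes_{\mathcal{E}}\mathcal{Z}(\mathcal{A}_1,\mathcal{E})\simeq\mathcal{B}_2\boxtimes_{\mathcal{E}}\mathcal{Z}(\mathcal{A}_2,\mathcal{E})$, rather than via the single-ambient-category characterization you check, and triviality of the product class is exactly what that definition asks for. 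What your route buys is explicitness: it makes visible why the identifications $F_1,F_2$ can always be chosen so that the central functors match, a point the paper leaves implicit inside the phrase ``canonical identification.'' The one caveat is that your argument (including the ``immediate'' converse) is complete only relative to the paper's recapitulation of definition 5.1; against the literal definition in \cite{DNO} you would additionally need the standard bridge that a class in $\mathcal{W}(\mathcal{E})$ vanishes if and only if it is represented by some $\mathcal{Z}(\mathcal{A},\mathcal{E})$, together with a centralizer computation turning that into your single-category form --- content the paper's group-theoretic argument gets for free.
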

\begin{proof}
Let us fix an identification $F_1:\mathcal{E}\simeq \mathcal{Z}_{(2)}(\mathcal{B}_1)$. If there exists a fusion 1-category $\mathcal{C}$, together with a fully faithful braided embedding $\mathcal{B}_1\hookrightarrow \mathcal{Z}(\mathcal{C})$ and a braided monoidal equivalence $\mathcal{B}_2\simeq \mathcal{Z}(\mathcal{C},\mathcal{B}_1)^{rev}$, then there is a canonical identification $F_2:\mathcal{E}\simeq \mathcal{Z}_{(2)}(\mathcal{B}_2)$ such that $$\mathcal{B}_1\boxtimes_{\mathcal{E}}\mathcal{B}_2^{rev}\simeq \mathcal{B}_1\boxtimes_{\mathcal{E}}\mathcal{Z}(\mathcal{C},\mathcal{B}_1)\simeq \mathcal{Z}(\mathcal{C},\mathcal{E})$$ by proposition 4.3 of \cite{DNO}. This shows that $(\mathcal{B}_1,F_1)$ and $(\mathcal{B}_2^{rev},F_2^{rev})$ define opposite elements in the Witt group $\mathcal{W}(\mathcal{E})$ in the sense of \cite{DNO}, so that $(\mathcal{B}_1,F_1)$ and $(\mathcal{B}_2,F_2)$ are Witt equivalent over $\mathcal{E}$ in the sense of definition 5.1 of \cite{DNO}. The converse follows by running the argument in reverse.
\end{proof}

\begin{Remark}
We emphasize that Witt equivalence over $\mathcal{E}$ does not in general coincide with the notion of Witt equivalence we have introduced above. Namely, given a braided fusion 1-category $\mathcal{B}$ and identifications $F_1,F_2:\mathcal{E}\simeq \mathcal{Z}_{(2)}(\mathcal{B})$, then $(\mathcal{B},F_1)$ and $(\mathcal{B},F_2)$ are Witt equivalent over $\mathcal{E}$ if and only if there exists $E$, an autoequivalence of $\mathcal{B}$ as a braided fusion 1-category, such that $F_1$ and $E\circ F_2$ are equivalent as braided functors. But,
there exists braided fusion 1-categories $\mathcal{B}$ such that the canonical group homomorphism $Aut^{br}(\mathcal{B})\rightarrow Aut^{br}(\mathcal{Z}_{(2)}(\mathcal{B}))$ is not surjective \cite{N:private}. For instance, given a prime $p>3$, the braided fusion 1-category $\mathbf{Vect}_{\mathbb{Z}/p^2\mathbb{Z}}^q$ associated to the quadratic form $x\mapsto (-1)^{x^2}$ exhibits such behaviour.
\end{Remark}

We can now state the main theorem of this section, which completely characterizes which connected fusion 2-categories are Morita equivalent. In particular, it generalizes example 5.4.6 of \cite{D8}. The proof relies on the technical results derived in section \ref{sub:technical} below.

\begin{Theorem}\label{thm:WittEquivalence}
Let $\mathcal{B}_1$ and $\mathcal{B}_2$ be two braided fusion 1-categories. Then, $\mathbf{Mod}(\mathcal{B}_1)$ and $\mathbf{Mod}(\mathcal{B}_2)$ are Morita equivalent fusion 2-categories if and only if $\mathcal{B}_1$ and $\mathcal{B}_2$ have the same symmetric center $\mathcal{E}$ and they are Witt equivalent.
\end{Theorem}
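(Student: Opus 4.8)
The plan is to convert the 2-categorical statement into one about algebras using the Morita theory of Theorem \ref{thm:MoritaF2C}, and then to match the resulting algebraic data with the data of Definition \ref{def:Wittequivalence}. Throughout I use that $\Omega$ recovers $\mathcal{B}_i$ from $\mathbf{Mod}(\mathcal{B}_i)$, and that by Example \ref{ex:connectedfusion2categories} every connected fusion 2-category $\mathfrak{C}$ is equivalent to $\mathbf{Mod}(\Omega\mathfrak{C})$. Both implications will ultimately rest on a single braided equivalence computing $\Omega$ of a bimodule 2-category.

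For the forward direction, suppose $\mathbf{Mod}(\mathcal{B}_1)$ and $\mathbf{Mod}(\mathcal{B}_2)$ are Morita equivalent. By Theorem \ref{thm:MoritaF2C} there is an indecomposable separable algebra $A$ in $\mathbf{Mod}(\mathcal{B}_1)$ with $\mathbf{Mod}(\mathcal{B}_2)\simeq\mathbf{Bimod}_{\mathbf{Mod}(\mathcal{B}_1)}(A)$ as monoidal 2-categories. As the left-hand side is connected, $A$ is forced to be connected, so by the connected case of Example \ref{ex:algebrasModB} it is a $\mathcal{B}_1$-central fusion 1-category $\mathcal{C}$ with central functor $F_1\colon\mathcal{B}_1\to\mathcal{Z}(\mathcal{C})$. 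I would first record, as a technical input, that over the fusion 2-category $\mathbf{Mod}(\mathcal{B}_1)$ any nonzero module 2-category is faithful, and that this faithfulness is equivalent to $F_1$ being a fully faithful braided embedding; this is what makes $\mathcal{C}$ eligible as the witnessing category in Definition \ref{def:Wittequivalence}.

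The crux is the braided equivalence
\[
\Omega\,\mathbf{Bimod}_{\mathbf{Mod}(\mathcal{B})}(\mathcal{C})\simeq\mathcal{Z}(\mathcal{C},\mathcal{B})^{rev},
\]
which I would isolate as the central lemma of section \ref{sub:technical}. Unwinding $\Omega$, the endomorphisms of the monoidal unit of $\mathbf{Bimod}_{\mathbf{Mod}(\mathcal{B})}(\mathcal{C})$ are the $\mathcal{B}$-balanced $\mathcal{C}$-$\mathcal{C}$-bimodule endofunctors of $\mathcal{C}$; under the standard identification $\mathrm{Fun}_{\mathcal{C}\text{-}\mathcal{C}}(\mathcal{C},\mathcal{C})\simeq\mathcal{Z}(\mathcal{C})$, the balancing condition cuts this down to exactly the centralizer $\mathcal{Z}(\mathcal{C},\mathcal{B})$ of the image of $\mathcal{B}$. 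Applying this with $A=\mathcal{C}$ to $\mathbf{Bimod}_{\mathbf{Mod}(\mathcal{B}_1)}(\mathcal{C})\simeq\mathbf{Mod}(\mathcal{B}_2)$ and taking $\Omega$ gives $\mathcal{B}_2\simeq\mathcal{Z}(\mathcal{C},\mathcal{B}_1)^{rev}$, which is precisely the data of Definition \ref{def:Wittequivalence}. That $\mathcal{B}_1$ and $\mathcal{B}_2$ then have a common symmetric center $\mathcal{E}$ follows from the double centralizer theorem inside the non-degenerate braided fusion category $\mathcal{Z}(\mathcal{C})$: one computes $\mathcal{Z}_{(2)}(\mathcal{Z}(\mathcal{C},\mathcal{B}_1))=\mathcal{Z}(\mathcal{C},\mathcal{B}_1)\cap\mathcal{B}_1=\mathcal{Z}_{(2)}(\mathcal{B}_1)$, and $\mathcal{Z}_{(2)}$ is unchanged by reversing the braiding.

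For the converse, a Witt equivalence provides a fusion 1-category $\mathcal{C}$ with a fully faithful braided embedding $\mathcal{B}_1\hookrightarrow\mathcal{Z}(\mathcal{C})$ and a braided equivalence $\mathcal{B}_2\simeq\mathcal{Z}(\mathcal{C},\mathcal{B}_1)^{rev}$. Viewing $\mathcal{C}$ via this embedding as a $\mathcal{B}_1$-central fusion 1-category, Example \ref{ex:algebrasModB} presents it as a connected, hence indecomposable, separable algebra $A$ in $\mathbf{Mod}(\mathcal{B}_1)$. Then $\mathbf{Bimod}_{\mathbf{Mod}(\mathcal{B}_1)}(A)$ is a connected fusion 2-category Morita equivalent to $\mathbf{Mod}(\mathcal{B}_1)$ by Theorem \ref{thm:MoritaF2C}, and the central lemma gives $\Omega\,\mathbf{Bimod}_{\mathbf{Mod}(\mathcal{B}_1)}(A)\simeq\mathcal{Z}(\mathcal{C},\mathcal{B}_1)^{rev}\simeq\mathcal{B}_2$, so Example \ref{ex:connectedfusion2categories} yields $\mathbf{Bimod}_{\mathbf{Mod}(\mathcal{B}_1)}(A)\simeq\mathbf{Mod}(\mathcal{B}_2)$ and the two fusion 2-categories are Morita equivalent. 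I expect the main obstacle to be this central lemma: setting up the $\mathcal{B}$-balanced bimodule endofunctor category correctly, matching its monoidal structure and the induced Eckmann--Hilton braiding on $\Omega$ against the centralizer braiding, and in particular tracking the orientation reversal responsible for the $(-)^{rev}$. The secondary technical point, needed to make the dictionary between algebras and embeddings precise, is the equivalence between faithfulness of the module 2-category and full faithfulness of the central functor $F_1$.
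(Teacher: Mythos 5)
Your skeleton (trade the Morita equivalence for an algebra via Theorem \ref{thm:MoritaF2C}, compute $\Omega$ of the bimodule 2-category as a centralizer --- your ``central lemma'' is the paper's Lemma \ref{lem:dualconnectedcomponent}, and your sketch of it matches the paper's proof) is the right one, but the technical input on which you hang both directions is false, and it sits exactly where the real content of the theorem lies. You claim that over $\mathbf{Mod}(\mathcal{B}_1)$ every nonzero module 2-category is faithful \emph{and} that this faithfulness is equivalent to full faithfulness of the central functor $F_1:\mathcal{B}_1\to\mathcal{Z}(\mathcal{C})$. These two statements are incompatible: take $\mathcal{B}_1=\mathbf{Rep}(G)$ and $\mathcal{C}=\mathbf{Vect}$ with central structure the forgetful functor $\mathbf{Rep}(G)\rightarrow\mathbf{Vect}=\mathcal{Z}(\mathbf{Vect})$. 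This is a nonzero, indecomposable, separable algebra in $\mathbf{Mod}(\mathbf{Rep}(G))$ whose central functor is far from fully faithful, and it witnesses a genuine Morita equivalence, since $\mathbf{Bimod}_{\mathbf{Mod}(\mathbf{Rep}(G))}(\mathbf{Vect})\simeq\mathbf{2Vect}_G$ (example 5.1.9 of \cite{D8}, quoted in the paper after Corollary \ref{cor:dualconnected}). So no property enjoyed by all nonzero module 2-categories can detect full faithfulness of $F_1$; what detects it is connectedness of the dual (Corollary \ref{cor:dualconnected}), and the implication ``connected $\Rightarrow$ fully faithful'' is a \emph{consequence} of the theorem's proof, not an available input. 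With that lemma gone, your forward direction becomes circular: your identification of symmetric centers via the double centralizer theorem, $\mathcal{Z}_{(2)}(\mathcal{Z}(\mathcal{C},\mathcal{B}_1))=\mathcal{Z}(\mathcal{C},\mathcal{B}_1)\cap\mathcal{B}_1$, already treats $\mathcal{B}_1$ as a fusion subcategory of $\mathcal{Z}(\mathcal{C})$, i.e.\ presupposes the full faithfulness you still have to prove. The paper breaks the circle by reversing the order: it first proves $\mathcal{Z}_{(2)}(\mathcal{B}_1)\simeq\mathcal{Z}_{(2)}(\mathcal{B}_2)=\mathcal{E}$ with no reference to $F$ whatsoever, using Morita invariance of the Drinfeld center (Theorem \ref{thm:MoritaInvarianceDrinfeldCenter} combined with Corollary \ref{cor:omegacenter}) --- a tool you never invoke --- then computes the symmetric center of the honest image $\mathcal{A}=F(\mathcal{B}_1)$ as $\mathcal{Z}_{(2)}(\mathcal{A})\simeq\mathcal{Z}_{(2)}(\mathcal{Z}(\mathcal{C},\mathcal{A}))\simeq\mathcal{Z}_{(2)}(\mathcal{B}_2)\simeq\mathcal{E}$ via proposition 4.3 of \cite{DNO} and Lemma \ref{lem:dualconnectedcomponent}, and only then concludes that the surjection $\mathcal{B}_1\twoheadrightarrow\mathcal{A}$ is fully faithful by corollary 3.24 of \cite{DMNO}.

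There are two further gaps. In the converse direction you assert that $\mathbf{Bimod}_{\mathbf{Mod}(\mathcal{B}_1)}(\mathcal{C})$ is connected; Theorem \ref{thm:MoritaF2C} gives no such thing, and your central lemma only identifies the identity component, so without connectedness you may conclude only $\mathbf{Bimod}_{\mathbf{Mod}(\mathcal{B}_1)}(\mathcal{C})^0\simeq\mathbf{Mod}(\mathcal{B}_2)$ (compare $\mathbf{2Vect}_G$, whose $\Omega$ is $\mathbf{Vect}$ although it is not $\mathbf{2Vect}$). Connectedness under the hypothesis that $F$ is fully faithful is precisely Lemma \ref{lem:dualconnectedstrong} and Corollary \ref{cor:dualwittequivalent} of the paper, proved there by a genuine argument (a reduction through $\mathcal{Z}(\mathcal{C})$ and $\mathcal{C}^{mop}\boxtimes\mathcal{C}$ using Morita invariance and the 2-Deligne tensor product); it cannot simply be asserted. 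Finally, your opening inference that connectedness of $\mathbf{Mod}(\mathcal{B}_2)\simeq\mathbf{Bimod}_{\mathbf{Mod}(\mathcal{B}_1)}(A)$ forces the algebra $A$ to be connected is also false: $A=\mathrm{End}(\mathcal{V})$, for $\mathcal{V}$ a finite semisimple 1-category with several simple objects, is an indecomposable, non-connected separable algebra in $\mathbf{2Vect}$ with $\mathbf{Bimod}_{\mathbf{2Vect}}(A)\simeq\mathbf{2Vect}$ connected. This last point is repairable --- replace an indecomposable $\mathcal{B}_1$-central multifusion 1-category by one of its diagonal component fusion 1-categories, which carries an induced central structure and is Morita equivalent to it as an algebra --- but as stated the inference is wrong, whereas the two gaps above are the actual mathematical core of the theorem.
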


\begin{proof}
Assume that $\mathbf{Mod}(\mathcal{B}_1)$ and $\mathbf{Mod}(\mathcal{B}_2)$ are Morita equivalent fusion 2-categories. Then, it follows from theorem \ref{thm:MoritaInvarianceDrinfeldCenter} that $$\mathscr{Z}(\mathbf{Mod}(\mathcal{B}_1))\simeq \mathscr{Z}(\mathbf{Mod}(\mathcal{B}_2))$$ as braided fusion 2-categories. Thanks to lemma 2.16 of \cite{JFR} or corollary \ref{cor:omegacenter} below, we find that $$\mathcal{Z}_{(2)}(\mathcal{B}_1)\simeq \Omega\mathscr{Z}(\mathbf{Mod}(\mathcal{B}_1))\simeq \Omega\mathscr{Z}(\mathbf{Mod}(\mathcal{B}_2))\simeq \mathcal{Z}_{(2)}(\mathcal{B}_2)$$ as symmetric fusion 1-categories. We denote by $\mathcal{E}$ this symmetric fusion 1-category. Now, as $\mathbf{Mod}(\mathcal{B}_1)$ and $\mathbf{Mod}(\mathcal{B}_2)$ are Morita equivalent fusion 2-categories, there exists a fusion 1-category $\mathcal{C}$ equipped with a braided monoidal functor $F:\mathcal{B}_1\rightarrow \mathcal{Z}(\mathcal{C})$, and a monoidal equivalence $\mathbf{Bimod}_{\mathbf{Mod}(\mathcal{B}_1)}(\mathcal{C})\simeq \mathbf{Mod}(\mathcal{B}_2)$ of fusion 2-categories. Let us write $\mathcal{A}$ for the image of $\mathcal{B}_1$ in $\mathcal{Z}(\mathcal{C})$, which is a braided fusion 1-category by definition. Then, it follows from proposition 4.3 of \cite{DNO} and from the definitions that $$\mathcal{Z}_{(2)}(\mathcal{A}) \simeq \mathcal{Z}_{(2)}(\mathcal{Z}(\mathcal{C},\mathcal{A}))\simeq \mathcal{Z}_{(2)}(\mathcal{Z}(\mathcal{C},\mathcal{B}_1))$$ as symmetric fusion 1-categories. On the other hand, by lemma \ref{lem:dualconnectedcomponent} below, we find that $\mathcal{Z}(\mathcal{C},\mathcal{B}_1)^{rev}\simeq \mathcal{B}_2$ as braided fusion 1-categories. This implies that $\mathcal{Z}_{(2)}(\mathcal{A})\simeq \mathcal{E}$. Thence, it follows from corollary 3.24 of \cite{DMNO} that $F$ is fully faithful, so that $\mathcal{B}_1$ and $\mathcal{B}_2$ are Witt equivalent.

On the other hand, if $\mathcal{B}_1$ and $\mathcal{B}_2$ are Witt equivalent, then, by definition, there exists a $\mathcal{B}_1$-central fusion 1-category $\mathcal{C}$ such that $\mathcal{B}_1\rightarrow \mathcal{Z}(\mathcal{C})$ is a fully faithful embedding, and an equivalence $\mathcal{Z}(\mathcal{C},\mathcal{B}_1)^{rev}\simeq \mathcal{B}_2$ of braided fusion 1-categories. By corollary \ref{cor:dualwittequivalent} below, the $\mathcal{B}_1$-central fusion 1-category $\mathcal{C}$ witnesses the desired Morita equivalence.
\end{proof}

\begin{Corollary}\label{cor:dualconnected}
Let $\mathcal{B}$ be a braided fusion 1-category, and $\mathcal{C}$ a fusion 1-category with $\mathcal{B}$-central structure given by $F:\mathcal{B}\rightarrow\mathcal{Z}(\mathcal{C})$. Then, $\mathbf{Bimod}_{\mathbf{Mod}(\mathcal{B})}(\mathcal{C})$ is connected if and only if $F$ is fully faithful.
\end{Corollary}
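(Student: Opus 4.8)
The plan is to transport everything through Example~\ref{ex:algebrasModB}. Since $\mathcal{C}$ is a $\mathcal{B}$-central fusion $1$-category it is a connected, hence indecomposable, rigid algebra in $\mathbf{Mod}(\mathcal{B})$, and that example gives an equivalence of $2$-categories $\mathbf{Bimod}_{\mathbf{Mod}(\mathcal{B})}(\mathcal{C})\simeq\mathbf{Mod}(\mathcal{A})$, where $\mathcal{A}:=\mathcal{C}^{mop}\boxtimes_{\mathcal{B}}\mathcal{C}$ is a multifusion $1$-category (rigid by \cite{Gre}, finite semisimple by \cite{ENO2}). A finite semisimple $2$-category of the form $\mathbf{Mod}(\mathcal{A})$ is connected exactly when $\mathcal{A}$ is indecomposable as a multifusion $1$-category, so the task becomes: show $\mathcal{C}^{mop}\boxtimes_{\mathcal{B}}\mathcal{C}$ is indecomposable if and only if $F$ is fully faithful.

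Rather than test simplicity of the unit object of $\mathcal{A}$ directly (which fails, since $\mathcal{A}$ may be indecomposable with a non-simple unit), I would isolate the connected component of the monoidal unit. By Theorem~\ref{thm:dualfusion2category} the $2$-category $\mathbf{Bimod}_{\mathbf{Mod}(\mathcal{B})}(\mathcal{C})$ is multifusion with monoidal unit the regular bimodule $\mathcal{C}$, a simple object, and I would compute its braided fusion $1$-category of endomorphisms: using that $\mathcal{C}$-$\mathcal{C}$-bimodule endofunctors of $\mathcal{C}$ form $\mathcal{Z}(\mathcal{C})$, and imposing the $\mathcal{B}$-balancing constraint, this endomorphism category is the centralizer $\mathcal{Z}(\mathcal{C},\mathcal{B})^{rev}$. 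Since any connected component of a simple object in a finite semisimple $2$-category is equivalent to $\mathbf{Mod}$ of that object's endomorphism fusion $1$-category, the component of the unit is $\mathbf{Mod}(\mathcal{Z}(\mathcal{C},\mathcal{B})^{rev})$. Hence $\mathbf{Bimod}_{\mathbf{Mod}(\mathcal{B})}(\mathcal{C})$ is connected precisely when this component is the whole $2$-category.

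To detect that, I would run a Frobenius--Perron dimension count. As the identity component is one block of $\mathbf{Mod}(\mathcal{A})$, and the Frobenius--Perron dimension of a multifusion category is additive over blocks while being invariant under the Morita duality relating each block to the endomorphism category of a simple module in it, one gets $\dim\mathcal{Z}(\mathcal{C},\mathcal{B})\le \dim\mathcal{A}$ with equality if and only if there is a single block, i.e.\ if and only if $\mathbf{Bimod}_{\mathbf{Mod}(\mathcal{B})}(\mathcal{C})$ is connected. The centralizer formula inside the nondegenerate braided fusion $1$-category $\mathcal{Z}(\mathcal{C})$ gives $\dim\mathcal{Z}(\mathcal{C},\mathcal{B})=(\dim\mathcal{C})^2/\dim F(\mathcal{B})$, while a computation of $\dim(\mathcal{C}^{mop}\boxtimes_{\mathcal{B}}\mathcal{C})$ in terms of $\dim\mathcal{C}$, $\dim\mathcal{B}$ and $\dim F(\mathcal{B})$ shows the two dimensions agree exactly when $\dim F(\mathcal{B})=\dim\mathcal{B}$. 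By corollary~3.24 of \cite{DMNO} this last numerical equality is equivalent to $F$ being fully faithful, which would close both implications.

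The step I expect to be the main obstacle is the final dimension bookkeeping: correctly normalizing the Frobenius--Perron dimension of the relative tensor product $\mathcal{C}^{mop}\boxtimes_{\mathcal{B}}\mathcal{C}$ against that of the centralizer $\mathcal{Z}(\mathcal{C},\mathcal{B})$, and verifying that their ratio is governed precisely by $\dim\mathcal{B}/\dim F(\mathcal{B})$. The delicate point is that a priori $\mathcal{A}$ can be an indecomposable multifusion category whose unit is not simple, so connectedness genuinely has to be read off from the comparison of the identity component with the whole and not from the unit alone; invoking the Morita invariance of Frobenius--Perron dimension is what neutralizes this, but pinning down the identification $\Omega\,\mathbf{Bimod}_{\mathbf{Mod}(\mathcal{B})}(\mathcal{C})\simeq\mathcal{Z}(\mathcal{C},\mathcal{B})^{rev}$ and the numerical criterion of \cite{DMNO} is where the real content lies.
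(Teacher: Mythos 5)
Your reduction to $\mathbf{Mod}(\mathcal{A})$ with $\mathcal{A}=\mathcal{C}^{mop}\boxtimes_{\mathcal{B}}\mathcal{C}$ (Example \ref{ex:algebrasModB}) and your identification of the component of the unit with $\mathbf{Mod}(\mathcal{Z}(\mathcal{C},\mathcal{B})^{rev})$ (this is Lemma \ref{lem:dualconnectedcomponent}) are both fine; the gap is in the dimension bookkeeping, and it is fatal. You need a dimension for multifusion 1-categories that is simultaneously additive over simples (so that strict inequality detects a second block) and invariant under the Morita duality relating a block to $\mathrm{End}$ of a simple module over it. No such notion exists: $\mathrm{Mat}_n(\mathbf{Vect})$ is an indecomposable multifusion category Morita dual to $\mathbf{Vect}$, yet it has $n^2$ simple objects, so any additive dimension gives a value strictly larger than $1=\mathrm{FPdim}(\mathbf{Vect})$. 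Worse, the delicate case you flagged is not hypothetical, and your test provably returns the wrong answer on it: take $\mathcal{B}=\mathcal{Z}(\mathcal{C})$ and $F=\mathrm{id}$, with $\mathcal{C}$ any fusion 1-category having $n\geq 2$ simples. Here $F$ is fully faithful, and $\mathbf{Bimod}_{\mathbf{Mod}(\mathcal{Z}(\mathcal{C}))}(\mathcal{C})$ is connected by Lemma \ref{lem:dualconnectedstrong}; since $\mathcal{Z}(\mathcal{C},\mathcal{Z}(\mathcal{C}))=\mathcal{Z}_{(2)}(\mathcal{Z}(\mathcal{C}))=\mathbf{Vect}$, it is in fact $\mathbf{2Vect}$, so $\mathcal{A}$ is Morita-trivial. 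But the underlying 1-category of $\mathcal{A}$ is the relative Deligne product $\mathrm{Fun}_{\mathcal{Z}(\mathcal{C})}(\mathcal{C},\mathcal{C})\simeq\mathcal{C}\boxtimes\mathcal{C}^{mop}$, which has $n^2$ simple objects; hence $\mathcal{A}\simeq\mathrm{Mat}_n(\mathbf{Vect})$, indecomposable with non-simple unit. Your comparison then reads $1=\dim\mathcal{Z}(\mathcal{C},\mathcal{B})<\dim\mathcal{A}$, so you would conclude that the 2-category is disconnected and that $F$ is not fully faithful --- wrong on both counts. The same example refutes the proposed formula for $\dim(\mathcal{C}^{mop}\boxtimes_{\mathcal{B}}\mathcal{C})$: here $\dim F(\mathcal{B})=\dim\mathcal{B}$, so your formula must return $(\dim\mathcal{C})^2/\dim\mathcal{B}=1$, while $\mathcal{A}$ has $n^2$ simples. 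In particular this also shows that full faithfulness of $F$ into $\mathcal{Z}(\mathcal{C})$ does \emph{not} make $\mathcal{A}$ fusion; only full faithfulness of the composite $\mathcal{B}\rightarrow\mathcal{C}$ does (Lemma \ref{lem:dualconnectedweak}), which is exactly why the paper needs a separate, indirect argument in the general case.

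The paper avoids weighing $\mathcal{A}$ altogether and instead compares quantities that genuinely are invariants of the 2-categorical situation. For "connected $\Rightarrow$ $F$ fully faithful": connectedness plus Lemma \ref{lem:dualconnectedcomponent} give $\mathbf{Bimod}_{\mathbf{Mod}(\mathcal{B})}(\mathcal{C})\simeq\mathbf{Mod}(\mathcal{Z}(\mathcal{C},\mathcal{B})^{rev})$; Theorem \ref{thm:MoritaInvarianceDrinfeldCenter} together with Corollary \ref{cor:omegacenter} then identifies the symmetric centers of $\mathcal{B}$ and of $\mathcal{Z}(\mathcal{C},\mathcal{B})^{rev}$, proposition 4.3 of \cite{DNO} transports this identification to the image $F(\mathcal{B})\subseteq\mathcal{Z}(\mathcal{C})$, and only then is corollary 3.24 of \cite{DMNO} applied --- to the surjective braided functor $\mathcal{B}\rightarrow F(\mathcal{B})$, whose source and image now have the same symmetric center. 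That is, the Morita-invariant quantity replacing your dimension count is the symmetric center $\Omega\mathscr{Z}$, not $\mathrm{FPdim}$. For the converse, Lemma \ref{lem:dualconnectedstrong} deliberately never claims $\mathcal{A}$ is fusion; it passes to $\mathbf{Bimod}_{\mathbf{Mod}(\mathcal{B})}(\mathcal{Z}(\mathcal{C}))$, where the composite $\mathcal{B}\rightarrow\mathcal{Z}(\mathcal{C})$ is fully faithful so that Lemma \ref{lem:dualconnectedweak} applies, and then transports connectedness back along the Morita equivalence between $\mathcal{Z}(\mathcal{C})$ and $\mathcal{C}^{mop}\boxtimes\mathcal{C}$ together with a 2-Deligne tensor factorization. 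The step of your outline that cannot be repaired is the inequality "$\dim\mathcal{Z}(\mathcal{C},\mathcal{B})\leq\dim\mathcal{A}$ with equality if and only if connected"; everything downstream of it has to be replaced by an argument at the level of invariants of the 2-category, as above.
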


\begin{Remark}
More generally, it would be interesting to understand the connected components of the fusion 2-category $\mathbf{Bimod}_{\mathbf{Mod}(\mathcal{B})}(\mathcal{C})$. For instance, with $\mathcal{B}=\mathbf{Rep}(G)$ for some finite group $G$, and $\mathcal{C}=\mathbf{Vect}$, it was shown in example 5.1.9 of \cite{D8} that $\mathbf{Bimod}_{\mathbf{Mod}(\mathcal{B})}(\mathcal{C})\simeq \mathbf{2Vect}_G$, which has $|G|$ connected components. On the other hand, with $\mathcal{B}=\mathbf{Rep}(G\times G^{op})$ and $\mathcal{C}=\mathbf{Rep}(G)$, it follows from lemma \ref{lem:module2funcenter} and corollary \ref{cor:Drinfeldgraded2vectorspaces} that $\mathbf{Bimod}_{\mathbf{Mod}(\mathcal{B})}(\mathcal{C})\simeq \mathscr{Z}(\mathbf{2Vect}_G)$. It was shown in \cite{KTZ} that the connected components of the underlying finite semisimple 2-category of $\mathscr{Z}(\mathbf{2Vect}_G)$ are parameterised by the conjugacy classes of $G$.
\end{Remark}

The technical results used in the proof of the above theorem will be established below. But first, we record the following corollary. Let us fix $\mathcal{E}$ a symmetric fusion 1-category. A non-degenerate braided fusion 1-category $\mathcal{B}$ equipped with a fully faithful embedding of $\mathcal{E}$ is called minimal if the centralizer of $\mathcal{E}$ in $\mathcal{B}$ is exactly $\mathcal{E}$. The set of minimal non-degenerate extensions of $\mathcal{E}$ forms a group (see \cite{LKW}), which is denoted by $Mext(\mathcal{E})$. We say that a minimal non-degenerate extension $\mathcal{B}$ of $\mathcal{E}$ is Witt-trivial if the class of the non-degenerate braided fusion 1-category $\mathcal{B}$ is trivial in the Witt group $\mathcal{W}$. This is equivalent to requiring that $\mathcal{B}$ is braided equivalent to the Drinfeld center of a fusion 1-category. We note that the set of Witt-trivial minimal non-degenerate extensions of $\mathcal{E}$ is a subgroup of $Mext(\mathcal{E})$ thanks to theorem 3.6.20 of \cite{Sun}.

\begin{Corollary}
Let $\mathcal{E}$ be a symmetric fusion 1-category. Then, there is a bijective correspondence between Morita autoequivalences of $\mathbf{Mod}(\mathcal{E})$ and pairs consisting of a Witt-trivial minimal non-degenerate extension of $\mathcal{E}$ and an equivalence class of symmetric monoidal functor $\mathcal{E}\simeq\mathcal{E}$.
\end{Corollary}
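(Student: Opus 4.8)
The plan is to translate Morita autoequivalences into algebraic data via the Morita theory of Section \ref{sec:preliminaries} together with Theorem \ref{thm:WittEquivalence}, and then to match this data with the prescribed pairs. First I would recall that, by Theorem \ref{thm:MoritaF2C} and Example \ref{ex:algebrasModB}, a Morita autoequivalence of $\mathbf{Mod}(\mathcal{E})$ amounts to an indecomposable separable $\mathcal{E}$-central fusion 1-category $\mathcal{C}$, that is a fusion 1-category equipped with a braided functor $F:\mathcal{E}\rightarrow\mathcal{Z}(\mathcal{C})$, together with a chosen monoidal equivalence $\mathbf{Bimod}_{\mathbf{Mod}(\mathcal{E})}(\mathcal{C})\simeq\mathbf{Mod}(\mathcal{E})$ witnessing the autoequivalence. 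Since the target $\mathbf{Mod}(\mathcal{E})$ is connected, Corollary \ref{cor:dualconnected} forces $F$ to be fully faithful, so that $\mathcal{E}$ embeds as a braided fusion sub-1-category of $\mathcal{Z}(\mathcal{C})$.

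Next I would extract the extension. As $\mathcal{Z}(\mathcal{C})$ is the Drinfeld center of a fusion 1-category, it is non-degenerate and its class in the Witt group $\mathcal{W}$ is trivial. Following the argument in the proof of Theorem \ref{thm:WittEquivalence}, relying on lemma \ref{lem:dualconnectedcomponent}, one has $\mathbf{Bimod}_{\mathbf{Mod}(\mathcal{E})}(\mathcal{C})\simeq\mathbf{Mod}(\mathcal{Z}(\mathcal{C},\mathcal{E})^{rev})$, so a witnessing equivalence with $\mathbf{Mod}(\mathcal{E})$ exists precisely when $\mathcal{Z}(\mathcal{C},\mathcal{E})^{rev}\simeq\mathcal{E}$. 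As $\mathcal{E}$ is symmetric it centralizes itself, whence $\mathcal{E}\subseteq\mathcal{Z}(\mathcal{C},\mathcal{E})$, and an abstract braided equivalence $\mathcal{Z}(\mathcal{C},\mathcal{E})^{rev}\simeq\mathcal{E}$ then forces the centralizer of $\mathcal{E}$ in $\mathcal{Z}(\mathcal{C})$ to equal $\mathcal{E}$. This is exactly the condition that $(\mathcal{Z}(\mathcal{C}),F)$ be a minimal non-degenerate extension of $\mathcal{E}$. Thus every Morita autoequivalence produces a Witt-trivial minimal non-degenerate extension $\mathcal{Z}(\mathcal{C})$ of $\mathcal{E}$; conversely, by the definition of Witt-triviality every such extension is braided equivalent to some $\mathcal{Z}(\mathcal{C})$, and Corollary \ref{cor:dualwittequivalent} shows the corresponding $\mathcal{C}$ does witness a Morita autoequivalence.

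I would then account for the second factor. The only remaining freedom is the choice of witnessing equivalence, equivalently of braided equivalence $\mathcal{Z}(\mathcal{C},\mathcal{E})^{rev}\simeq\mathcal{E}$. By minimality $\mathcal{Z}(\mathcal{C},\mathcal{E})$ is canonically identified with $\mathcal{E}$ through $F$, so such a choice is the same as a braided autoequivalence of $\mathcal{E}^{rev}\simeq\mathcal{E}$, which, $\mathcal{E}$ being symmetric, is precisely a symmetric monoidal autoequivalence of $\mathcal{E}$. Concretely, postcomposing a fixed witnessing equivalence with $\mathbf{Mod}(\alpha)$ for $\alpha$ a symmetric autoequivalence of $\mathcal{E}$ realises all choices, so this freedom is a torsor over the equivalence classes of symmetric monoidal functors $\mathcal{E}\simeq\mathcal{E}$. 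Assembling these steps gives the assignment sending a Morita autoequivalence to the pair consisting of the extension $\mathcal{Z}(\mathcal{C})$ and the symmetric autoequivalence recording the target identification.

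The main obstacle I anticipate is the bookkeeping in the final bijectivity check: one must verify that two pairs $(\mathcal{C},\psi)$ and $(\mathcal{C}',\psi')$ determine equivalent Morita autoequivalences if and only if they yield braided-equivalent extensions together with equal classes of symmetric autoequivalence. This requires unwinding, through lemma \ref{lem:dualconnectedcomponent} and the proof of Theorem \ref{thm:WittEquivalence}, how an equivalence of the underlying invertible bimodule 2-categories descends to a braided equivalence of centers compatible with the embeddings of $\mathcal{E}$, and checking that no autoequivalence of $\mathcal{C}$ can alter the recorded symmetric autoequivalence while fixing the extension. Disentangling the extension and the automorphism of $\mathcal{E}$ cleanly, so that they appear as an honest product rather than interacting, is the delicate point of the argument.
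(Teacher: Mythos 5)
Your translation of a Morita autoequivalence of $\mathbf{Mod}(\mathcal{E})$ into an $\mathcal{E}$-central fusion 1-category $\mathcal{C}$ with fully faithful $F:\mathcal{E}\hookrightarrow\mathcal{Z}(\mathcal{C})$ and $\mathcal{Z}(\mathcal{C},\mathcal{E})=\mathcal{E}$, together with the observation that $\mathcal{Z}(\mathcal{C})$ is then a Witt-trivial minimal non-degenerate extension and that every such extension arises this way, follows the same route as the paper, using the same inputs (Corollary \ref{cor:dualconnected}, Lemma \ref{lem:dualconnectedcomponent}, Corollary \ref{cor:dualwittequivalent}). The problem is what comes after: the step you defer as ``the main obstacle I anticipate'' --- deciding exactly when two pairs $(\mathcal{C},\psi)$ and $(\mathcal{C}',\psi')$ induce the same Morita autoequivalence --- is not bookkeeping but the actual mathematical content of the corollary, and your proposal contains no argument for it. In particular, your torsor description of the witnessing equivalences only parametrizes the autoequivalences built from a \emph{fixed} $\mathcal{C}$; it does not address the essential ambiguity that many non-equivalent $\mathcal{E}$-central fusion 1-categories yield the same invertible $\mathbf{Mod}(\mathcal{E})$-$\mathbf{Mod}(\mathcal{E})$-bimodule 2-category, so the map you construct to pairs is not yet shown to be well-defined on Morita autoequivalences, let alone injective.

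The missing idea, which is how the paper closes this gap, is a descent to 1-categorical Morita theory: two $\mathcal{E}$-central fusion 1-categories $\mathcal{C}_1$ and $\mathcal{C}_2$ with $\mathcal{Z}(\mathcal{C}_1,\mathcal{E})=\mathcal{E}=\mathcal{Z}(\mathcal{C}_2,\mathcal{E})$ can induce the same Morita autoequivalence only if $\mathbf{Mod}(\mathcal{C}_1)\simeq\mathbf{Mod}(\mathcal{C}_2)$ as left $\mathbf{Mod}(\mathcal{E})$-module 2-categories. This forces $\mathcal{C}_1$ and $\mathcal{C}_2$ to be Morita equivalent fusion 1-categories, hence $\mathcal{Z}(\mathcal{C}_1)\simeq\mathcal{Z}(\mathcal{C}_2)$ as braided fusion 1-categories by theorem 1.1 of \cite{ENO2}; and, more precisely, the module 2-category equivalence over $\mathbf{Mod}(\mathcal{E})$ exists if and only if such a braided equivalence can be chosen so that the triangle formed with the two embeddings of $\mathcal{E}$ commutes up to braided natural equivalence, i.e.\ if and only if $\mathcal{Z}(\mathcal{C}_1)$ and $\mathcal{Z}(\mathcal{C}_2)$ are equivalent \emph{as minimal non-degenerate extensions} of $\mathcal{E}$. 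This is exactly the statement that disentangles your two factors: the equivalence class of the extension is a complete invariant of the underlying module 2-category, independent of the witnessing equivalence, and the witnessing equivalence then contributes precisely the class of a symmetric autoequivalence of $\mathcal{E}$. Without invoking this refinement of the Morita theory of fusion 1-categories (or some substitute for it), your proposed correspondence cannot be completed to a bijection.
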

\begin{proof}
It follows readily from corollary \ref{cor:dualconnected} that every Morita autoequivalence of $\mathbf{Mod}(\mathcal{E})$ is given by an $\mathcal{E}$-central fusion 1-category $\mathcal{C}$ such that $\mathcal{Z}(\mathcal{C},\mathcal{E})=\mathcal{E}$ together with a braided autoequivalence of $\mathcal{E}$. On the other hand, any two $\mathcal{E}$-central fusion 1-categories $\mathcal{C}_1$ and $\mathcal{C}_2$ with $\mathcal{Z}(\mathcal{C}_1,\mathcal{E})=\mathcal{E}=\mathcal{Z}(\mathcal{C}_2,\mathcal{E})$ can induce the same Morita equivalence only if $\mathbf{Mod}(\mathcal{C}_1)$ and $\mathbf{Mod}(\mathcal{C}_2)$ are equivalent as left $\mathbf{Mod}(\mathcal{E})$-module 2-categories. This implies that $\mathcal{C}_1$ and $\mathcal{C}_2$ are Morita equivalent fusion 1-categories so that we have an equivalence $\mathcal{Z}(\mathcal{C}_1)\simeq\mathcal{Z}(\mathcal{C}_2)$ of braided fusion 1-categories (see theorem 1.1 of \cite{ENO2}). Finally, $\mathbf{Mod}(\mathcal{C}_1)$ and $\mathbf{Mod}(\mathcal{C}_2)$ are equivalent as left $\mathbf{Mod}(\mathcal{E})$-module 2-categories if and only if the diagram of braided monoidal functors $$\begin{tikzcd}
                                      & \mathcal{E} \arrow[ld] \arrow[rd] &                            \\
\mathcal{Z}(\mathcal{C}_1) \arrow[rr, "\simeq"] &                                   & \mathcal{Z}(\mathcal{C}_2)
\end{tikzcd}$$ commutes up to braided natural equivalence. This is the case if and only if $\mathcal{Z}(\mathcal{C}_1)$ and $\mathcal{Z}(\mathcal{C}_2)$ are equivalent minimal non-degenerate extensions of $\mathcal{E}$. This concludes the proof.
\end{proof}

\begin{Remark}\label{rem:braidedequivalencescenter}
Let $G$ be a finite group. It follows from the above corollary and example 2.1 of \cite{N} that there is a bijection of sets $\mathrm{BrPic}(\mathbf{Mod}(\mathbf{Rep}(G)))\simeq H^3(G;\mathds{k}^{\times})\times Aut^{br}(\mathbf{Rep}(G)))$. Namely, every minimal non-degenerate extension of $\mathbf{Rep}(G)$ is Witt-trivial. In this case, we expect that the map $\Phi:\mathrm{BrPic}(\mathbf{Mod}(\mathbf{Rep}(G)))\rightarrow Aut^{br}(\mathscr{Z}(\mathbf{Mod}(\mathbf{Rep}(G))))$ of remark \ref{rem:BrauerPicardCenter} is an isomorphism of groups.

On the other hand, the above corollary shows that $\mathrm{BrPic}(\mathbf{2SVect})\simeq 0$. Namely, the group of minimal non-degenerate extensions of $\mathbf{SVect}$ is identified with $\mathbb{Z}/16\mathbb{Z}$ by example 2.2 of \cite{N}, and only one of the sixteen minimal non-degenerate extensions of $\mathbf{SVect}$ is Witt-trivial (see appendix A.3.2 of \cite{DGNO}). But, every minimal non-degenerate extension $\mathcal{C}$ of $\mathbf{SVect}$ induces a braided autoequivalence of $\mathscr{Z}(\mathbf{2SVect})$. In detail, the left $\mathbf{2SVect}$-module 2-category $\mathbf{Mod}(\mathcal{C})$ induces a braided monoidal 2-functor $\mathscr{Z}(\mathbf{2SVect})\simeq \mathscr{Z}(\mathbf{2SVect}\boxtimes \mathbf{Mod}(\mathcal{C}))$, and, thanks to corollary \ref{cor:center2Deligne} below, the right-hand side is canonically equivalent to $\mathscr{Z}(\mathbf{2SVect})\boxtimes \mathscr{Z}(\mathbf{Mod}(\mathcal{C}))\simeq\mathscr{Z}(\mathbf{2SVect})$ as a braided monoidal 2-category. In fact, it was sketched in section 2.3 of \cite{JF2} that this construction induces a bijection of sets $Aut^{br}(\mathscr{Z}(\mathbf{2SVect}))\cong \mathbb{Z}/16\mathbb{Z}$. We therefore expect that the map $\Phi:\mathrm{BrPic}(\mathbf{2SVect})\rightarrow Aut^{br}(\mathscr{Z}(\mathbf{2SVect}))$ is not an isomorphism, so that the full version of theorem 1.1 of \cite{ENO2} can not be na\"ively categorified.
\end{Remark}

Let $\mathcal{E}$ be a symmetric fusion 1-category. The group $\mathcal{W}(\mathcal{E})$ of classes of non-degenerate braided fusion 1-categories over $\mathcal{E}$ up to Witt equivalence over $\mathcal{E}$ was introduced in \cite{DNO}. The group $Aut^{br}(\mathcal{E})$ acts on $\mathcal{W}(\mathcal{E})$. Namely, given $E:\mathcal{E}\simeq\mathcal{E}$ an equivalence of symmetric fusion 1-categories, and $(\mathcal{B},F)$ a non-degenerate braided fusion 1-category over $\mathcal{E}$, we can consider $(\mathcal{B},F\circ E)$. Further, it is clear that this assignment is compatible with Witt equivalence over $\mathcal{E}$. We write $\widehat{\mathcal{W}}(\mathcal{E})$ for the quotient of $\mathcal{W}(\mathcal{E})$ under the action of $Aut^{br}(\mathcal{E})$. The next result follows by combing lemma \ref{lem:comparisonWittequivalenceoverE} with theorem \ref{thm:Moritaconnected}.

\begin{Corollary}\label{cor:Moritaclassesconnected}
Morita equivalence classes of connected fusion 2-categories correspond precisely to pairs consisting of a symmetric fusion 1-category $\mathcal{E}$ and a class in $\widehat{\mathcal{W}}(\mathcal{E})$.
\end{Corollary}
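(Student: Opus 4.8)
The plan is to exhibit the correspondence directly on representatives and then check that it descends to Morita classes and is a bijection, with Theorem \ref{thm:WittEquivalence} and Lemma \ref{lem:comparisonWittequivalenceoverE} doing the essential work.

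First I would invoke Example \ref{ex:connectedfusion2categories}, which identifies connected fusion 2-categories, up to equivalence, with braided fusion 1-categories $\mathcal{B}$ via $\mathcal{B}\mapsto\mathbf{Mod}(\mathcal{B})$, with inverse $\mathfrak{C}\mapsto\Omega\mathfrak{C}$. To a connected fusion 2-category $\mathbf{Mod}(\mathcal{B})$ I attach the pair consisting of its symmetric center $\mathcal{E}:=\mathcal{Z}_{(2)}(\mathcal{B})$ together with a class in $\widehat{\mathcal{W}}(\mathcal{E})$ defined as follows. Choosing any braided identification $F\colon\mathcal{E}\simeq\mathcal{Z}_{(2)}(\mathcal{B})$, the pair $(\mathcal{B},F)$ is by construction a non-degenerate braided fusion 1-category over $\mathcal{E}$, hence represents a class in the group $\mathcal{W}(\mathcal{E})$ of \cite{DNO}. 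Two such identifications differ by precomposition with an element of $Aut^{br}(\mathcal{E})$, which is precisely the action quotiented out to form $\widehat{\mathcal{W}}(\mathcal{E})$; thus the image of $[(\mathcal{B},F)]$ in $\widehat{\mathcal{W}}(\mathcal{E})$ is well defined, independent of $F$.

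Next I would verify that this assignment factors through Morita equivalence and is bijective onto such pairs. For well-definedness, suppose $\mathbf{Mod}(\mathcal{B}_1)$ and $\mathbf{Mod}(\mathcal{B}_2)$ are Morita equivalent. By Theorem \ref{thm:WittEquivalence} they share a symmetric center $\mathcal{E}$, and $\mathcal{B}_1,\mathcal{B}_2$ are Witt equivalent in the sense of Definition \ref{def:Wittequivalence}; Lemma \ref{lem:comparisonWittequivalenceoverE} then produces identifications $F_1,F_2$ for which $(\mathcal{B}_1,F_1)$ and $(\mathcal{B}_2,F_2)$ are Witt equivalent over $\mathcal{E}$, so they coincide in $\mathcal{W}(\mathcal{E})$ and a fortiori in $\widehat{\mathcal{W}}(\mathcal{E})$. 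Surjectivity is immediate, since any class in $\widehat{\mathcal{W}}(\mathcal{E})$ lifts to some $(\mathcal{B},F)\in\mathcal{W}(\mathcal{E})$ and $\mathbf{Mod}(\mathcal{B})$ realizes it. For injectivity I would run the argument backwards: if two connected fusion 2-categories give the same pair, then after modifying one identification by an element of $Aut^{br}(\mathcal{E})$ the two classes agree in $\mathcal{W}(\mathcal{E})$, so $(\mathcal{B}_1,F_1)$ and $(\mathcal{B}_2,F_2)$ are Witt equivalent over $\mathcal{E}$; Lemma \ref{lem:comparisonWittequivalenceoverE} upgrades this to Witt equivalence in the sense of Definition \ref{def:Wittequivalence}, and Theorem \ref{thm:WittEquivalence} delivers the Morita equivalence.

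The step demanding the most care is the bookkeeping of the identification $F$ and the $Aut^{br}(\mathcal{E})$-action: one must confirm that passing from the intrinsic Witt equivalence of Definition \ref{def:Wittequivalence}, which remembers only the abstract symmetric center, to the $\mathcal{E}$-relative group $\mathcal{W}(\mathcal{E})$ loses exactly the datum of $F$, so that the fibres of $\mathcal{W}(\mathcal{E})\to\widehat{\mathcal{W}}(\mathcal{E})$ are precisely the $Aut^{br}(\mathcal{E})$-orbits. This is exactly what Lemma \ref{lem:comparisonWittequivalenceoverE} is built to control, so the remaining argument is formal. Finally, combining this with Theorem \ref{thm:Moritaconnected}, which shows every fusion 2-category is Morita equivalent to a connected one, one sees that these pairs in fact classify all fusion 2-categories up to Morita equivalence.
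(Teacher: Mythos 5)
Your proposal is correct and takes essentially the same route as the paper: the paper deduces this corollary in one line by combining Lemma \ref{lem:comparisonWittequivalenceoverE} with Theorem \ref{thm:WittEquivalence} (miscited in the text as Theorem \ref{thm:Moritaconnected}, which is only needed later to extend the classification from connected to all fusion 2-categories), and your argument spells out exactly this combination. The bookkeeping you highlight --- that the choice of identification $F$ is precisely what the quotient by $Aut^{br}(\mathcal{E})$ forgets --- is the right point to check, and your treatment of it is sound.
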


\begin{Remark}\label{rem:centralMoritaequivalence}
Let us fix $\mathcal{E}$ a symmetric fusion 1-category. A $\mathbf{Mod}(\mathcal{E})$-central fusion 2-category, or a fusion 2-category over $\mathbf{Mod}(\mathcal{E})$, is a fusion 2-category $\mathfrak{C}$ equipped with a braided monoidal 2-functor $\mathbf{F}:\mathbf{Mod}(\mathcal{E})\rightarrow \mathscr{Z}(\mathfrak{C})$. A Morita equivalence
between two $\mathbf{Mod}(\mathcal{E})$-central fusion 2-category $(\mathfrak{C}_1,\mathbf{F}_1)$ and $(\mathfrak{C}_2,\mathbf{F}_1)$ is a Morita equivalence between the underlying fusion 2-categories $\mathfrak{C}_1$ and $\mathfrak{C}_2$ such that the induced diagram of braided monoidal 2-functors $$\begin{tikzcd}
& \mathbf{Mod}(\mathcal{E}) \arrow[ld, "\mathbf{F}_1"'] \arrow[rd, "\mathbf{F}_2"] & \\
\mathscr{Z}(\mathfrak{C}_1) \arrow[rr, "\simeq"] & & \mathscr{Z}(\mathfrak{C}_2)
\end{tikzcd}$$ commutes up to braided monoidal 2-natural equivalence. Now, the data of a $\mathbf{Mod}(\mathcal{E})$-central structure for a connected fusion 2-category $\mathbf{Mod}(\mathcal{B})$ corresponds exactly to a symmetric monoidal functor $F:\mathcal{E}\rightarrow \mathcal{Z}_{(2)}(\mathcal{B})$. Namely, we have $\Omega\mathscr{Z}(\mathbf{Mod}(\mathcal{B}))\simeq \mathbf{Mod}(\mathcal{Z}_{(2)}(\mathcal{B}))$, and any monoidal 2-functor $\mathbf{F}:\mathbf{Mod}(\mathcal{E})\rightarrow \mathscr{Z}(\mathbf{Mod}(\mathcal{B}))$ is completely determined by the braided monoidal functor $\Omega\mathbf{F}$. In particular, if $\mathcal{B}_1$ and $\mathcal{B}_2$ are braided fusion 1-categories, and $F_1:\mathcal{E}\simeq\mathcal{Z}_{(2)}(\mathcal{B}_1)$ and $F_2:\mathcal{E}\simeq\mathcal{Z}_{(2)}(\mathcal{B}_2)$ are equivalences of symmetric fusion 1-categories, it follows from theorem \ref{thm:WittEquivalence} that the $\mathbf{Mod}(\mathcal{E})$-central fusion 2-category $(\mathbf{Mod}(\mathcal{B}_1),\mathbf{Mod}(F_1))$ and $(\mathbf{Mod}(\mathcal{B}_2),\mathbf{Mod}(F_1))$ are Morita equivalent if and only if $(\mathcal{B}_1, F_1)$ and $(\mathcal{B}_2, F_2)$ are Witt equivalent over $\mathcal{E}$ in the sense of \cite{DNO}.
\end{Remark}

\subsection{Technical Results}\label{sub:technical}

We establish the technical lemmas that were used in the proof of theorem \ref{thm:WittEquivalence}.

\begin{Lemma}\label{lem:dualconnectedcomponent}
Let $\mathcal{B}$ be a braided fusion 1-category, and $\mathcal{C}$ a $\mathcal{B}$-central fusion 1-category. There is an equivalence $$\mathbf{Bimod}_{\mathbf{Mod}(\mathcal{B})}(\mathcal{C})^0\simeq \mathbf{Mod}(\mathcal{Z}(\mathcal{C},\mathcal{B})^{rev})$$ of monoidal 2-categories.
\end{Lemma}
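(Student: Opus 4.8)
The plan is to realize $\mathbf{Bimod}_{\mathbf{Mod}(\mathcal{B})}(\mathcal{C})$ as a fusion 2-category and then read off its identity component through its loop category. First I would note that, by example \ref{ex:algebrasModB}, the $\mathcal{B}$-central fusion 1-category $\mathcal{C}$ is a separable algebra in $\mathbf{Mod}(\mathcal{B})$; since $\mathcal{C}$ is fusion its monoidal unit is simple, so the corresponding algebra is connected, and hence indecomposable. Theorem \ref{thm:dualfusion2category} then shows that $\mathbf{Bimod}_{\mathbf{Mod}(\mathcal{B})}(\mathcal{C})$ is multifusion, and indecomposability together with theorem \ref{thm:MoritaF2C} upgrades this to a genuine fusion 2-category. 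Applying example \ref{ex:connectedfusion2categories} produces a canonical equivalence $\mathbf{Bimod}_{\mathbf{Mod}(\mathcal{B})}(\mathcal{C})^0 \simeq \mathbf{Mod}(\Omega\mathbf{Bimod}_{\mathbf{Mod}(\mathcal{B})}(\mathcal{C}))$ of monoidal 2-categories. This reduces the problem to producing a braided equivalence $\Omega\mathbf{Bimod}_{\mathbf{Mod}(\mathcal{B})}(\mathcal{C}) \simeq \mathcal{Z}(\mathcal{C},\mathcal{B})^{rev}$ of fusion 1-categories and applying the functor $\mathbf{Mod}(-)$.

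The core of the argument is then the explicit computation of this loop category. The monoidal unit of $\mathbf{Bimod}_{\mathbf{Mod}(\mathcal{B})}(\mathcal{C})$ is $\mathcal{C}$ regarded as the regular $\mathcal{C}$-$\mathcal{C}$-bimodule, so $\Omega\mathbf{Bimod}_{\mathbf{Mod}(\mathcal{B})}(\mathcal{C})$ is the fusion 1-category of $\mathcal{B}$-balanced $\mathcal{C}$-$\mathcal{C}$-bimodule endofunctors of $\mathcal{C}$, using the identification of bimodules recalled in remark \ref{rem:balancedbimdoule1categories}. As in the classical identification of the Drinfeld center with bimodule endofunctors of the regular bimodule, such an endofunctor $G$ is determined by the object $Z := G(\mathbb{1}_{\mathcal{C}})$ equipped with the half-braiding that its two commuting module structures supply; forgetting the balancing constraint recovers exactly $\mathcal{Z}(\mathcal{C})$. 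The extra $\mathcal{B}$-balancing datum compares the left and right $\mathcal{C}$-actions through the image of $\mathcal{B}$, and I would check that it amounts precisely to the requirement that the half-braiding of $Z$ be trivial against each $F(B)$ for $B$ in $\mathcal{B}$, that is, that $Z$ lie in the centralizer $\mathcal{Z}(\mathcal{C},\mathcal{B})$ of the image of $\mathcal{B}$. This gives an equivalence $\Omega\mathbf{Bimod}_{\mathbf{Mod}(\mathcal{B})}(\mathcal{C}) \simeq \mathcal{Z}(\mathcal{C},\mathcal{B})$ of the underlying fusion 1-categories.

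Finally I would match the braidings. The braiding on the loop category of any monoidal 2-category is produced from the interchanger via the Eckmann--Hilton argument; unwinding it for two bimodule endofunctors attached to objects $Z_1$ and $Z_2$ expresses it through the half-braiding of one object evaluated on the other. Tracking the order in which the two commuting module structures are composed shows that the resulting braiding coincides with the \emph{opposite} of the ambient braiding of $\mathcal{Z}(\mathcal{C})$ restricted to the centralizer, so that $\Omega\mathbf{Bimod}_{\mathbf{Mod}(\mathcal{B})}(\mathcal{C}) \simeq \mathcal{Z}(\mathcal{C},\mathcal{B})^{rev}$ as braided fusion 1-categories. I expect this last bookkeeping to be the main obstacle: pinning down the direction of the braiding (the ``$rev$'') and verifying with full coherence that the $\mathcal{B}$-balancing condition is exactly centralization of the image of $\mathcal{B}$. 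The remaining ingredients are the Morita-theoretic inputs recalled above and the standard center-as-bimodule-endofunctors computation, carried out internally to $\mathbf{Mod}(\mathcal{B})$.
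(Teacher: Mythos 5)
Your proposal is correct and follows essentially the same route as the paper: identify the identity component with $\mathbf{Mod}$ of the loop category, identify that loop category with the $\mathcal{B}$-balanced $\mathcal{C}$-$\mathcal{C}$-bimodule endofunctors of the regular bimodule, and recognize these as the centralizer $\mathcal{Z}(\mathcal{C},\mathcal{B})$ with reversed braiding. The only difference is bookkeeping: the two computations you propose to carry out by hand, namely $End_{\mathcal{C}\textrm{-}\mathcal{C}}(\mathcal{C})\simeq\mathcal{Z}(\mathcal{C})^{rev}$ as braided fusion 1-categories and the fact that balancedness of an endofunctor is precisely the centralizing condition, are outsourced by the paper to example 5.3.7 of \cite{D8} and definition 3.24 of \cite{Lau}, respectively.
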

\begin{proof}
We have seen in example \ref{ex:algebrasModB} above that there is an equivalence of 2-categories $$\mathbf{Bimod}_{\mathbf{Mod}(\mathcal{B})}(\mathcal{C})\simeq\mathbf{Mod}(\mathcal{C}^{mop}\boxtimes_{\mathcal{B}}\mathcal{C}).$$ Further, as observed in remark \ref{rem:balancedbimdoule1categories}, the 2-category on the left hand-side is the 2-category of finite semisimple $\mathcal{B}$-balanced $\mathcal{C}$-$\mathcal{C}$-bimodule 1-categories as considered in section 3.4 of \cite{Lau}. Now, $\mathcal{C}$ with its canonical $\mathcal{B}$-balanced $\mathcal{C}$-$\mathcal{C}$-bimodule structure is the monoidal unit of $\mathbf{Bimod}_{\mathbf{Mod}(\mathcal{B})}(\mathcal{C})$. Let us write $End^{\mathcal{B}}_{\mathcal{C}\textrm{-}\mathcal{C}}(\mathcal{C})$ for the braided fusion 1-category of endomorphisms of $\mathcal{C}$ in $\mathbf{Bimod}_{\mathbf{Mod}(\mathcal{B})}(\mathcal{C})$. Now, observe that the forgetful 2-functor $$\mathbf{Bimod}_{\mathbf{Mod}(\mathcal{B})}(\mathcal{C})\rightarrow\mathbf{Bimod}(\mathcal{C})$$ is monoidal. On the monoidal unit, this 2-functor induces a braided monoidal functor $End^{\mathcal{B}}_{\mathcal{C}\textrm{-}\mathcal{C}}(\mathcal{C})\rightarrow End_{\mathcal{C}\textrm{-}\mathcal{C}}(\mathcal{C})$. But, by example 5.3.7 of \cite{D8}, we know that there is an equivalence $End_{\mathcal{C}\textrm{-}\mathcal{C}}(\mathcal{C})\simeq \mathcal{Z}(\mathcal{C})^{rev}$ of braided fusion 1-categories. Moreover, it follows from definition 3.24 of \cite{Lau} that $End^{\mathcal{B}}_{\mathcal{C}\textrm{-}\mathcal{C}}(\mathcal{C})$ is the full fusion sub-1-category of $End_{\mathcal{C}\textrm{-}\mathcal{C}}(\mathcal{C})\simeq \mathcal{Z}(\mathcal{C})^{rev}$ on those objects centralizing the image of $\mathcal{B}^{rev}$ in $\mathcal{Z}(\mathcal{C})^{rev}$. Thus, we find that $End^{\mathcal{B}}_{\mathcal{C}\textrm{-}\mathcal{C}}(\mathcal{C})\simeq \mathcal{Z}(\mathcal{C},\mathcal{B})^{rev}$ as braided fusion 1-categories. Finally, the proof is concluded by appealing to proposition 2.4.7 of \cite{D2}.
\end{proof}

\begin{Remark}\label{rem:Laughwitz}
As already mentioned in remark \ref{rem:balancedbimdoule1categories}, \cite{Lau} works with $\mathcal{B}$-augmented tensor 1-categories, which are $\mathcal{B}$-balanced tensor 1-categories equipped with additional structure. Nonetheless, the results of section 3.4 of \cite{Lau} hold for any $\mathcal{B}$-central tensor 1-category. On the other hand, this is not the case for all the results of section 3 of \cite{Lau}. More precisely, corollary 3.46 of \cite{Lau} does not hold for an arbitrary $\mathcal{B}$-central tensor 1-category. Namely, if $\mathcal{B}=\mathbf{Rep}(G)$, and $\mathcal{C}=\mathbf{Vect}$, then $\mathcal{C}^{mop}\boxtimes_{\mathcal{B}}\mathcal{C} \simeq \boxplus_{g\in G}\mathbf{Vect}$ and $\mathcal{C}$ is not a faithful $\mathcal{C}^{mop}\boxtimes_{\mathcal{B}}\mathcal{C}$-module 1-category.
\end{Remark}

We recover lemma 2.16 of \cite{JFR}.

\begin{Corollary}\label{cor:omegacenter}
Let $\mathcal{B}$ be a braided fusion 1-category, then we have $$\Omega\mathscr{Z}(\mathbf{Mod}(\mathcal{B}))\simeq \mathcal{Z}_{(2)}(\mathcal{B}).$$
\end{Corollary}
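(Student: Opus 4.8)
The plan is to realize $\mathscr{Z}(\mathbf{Mod}(\mathcal{B}))$ as a relative bimodule $2$-category over the enveloping braided fusion $1$-category $\mathcal{B}\boxtimes\mathcal{B}^{rev}$, and then to read off $\Omega$ using the computation already carried out inside the proof of Lemma \ref{lem:dualconnectedcomponent}. By Lemma \ref{lem:module2funcenter} we have $\mathscr{Z}(\mathbf{Mod}(\mathcal{B}))\simeq\mathbf{End}_{\mathbf{Mod}(\mathcal{B})\mathrm{-}\mathbf{Mod}(\mathcal{B})}(\mathbf{Mod}(\mathcal{B}))$, and viewing the regular bimodule as a left module over $\mathbf{Mod}(\mathcal{B})\boxtimes\mathbf{Mod}(\mathcal{B})^{mop}$ turns the right-hand side into a dual tensor $2$-category in the sense of Theorem \ref{thm:dualfusion2category}.

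First I would identify the enveloping $2$-category. Since $\Omega\mathbf{Mod}(\mathcal{B})\simeq\mathcal{B}$, proposition 4.1 of \cite{D3} gives $\Omega(\mathbf{Mod}(\mathcal{B})\boxtimes\mathbf{Mod}(\mathcal{B})^{mop})\simeq\mathcal{B}\boxtimes\mathcal{B}^{rev}$; as both factors are connected, so is their $2$-Deligne tensor product, and Example \ref{ex:connectedfusion2categories} therefore yields $\mathbf{Mod}(\mathcal{B})\boxtimes\mathbf{Mod}(\mathcal{B})^{mop}\simeq\mathbf{Mod}(\mathcal{V})$ with $\mathcal{V}:=\mathcal{B}\boxtimes\mathcal{B}^{rev}$. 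Under this identification the regular $\mathbf{Mod}(\mathcal{B})$-bimodule corresponds, via Example \ref{ex:algebrasModB}, to the rigid algebra given by $\mathcal{B}$ equipped with its canonical $\mathcal{V}$-central structure $G:\mathcal{B}\boxtimes\mathcal{B}^{rev}\to\mathcal{Z}(\mathcal{B})$ assembled from the two half-braidings, the point being that $\mathbf{Mod}_{\mathbf{Mod}(\mathcal{V})}(\mathcal{B})\simeq\mathbf{Mod}(\mathcal{B})$ as module $2$-categories. Theorem \ref{thm:dualfusion2category} then gives $\mathscr{Z}(\mathbf{Mod}(\mathcal{B}))\simeq\mathbf{Bimod}_{\mathbf{Mod}(\mathcal{V})}(\mathcal{B})^{mop}$.

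Next I would take $\Omega$ of both sides. Reversing the monoidal product of a monoidal $2$-category reverses the induced braiding on the endomorphisms of its unit, so $\Omega(\mathfrak{D}^{mop})\simeq(\Omega\mathfrak{D})^{rev}$; combined with the intermediate computation in the proof of Lemma \ref{lem:dualconnectedcomponent}, namely $\Omega\mathbf{Bimod}_{\mathbf{Mod}(\mathcal{V})}(\mathcal{B})\simeq\mathcal{Z}(\mathcal{B},\mathcal{V})^{rev}$, the two occurrences of $rev$ cancel and I obtain $\Omega\mathscr{Z}(\mathbf{Mod}(\mathcal{B}))\simeq\mathcal{Z}(\mathcal{B},\mathcal{V})=\mathcal{Z}(\mathcal{B},\mathcal{B}\boxtimes\mathcal{B}^{rev})$, the centralizer in $\mathcal{Z}(\mathcal{B})$ of the image of $G$.

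The remaining, and genuinely categorical, step is to identify this double centralizer with the symmetric center. Writing $\iota_+,\iota_-:\mathcal{B}\rightrightarrows\mathcal{Z}(\mathcal{B})$ for the two braided functors sending $X$ to $X$ with half-braiding $\beta_{X,-}$ and $\beta^{-1}_{-,X}$ respectively, a direct monodromy computation shows that an object centralizes $\iota_+(\mathcal{B})$ exactly when its half-braiding is forced to be $\beta^{-1}$, that is, when it lies in $\iota_-(\mathcal{B})$, and symmetrically for $\iota_-$. Since the image of $G$ is generated by $\iota_+(\mathcal{B})$ and $\iota_-(\mathcal{B})$, its centralizer is $\iota_+(\mathcal{B})\cap\iota_-(\mathcal{B})$, which consists precisely of the objects $Z$ with $\beta_{Z,W}=\beta^{-1}_{W,Z}$ for all $W$, namely $\mathcal{Z}_{(2)}(\mathcal{B})$; on these objects $\iota_+$ and $\iota_-$ agree, so the identification is one of symmetric fusion $1$-categories. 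I expect this centralizer identification, together with the careful $rev$/$mop$ bookkeeping feeding into it, to be the main obstacle, whereas the reduction to it is essentially formal once the enveloping category $\mathcal{B}\boxtimes\mathcal{B}^{rev}$ and the algebra $\mathcal{B}$ are in place.
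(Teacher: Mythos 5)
Your proposal is correct and follows essentially the same route as the paper's proof: both identify $\mathscr{Z}(\mathbf{Mod}(\mathcal{B}))$, via Lemma \ref{lem:module2funcenter}, with the dual tensor 2-category to $\mathbf{Mod}(\mathcal{B}\boxtimes\mathcal{B}^{rev})$ with respect to $\mathbf{Mod}(\mathcal{B})$, and then read off $\Omega$ as the relative centralizer $\mathcal{Z}(\mathcal{B},\mathcal{B}\boxtimes\mathcal{B}^{rev})$ using the computation underlying Lemma \ref{lem:dualconnectedcomponent}. The only divergence is in the final step, where you establish $\mathcal{Z}(\mathcal{B},\mathcal{B}\boxtimes\mathcal{B}^{rev})\simeq\mathcal{Z}_{(2)}(\mathcal{B})$ by a direct (and correct) monodromy computation with the two embeddings $\iota_{\pm}$, whereas the paper simply cites corollary 4.4 of \cite{DNO} and theorem 3.10(ii) of \cite{DGNO} for the same double-centralizer statement.
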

\begin{proof}
Thanks to lemma \ref{lem:module2funcenter} we can identify $\mathscr{Z}(\mathbf{Mod}(\mathcal{B}))$ with the dual to $\mathbf{Mod}(\mathcal{B}^{rev}\boxtimes\mathcal{B})$ with respect to $\mathbf{Mod}(\mathcal{B})$. It follows from corollary 4.4 of \cite{DNO} and theorem 3.10 (ii) of \cite{DGNO} that $\mathcal{Z}(\mathcal{B}, \mathcal{B}^{rev}\boxtimes\mathcal{B})\simeq \mathcal{Z}_{(2)}(\mathcal{B})$, which establishes the result.
\end{proof}

We now give a sufficient criterion for the fusion 2-category $\mathbf{Bimod}_{\mathbf{Mod}(\mathcal{B})}(\mathcal{C})$ to be connected.

\begin{Lemma}\label{lem:dualconnectedweak}
Let $\mathcal{B}$ be a braided fusion 1-category, and $\mathcal{C}$ a $\mathcal{B}$-central fusion 1-category such that the composite $\mathcal{B}\rightarrow\mathcal{C}$ is fully faithful. Then, $\mathbf{Bimod}_{\mathbf{Mod}(\mathcal{B})}(\mathcal{C})$ is a connected fusion 2-category.
\end{Lemma}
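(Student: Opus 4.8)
The plan is to reduce the connectedness of $\mathbf{Bimod}_{\mathbf{Mod}(\mathcal{B})}(\mathcal{C})$ to the statement that a single fusion $1$-category has simple unit. First I would invoke the equivalence of $2$-categories
$$\mathbf{Bimod}_{\mathbf{Mod}(\mathcal{B})}(\mathcal{C})\simeq \mathbf{Mod}(\mathcal{C}^{mop}\boxtimes_{\mathcal{B}}\mathcal{C})$$
recorded in example \ref{ex:algebrasModB}, together with the facts cited there that $\mathcal{A}:=\mathcal{C}^{mop}\boxtimes_{\mathcal{B}}\mathcal{C}$ is rigid monoidal (theorem 6.2 of \cite{Gre}) and finite semisimple (proposition 3.5 of \cite{ENO2}). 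Since example \ref{ex:connectedfusion2categories} tells us that $\mathbf{Mod}(\mathcal{D})$ is connected whenever $\mathcal{D}$ is a fusion $1$-category, it then suffices to upgrade $\mathcal{A}$ from a multifusion to a genuine fusion $1$-category, that is, to prove that its monoidal unit $1\boxtimes 1$ is a \emph{simple} object.

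To this end I would compute the endomorphism algebra of the unit directly, using the standard description of morphism spaces in a relative Deligne tensor product as a coend. Write $F\colon\mathcal{B}\to\mathcal{Z}(\mathcal{C})$ for the central structure and $G\colon\mathcal{B}\to\mathcal{C}$ for its composite with the forgetful functor, which governs the $\mathcal{B}$-action on both factors. One obtains
$$\mathrm{Hom}_{\mathcal{A}}(1\boxtimes 1,\,1\boxtimes 1)\;\cong\;\int^{b\in\mathcal{B}}\mathrm{Hom}_{\mathcal{C}}(1,G(b))\otimes\mathrm{Hom}_{\mathcal{C}}(G(b),1).$$
Here the hypothesis enters decisively: because the composite $G\colon\mathcal{B}\to\mathcal{C}$ is fully faithful, the two factors become $\mathrm{Hom}_{\mathcal{B}}(1,b)$ and $\mathrm{Hom}_{\mathcal{B}}(b,1)$. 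The co-Yoneda lemma then collapses the coend to $\mathrm{Hom}_{\mathcal{B}}(1,1)=\mathds{k}$, using that $1$ is simple in $\mathcal{B}$. Hence $1\boxtimes 1$ is simple, $\mathcal{A}$ is a fusion $1$-category, and $\mathbf{Mod}(\mathcal{A})\simeq\mathbf{Bimod}_{\mathbf{Mod}(\mathcal{B})}(\mathcal{C})$ is connected as desired.

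The main obstacle is making the coend computation rigorous: one must pin down the precise balancing and the convention for the $\mathcal{B}$-actions on $\mathcal{C}^{mop}$ and $\mathcal{C}$ (both induced by $G$, with compatibility coming from the braiding of $\mathcal{B}$ and the lift through $\mathcal{Z}(\mathcal{C})$), and confirm that the coend formula for $\mathrm{Hom}$-spaces applies in this semisimple setting. I expect no genuine difficulty beyond bookkeeping, since semisimplicity turns the coend into a finite sum over the simple objects of $\mathcal{B}$ and the full faithfulness of $G$ forces only the unit summand to survive. It is worth emphasizing that this argument is strictly stronger than merely requiring $F\colon\mathcal{B}\to\mathcal{Z}(\mathcal{C})$ to be fully faithful: the present hypothesis yields a \emph{simple} unit, whereas the weaker one treated in corollary \ref{cor:dualconnected} only produces an \emph{indecomposable} multifusion category, for which connectedness demands the additional analysis of the component structure of $\mathcal{A}$.
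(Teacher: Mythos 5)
Your proposal is correct, and it shares the paper's skeleton: both identify $\mathbf{Bimod}_{\mathbf{Mod}(\mathcal{B})}(\mathcal{C})\simeq\mathbf{Mod}(\mathcal{C}^{mop}\boxtimes_{\mathcal{B}}\mathcal{C})$ and reduce connectedness to showing that $\mathcal{A}:=\mathcal{C}^{mop}\boxtimes_{\mathcal{B}}\mathcal{C}$ has a simple monoidal unit. Where you diverge is in how simplicity of the unit is established. The paper argues structurally: the canonical functor $\mathcal{C}^{mop}\boxtimes\mathcal{C}\rightarrow\mathcal{A}$ is monoidal, and since $\mathcal{B}\rightarrow\mathcal{C}$ is fully faithful the unit $I\boxtimes I$ is the image of the unit of $\mathcal{B}^{mop}\boxtimes\mathcal{B}$ under $\mathcal{B}^{mop}\boxtimes\mathcal{B}\rightarrow\mathcal{B}^{mop}\boxtimes_{\mathcal{B}}\mathcal{B}\simeq\mathcal{B}$, whose unit is simple. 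You instead compute $\mathrm{End}_{\mathcal{A}}(1\boxtimes 1)$ directly from the coend description of $Hom$-spaces in the relative Deligne tensor product, use full faithfulness of $G\colon\mathcal{B}\rightarrow\mathcal{C}$ to replace $\mathrm{Hom}_{\mathcal{C}}(1,G(b))$ and $\mathrm{Hom}_{\mathcal{C}}(G(b),1)$ by $\mathrm{Hom}_{\mathcal{B}}(1,b)$ and $\mathrm{Hom}_{\mathcal{B}}(b,1)$ (these identifications are natural in $b$, so the two coends agree), and collapse via co-Yoneda to get $\mathds{k}$; together with finite semisimplicity of $\mathcal{A}$ (proposition 3.5 of \cite{ENO2}, as cited in example \ref{ex:algebrasModB}) this yields simplicity. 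In fact your computation makes rigorous precisely the point at which the paper is terse: concluding simplicity of $1\boxtimes 1$ from simplicity of the unit of $\mathcal{B}$ implicitly requires that passing to the relative product does not enlarge the endomorphism algebra of the unit, and that is exactly what your coend calculation verifies. The paper's route is shorter and avoids invoking the $Hom$-space formula for relative Deligne products; yours is more self-contained and isolates exactly where full faithfulness of the \emph{composite} $\mathcal{B}\rightarrow\mathcal{C}$ (rather than of $F$ into the center) enters.

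Two minor points. First, the fact you need at the reduction step, namely that $\mathbf{Mod}(\mathcal{D})$ is connected for $\mathcal{D}$ a fusion 1-category, is the statement recorded in section \ref{sub:semisimple2categories} (the paper's proof cites proposition 2.3.5 of \cite{D1} for it); example \ref{ex:connectedfusion2categories} concerns braided 1-categories and the monoidal structure on $\mathbf{Mod}$, so adjust the citation. Second, your closing remark should be phrased carefully: under the weaker hypothesis that only $F\colon\mathcal{B}\rightarrow\mathcal{Z}(\mathcal{C})$ is fully faithful, the 2-category $\mathbf{Bimod}_{\mathbf{Mod}(\mathcal{B})}(\mathcal{C})$ is still connected --- that is lemma \ref{lem:dualconnectedstrong} and corollary \ref{cor:dualconnected} --- even though $\mathcal{A}$ may then fail to have a simple unit; connectedness of $\mathbf{Mod}(\mathcal{A})$ only requires indecomposability of the multifusion 1-category $\mathcal{A}$, which is why the weaker hypothesis suffices for the paper's later results but demands the separate argument given there.
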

\begin{proof}
Let us identify $\mathbf{Bimod}_{\mathbf{Mod}(\mathcal{B})}(\mathcal{C})\simeq \mathbf{Mod}(\mathcal{C}^{mop}\boxtimes_{\mathcal{B}}\mathcal{C})$ as 2-categories. Thanks to proposition 2.3.5 of \cite{D1}, in order to prove that this finite semisimple 2-category is connected, it is enough to show that $\mathcal{C}^{mop}\boxtimes_{\mathcal{B}}\mathcal{C}$ is a fusion 1-category. But, the canonical functor $\mathcal{C}^{mop}\boxtimes\mathcal{C}\rightarrow \mathcal{C}^{mop}\boxtimes_{\mathcal{B}}\mathcal{C}$ is monoidal. Further, the image of $\mathcal{B}$ in $\mathcal{C}$ may be identified with $\mathcal{B}$ by hypothesis. Thus, the monoidal unit of $\mathcal{C}^{mop}\boxtimes_{\mathcal{B}}\mathcal{C}$ is given by the image of the monoidal unit $I\boxtimes I$ of $\mathcal{B}^{mop}\boxtimes\mathcal{B}$ under the canonical monoidal functor $\mathcal{B}^{mop}\boxtimes\mathcal{B}\rightarrow \mathcal{B}^{mop}\boxtimes_{\mathcal{B}}\mathcal{B} \simeq \mathcal{B}$. As $\mathcal{B}$ is a fusion 1-category by hypothesis, the proof is complete.
\end{proof}

We need the following slightly more general version of the previous lemma.

\begin{Lemma}\label{lem:dualconnectedstrong}
Let $\mathcal{B}$ be a braided fusion 1-category, and $\mathcal{C}$ a fusion 1-category with $\mathcal{B}$-central structure $F:\mathcal{B}\rightarrow\mathcal{Z}(\mathcal{C})$ given by a fully faithful braided monoidal functor. Then, $\mathbf{Bimod}_{\mathbf{Mod}(\mathcal{B})}(\mathcal{C})$ is a connected fusion 2-category.
\end{Lemma}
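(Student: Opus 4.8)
The plan is to follow the opening of the proof of Lemma \ref{lem:dualconnectedweak} but to replace its final step. First I would invoke example \ref{ex:algebrasModB} to identify $\mathbf{Bimod}_{\mathbf{Mod}(\mathcal{B})}(\mathcal{C})\simeq\mathbf{Mod}(\mathcal{A})$ with $\mathcal{A}:=\mathcal{C}^{mop}\boxtimes_{\mathcal{B}}\mathcal{C}$, which is a multifusion $1$-category by theorem 6.2 of \cite{Gre}. By proposition 2.3.5 of \cite{D1}, the finite semisimple $2$-category $\mathbf{Mod}(\mathcal{A})$ is connected if and only if $\mathcal{A}$ is indecomposable as a multifusion $1$-category. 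The crucial difference with the weak version is that, under the present hypothesis, the monoidal unit $\mathbf{1}\boxtimes\mathbf{1}$ of $\mathcal{A}$ need no longer be simple: the endomorphisms of the unit are computed by the coend $End_{\mathcal{A}}(\mathbf{1}\boxtimes\mathbf{1})\cong\bigoplus_{b}Hom_{\mathcal{C}}(\mathbf{1},G(b))\otimes Hom_{\mathcal{C}}(G(b),\mathbf{1})$, where $G:\mathcal{B}\rightarrow\mathcal{C}$ is the composite of $F$ with the forgetful functor, and this can have dimension bigger than one. So I cannot conclude as before that $\mathcal{A}$ is fusion, and I must instead establish indecomposability directly.

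The main step is therefore to show that $\mathcal{A}=\mathcal{C}^{mop}\boxtimes_{\mathcal{B}}\mathcal{C}$ is indecomposable, for which I would use the criterion that a multifusion $1$-category is indecomposable exactly when its Drinfeld center has a simple monoidal unit. To compute $\mathcal{Z}(\mathcal{A})$, recall that $\mathcal{Z}(\mathcal{C}^{mop}\boxtimes\mathcal{C})\simeq\mathcal{Z}(\mathcal{C})^{rev}\boxtimes\mathcal{Z}(\mathcal{C})$, and that passing to the relative tensor product over $\mathcal{B}$ corresponds, at the level of centers, to taking local modules over the étale algebra $A_{\mathcal{B}}$ obtained as the image of the canonical algebra of $\mathcal{B}$ under the braided functor $\mathcal{B}^{rev}\boxtimes\mathcal{B}\rightarrow\mathcal{Z}(\mathcal{C})^{rev}\boxtimes\mathcal{Z}(\mathcal{C})$ induced by $F$. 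The hypothesis enters decisively here: since $F$ is fully faithful and $\mathbf{1}_{\mathcal{B}}$ is simple, one has $Hom_{\mathcal{Z}(\mathcal{C})}(\mathbf{1},F(b))\cong Hom_{\mathcal{B}}(\mathbf{1},b)$, which vanishes unless $b\cong\mathbf{1}$. Hence $Hom(\mathbf{1}\boxtimes\mathbf{1},A_{\mathcal{B}})\cong\mathds{k}$, so that $A_{\mathcal{B}}$ is a \emph{connected} étale algebra, and its category of local modules $\mathcal{Z}(\mathcal{A})$ has a simple unit. This shows $\mathcal{A}$ is indecomposable, and therefore that $\mathbf{Bimod}_{\mathbf{Mod}(\mathcal{B})}(\mathcal{C})\simeq\mathbf{Mod}(\mathcal{A})$ is connected.

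There is an alternative, more structural route that explains why this is only "slightly more general": equip $\mathcal{Z}(\mathcal{C})$ with the $\mathcal{B}$-central structure $\mathcal{B}\xrightarrow{F}\mathcal{Z}(\mathcal{C})\hookrightarrow\mathcal{Z}(\mathcal{Z}(\mathcal{C}))$ arising from the braiding of $\mathcal{Z}(\mathcal{C})$; the resulting composite down to $\mathcal{Z}(\mathcal{C})$ is again $F$, which is fully faithful, so Lemma \ref{lem:dualconnectedweak} applies to the fusion $1$-category $\mathcal{Z}(\mathcal{C})$ and shows $\mathbf{Bimod}_{\mathbf{Mod}(\mathcal{B})}(\mathcal{Z}(\mathcal{C}))$ is connected. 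One would then transport connectedness along the dominant $\mathcal{B}$-central monoidal functor $\mathcal{Z}(\mathcal{C})\rightarrow\mathcal{C}$, which exhibits $\mathcal{C}$ as the condensation of $\mathcal{Z}(\mathcal{C})$ by a connected Lagrangian algebra. I expect the genuine obstacle to be precisely this transfer: a dominant monoidal functor does \emph{not} in general preserve indecomposability (witness the diagonal $\mathbf{Vect}\rightarrow\mathbf{Vect}\boxplus\mathbf{Vect}$), so one must exploit that the algebra being condensed is connected. For this reason I would make the center computation of the previous paragraph the actual argument, since there the fully faithfulness of $F$ is used cleanly; the identity component furnished by Lemma \ref{lem:dualconnectedcomponent}, namely $\mathbf{Mod}(\mathcal{Z}(\mathcal{C},\mathcal{B})^{rev})$, then automatically exhausts $\mathbf{Bimod}_{\mathbf{Mod}(\mathcal{B})}(\mathcal{C})$.
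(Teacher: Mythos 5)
Your proof is correct, but it takes a genuinely different route from the paper's. The paper stays at the 2-categorical level: it applies Lemma \ref{lem:dualconnectedweak} to $\mathcal{Z}(\mathcal{C})$ equipped with its canonical $\mathcal{B}$-central structure (legitimate because the underlying braided functor into $\mathcal{Z}(\mathcal{Z}(\mathcal{C}))$ composes down to $F$, which is fully faithful), then transports connectedness not along the dominant functor $\mathcal{Z}(\mathcal{C})\rightarrow\mathcal{C}$ that you rightly distrust, but along the Morita equivalence between $\mathcal{Z}(\mathcal{C})$ and $\mathcal{C}^{mop}\boxtimes\mathcal{C}$, which is compatible with the $\mathcal{B}$-central structures; since the central structure on $\mathcal{C}^{mop}\boxtimes\mathcal{C}$ factors through the second tensor factor, one gets $\mathbf{Bimod}_{\mathbf{Mod}(\mathcal{B})}(\mathcal{C}^{mop}\boxtimes\mathcal{C})\simeq \mathbf{Bimod}(\mathcal{C}^{mop})\boxtimes\mathbf{Bimod}_{\mathbf{Mod}(\mathcal{B})}(\mathcal{C})$, and multiplicativity of $\pi_0$ under the 2-Deligne tensor product (lemma 4.3 of \cite{D3}) forces each factor to be connected. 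So the paper's proof is essentially your ``alternative, more structural route,'' with the transfer step executed by Morita invariance plus $\pi_0$-multiplicativity rather than by condensation along a Lagrangian algebra --- the obstacle you flagged simply never arises. Your main argument is instead purely 1-categorical: identify $\mathcal{A}=\mathcal{C}^{mop}\boxtimes_{\mathcal{B}}\mathcal{C}$ with modules in $\mathcal{C}^{mop}\boxtimes\mathcal{C}$ over the image $A_{\mathcal{B}}$ of the canonical algebra of $\mathcal{B}$, note that full faithfulness of $F$ is exactly what makes $A_{\mathcal{B}}$ \emph{connected} étale in $\mathcal{Z}(\mathcal{C})^{rev}\boxtimes\mathcal{Z}(\mathcal{C})$, and apply Schauenburg's theorem identifying $\mathcal{Z}(\mathcal{A})$ with local $A_{\mathcal{B}}$-modules, whose unit $A_{\mathcal{B}}$ has endomorphism algebra $Hom(\mathbf{1},A_{\mathcal{B}})\cong\mathds{k}$; your indecomposability criterion is sound in the direction you use it (a decomposition $\mathcal{A}_1\boxplus\mathcal{A}_2$ yields orthogonal central summands of the unit). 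The trade-off: the paper's argument uses only results already established in the paper and its prequels, whereas yours imports two external (but standard) inputs --- the description of $\boxtimes_{\mathcal{B}}$ over a braided, not merely symmetric, base as modules over the canonical algebra (cf.\ \cite{Lau}, \cite{DNO}), and Schauenburg's theorem as in \cite{DMNO} --- in exchange for a more self-contained computation that moreover identifies $\mathcal{Z}(\mathcal{A})$ explicitly and recovers the identity-component description of Lemma \ref{lem:dualconnectedcomponent}. One small citation slip: proposition 2.3.5 of \cite{D1} gives ``$\mathcal{A}$ fusion implies $\mathbf{Mod}(\mathcal{A})$ connected''; the biconditional with indecomposability that you state is true but should be deduced (e.g.\ by Morita-reducing an indecomposable multifusion category to a fusion one) rather than cited.
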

\begin{proof}
Let us consider the fusion 2-category $\mathbf{Bimod}_{\mathbf{Mod}(\mathcal{B})}(\mathcal{Z}(\mathcal{C}))$, where $\mathcal{Z}(\mathcal{C})$ is endowed with its canonical $\mathcal{B}$-central structure. Thanks to lemma \ref{lem:dualconnectedweak} above, it is connected. On the other hand, by theorem \ref{thm:dualfusion2category} above, there is an equivalence of fusion 2-categories $$\mathbf{Bimod}_{\mathbf{Mod}(\mathcal{B})}(\mathcal{Z}(\mathcal{C}))\simeq \mathbf{Bimod}_{\mathbf{Mod}(\mathcal{B})}(\mathcal{C}^{mop}\boxtimes\mathcal{C}).$$ More precisely, $\mathcal{C}^{mop}\boxtimes\mathcal{C}$ is equipped with the braided monoidal functor $$\mathcal{B}\xrightarrow{F}\mathcal{Z}(\mathcal{C})\hookrightarrow \mathcal{Z}(\mathcal{C}^{mop})\boxtimes\mathcal{Z}(\mathcal{C})\simeq\mathcal{Z}(\mathcal{C}^{mop}\boxtimes\mathcal{C}).$$ Then, as $\mathcal{Z}(\mathcal{C})$ and $\mathcal{C}^{mop}\boxtimes\mathcal{C}$ are Morita equivalent fusion 1-categories, $\mathbf{Mod}(\mathcal{Z}(\mathcal{C}))$ and $\mathbf{Mod}(\mathcal{C}^{mop}\boxtimes\mathcal{C})$ are equivalent finite semisimple 2-categories. Furthermore, this Morita equivalence of fusion 1-categories is compatible with the $\mathcal{B}$-central structures as the following diagram of braided monoidal functors commute $$\begin{tikzcd}
& \mathcal{B} \arrow[ld] \arrow[rd] &  \\
\mathcal{Z}(\mathcal{C}^{mop}\boxtimes\mathcal{C}) \arrow[rr, "\simeq"] & & \mathcal{Z}(\mathcal{Z}(\mathcal{C})).
\end{tikzcd}$$ But, as the $\mathcal{B}$-central structure on $\mathcal{C}^{mop}\boxtimes\mathcal{C}$ factors through $\mathcal{C}$, we have that $$\mathbf{Bimod}_{\mathbf{Mod}(\mathcal{B})}(\mathcal{C}^{mop}\boxtimes\mathcal{C})\simeq \mathbf{Bimod}(\mathcal{C}^{mop})\boxtimes \mathbf{Bimod}_{\mathbf{Mod}(\mathcal{B})}(\mathcal{C})$$ as fusion 2-categories. Finally, the fusion 2-categories $\mathbf{Bimod}(\mathcal{C}^{mop})$ and $\mathbf{Mod}(\mathcal{Z}(\mathcal{C}^{mop}))$ are equivalent (see example 5.3.7 of \cite{D8}). But, thanks to lemma 4.3 of \cite{D3}, the set of connected components of a 2-Deligne tensor product is the product of the set of connected components, so that $\mathbf{Bimod}_{\mathbf{Mod}(\mathcal{B})}(\mathcal{C})$ is indeed connected.
\end{proof}

\begin{Corollary}\label{cor:dualwittequivalent}
Let $\mathcal{B}$ be a braided fusion 1-category, and $\mathcal{C}$ be a $\mathcal{B}$-central fusion 1-category such that the braided monoidal functor $\mathcal{B}\rightarrow \mathcal{Z}(\mathcal{C})$ is fully faithful. Then, the dual to $\mathbf{Mod}(\mathcal{B})$ with respect to $\mathbf{Mod}(\mathcal{C})$ is given by $\mathbf{Mod}(\mathcal{Z}(\mathcal{C},\mathcal{B}))$.
\end{Corollary}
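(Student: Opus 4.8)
The plan is to realize the stated dual as a bimodule 2-category and then evaluate it by assembling the two preceding technical lemmas. First, recall from example \ref{ex:algebrasModB} that the $\mathcal{B}$-central fusion 1-category $\mathcal{C}$ is exactly a rigid algebra in $\mathbf{Mod}(\mathcal{B})$, and that the discussion there shows it to be separable. Viewing $\mathbf{Mod}(\mathcal{C})$ as the left $\mathbf{Mod}(\mathcal{B})$-module 2-category $\mathbf{Mod}_{\mathbf{Mod}(\mathcal{B})}(\mathcal{C})$, theorem \ref{thm:dualfusion2category} identifies the dual to $\mathbf{Mod}(\mathcal{B})$ with respect to $\mathbf{Mod}(\mathcal{C})$ as
$$\mathbf{End}_{\mathbf{Mod}(\mathcal{B})}\big(\mathbf{Mod}_{\mathbf{Mod}(\mathcal{B})}(\mathcal{C})\big)\simeq \mathbf{Bimod}_{\mathbf{Mod}(\mathcal{B})}(\mathcal{C})^{mop}.$$
Hence it suffices to compute the monoidal 2-category $\mathbf{Bimod}_{\mathbf{Mod}(\mathcal{B})}(\mathcal{C})$ and then pass to its opposite.

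Next I would invoke the two lemmas just proved. Since the $\mathcal{B}$-central structure $\mathcal{B}\to\mathcal{Z}(\mathcal{C})$ is fully faithful, lemma \ref{lem:dualconnectedstrong} guarantees that $\mathbf{Bimod}_{\mathbf{Mod}(\mathcal{B})}(\mathcal{C})$ is a \emph{connected} fusion 2-category, so it coincides with its own connected component of the identity. Lemma \ref{lem:dualconnectedcomponent} computes that component as $\mathbf{Mod}(\mathcal{Z}(\mathcal{C},\mathcal{B})^{rev})$. Combining the two yields an equivalence of monoidal 2-categories $\mathbf{Bimod}_{\mathbf{Mod}(\mathcal{B})}(\mathcal{C})\simeq \mathbf{Mod}(\mathcal{Z}(\mathcal{C},\mathcal{B})^{rev})$. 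The fully faithfulness hypothesis is precisely what is used here: without it the bimodule 2-category would in general have several connected components (cf.\ the remark following corollary \ref{cor:dualconnected}), and the identification would break down.

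Finally I would handle the opposite-monoidal and reversed-braiding bookkeeping, which is the only genuine subtlety. Passing to the opposite monoidal structure and using the standard identification $\mathbf{Mod}(\mathcal{D})^{mop}\simeq\mathbf{Mod}(\mathcal{D}^{rev})$ for a braided fusion 1-category $\mathcal{D}$ (the monoidal product on $\mathbf{Mod}(\mathcal{D})$ being the relative Deligne tensor product over $\mathcal{D}$, whose reversal is effected by the reverse braiding), one obtains
$$\mathbf{Bimod}_{\mathbf{Mod}(\mathcal{B})}(\mathcal{C})^{mop}\simeq \mathbf{Mod}(\mathcal{Z}(\mathcal{C},\mathcal{B})^{rev})^{mop}\simeq \mathbf{Mod}(\mathcal{Z}(\mathcal{C},\mathcal{B})),$$
the two applications of $(-)^{rev}$ cancelling. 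This exhibits the dual as $\mathbf{Mod}(\mathcal{Z}(\mathcal{C},\mathcal{B}))$, as claimed. The main point to check carefully is exactly this cancellation, i.e.\ that the $mop$ produced by theorem \ref{thm:dualfusion2category} and the $rev$ produced by lemma \ref{lem:dualconnectedcomponent} are compatible and annihilate one another; everything else is a direct assembly of the cited results.
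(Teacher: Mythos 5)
Your proof is correct and is exactly the derivation the paper intends: the corollary is stated without proof immediately after lemmas \ref{lem:dualconnectedcomponent} and \ref{lem:dualconnectedstrong}, and it follows by viewing $\mathcal{C}$ as a separable algebra in $\mathbf{Mod}(\mathcal{B})$ (example \ref{ex:algebrasModB}), applying theorem \ref{thm:dualfusion2category} to identify the dual with $\mathbf{Bimod}_{\mathbf{Mod}(\mathcal{B})}(\mathcal{C})^{mop}$, and then combining the two lemmas, just as you do. Your final bookkeeping step, namely that $\mathbf{Mod}(\mathcal{Z}(\mathcal{C},\mathcal{B})^{rev})^{mop}\simeq \mathbf{Mod}(\mathcal{Z}(\mathcal{C},\mathcal{B}))$ via the identification $\mathbf{Mod}(\mathcal{D})^{mop}\simeq\mathbf{Mod}(\mathcal{D}^{rev})$, is precisely the cancellation that makes the statement read $\mathbf{Mod}(\mathcal{Z}(\mathcal{C},\mathcal{B}))$ rather than $\mathbf{Mod}(\mathcal{Z}(\mathcal{C},\mathcal{B})^{rev})$, and it is consistent with how the corollary is used in the proof of theorem \ref{thm:WittEquivalence}.
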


\section{Fusion 2-Categories up to Morita Equivalence}\label{sec:F2CmoduloMorita}

We analyze the Morita equivalence classes of fusion 2-categories. More precisely, we show that every fusion 2-category is Morita equivalent to the 2-Deligne tensor product of a strongly fusion 2-category and an invertible fusion 2-category. Further, we show that every strongly fusion 2-category is Morita equivalent to a connected fusion 2-category. In particular, this establishes that every fusion 2-category is Morita equivalent to a connected fusion 2-category. Throughout, we work over a fixed algebraically closed field $\mathds{k}$ of characteristic zero.

\subsection{Strongly Fusion and Invertible}

We begin by introducing invertible multifusion 2-categories, and we characterize them explicitly.

\begin{Definition}
A multifusion 2-category $\mathfrak{C}$ is invertible if $\mathscr{Z}(\mathfrak{C})\simeq \mathbf{2Vect}$.
\end{Definition}

\begin{Remark}
Let $\mathfrak{C}$ be an invertible multifusion 2-category. It follows from lemma \ref{lem:module2funcenter}, that the Morita equivalence class of $\mathfrak{C}\boxtimes \mathfrak{C}^{mop}$ is trivial. This justifies the name. In fact, thanks to theorem \ref{thm:MoritaInvarianceDrinfeldCenter} the latter gives an equivalent characterization of invertibility.
\end{Remark}

\begin{Lemma}\label{lem:invertiblecharacterization}
A fusion 2-category is invertible if and only if it is equivalent to $\mathbf{Mod}(\mathcal{B})$ with $\mathcal{B}$ a non-degenerate braided fusion 1-category.
\end{Lemma}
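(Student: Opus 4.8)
The plan is to prove the two implications separately. The \emph{if} direction, that $\mathbf{Mod}(\mathcal{B})$ is invertible when $\mathcal{B}$ is non-degenerate, is essentially already available: by lemma 2.16 and theorem 2.52 of \cite{JFR}, as recalled in remark \ref{rem:BrauerPicardCenter}, there is an equivalence $\mathscr{Z}(\mathbf{Mod}(\mathcal{B}))\simeq\mathbf{2Vect}$ of braided fusion 2-categories, so that $\mathbf{Mod}(\mathcal{B})$ is invertible by definition. I would simply cite this.

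For the \emph{only if} direction, suppose $\mathfrak{C}$ is a fusion 2-category with $\mathscr{Z}(\mathfrak{C})\simeq\mathbf{2Vect}$. I would reduce to two claims: that $\mathfrak{C}$ is connected, and that $\Omega\mathfrak{C}$ is non-degenerate. Granting connectedness, example \ref{ex:connectedfusion2categories} gives $\mathfrak{C}\simeq\mathbf{Mod}(\mathcal{B})$ with $\mathcal{B}:=\Omega\mathfrak{C}$, and then corollary \ref{cor:omegacenter} identifies $\mathcal{Z}_{(2)}(\mathcal{B})\simeq\Omega\mathscr{Z}(\mathbf{Mod}(\mathcal{B}))\simeq\Omega\mathbf{2Vect}\simeq\mathbf{Vect}$, which is exactly the assertion that $\mathcal{B}$ is non-degenerate. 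So non-degeneracy is a formal consequence of connectedness together with the triviality of the center.

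The hard part will be establishing connectedness, i.e. that $\pi_0(\mathfrak{C})$ is a singleton. My approach would be to exploit the forgetful 2-functor $U:\mathscr{Z}(\mathfrak{C})\rightarrow\mathfrak{C}$. The key input I would need is that $U$ is dominant, in the sense that every object of $\mathfrak{C}$ arises as a 2-condensate (a retract in the sense of \cite{GJF}) of an object in the essential image of $U$; this is the categorification of the dominance of the forgetful functor from the Drinfeld center of a fusion 1-category, and I expect to obtain it from the existence of a right adjoint (induction) to $U$, which exists because $\mathfrak{C}$ is rigid, together with an analysis of the associated comonad. Once dominance is in hand the argument closes quickly: since $\mathscr{Z}(\mathfrak{C})\simeq\mathbf{2Vect}$ has a single simple object, namely its monoidal unit, every object of $\mathscr{Z}(\mathfrak{C})$ is a direct sum of copies of that unit, and $U$ sends it to the monoidal unit $I$ of $\mathfrak{C}$. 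Hence every simple object $C$ of $\mathfrak{C}$ is a condensate of a direct sum of copies of $I$; in particular $Hom_{\mathfrak{C}}(C,I)\neq 0$, so $C$ is connected to $I$. As $I$ lies in the identity component $\mathfrak{C}^0$, this forces $\mathfrak{C}=\mathfrak{C}^0$, and $\mathfrak{C}$ is connected.

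The main obstacle is therefore the dominance of $U$; everything else is formal given the elementary properties of the center (corollary \ref{cor:omegacenter}, lemma \ref{lem:module2funcenter}) and the structure theory of connected fusion 2-categories (example \ref{ex:connectedfusion2categories}). Should dominance prove delicate to set up cleanly, an alternative route to connectedness would be to use lemma \ref{lem:module2funcenter} to realize $\mathscr{Z}(\mathfrak{C})\simeq\mathbf{End}_{\mathfrak{C}\mathrm{-}\mathfrak{C}}(\mathfrak{C})$ and to interpret the triviality of this endomorphism 2-category as the statement that the regular $\mathfrak{C}\boxtimes\mathfrak{C}^{mop}$-module $\mathfrak{C}$, via its canonical algebra, is supported only on the identity component.
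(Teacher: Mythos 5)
Your treatment of the \emph{if} direction and of the non-degeneracy step is correct and matches the paper: both come down to citing \cite{JFR} and corollary \ref{cor:omegacenter}. The genuine gap is in the connectedness step, which is the actual content of the lemma. Your argument rests entirely on the dominance of the forgetful 2-functor $U:\mathscr{Z}(\mathfrak{C})\rightarrow\mathfrak{C}$, but in your situation, where $\mathscr{Z}(\mathfrak{C})\simeq\mathbf{2Vect}$, dominance is not an input to connectedness --- it is \emph{equivalent} to it: the essential image of $U$ consists of direct sums of copies of $I$, so ``every object of $\mathfrak{C}$ is a condensate of an object in the image of $U$'' is word-for-word the statement that every simple object of $\mathfrak{C}$ lies in the connected component of $I$. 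So the reduction is circular unless you give an independent proof of dominance, and the mechanism you propose (a right adjoint to $U$ coming from rigidity, plus an analysis of the comonad) collapses. With $\mathscr{Z}(\mathfrak{C})\simeq\mathbf{2Vect}$, the right 2-adjoint of $U$ exists for trivial reasons and is $R(C)=Hom_{\mathfrak{C}}(I,C)$, so the comonad is $C\mapsto Hom_{\mathfrak{C}}(I,C)\otimes I$; its counit is the zero 1-morphism on every simple object not connected to $I$, so the adjunction is blind to exactly the components you need to rule out. Unlike the fusion 1-category case, where induction $X\mapsto\oplus_i\, X_i\otimes X\otimes X_i^{*}$ visibly contains $X$ as the summand indexed by the unit, this computation does not naively categorify: the relevant 2-categorical coend requires splitting condensation monads in $\mathscr{Z}(\mathfrak{C})$, and its existence is bound up with the finite semisimplicity of the center (corollary \ref{cor:F2Ccenter}), a result the paper can only prove much later.

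There is also a circularity at the level of the paper's architecture: dominance of $\mathbf{F}:\mathscr{Z}(\mathfrak{C})\rightarrow\mathfrak{C}$ appears only as one of the final corollaries of section \ref{sec:separable}, downstream of corollary \ref{cor:F2Ccenter}, hence of theorems \ref{thm:algebrarigidseparable}, \ref{thm:Moritaconnected} and \ref{thm:stronglyfusioninvertible}; and the proof of theorem \ref{thm:Moritaconnected} invokes lemma \ref{lem:invertiblecharacterization} to know that the invertible factor is connected. The paper instead proves connectedness directly from the Morita theory of \cite{D8}: invertibility says that the dual to $\mathfrak{C}\boxtimes\mathfrak{C}^{mop}$ with respect to $\mathfrak{C}$ is $\mathbf{2Vect}$ (lemma \ref{lem:module2funcenter}); the double centralizer theorem (corollary 5.4.4 of \cite{D8}) then identifies the dual to $\mathbf{2Vect}$ with respect to $\mathfrak{C}$, i.e.\ $\mathbf{End}(\mathfrak{C})$, with $\mathfrak{C}\boxtimes\mathfrak{C}^{mop}$, which has simple unit; and proposition 5.1.6 together with corollary 5.2.5 of \cite{D8} convert ``the dual tensor 2-category is fusion'' into ``$\mathfrak{C}$ is indecomposable as a finite semisimple 2-category,'' i.e.\ connected. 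Your closing ``alternative route'' gestures toward this, but the two ingredients that make it work --- the double centralizer theorem and the indecomposability criterion --- are exactly what your sketch leaves unproven.
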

\begin{proof}
Let $\mathfrak{C}$ be an invertible fusion 2-category. This means that the dual to $\mathfrak{C}\boxtimes\mathfrak{C}^{mop}$ with respect to $\mathfrak{C}$ is $\mathbf{2Vect}$. Thanks to corollary 5.4.4 of \cite{D8}, this implies that the dual to $\mathbf{2Vect}$ with respect to $\mathfrak{C}$ is $\mathfrak{C}\boxtimes\mathfrak{C}^{mop}$. Thus, by combining proposition 5.1.6 and corollary 5.2.5 of \cite{D8}, we get that $\mathfrak{C}$ is indecomposable as a finite semisimple 2-category, i.e.\ it is connected. Let $\mathcal{B}$ be a braided fusion 1-category such that $\mathbf{Mod}(\mathcal{B})\simeq \mathfrak{C}$ as fusion 2-category. As $\mathscr{Z}(\mathbf{Mod}(\mathcal{B}))\simeq\mathbf{2Vect}$, it follows from lemma 2.16 of \cite{JFR}, or corollary \ref{cor:omegacenter} above, that $\mathcal{Z}_{(2)}(\mathcal{B})\simeq \mathbf{Vect}$, so that $\mathcal{B}$ is non-degenerate.
\end{proof}

Before stating the first theorem of this section, we will establish a technical lemma. In order to do so, it is convenient to generalize the terminology of definition \ref{def:stronglyfusion}.

\begin{Definition}
We say that a fusion 2-category $\mathfrak{C}$ is bosonic if the symmetric fusion 1-category $\mathcal{Z}_{(2)}(\Omega\mathfrak{C})$ is Tannakian. Otherwise, we say that $\mathfrak{C}$ is fermionic.
\end{Definition}

\begin{Lemma}\label{lem:dualdisconnected}
Every bosonic fusion 2-category $\mathfrak{C}$ is Morita equivalent to a fusion 2-category $\mathfrak{D}$ with $\Omega\mathfrak{D}$ a non-degenerate braided fusion 1-category. Every fermionic fusion 2-category $\mathfrak{C}$ is Morita equivalent to a fusion 2-category $\mathfrak{D}$ with $\Omega\mathfrak{D}$ a slightly-degenerate braided fusion 1-category, i.e. $\mathcal{Z}_{(2)}(\Omega\mathfrak{D})=\mathbf{SVect}$.
\end{Lemma}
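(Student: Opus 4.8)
The plan is to reduce both statements to Deligne's classification of symmetric fusion $1$-categories together with the de-equivariantization calculus of \cite{DGNO}, realised at the level of fusion $2$-categories through a single condensation. Write $\mathcal{E}:=\mathcal{Z}_{(2)}(\Omega\mathfrak{C})$. By Deligne's theorem $\mathcal{E}$ is either $\mathbf{Rep}(G)$ (the bosonic case) or $\mathbf{Rep}(G,z)$ for some $z\in Z(G)$ of order two (the fermionic case). I would then select a maximal Tannakian subcategory $\mathbf{Rep}(H)\subseteq\mathcal{E}$: in the bosonic case $H=G$, and in the fermionic case $H=G/\langle z\rangle$, i.e.\ the subcategory of objects on which $z$ acts trivially. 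These choices are made precisely so that the de-equivariantization $\mathcal{E}_H$ is $\mathbf{Vect}$ in the bosonic case and $\mathbf{Rep}(\langle z\rangle,z)\simeq\mathbf{SVect}$ in the fermionic case.

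Next I would upgrade the inclusion $\mathbf{Rep}(H)\hookrightarrow\mathcal{E}\hookrightarrow\Omega\mathfrak{C}$ to a condensation of $\mathfrak{C}$. Since $\mathbf{Rep}(H)$ lands in the symmetric center of $\Omega\mathfrak{C}$, the algebra $\mathcal{O}(H)$ of functions on $H$ is a connected étale algebra of $\Omega\mathfrak{C}$ lying in its Müger center. Generalising example 5.1.9 of \cite{D8}, where $\mathbf{Bimod}_{\mathbf{2Rep}(H)}(\mathbf{Vect})\simeq\mathbf{2Vect}_H$, I would use this central datum to produce a connected separable \emph{gauging algebra} $A$ in $\mathfrak{C}$, whose underlying object condenses the monoidal unit along $\mathcal{O}(H)$. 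Connectedness of $\mathcal{O}(H)$ makes $A$ connected, hence indecomposable, and separability of $\mathcal{O}(H)$ should yield separability of $A$ via the criterion of Theorem \ref{thm:bimodulefinitesemisimple}. Setting $\mathfrak{D}:=\mathbf{Bimod}_{\mathfrak{C}}(A)$, Theorem \ref{thm:MoritaF2C} then guarantees that $\mathfrak{D}$ is a fusion $2$-category Morita equivalent to $\mathfrak{C}$.

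It remains to identify $\Omega\mathfrak{D}$. The monoidal unit of $\mathfrak{D}$ is $A$, so $\Omega\mathfrak{D}$ is the braided fusion $1$-category of its $A$-$A$-bimodule endomorphisms, which I expect to identify with the de-equivariantization $(\Omega\mathfrak{C})_H$ (equivalently, the category of local $\mathcal{O}(H)$-modules in $\Omega\mathfrak{C}$, which here are \emph{all} modules precisely because $\mathcal{O}(H)$ lies in the symmetric center). Granting this, the de-equivariantization formula for Müger centers of \cite{DGNO} gives $\mathcal{Z}_{(2)}\big((\Omega\mathfrak{C})_H\big)\simeq\big(\mathcal{Z}_{(2)}(\Omega\mathfrak{C})\big)_H=\mathcal{E}_H$, which is $\mathbf{Vect}$ in the bosonic case and $\mathbf{SVect}$ in the fermionic case, as required. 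The main obstacle is exactly this last identification: one must match the purely $2$-categorical operation of condensing the central algebra $A$ inside $\mathfrak{C}$ with the $1$-categorical de-equivariantization of the braided category $\Omega\mathfrak{C}$, checking that centrality of $A$ is what forces $\Omega\mathfrak{D}$ to be the de-equivariantization (rather than a full relative center) and that passing to bimodules alters only the endomorphisms of the unit in the expected manner. The calculations of Lemma \ref{lem:dualconnectedcomponent} and Corollary \ref{cor:omegacenter}, which already compute $\Omega$ of a dual in the connected case, provide the template for carrying this out.
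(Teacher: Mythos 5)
Your proposal is correct and is essentially the paper's own argument: the paper likewise de-equivariantizes, setting $\mathcal{D}:=\Omega\mathfrak{C}\boxtimes_{\mathcal{E}}\mathbf{Vect}$ in the bosonic case and $\mathcal{D}:=\Omega\mathfrak{C}\boxtimes_{\mathcal{E}}\mathbf{SVect}$ in the fermionic case (both agree with your $(\Omega\mathfrak{C})_H$), regards $\mathcal{D}$ as a connected separable algebra in $\mathfrak{C}^0\simeq\mathbf{Mod}(\Omega\mathfrak{C})$ via its central structure and example \ref{ex:algebrasModB} (this is the clean substitute for your appeal to Theorem \ref{thm:bimodulefinitesemisimple}), and takes $\mathfrak{D}:=\mathbf{Bimod}_{\mathfrak{C}}(\mathcal{D})$. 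The identification you flag as the main obstacle is closed exactly along the lines you anticipate: lemma \ref{lem:dualconnectedcomponent} gives $\Omega\mathfrak{D}\simeq \mathcal{Z}(\mathcal{D},\Omega\mathfrak{C})^{rev}$, and since $\Omega\mathfrak{C}\rightarrow\mathcal{D}$ is dominant this relative centralizer collapses to the de-equivariantization itself --- in the bosonic case via $\mathcal{Z}(\mathcal{D})\simeq\mathcal{D}\boxtimes\mathcal{D}^{rev}$ from proposition 3.7 of \cite{DGNO}, and in the fermionic case its slight degeneracy follows from proposition 4.3 of \cite{DNO} --- matching your Müger-center formula for de-equivariantizations.
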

\begin{proof}
Let $\mathfrak{C}$ be a bosonic fusion 1-category. We write $\mathcal{E}:=\mathcal{Z}_{(2)}(\Omega\mathfrak{C})$. Thanks to Deligne's theorem (see \cite{De}), there exists a symmetric monoidal functor $\mathcal{E}\rightarrow\mathbf{Vect}$. We define a braided fusion 1-category $\mathcal{D}:=\Omega\mathfrak{C}\boxtimes_{\mathcal{E}}\mathbf{Vect}$. By proposition 4.30 of \cite{DGNO}, $\mathcal{D}$ is non-degenerate. Further, we view $\mathcal{D}$ as a connected rigid algebra in $\mathbf{Mod}(\Omega\mathfrak{C})\simeq \mathfrak{C}^0$ using the canonical braided monoidal functor $\Omega\mathfrak{C}\rightarrow \mathcal{D}\rightarrow \mathcal{Z}(\mathcal{D})$. It follows from example \ref{ex:algebrasModB} that $\mathcal{D}$ is a separable algebra. Thus, by theorem 5.4.3 of \cite{D8}, the multifusion 2-category $\mathbf{Bimod}_{\mathfrak{C}}(\mathcal{D})$ is Morita equivalent to $\mathfrak{C}$. Further, as $\mathcal{D}$ is connected, it is necessarily indecomposable, so that it follows from section 5.2 of \cite{D8} that $\mathbf{Bimod}_{\mathfrak{C}}(\mathcal{D})$ is in fact a fusion 2-category, and we set $\mathfrak{D}:=\mathbf{Bimod}_{\mathfrak{C}}(\mathcal{D})$. It remains to analyze $\mathfrak{D}^0$. As $\mathcal{D}$ is an algebra in $\mathfrak{C}^0$, corollary 2.3.6 of \cite{D2} shows that the underlying object of any simple $\mathcal{D}$-$\mathcal{D}$-bimodule in $\mathfrak{C}$ is contained in a single connected component of $\mathfrak{C}$. In particular, we have an equivalence $$\mathfrak{D}^0\simeq (\mathbf{Bimod}_{\mathfrak{C}^0}(\mathcal{D}))^0\simeq (\mathbf{Bimod}_{\mathbf{Mod}(\Omega\mathfrak{C})}(\mathcal{D}))^0$$ of fusion 2-categories. Moreover, it was shown in lemma \ref{lem:dualconnectedcomponent} above that $$(\mathbf{Bimod}_{\mathbf{Mod}(\Omega\mathfrak{C})}(\mathcal{D}))^0\simeq \mathbf{Mod}(\mathcal{Z}(\mathcal{D},\Omega\mathfrak{C}))$$ as fusion 2-categories. Finally, as $\mathcal{D}$ is non-degenerate, we have that $\mathcal{Z}(\mathcal{D})\simeq \mathcal{D}\boxtimes\mathcal{D}^{rev}$ by proposition 3.7 of \cite{DGNO}. Further, the canonical braided monoidal functor $\Omega\mathfrak{C}\rightarrow \mathcal{D}$ is surjective (in the sense of definition 2.1 of \cite{DGNO}), so that $\mathcal{Z}(\mathcal{D},\Omega\mathfrak{C})\simeq \mathcal{D}^{rev}$ as braided fusion 1-categories. This concludes the proof in this case.

If $\mathfrak{C}$ is fermionic, then it follows from \cite{De} that there exists a dominant symmetric monoidal functor $\mathcal{E}\rightarrow\mathbf{SVect}$. We then define a braided fusion 1-category $\mathcal{D}:=\Omega\mathfrak{C}\boxtimes_{\mathcal{E}}\mathbf{SVect}$. By proposition 4.30 of \cite{DGNO}, $\mathcal{D}$ is slightly-degenerate. Then, the argument used above shows that $\mathfrak{D}:=\mathbf{Bimod}_{\mathbf{Mod}(\Omega\mathfrak{C})}(\mathcal{D})$ is a fusion 2-category with $\mathfrak{D}^0\simeq \mathbf{Mod}(\mathcal{Z}(\mathcal{D},\Omega\mathfrak{C}))$ as fusion 2-categories. It follows from proposition 4.3 of \cite{DNO} that $\mathcal{Z}(\mathcal{D},\Omega\mathfrak{C})$ is slightly-degenerate as desired, thereby finishing the proof.
\end{proof}

\begin{Theorem}\label{thm:stronglyfusioninvertible}
Every bosonic, respectively fermionic, fusion 2-category is Morita equivalent to the 2-Deligne tensor product of a bosonic, respectively fermionic, strongly fusion 2-category and an invertible fusion 2-category.
\end{Theorem}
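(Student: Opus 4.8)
The plan is to combine the disconnection result of Lemma~\ref{lem:dualdisconnected} with a condensation argument that splits off an invertible factor. Since Morita equivalence is transitive (Theorem~\ref{thm:MoritaF2C}), I would first apply Lemma~\ref{lem:dualdisconnected} to replace the given bosonic (resp.\ fermionic) fusion 2-category by a Morita equivalent fusion 2-category $\mathfrak{D}$ for which $\mathcal{A}:=\Omega\mathfrak{D}$ is non-degenerate (resp.\ slightly-degenerate, so that $\mathcal{Z}_{(2)}(\mathcal{A})\simeq\mathbf{SVect}$); recall $\mathfrak{D}^0\simeq\mathbf{Mod}(\mathcal{A})$. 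The remaining task is to show that such a $\mathfrak{D}$ is Morita equivalent to $\mathfrak{S}\boxtimes\mathfrak{I}$ with $\mathfrak{S}$ strongly fusion and $\mathfrak{I}$ invertible.

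Next I would fix the invertible factor. In the bosonic case $\mathcal{A}$ is non-degenerate, so $\mathbf{Mod}(\mathcal{A})$ is invertible by Lemma~\ref{lem:invertiblecharacterization}, and I set $\mathcal{N}:=\mathcal{A}$. In the fermionic case I would invoke the minimal non-degenerate extension theorem of \cite{JFR} to choose a non-degenerate $\mathcal{N}:=\widetilde{\mathcal{A}}$ with a fully faithful braided embedding $\mathcal{A}\hookrightarrow\widetilde{\mathcal{A}}$ realizing $\mathcal{A}$ as $\mathcal{Z}_{\widetilde{\mathcal{A}}}(\mathbf{SVect})$. In both cases $\mathfrak{I}:=\mathbf{Mod}(\mathcal{N})$ is invertible. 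I would then pass to $\mathfrak{X}:=\mathfrak{D}\boxtimes\mathbf{Mod}(\mathcal{N}^{rev})$, which satisfies $\Omega\mathfrak{X}\simeq\mathcal{A}\boxtimes\mathcal{N}^{rev}$. This braided fusion 1-category carries a canonical connected étale algebra $\mathbf{L}$: in the bosonic case it is the canonical Lagrangian algebra of $\mathcal{Z}(\mathcal{A})\simeq\mathcal{A}\boxtimes\mathcal{A}^{rev}$, while in the fermionic case it is the image of the unit under the right adjoint of the dominant tensor functor $\mathcal{A}\boxtimes\widetilde{\mathcal{A}}^{rev}\rightarrow\widetilde{\mathcal{A}}$, $a\boxtimes b\mapsto a\otimes b$.

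The core step is the condensation of $\mathbf{L}$. Viewed as the $\Omega\mathfrak{X}$-central fusion category $\mathbf{Mod}_{\Omega\mathfrak{X}}(\mathbf{L})$, it is a connected rigid algebra in $\mathfrak{X}^0\simeq\mathbf{Mod}(\Omega\mathfrak{X})$, and it is separable by Example~\ref{ex:algebrasModB}; hence $\mathfrak{S}:=\mathbf{Bimod}_{\mathfrak{X}}(\mathbf{L})$ is a fusion 2-category Morita equivalent to $\mathfrak{X}$ by Theorem~\ref{thm:MoritaF2C}. The key computation is that $\Omega\mathfrak{S}$ is the category of local $\mathbf{L}$-modules: by Lemma~\ref{lem:dualconnectedcomponent} together with the double centralizer theorem in non-degenerate braided fusion categories, $\Omega\mathfrak{S}\simeq\mathbf{Mod}^{loc}_{\mathcal{A}\boxtimes\mathcal{N}^{rev}}(\mathbf{L})\simeq\mathcal{Z}_{\mathcal{N}}(\mathcal{A})$. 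In the bosonic case this is $\mathcal{Z}_{(2)}(\mathcal{A})\simeq\mathbf{Vect}$, while in the fermionic case it is $\mathcal{Z}_{\widetilde{\mathcal{A}}}(\mathcal{Z}_{\widetilde{\mathcal{A}}}(\mathbf{SVect}))\simeq\mathbf{SVect}$. Thus $\mathfrak{S}$ is bosonic (resp.\ fermionic) strongly fusion.

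Finally I would reassemble. Tensoring the Morita equivalence $\mathfrak{S}\simeq\mathfrak{D}\boxtimes\mathbf{Mod}(\mathcal{N}^{rev})$ with $\mathfrak{I}=\mathbf{Mod}(\mathcal{N})$ and using that the 2-Deligne tensor product respects Morita equivalence (Lemma~\ref{lem:bimodule2Deligne}) shows that $\mathfrak{S}\boxtimes\mathfrak{I}$ is Morita equivalent to $\mathfrak{D}\boxtimes\mathbf{Mod}(\mathcal{N}^{rev})\boxtimes\mathbf{Mod}(\mathcal{N})\simeq\mathfrak{D}\boxtimes\mathbf{Mod}(\mathcal{Z}(\mathcal{N}))$. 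As $\mathcal{Z}(\mathcal{N})$ is a Drinfeld center it is Witt-trivial, so $\mathbf{Mod}(\mathcal{Z}(\mathcal{N}))$ is Morita equivalent to $\mathbf{2Vect}$ by Theorem~\ref{thm:WittEquivalence}, whence $\mathfrak{D}\boxtimes\mathbf{Mod}(\mathcal{Z}(\mathcal{N}))$ is Morita equivalent to $\mathfrak{D}$; transitivity then yields that the original fusion 2-category is Morita equivalent to $\mathfrak{S}\boxtimes\mathfrak{I}$. I expect the main obstacle to lie entirely in the fermionic case: both the appeal to the minimal non-degenerate extension theorem of \cite{JFR} and the precise identification $\Omega\mathfrak{S}\simeq\mathbf{Mod}^{loc}_{\mathcal{A}\boxtimes\widetilde{\mathcal{A}}^{rev}}(\mathbf{L})\simeq\mathbf{SVect}$ are delicate, the latter because it requires matching the condensation of $\mathbf{L}$ inside the 2-category with the formation of local modules in the braided 1-category.
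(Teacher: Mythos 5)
Your proposal is correct and takes essentially the same route as the paper: both arguments start from Lemma~\ref{lem:dualdisconnected}, invoke the minimal non-degenerate extension theorem of \cite{JFR} in the fermionic case, tensor with the invertible factor $\mathbf{Mod}(\mathcal{N}^{rev})$, condense a canonical connected separable algebra whose identity component is controlled by Lemma~\ref{lem:dualconnectedcomponent}, and reassemble via the Morita triviality of $\mathbf{Mod}(\mathcal{N})\boxtimes\mathbf{Mod}(\mathcal{N}^{rev})$ together with Lemma~\ref{lem:bimodule2Deligne} and transitivity (Theorem~\ref{thm:MoritaF2C}). The differences are only in bookkeeping: you encode the condensed object as the \'etale algebra $\mathbf{L}$ and compute $\Omega$ of the condensation through local modules, whereas the paper condenses the corresponding central fusion 1-category ($\mathcal{B}$, respectively the minimal extension $\mathcal{C}$) and computes the centralizer $\mathcal{Z}(\mathcal{C},\mathcal{B})$ directly inside $\mathcal{Z}(\mathcal{C})\simeq\mathcal{C}\boxtimes\mathcal{C}^{rev}$, and in the fermionic case you perform a single condensation after tensoring where the paper condenses twice --- equivalent reformulations of the same argument.
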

\begin{proof}
Let $\mathfrak{C}$ be a fusion 2-category, and write $\mathcal{E}$ for the symmetric fusion 1-category $\mathcal{Z}_{(2)}(\Omega\mathfrak{C})$. We treat the cases of bosonic and fermionic fusion 2-categories separately.

\noindent \textbf{Case 1:} We assume that the fusion 2-category $\mathfrak{C}$ is bosonic. In that case, it follows from lemma \ref{lem:dualdisconnected} that there exists a fusion 2-category $\mathfrak{D}$ that is Morita equivalent to $\mathfrak{C}$, and such that $\mathcal{B}:=\Omega\mathfrak{D}$ is a non-degenerate braided fusion 1-category. Let us now define $\mathfrak{F}:= \mathfrak{D}\boxtimes \mathbf{Mod}(\mathcal{B}^{rev})$, so that $\Omega\mathfrak{F}\simeq \mathcal{B}\boxtimes \mathcal{B}^{rev}$ by proposition 5.1 of \cite{D3}. In particular, equipping $\mathcal{B}$ with the canonical braided monoidal functor $\mathcal{B}\boxtimes \mathcal{B}^{rev}\rightarrow \mathcal{Z}(\mathcal{B})$, we can view $\mathcal{B}$ as a separable algebra in the fusion 2-category $\mathbf{Mod}(\mathcal{B}\boxtimes \mathcal{B}^{rev})\simeq\mathfrak{F}^0$. Let us write $\mathfrak{G}$ for the fusion 2-category given by $\mathbf{Bimod}_{\mathfrak{F}}(\mathcal{B})$. By example 5.3.7 of \cite{D8}, see also lemma \ref{lem:dualconnectedstrong} above, we find that $$\mathfrak{G}^0= \mathbf{Bimod}_{\mathfrak{F}^0}(\mathcal{B})\simeq \mathbf{2Vect}.$$ This shows that $\mathfrak{G}$ is a bosonic strongly fusion 2-category.

Finally, it follows from lemma \ref{lem:bimodule2Deligne} that Morita equivalence is compatible with the 2-Deligne tensor product. In particular, as $\mathbf{2Vect}$ is Morita equivalent to $\mathbf{Mod}(\mathcal{B}^{rev})\boxtimes\mathbf{Mod}(\mathcal{B})$, we have that $\mathfrak{C}$ is Morita equivalent to the fusion 2-category $\mathfrak{C}\boxtimes\mathbf{Mod}(\mathcal{B}^{rev})\boxtimes\mathbf{Mod}(\mathcal{B})$.
Furthermore, the above discussion shows that $\mathfrak{C}\boxtimes\mathbf{Mod}(\mathcal{B}^{rev})\boxtimes\mathbf{Mod}(\mathcal{B})$ is Morita equivalent to $\mathfrak{G}\boxtimes \mathbf{Mod}(\mathcal{B})$. But, Morita equivalence is an equivalence relation by theorem \ref{thm:MoritaF2C}, so that we get the desired result in this case.

\noindent \textbf{Case 2:} We assume that the fusion 2-category $\mathfrak{C}$ is fermionic. In that case, it follows from lemma \ref{lem:dualdisconnected} that there exists a fusion 2-category $\mathfrak{D}$ that is Morita equivalent to $\mathfrak{C}$, and such that $\mathcal{B}:=\Omega\mathfrak{D}$ is a slightly degenerate braided fusion 1-category. Now, thank to the main theorem of \cite{JFR}, there exists a non-degenerate braided fusion 1-category $\mathcal{C}$ together with a braided embedding $\mathcal{B}\subseteq \mathcal{C}$, such that the centralizer of $\mathcal{B}$ in $\mathcal{C}$ is exactly $\mathbf{SVect}$. Let us write $\mathfrak{E}$ for the fusion 2-category $\mathbf{Bimod}_{\mathfrak{D}}(\mathcal{C})$. By lemma \ref{lem:dualconnectedstrong} above, we find that $$\mathfrak{E}^0= \mathbf{Bimod}_{\mathfrak{D}^0}(\mathcal{C})\simeq \mathbf{Mod}(\mathbf{SVect}\boxtimes \mathcal{C}^{rev}).$$ Namely, as $\mathcal{C}$ is a non-degenerate braided fusion 1-category, we have that $\mathcal{Z}(\mathcal{C})\simeq \mathcal{C}\boxtimes\mathcal{C}^{rev}$ by proposition 3.7 of \cite{DGNO}, so that $\mathcal{Z}(\mathcal{C},\mathcal{B})\simeq \mathbf{SVect}\boxtimes \mathcal{C}^{rev}$ as braided fusion 1-categories. Then, we consider the fusion 2-category $\mathfrak{F}:=\mathfrak{E}\boxtimes \mathbf{Mod}(\mathcal{C})$. Using the same argument as the one used in the proof of the first case, we find that $\mathfrak{G}:=\mathbf{Bimod}_{\mathfrak{F}}(\mathcal{C})$ is a fermionic strongly fusion 2-category.

Finally, as $\mathbf{2Vect}$ is Morita equivalent to $\mathbf{Mod}(\mathcal{C})\boxtimes\mathbf{Mod}(\mathcal{C}^{rev})$, we have that $\mathfrak{C}$ is Morita equivalent to the fusion 2-category $\mathfrak{C}\boxtimes\mathbf{Mod}(\mathcal{C})\boxtimes\mathbf{Mod}(\mathcal{C}^{rev})$.
Furthermore, the above discussion implies that $\mathfrak{C}\boxtimes\mathbf{Mod}(\mathcal{C})\boxtimes\mathbf{Mod}(\mathcal{C}^{rev})$ is Morita equivalent to $\mathfrak{G}\boxtimes \mathbf{Mod}(\mathcal{C}^{rev})$. This concludes the proof.
\end{proof}

\begin{Remark}\label{rem:decompositionnonunique}
We emphasize that the invertible fusion 2-category supplied by theorem \ref{thm:stronglyfusioninvertible} is not unique. Namely, if $\mathcal{C}$ is any minimal non-degenerate extension of $\mathbf{SVect}$, then it follows from theorem \ref{thm:WittEquivalence} that $\mathbf{2SVect}\boxtimes\mathbf{Mod}(\mathcal{C})$ is Morita equivalent to $\mathbf{2SVect}$. On the one hand, in the fermionic case, the strongly fusion 2-categories is not quite unique either, as will be explained in \cite{JF3}. On the other hand, we will show in proposition \ref{prop:bosonicWittgroups} below that, in the bosonic case, the strongly fusion 2-category supplied by theorem \ref{thm:stronglyfusioninvertible} is unique up to monoidal equivalence, and that the invertible fusion 2-category is unique up to Morita equivalence.
\end{Remark}

\subsection{Connected}

We say that an algebra $A$ in a fusion 2-category is called strongly connected if its unit $i:I\rightarrow A$ is the inclusion of a simple summand. We begin by proving the following lemma.

\begin{Lemma}\label{lem:criterionbimoduleconnected}
Let $A$ be a strongly connected separable algebra in the fusion 2-category $\mathfrak{C}$. If the underlying object of $A$ has a summand in every connected component of $\mathfrak{C}$, then $\mathbf{Bimod}_{\mathfrak{C}}(A)$ is a connected fusion 2-category.
\end{Lemma}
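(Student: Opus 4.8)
The plan is to prove directly that $\mathbf{Bimod}_{\mathfrak{C}}(A)$ has a single connected component, namely that of its monoidal unit $A$. First I would record the ambient structure. Since $A$ is separable, Theorem \ref{thm:bimodulefinitesemisimple} gives that $\mathbf{Bimod}_{\mathfrak{C}}(A)$ is finite semisimple, and Theorem \ref{thm:dualfusion2category} that it is multifusion; strong connectedness of $A$ ensures that the unit object $A$ is simple, so $\mathbf{Bimod}_{\mathfrak{C}}(A)$ is genuinely fusion. Recalling that a finite semisimple $2$-category is connected exactly when it is indecomposable as a direct sum, it then suffices to show that every simple $A$-$A$-bimodule $M$ lies in the connected component of $A$.

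The first key step is to reduce to free bimodules. Write $F\colon\mathfrak{C}\to\mathbf{Bimod}_{\mathfrak{C}}(A)$, $X\mapsto A\Box X\Box A$, for the free functor, left adjoint to the forgetful functor $U$. By separability the counit $F(U(M))\to M$ is a split epimorphism, say with section $s$ and retraction $r$. Decomposing $U(M)=\bigoplus_k X_k$ into simple objects of $\mathfrak{C}$ gives $F(U(M))=\bigoplus_k F(X_k)$, and the components $s_k,r_k$ satisfy $r\circ s=\bigoplus_k(r_k\circ s_k)\cong \mathrm{Id}_M$ in $\mathrm{End}_{A\text{-}A}(M)$. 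As $M$ is simple, $\mathrm{Id}_M$ is a simple object of this endomorphism category, so exactly one term $r_k\circ s_k$ is an isomorphism; hence $M$ is a genuine direct summand of $F(X)$ for a single simple summand $X$ of $U(M)$. It therefore suffices to connect each such free bimodule $F(X)$ to $A$.

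Next I would bring in the hypothesis through the adjunction isomorphism $\mathrm{Hom}_{A\text{-}A}(F(X),A)\cong\mathrm{Hom}_{\mathfrak{C}}(X,A)$. Since the underlying object of $A$ has a summand $a$ in the connected component of the simple object $X$, the category $\mathrm{Hom}_{\mathfrak{C}}(X,a)$ is nonzero, whence $\mathrm{Hom}_{A\text{-}A}(F(X),A)\neq 0$ and $F(X)$ meets the component of $A$. Symmetrically, $\mathrm{Hom}_{A\text{-}A}(F(X),M)\cong\mathrm{Hom}_{\mathfrak{C}}(X,U(M))\neq 0$ shows that $M$ meets the component of $F(X)$.

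The main obstacle is precisely that, because Schur's lemma fails for simple objects of a semisimple $2$-category, these nonzero $\mathrm{Hom}$-spaces only guarantee that \emph{some} simple summand of $F(X)$ is connected to $A$ and \emph{some} summand is connected to $M$; to conclude I must show that \emph{all} simple summands of $F(X)$ lie in a single connected component, equivalently that $\mathrm{End}_{A\text{-}A}(F(X))\cong\mathrm{Hom}_{\mathfrak{C}}(X,A\Box X\Box A)$ is an indecomposable multifusion $1$-category. Here I expect to use rigidity of $\mathfrak{C}$: for any simple $X$ the coevaluation forces $\mathrm{Hom}_{\mathfrak{C}}(I,X\Box X^{*})\neq 0$, so the unit component lies in $[X]\cdot[X^{*}]$ and every component becomes invertible up to the whole of $\pi_0(\mathfrak{C})$, giving $\pi_0(\mathfrak{C})\cdot[X]\cdot\pi_0(\mathfrak{C})=\pi_0(\mathfrak{C})$. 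Combined with the hypothesis that $A$ has a summand in every component, so that its support is all of $\pi_0(\mathfrak{C})$, this should let me connect any two simple summands of $A\Box X\Box A$ through the distinguished summand $I\Box X\Box I=X$, controlling the component supports of simple bimodules as in corollary 2.3.6 of \cite{D2}. Establishing this internal connectedness of the free bimodules is the crux; once it is in hand, the two nonzero $\mathrm{Hom}$-spaces above place $M$ and $A$ in the same component of $\mathbf{Bimod}_{\mathfrak{C}}(A)$, completing the proof.
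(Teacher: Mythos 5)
Your overall skeleton matches the paper's: reduce to the free bimodules $F(X)=A\Box X\Box A$ on simple objects $X$ of $\mathfrak{C}$, exploit the free--forgetful adjunction $Hom_{A\textrm{-}A}(F(X),N)\simeq Hom_{\mathfrak{C}}(X,U(N))$, and use the hypothesis on the support of $A$ to produce a nonzero bimodule 1-morphism from $F(X)$ to the unit $A$ (your use of the hypothesis here is fine, and in fact slightly cleaner than the paper's, which instead maps $F(X)$ to $A\Box A$). But the step you yourself single out as the crux --- that all simple bimodule summands of $F(X)$ lie in a single connected component of $\mathbf{Bimod}_{\mathfrak{C}}(A)$ --- is exactly the step you do not prove, and the route you sketch for it cannot work as stated. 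Your sketch runs entirely through $\pi_0(\mathfrak{C})$, i.e.\ through connecting the simple summands of the \emph{underlying object} $A\Box X\Box A$ inside $\mathfrak{C}$. This is the wrong 2-category: connectedness of underlying objects in $\mathfrak{C}$ does not imply connectedness of the corresponding simple bimodules in $\mathbf{Bimod}_{\mathfrak{C}}(A)$, because a nonzero 1-morphism in $\mathfrak{C}$ between underlying objects need not underlie any bimodule 1-morphism. For instance, $\mathbf{2Vect}$ is connected, yet $\mathbf{Bimod}_{\mathbf{2Vect}}(\mathcal{C})$ for a decomposable multifusion 1-category $\mathcal{C}$ is not; such a $\mathcal{C}$ is of course not strongly connected, which is precisely the point: the hypothesis must enter through the bimodule Hom-categories, whereas your proposed crux argument never invokes strong connectedness at all.

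The paper closes this gap with a short, decisive observation for which you already have every ingredient in hand: for any simple $C$ in $\mathfrak{C}$, the free bimodule $F(C)$ is in fact a \emph{simple} object of $\mathbf{Bimod}_{\mathfrak{C}}(A)$, so there is nothing to connect internally. Indeed, under the adjunction equivalence $End_{A\textrm{-}A}(F(C))\simeq Hom_{\mathfrak{C}}(C,A\Box C\Box A)$, the identity 1-morphism of $F(C)$ corresponds to the unit of the adjunction, namely $i\Box Id_C\Box i$; strong connectedness says that $i$ is the inclusion of the simple summand $I$ of $A$, so $i\Box Id_C\Box i$ is the inclusion of the simple summand $C$ of $A\Box C\Box A$, hence a simple object of that Hom-1-category. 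Since an object of a semisimple 2-category is simple exactly when its identity 1-morphism is simple, and equivalences of semisimple 1-categories preserve simplicity, $F(C)$ is simple. With this in place, your two nonzero Hom-categories $Hom_{A\textrm{-}A}(F(X),A)$ and $Hom_{A\textrm{-}A}(F(X),M)$ immediately put $M$, $F(X)$, and $A$ in the same component; moreover your separability-splitting reduction becomes unnecessary (a nonzero bimodule 1-morphism $F(X)\rightarrow M$ already follows from the adjunction applied to the inclusion of a simple summand $X$ of $U(M)$). As written, however, your proposal has a genuine gap at its central step.
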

\begin{proof}
As $A$ is connected, it is a fortiori indecomposable, which ensures that $\mathbf{Bimod}_{\mathfrak{C}}(A)$ is a fusion 2-category by corollary 5.2.5 of \cite{D8}. Let $P$ be any simple $A$-$A$-bimodule. There exists a simple object $C$ of $\mathfrak{C}$ and a non-zero 1-morphism $A\Box C\Box A\rightarrow P$ of $A$-$A$-bimodules. For instance, pick any summand of $P$ viewed as an object of $\mathfrak{C}$, and consider the induced map of $A$-$A$-bimodules. We claim that $A\Box C\Box A$ is a simple $A$-$A$-bimodule. Namely, under the adjunction $$Hom_{A\textrm{-}A}(A\Box C\Box A,A\Box C\Box A)\simeq Hom_{\mathfrak{C}}(C,A\Box C\Box A),$$ the identity $A$-$A$-bimodule 1-morphism on $P$ corresponds to the 1-morphism $i\Box Id_C\Box i$ in $\mathfrak{C}$. But, thanks to our assumptions, the 1-morphism $i:I\hookrightarrow A$ is the inclusion of the monoidal unit. It therefore follows that $i\Box Id_C\Box i$ is a simple 1-morphism in $\mathfrak{C}$, which establishes that $A\Box C\Box A$ is a simple $A$-$A$-bimodule. Now, $A$ has a summand in every connected component of $\mathfrak{C}$, so that there exists a non-zero 1-morphism $f:C\rightarrow A$. Further, as $A$ is connected, the composite $A$-$A$-bimodule 1-morphism $A\Box C\Box A\rightarrow A\Box A\Box A\rightarrow A\Box A$ is non-zero. Thence, every simple object of $\mathbf{Bimod}_{\mathfrak{C}}(A)$ is in the connected component of the simple object $A\Box A$, concluding the proof of the lemma.
\end{proof}

We now establish the second main theorem of this section.

\begin{Theorem}\label{thm:Moritaconnected}
Every fusion 2-category is Morita equivalent to a connected fusion 2-category.
\end{Theorem}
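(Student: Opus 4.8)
The plan is to bootstrap off Theorem~\ref{thm:stronglyfusioninvertible} and then reduce everything to a single connectedness criterion. By Theorem~\ref{thm:stronglyfusioninvertible}, an arbitrary fusion 2-category $\mathfrak{C}$ is Morita equivalent to $\mathfrak{S}\boxtimes\mathfrak{I}$ with $\mathfrak{S}$ strongly fusion and $\mathfrak{I}$ invertible. By Lemma~\ref{lem:invertiblecharacterization}, $\mathfrak{I}\simeq\mathbf{Mod}(\mathcal{B})$ for a non-degenerate braided fusion 1-category $\mathcal{B}$, so $\mathfrak{I}$ is already connected. Since Morita equivalence is compatible with the 2-Deligne tensor product (the remark following Lemma~\ref{lem:bimodule2Deligne}) and since $\pi_0$ sends $\boxtimes$ to products (lemma 4.3 of \cite{D3}), the 2-Deligne tensor product of two connected fusion 2-categories is again connected. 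As Morita equivalence is transitive (Theorem~\ref{thm:MoritaF2C}), it therefore suffices to prove that every strongly fusion 2-category $\mathfrak{S}$ is Morita equivalent to a connected fusion 2-category.

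To realize such an $\mathfrak{S}$ as Morita equivalent to a connected fusion 2-category, by Theorem~\ref{thm:MoritaF2C} it is enough to exhibit an indecomposable separable algebra $A$ in $\mathfrak{S}$ with $\mathbf{Bimod}_{\mathfrak{S}}(A)$ connected. Here Lemma~\ref{lem:criterionbimoduleconnected} is the key tool: it suffices to produce a \emph{strongly connected} separable algebra $A$ whose underlying object has a summand in every connected component of $\mathfrak{S}$. Since $\pi_0(\mathfrak{S})$ is a finite group $G$ with $\mathfrak{S}^0\simeq\mathbf{2Vect}$ (bosonic case) or $\mathfrak{S}^0\simeq\mathbf{2SVect}$ (fermionic case), the whole problem reduces to constructing one such algebra with full support over $G$.

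In the bosonic case $\mathfrak{S}\simeq\mathbf{2Vect}_G^{\pi}$ for some $\pi\in H^4(G;\mathds{k}^{\times})$ by Example~\ref{ex:twistedgroupgraded2vectorspaces}, and a strongly connected separable algebra with full support is precisely a $\pi$-anomalous, faithfully $G$-graded fusion 1-category; when $\pi\neq 0$ it cannot be pointed, so the grades must be enlarged. I would build it from a finite group extension $1\to K\to\widehat{G}\xrightarrow{q}G\to 1$ chosen so that $q^{*}\pi=0$, together with a $3$-cochain $\mu$ on $\widehat{G}$ satisfying $d\mu=q^{*}\pi$. The object $A=\boxplus_{g\in G}\big(\boxplus_{x\in q^{-1}(g)}\mathbf{Vect}\big)$, with multiplication induced by the group law of $\widehat{G}$ and associativity data given by $\mu$, then has pentagon defect exactly $\pi$ (because $d\mu$ depends only on the images in $G$), so it is a genuine algebra in $\mathbf{2Vect}_G^{\pi}$. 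It is strongly connected, its unit being the simple summand indexed by $e\in\widehat{G}$; it has a summand in every grade because $q$ is surjective; and it is a connected rigid algebra of non-zero dimension, hence separable by theorem 3.2.4 of \cite{D7}. Lemma~\ref{lem:criterionbimoduleconnected} then shows $\mathbf{Bimod}_{\mathfrak{S}}(A)$ is connected and Morita equivalent to $\mathfrak{S}$. The fermionic case runs in parallel, replacing $\mathbf{Vect}$ by its super-analogue and the ordinary trivialization by its super-cohomological counterpart.

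The main obstacle is the cohomological input underlying this construction: for every finite group $G$ and every $\pi\in H^4(G;\mathds{k}^{\times})$, the existence of a finite extension $q\colon\widehat{G}\twoheadrightarrow G$ with $q^{*}\pi=0$. This trivialization of a finite-group anomaly on an extension is a known fact, which I would invoke directly, and its super-cohomological analogue is the corresponding technical heart of the fermionic case. Once these are granted, the theorem follows from the reduction above together with the separability criterion of \cite{D7} and Lemma~\ref{lem:criterionbimoduleconnected}.
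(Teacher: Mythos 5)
Your reduction and your bosonic construction are essentially the paper's own argument. The paper likewise reduces via Theorem \ref{thm:stronglyfusioninvertible} to strongly fusion 2-categories, uses Lemma \ref{lem:criterionbimoduleconnected} as the connectedness criterion, and obtains the required algebra as the image of $\mathbf{Vect}_H$ under an essentially surjective monoidal 2-functor $\mathbf{2Vect}_H\rightarrow\mathfrak{S}$ built from a finite cover trivializing the anomaly; in the bosonic case this is precisely your $A$. The cohomological input you propose to ``invoke directly'' is theorem 1 of \cite{Op}, which the paper reproves and generalizes as Lemma \ref{lem:pullbackvanishing}, so citing it is legitimate.

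The genuine gap is the fermionic case, which you wave off as ``running in parallel''. It does not: your argument presupposes that a fermionic strongly fusion 2-category is determined by its component group $G$ together with a class in (extended) supercohomology, i.e.\ that it is a twisted form of $\mathbf{2SVect}_G$. The paper explicitly notes, citing example 2.1.27 of \cite{DR}, that not every fermionic strongly fusion 2-category is of this form. The actual structure is a genuine finite 2-group $\mathcal{G}$ of invertible objects: its group $G'$ of equivalence classes of objects is a $\mathbb{Z}/2$-extension of $G$ (each component, being equivalent to $\mathbf{2SVect}$, contains two classes of simple objects), its $\pi_2$ is $\mathbb{Z}/2$ (the odd line in $\Omega\mathfrak{S}\simeq\mathbf{SVect}$), and there is a Postnikov class $\omega\in H^3(G';\mathbb{Z}/2)$. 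To get a strongly connected separable algebra with full support one needs a finite group $H$ and an essentially surjective monoidal 2-functor from $H$ to $\mathcal{G}$ along which to push $\mathbf{Vect}_H$, and this forces a two-step trivialization---first kill $\omega$ on a finite cover of $G'$, then kill the pulled-back 4-cocycle---which is exactly the content of Lemmas \ref{lem:structurelinear3groups} and \ref{lem:pullbackvanishing} in the paper. Notably, the paper's proof never splits into bosonic and fermionic cases at all: it runs uniformly through the 2-group of invertible objects, using from \cite{JFY} only that every simple object of a strongly fusion 2-category is invertible. Your proof would be repaired by replacing the grading group $G$ by this 2-group and supplying the two-step trivialization.
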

\begin{proof}
By theorem \ref{thm:stronglyfusioninvertible}, every fusion 2-category is Morita equivalent to the 2-Deligne tensor product of a strongly fusion 2-category and an invertible fusion 2-category. It is therefore enough to establish that every strongly fusion 2-category is Morita equivalent to a connected fusion 2-category.

We fix $\mathfrak{C}$ a strongly fusion 2-category, and write $\mathfrak{C}^{\times}$ for the monoidal sub-2-category of $\mathfrak{C}$ on the invertible objects and the invertible morphisms. As $\mathfrak{C}$ is a monoidal $\mathds{k}$-linear 2-category, there is an equivalence of monoidal 2-categories $\mathfrak{C}^{\times}\simeq \mathcal{G}\times^{\pi}\mathrm{B}^2\mathds{k}^{\times}$ for some finite 2-group $\mathcal{G}$ and 4-cocycle $\pi$ for $\mathcal{G}$ with coefficients in $\mathds{k}^{\times}$. We consider the fusion 2-category $\mathbf{2Vect}_{\mathcal{G}}^{\pi}$ from example \ref{ex:2groupgraded2vectorspaces}. Then, the monoidal inclusion $\mathfrak{C}^{\times}\hookrightarrow \mathfrak{C}$ induces a monoidal 2-functor $\mathbf{J}:\mathbf{2Vect}_{\mathcal{G}}^{\pi}\rightarrow \mathfrak{C}$. This follows from the fact that $\mathbf{2Vect}_{\mathcal{G}}^{\pi}$ is constructed from $\mathfrak{C}^{\times}$ through linearization, local Cauchy completion, and Cauchy completion, which are all operations satisfying sufficient universal properties. Moreover, because $\mathfrak{C}$ is a strongly fusion 2-category, the main results of \cite{JFY} yield that every simple object of $\mathfrak{C}$ in invertible. As a consequence, we find that $\mathbf{J}$ is essentially surjective on objects.

Now, if follows lemma \ref{lem:structurelinear3groups} below that there exists a finite group $H$ and a monoidal 2-functor $H\times\mathrm{B}^2\mathds{k}^{\times}\rightarrow \mathcal{G}\times^{\pi}\mathrm{B}^2\mathds{k}^{\times}\simeq\mathfrak{C}^{\times}$, which is essentially surjective on objects. After linearization, local Cauchy completion, and Cauchy completion, this yields a monoidal 2-functor $\mathbf{K}:\mathbf{2Vect}_H\rightarrow \mathbf{2Vect}_{\mathcal{G}}^{\pi}$ between fusion 2-categories such that the composite $\mathbf{J}\circ\mathbf{K}$ is essentially surjective on objects. The fusion 1-category $\mathbf{Vect}_H$ equipped with the non-trivial grading by $H$ defines a strongly connected rigid algebra in $\mathbf{2Vect}_H$, which is separable thanks to corollary 3.3.7 of \cite{D7}. We write $A$ for the separable algebra $\mathbf{J}(\mathbf{K}(\mathbf{Vect}_H))$ in $\mathfrak{C}$. We note that $A$ is strongly connected as $\mathbf{J}\circ\mathbf{K}$ is a monoidal 2-functor between fusion 2-categories. By construction, we also have that the underlying object of $A$ contains every simple object of $\mathfrak{C}$ as a direct summand. Thus, the hypotheses of lemma \ref{lem:criterionbimoduleconnected} are satisfied, and we get that $\mathbf{Bimod}_{\mathfrak{C}}(A)$ is a connected fusion 2-category.
\end{proof}

We will need the following technical lemma generalizing theorem 1 of \cite{Op} to cohomology with twisted coefficients.

\begin{Lemma}\label{lem:pullbackvanishing}
Let $G$ be a finite group, and $A$ a torsion abelian group equipped with a $G$ action. For $n\geq 2$, given any $n$-cocycle $\omega$ for $G$ with coefficients in $A$, there exists a finite group $\widetilde{G}$ together with a surjective group homomorphism $p:\widetilde{G}\rightarrow G$ such that the pullback $p^*\omega$ is a coboundary.
\end{Lemma}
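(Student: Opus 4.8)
The plan is to reduce to finite coefficient modules, settle the statement in degree $2$ by an explicit extension-theoretic argument, and then propagate it to all $n\geq 2$ by dimension shifting. As a first step I would replace $A$ by a finite $G$-module: since $G$ is finite, the cocycle $\omega\colon G^n\to A$ has finite image, and the $\mathbb{Z}[G]$-submodule $A'\subseteq A$ generated by this image is a finitely generated torsion abelian group, hence finite and $G$-stable. Then $\omega$ is a cocycle valued in $A'$, and any cochain trivializing $p^*\omega$ over $A'$ also trivializes it over $A$ via the inclusion $A'\hookrightarrow A$. So I may assume $A$ is finite.

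For the base case $n=2$, the class $[\omega]\in H^2(G;A)$ classifies a group extension $1\to A\to E\xrightarrow{\pi} G\to 1$ inducing the given $G$-action on $A$, and $E$ is finite because $A$ and $G$ are. I would take $\widetilde{G}:=E$ and $p:=\pi$. The pulled-back extension $\pi^*E=\{(e,e')\in E\times E: \pi(e)=\pi(e')\}$ is split by the diagonal homomorphism $e\mapsto(e,e)$; equivalently, $\mathrm{id}_E$ is a homomorphic lift of $\pi$ through $\pi$. Hence $p^*[\omega]=0$ in $H^2(\widetilde{G};p^*A)$, as required.

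For the inductive step ($n>2$) I would use dimension shifting. Embed $A$ into the co-induced module $I:=\mathrm{Coind}_1^G A=\mathrm{Map}(G,A)$, which is finite (as $A$ and $G$ are) and cohomologically trivial in positive degrees by Shapiro's lemma, and set $B:=I/A$, again a finite $G$-module. The long exact sequence of $0\to A\to I\to B\to 0$ yields a connecting isomorphism $\delta\colon H^{n-1}(G;B)\xrightarrow{\sim} H^n(G;A)$ for $n\geq 2$, so $[\omega]=\delta[\psi]$ for some $[\psi]\in H^{n-1}(G;B)$. Applying the inductive hypothesis to the finite module $B$ produces a finite surjection $p\colon\widetilde{G}\to G$ with $p^*[\psi]=0$. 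Because $p^*$ is a morphism of cohomological $\delta$-functors and the sequence $0\to A\to I\to B\to 0$ pulls back to the analogous sequence of $\widetilde{G}$-modules, $p^*$ commutes with the connecting maps, giving $p^*[\omega]=\delta'(p^*[\psi])=0$, where $\delta'$ is the connecting map over $\widetilde{G}$. This $\widetilde{G}$ is the desired cover.

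I expect the conceptual crux to be the degree-$2$ base case together with the preliminary reduction to finite $A$: it is precisely the finiteness of the coefficients that lets me realize $[\omega]$ as an honest finite group extension and then use that extension itself as the cover. Once this is in place, the inductive step is formal, the only point requiring care being the naturality of the connecting homomorphism under $p^*$ and the verification that $B$ remains a finite $G$-module so that the induction on $n$ (with the module allowed to vary) closes; the hypothesis $n\geq 2$ is exactly what is needed to stop the descent at the extension-theoretic base case, since the analogous statement fails in degree $1$.
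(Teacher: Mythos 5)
Your proof is correct, but it takes a genuinely different route from the paper's. The paper covers $G$ by a finitely generated free group $f:F\twoheadrightarrow G$, notes that $H^m(F;-)$ vanishes for $m\geq 2$, and runs the Lyndon--Hochschild--Serre spectral sequence of $N\hookrightarrow F\twoheadrightarrow G$ (where $N=\ker f$ is again free by Nielsen--Schreier): since the $E_2$-page is concentrated in the rows $q=0,1$, the class $\omega$ must be hit by a $d_2$-differential coming from a homomorphism $\psi$ out of $N$; as $\psi$ factors through the finite quotient $N/K$, with $K$ the subgroup generated by commutators and $k$-th powers ($k$ the exponent of the image of $\psi$, finite because $A$ is torsion), naturality of the spectral sequence shows that $\widetilde{G}:=F/K$ works. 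You instead reduce to finite coefficients (via the finite $G$-submodule $A'$ generated by the image of $\omega$ --- a legitimate reduction, since a trivializing cochain valued in $A'$ is in particular one valued in $A$), settle $n=2$ by the classical fact that an extension class inflates to zero on its own extension group, and then dimension-shift using the coinduced module $\mathrm{Map}(G,A)$, Shapiro's lemma, and the compatibility of connecting homomorphisms with pullback along $p$; you also correctly note that the induction must quantify over all finite coefficient modules, and that $n\geq 2$ is exactly where the descent terminates. Both arguments are complete. Yours is more elementary and self-contained, avoiding spectral sequences and facts about free groups altogether, and unwinding the recursion exhibits $\widetilde{G}$ as a single extension of $G$ by the $(n-2)$-fold dimension shift of $A$; the paper's argument treats all $n\geq 2$ uniformly without induction and produces $\widetilde{G}$ concretely as a finite quotient of a free presentation of $G$, with the torsion hypothesis on $A$ entering only to guarantee finiteness of $N/K$.
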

\begin{proof}
We begin by recording the following fact, which is an immediate consequence of corollary 6.2.23 of \cite{Wei}, and which we will use repeatedly:\ For any free group $F$, abelian group $B$ with an action by $F$, and $m\geq 2$, we have $H^m(F;B)=0$. Let us now pick a surjective group homomorphism $f:F\twoheadrightarrow G$ with $F$ a finitely generated free group. In particular, it is clear that $f^*\omega$ is a coboundary. For our purposes, it is useful to recast this observation through a Lyndon-Hochschild-Serre spectral sequence. More precisely, let us write $N$ for the kernel of $f$, and note that, thanks to the Nielsen-Schreier theorem, $N$ is again a finitely generated free group, and that it acts trivially on $A$. We can consider the Lyndon-Hochschild-Serre spectral sequence \cite{HS} $$E_2^{p,q}=H^p(G;H^q(N;A))\Rightarrow H^{p+q}(F;A)$$ associated to the short exact sequence of groups $N\hookrightarrow F\twoheadrightarrow G$. By the previous observations, the $E_2$ page of this spectral sequence is zero unless $q=0,1$, and its $E_{\infty}$ page is zero unless $p+q\leq 1$. In particular, $\omega$ viewed as an element of $E^{0,n}_2=H^n(G;A)$ is in the image of the $d_2$ differential $$d_2:H^1(N;H^{n-2}(G;A))=Hom(N,H^{n-2}(G;A))\rightarrow H^0(N;H^n(G;A))=H^n(G;A).$$ Let us write $\psi:N\rightarrow H^{n-2}(G;A)$ for a homomorphism representing a class in $H^1(N;H^{n-2}(G;A))$ such that $d_2(\psi) = \omega$. Now, if $k$ denotes the exponent of the image of $\psi$ in the torsion abelian group $H^{n-2}(G;A)$, we have that $\psi$ factors through the quotient $N/K$ with $K$ the subgroup of $N$ generated by commutators and $k$-th powers. Moreover, the subgroup $K$ of $N$ is characteristic, i.e.\ it is preserved by any automorphism of $N$, so that $K$ is also a normal subgroup of $F$. In particular, we may consider the quotient group $\widetilde{G}:=F/K$, which is finite as $N/K$ is finite. Furthermore, there is a surjective group homomorphism $p:\widetilde{G}\rightarrow G$. We can therefore consider the following homomorphism of short exact sequences of groups: $$\begin{tikzcd}[sep=small]
N \arrow[r, hook] \arrow[d] & F \arrow[r, two heads] \arrow[d]   & G \arrow[d] \\
N/K \arrow[r, hook]         & F/K=\widetilde{G} \arrow[r, two heads] & G.          
\end{tikzcd}$$ By naturality of the Lydon-Hochschild-Serre spectral sequence, there is a map of spectral sequence $\widetilde{E}^{p,q}_2\rightarrow E^{p,q}_2$. Moreover, as $\psi$ factors through $N/K$, it can be lifted to $\widetilde{\psi}$ in $\widetilde{E}^{1,n-2}_2=Hom(N/K;H^{n-2}(G;A))$, and it follows that $\widetilde{d}_2(\widetilde{\psi})=\omega$. Said differently, the class represented by $\omega$ in $\widetilde{E}^{0,n}_2$ is killed by $\widetilde{d}_2$, and therefore does not survive to $\widetilde{E}^{0,n}_{\infty}$. This is exactly saying that the pullback $p^*\omega$ is a coboundary as the subgroup $\widetilde{E}^{0,n}_{\infty}$ of $H^n(\widetilde{G};A)$ is the image of the pullback map $p^*$.
\end{proof}

\begin{Lemma}\label{lem:structurelinear3groups}
Let $\mathcal{G}$ be any finite 2-group, and $\pi$ any 4-cocycle for $\mathcal{G}$ with coefficients in $\mathds{k}^{\times}$. Then, there exists a finite group $H$ and an essentially surjective monoidal functor $K:H\rightarrow \mathcal{G}$ such that the pullback of $\pi$ along $K$ is a coboundary.
\end{Lemma}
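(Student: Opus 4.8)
The plan is to exploit the Postnikov decomposition of the finite $2$-group $\mathcal{G}$ and to apply Lemma \ref{lem:pullbackvanishing} twice: once to make an essentially surjective monoidal functor from an ordinary finite group exist at all, and once to trivialize the pulled-back $4$-cocycle. Write $G:=\pi_0(\mathcal{G})$ for the finite group of isomorphism classes of objects, $A:=\pi_1(\mathcal{G})=Aut_{\mathcal{G}}(I)$ for the finite, hence torsion, abelian group with its canonical $G$-action, and let $\alpha\in H^3(G;A)$ be the associator class of $\mathcal{G}$. Passing to classifying spaces, $B\mathcal{G}$ sits in a fibration $\mathrm{B}^2A\rightarrow B\mathcal{G}\rightarrow BG$ classified by $\alpha$, and for an ordinary finite group $H$ a monoidal functor $K:H\rightarrow\mathcal{G}$ is the same datum as a pointed map $BH\rightarrow B\mathcal{G}$. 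Such a $K$ is essentially surjective precisely when the underlying homomorphism $\bar g:H\rightarrow G$ is surjective.

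First I would produce an essentially surjective monoidal functor at all. Since $BH$ is a $K(H,1)$ and the fiber $\mathrm{B}^2A$ is a $K(A,2)$, a homomorphism $\bar g:H\rightarrow G$ lifts to a pointed map $BH\rightarrow B\mathcal{G}$, equivalently to a monoidal functor $H\rightarrow\mathcal{G}$, exactly when the single obstruction $\bar g^*\alpha\in H^3(H;A)$ vanishes. Applying Lemma \ref{lem:pullbackvanishing} to the $3$-cocycle $\alpha$ (with $n=3$ and the torsion coefficient module $A$) yields a finite group $G_1$ together with a surjection $p_1:G_1\twoheadrightarrow G$ such that $p_1^*\alpha$ is a coboundary. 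Hence the obstruction vanishes and $p_1$ lifts to an essentially surjective monoidal functor $K_1:G_1\rightarrow\mathcal{G}$.

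It then remains to trivialize the pulled-back $4$-cocycle. The class $K_1^*\pi$ lies in $H^4(G_1;\mathds{k}^{\times})$, which is annihilated by $|G_1|$; since $\mathds{k}^{\times}/\mu_{\infty}$ is uniquely divisible and therefore has vanishing higher cohomology over the finite group $G_1$, the inclusion of the torsion subgroup $\mu_{\infty}\cong\mathbb{Q}/\mathbb{Z}$ of roots of unity induces an isomorphism $H^4(G_1;\mu_{\infty})\xrightarrow{\sim}H^4(G_1;\mathds{k}^{\times})$. Thus $K_1^*\pi$ is cohomologous to a $4$-cocycle valued in the torsion group $\mu_{\infty}$, to which Lemma \ref{lem:pullbackvanishing} applies (now with $n=4$): I obtain a finite group $G_2$ and a surjection $p_2:G_2\twoheadrightarrow G_1$ making $p_2^*K_1^*\pi$ a coboundary.

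Finally I would set $H:=G_2$ and $K:=K_1\circ p_2$, viewing $p_2$ as a monoidal functor between discrete $2$-groups. Then $K$ is essentially surjective, since $p_2$ is surjective on $\pi_0$ and $K_1$ is essentially surjective, and $K^*\pi=p_2^*K_1^*\pi$ is a coboundary by construction. The step most likely to require care is the first one: setting up the precise dictionary between monoidal functors $H\rightarrow\mathcal{G}$ and maps of pointed homotopy $2$-types, so that the lifting obstruction is identified with $\bar g^*\alpha$, and verifying that a chosen trivialization of $\bar g^*\alpha$ genuinely assembles into a coherent monoidal functor (the associativity/hexagon coherence). The reduction from $\mathds{k}^{\times}$ to the torsion subgroup $\mu_{\infty}$ is the other point needing explicit justification, since Lemma \ref{lem:pullbackvanishing} is stated only for torsion coefficients.
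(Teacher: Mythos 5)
Your proof is correct and follows essentially the same route as the paper: decompose $\mathcal{G}$ into its group of objects and $Aut_{\mathcal{G}}(I)$ together with the associator class in $H^3$, apply Lemma \ref{lem:pullbackvanishing} once to kill the associator obstruction and lift to an essentially surjective monoidal functor from a finite group, then apply it a second time to trivialize the pulled-back $4$-cocycle, and take the composite. Your reduction from $\mathds{k}^{\times}$ to its torsion subgroup $\mu_{\infty}$ is in fact a refinement that the paper glosses over (it applies Lemma \ref{lem:pullbackvanishing} directly with the non-torsion coefficients $\mathds{k}^{\times}$); either repair works, since for $n\geq 3$ the proof of that lemma only needs $H^{n-2}(G;A)$ to be torsion, which holds automatically as it is annihilated by $|G|$.
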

\begin{proof}
Let us write $G:= \pi_1(\mathcal{G})$ for the finite group of equivalence classes of objects of $\mathcal{G}$, and $A:=\pi_2(\mathcal{G})$ for the finite abelian group of automorphisms of the monoidal unit of $\mathcal{G}$. Further, $G$ acts on $A$ by conjugation. It was shown in \cite{Sin} that the finite 2-group $\mathcal{G}$ is uniquely determined by the above data together with $\omega$, a 3-cocycle for $G$ with coefficients in $A$. Thanks to lemma \ref{lem:pullbackvanishing}, there exists a finite group $\widetilde{G}$, and a surjective group homomorphism $p:\widetilde{G}\twoheadrightarrow G$ such that $p^*\omega$ is a coboundary. In particular, the data of $p$ together with a cochain trivializing $p^*\omega$ induces an essentially surjective monoidal functor $p:\widetilde{G}\rightarrow \mathcal{G}$. Then, $p^*\pi$, the pullback of $\pi$ along $p$, is a 4-cocycle for $\widetilde{G}$ with coefficients in $\mathds{k}^{\times}$. By lemma \ref{lem:pullbackvanishing} above, there exists a finite group $H$ and a surjective group homomorphism $q:H\twoheadrightarrow \widetilde{G}$ such that $q^*p^*\pi$ is a coboundary. Taking $K:=p\circ q$ concludes the proof of the lemma.
\end{proof}

\begin{Remark}
It follows from theorem \ref{thm:Moritaconnected} and corollary \ref{cor:Moritaclassesconnected} that Morita equivalence classes of fusion 2-categories correspond exactly to pairs consisting of a symmetric fusion 1-category $\mathcal{E}$ and a class in $\widehat{\mathcal{W}}(\mathcal{E})$, the quotient of the Witt group $\mathcal{W}(\mathcal{E})$ under the action of $Aut^{br}(\mathcal{E})$. Furthermore, it was shown in theorem 5.5 of \cite{DNO} that every class in $\mathcal{W}(\mathcal{E})$ is represented by a unique completely $\mathcal{E}$-anisotropic braided fusion 1-category equipped with an identification of its symmetric center with $\mathcal{E}$. Thus, every class in $\widehat{\mathcal{W}}(\mathcal{E})$ is represented by a unique completely $\mathcal{E}$-anisotropic braided fusion 1-category. Fixing such a braided fusion 1-category $\mathcal{B}$, the fusion 2-categories that are Morita equivalent to $\mathbf{Mod}(\mathcal{B})$ are classified by indecomposable finite semisimple left $\mathbf{Mod}(\mathcal{B})$-module 2-categories. By theorem 5.1.2 of \cite{D8}, the later are exactly given by the Morita equivalence classes of $\mathcal{B}$-central fusion 1-categories.
\end{Remark}

Our last theorem has the following interesting consequence.

\begin{Corollary}
The property of being bosonic, respectively fermionic, is preserved under Morita equivalence of fusion 2-categories.
\end{Corollary}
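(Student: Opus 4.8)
The plan is to extract from the Drinfeld center a symmetric fusion $1$-category that is manifestly a Morita invariant and that detects the bosonic/fermionic dichotomy. Concretely, I would first observe that if $\mathfrak{C}$ and $\mathfrak{D}$ are Morita equivalent, then Theorem \ref{thm:MoritaInvarianceDrinfeldCenter} gives a braided equivalence $\mathscr{Z}(\mathfrak{C})\simeq\mathscr{Z}(\mathfrak{D})$, whence an equivalence $\Omega\mathscr{Z}(\mathfrak{C})\simeq\Omega\mathscr{Z}(\mathfrak{D})$ of symmetric fusion $1$-categories, since the endomorphisms of the monoidal unit of a braided monoidal $2$-category form a symmetric fusion $1$-category. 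As being Tannakian is invariant under symmetric equivalence, the corollary reduces to the single claim that a fusion $2$-category $\mathfrak{C}$ is bosonic if and only if $\Omega\mathscr{Z}(\mathfrak{C})$ is Tannakian.

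For one implication I would use the connected component of the unit. By Corollary \ref{cor:omegacenter} together with the identification $\mathfrak{C}^0\simeq\mathbf{Mod}(\Omega\mathfrak{C})$, there is a canonical equivalence $\Omega\mathscr{Z}(\mathfrak{C}^0)\simeq\mathcal{Z}_{(2)}(\Omega\mathfrak{C})$. Restricting half-braidings along the monoidal inclusion $\mathfrak{C}^0\hookrightarrow\mathfrak{C}$ produces a braided tensor functor $\rho\colon\Omega\mathscr{Z}(\mathfrak{C})\to\mathcal{Z}_{(2)}(\Omega\mathfrak{C})$. Because $\rho$ is braided, it sends a fermion, that is an invertible object with self-braiding $-1$, to a fermion; hence if $\mathcal{Z}_{(2)}(\Omega\mathfrak{C})$ contains no fermion, i.e. $\mathfrak{C}$ is bosonic, then $\Omega\mathscr{Z}(\mathfrak{C})$ contains none either and is Tannakian. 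This settles the implication ``$\mathfrak{C}$ bosonic $\Rightarrow \Omega\mathscr{Z}(\mathfrak{C})$ Tannakian''.

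The reverse implication, that $\mathfrak{C}$ fermionic forces $\Omega\mathscr{Z}(\mathfrak{C})$ to contain a fermion, is the heart of the argument, as it requires \emph{lifting} the transparent fermion of $\mathcal{Z}_{(2)}(\Omega\mathfrak{C})$ to a half-braiding over all of $\mathfrak{C}$, not merely over $\mathfrak{C}^0$. Here I would reduce to the strongly fusion case: by Theorem \ref{thm:stronglyfusioninvertible}, $\mathfrak{C}$ is Morita equivalent to $\mathfrak{S}\boxtimes\mathfrak{I}$ with $\mathfrak{S}$ fermionic strongly fusion and $\mathfrak{I}$ invertible, and since $\mathscr{Z}(\mathfrak{I})\simeq\mathbf{2Vect}$ and the center commutes with the $2$-Deligne tensor product (Corollary \ref{cor:center2Deligne}), one obtains $\Omega\mathscr{Z}(\mathfrak{C})\simeq\Omega\mathscr{Z}(\mathfrak{S})$. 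For such an $\mathfrak{S}$ we have $\Omega\mathfrak{S}\simeq\mathbf{SVect}$ and, by \cite{JFY}, every object of $\mathfrak{S}$ is invertible; consequently every simple object acts on the unique fermion $\psi\in\Omega\mathfrak{S}$ by conjugation through a braided autoequivalence of $\mathbf{SVect}$, and as $\mathbf{SVect}$ has no nontrivial such autoequivalence, $\psi$ commutes with every object up to coherent isomorphism. This yields the desired half-braiding and exhibits $\psi$ as a fermion in $\Omega\mathscr{Z}(\mathfrak{S})$, so that $\Omega\mathscr{Z}(\mathfrak{C})$ is super-Tannakian.

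Combining the two implications gives the characterization, and the corollary follows immediately. The main obstacle I anticipate is making the lift in the previous paragraph fully rigorous: one must verify that the object-wise isomorphisms $\psi\Box\mathrm{Id}_C\cong\mathrm{Id}_C\Box\psi$ assemble into an adjoint $2$-natural equivalence satisfying the coherence equation (\ref{eqn:halfbraidedobject}), which is precisely where the hypothesis that all objects of $\mathfrak{S}$ are invertible is indispensable. As a fallback packaging that sidesteps the explicit lift, I would use Lemma \ref{lem:dualdisconnected} to replace $\mathfrak{C}$ and $\mathfrak{D}$ by representatives whose $\mathcal{Z}_{(2)}(\Omega-)$ is exactly $\mathbf{Vect}$ or $\mathbf{SVect}$; the two resulting faithful symmetric tensor functors out of the common symmetric category $\Omega\mathscr{Z}(\mathfrak{C})\simeq\Omega\mathscr{Z}(\mathfrak{D})$ — one landing in $\mathbf{Vect}$ and one surjecting onto $\mathbf{SVect}$ — would force that category to be simultaneously Tannakian and super-Tannakian, the contradiction ruling out that a single Morita class contains both a bosonic and a fermionic fusion $2$-category.
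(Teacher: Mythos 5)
Your reduction --- via Theorem \ref{thm:MoritaInvarianceDrinfeldCenter}, the corollary follows once one shows that a fusion 2-category $\mathfrak{C}$ is bosonic if and only if the symmetric fusion 1-category $\Omega\mathscr{Z}(\mathfrak{C})$ is Tannakian --- is sound, and it is genuinely different from the paper's route (your appeal to Corollary \ref{cor:center2Deligne} is a forward reference to Section 5, but not a circular one). The trouble lies in your proofs of the two halves of that claim. For the bosonic half, you detect Tannakian-ness by the absence of a fermion, but for symmetric fusion 1-categories this is strictly weaker: by Deligne's classification such a category is $\mathbf{Rep}(G,z)$, it is Tannakian iff $z=1$, while it contains a fermion iff some character of $G$ takes the value $-1$ on $z$, i.e.\ iff $z\notin[G,G]$. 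Thus $\mathbf{Rep}(Q_8,z)$, with $z$ the central involution, is non-Tannakian yet fermion-free, so ``no fermion in the target of $\rho$'' does not yield Tannakian-ness of the source. This half is repairable: a linear monoidal functor between fusion 1-categories is automatically faithful, so composing $\rho$ with a fiber functor of the Tannakian target $\mathcal{Z}_{(2)}(\Omega\mathfrak{C})$ gives a symmetric fiber functor $\Omega\mathscr{Z}(\mathfrak{C})\rightarrow\mathbf{Vect}$.

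The fermionic half is a genuine gap, and it is where the entire difficulty of the corollary sits. You must show that $\Omega\mathscr{Z}(\mathfrak{S})$ is non-Tannakian for a fermionic strongly fusion 2-category $\mathfrak{S}$, and you propose to do so by lifting the transparent fermion $\psi\in\Omega\mathfrak{S}\simeq\mathbf{SVect}$ to an object of $\mathscr{Z}(\mathfrak{S})$. The conjugation argument only yields, for each object $C$, \emph{some} isomorphism $1_C\Box\psi\cong\psi\Box 1_C$; it yields neither 2-naturality in $C$, nor the modification $R_{\psi}$, nor equation (\ref{eqn:halfbraidedobject}), and coherent choices are in general obstructed --- this is exactly what you concede is missing, and it is the crux, not a verification. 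Worse, the statement you aim at is strictly stronger than the one you need, and is doubtful: you need $\Omega\mathscr{Z}(\mathfrak{S})$ non-Tannakian, whereas a fermion in it need not exist; nothing in your argument excludes that $\Omega\mathscr{Z}(\mathfrak{S})\simeq\mathbf{Rep}(\widehat{G},z)$ for a non-split $\mathbb{Z}/2$-central extension $\widehat{G}$ of $\pi_0(\mathfrak{S})$ (such as $Q_8$) with $z\in[\widehat{G},\widehat{G}]$, which is non-Tannakian but fermion-free. Your fallback founders on the same rock: the functor $\Omega\mathscr{Z}(\mathfrak{D}')\rightarrow\mathbf{SVect}$ is only known to exist, and unless it is shown to hit $\psi$ --- the identical lifting problem --- there is no contradiction, since a symmetric tensor functor from a Tannakian category to $\mathbf{SVect}$ simply factors through $\mathbf{Vect}$. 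Contrast this with the paper's proof, which never touches Drinfeld centers of non-connected fusion 2-categories: Theorem \ref{thm:Moritaconnected} and Lemma \ref{lem:dualconnectedcomponent} reduce the corollary to the purely 1-categorical claim that, for a $\mathcal{B}$-central fusion 1-category $\mathcal{C}$, the symmetric center of $\mathcal{Z}(\mathcal{C},\mathcal{B})$ is Tannakian iff that of $\mathcal{B}$ is, which is then settled by corollary 3.2.4 of \cite{DMNO} and proposition 4.3 of \cite{DNO}.
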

\begin{proof}
Let $\mathcal{B}$ be a braided fusion 1-category, and $\mathcal{C}$ be a $\mathcal{B}$-central fusion 1-category. We will show that the symmetric center of $\mathcal{Z}(\mathcal{C},\mathcal{B})$ is Tannakian if and only if the symmetric center of $\mathcal{B}$ is Tannakian. By virtue of theorem \ref{thm:Moritaconnected} and lemma \ref{lem:dualconnectedcomponent}, this yields the statement of the corollary. Let us write $\mathcal{A}$ for the image of $\mathcal{B}$ in $\mathcal{Z}(\mathcal{C})$, which is a braided fusion 1-category, and note that $\mathcal{Z}(\mathcal{C},\mathcal{B})\simeq \mathcal{Z}(\mathcal{C},\mathcal{A})$ as braided fusion 1-categories. It follows from corollary 3.2.4 of \cite{DMNO} that the symmetric center of $\mathcal{A}$ is Tannakian if and only the symmetric center of $\mathcal{B}$ is Tannakian. Finally, proposition 4.3 of \cite{DNO} establishes that $\mathcal{Z}_{(2)}(\mathcal{A})\simeq \mathcal{Z}_{(2)}(\mathcal{Z}(\mathcal{C},\mathcal{A}))$ as symmetric fusion 1-categories. This concludes the proof of the claim.
\end{proof}

\section{Separability}\label{sec:separable}

We begin by proving that every rigid algebra in a fusion 2-category is separable. As a consequence, we find that every multifusion 2-category is separable. Following \cite{JF}, we comment on the relation between the separability of multifusion 2-categories and the fact that multifusion 2-categories are fully dualizable objects of an appropriate symmetric monoidal 4-category. We also introduce a notion of dimension for fusion 2-categories, and check that it is always non-vanishing. Finally, we show that the Drinfeld center of any fusion 2-category is a finite semisimple 2-category. Throughout, we work over a fixed algebraically closed field $\mathds{k}$ of characteristic zero.

\subsection{Rigid Algebras}

Thanks to theorem \ref{thm:Moritaconnected}, we have an extremely good control over the Morita equivalence classes of fusion 2-categories. We use this to prove the following deep result on the structure of rigid algebras in fusion 2-categories, which internalizes corollary 2.6.8 of \cite{DSPS13}, and positively answers a question raised in \cite{JFR}.

\begin{Theorem}\label{thm:algebrarigidseparable}
Every rigid algebra in a fusion 2-category is separable.
\end{Theorem}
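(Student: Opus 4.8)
The plan is to reduce to the connected case, where example \ref{ex:algebrasModB} already shows that every rigid algebra is separable, and then transport this information back along a Morita equivalence.

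First I would invoke theorem \ref{thm:Moritaconnected} to find a braided fusion 1-category $\mathcal{B}$ such that $\mathfrak{C}$ is Morita equivalent to the connected fusion 2-category $\mathbf{Mod}(\mathcal{B})$. Since Morita equivalence is an equivalence relation (theorem \ref{thm:MoritaF2C}), I can apply the characterization of theorem \ref{thm:MoritaF2C} from the side of $\mathbf{Mod}(\mathcal{B})$ to produce an indecomposable separable algebra $\mathcal{C}$ in $\mathbf{Mod}(\mathcal{B})$, which is a $\mathcal{B}$-central multifusion 1-category by example \ref{ex:algebrasModB}, together with an equivalence $\mathfrak{C}\simeq \mathbf{Bimod}_{\mathbf{Mod}(\mathcal{B})}(\mathcal{C})$ of fusion 2-categories.

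The heart of the argument is an internal description of algebras in a relative bimodule 2-category. Writing $\mathfrak{D}:=\mathbf{Mod}(\mathcal{B})$, I would show that an algebra $A$ in $(\mathbf{Bimod}_{\mathfrak{D}}(\mathcal{C}),\otimes_{\mathcal{C}})$ is the same datum as an algebra $\widetilde{A}$ in $\mathfrak{D}$ equipped with an algebra 1-morphism $\mathcal{C}\to\widetilde{A}$, the underlying object being unchanged. Under this correspondence there is an equivalence $\mathbf{Bimod}_{\mathfrak{C}}(A)\simeq \mathbf{Bimod}_{\mathfrak{D}}(\widetilde{A})$, because the $\mathcal{C}$-bimodule structure on any $A$-$A$-bimodule is recovered by restriction along $\mathcal{C}\to\widetilde{A}$. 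I would then check that $\widetilde{A}$ is rigid whenever $A$ is: the multiplication of $\widetilde{A}$ factors as $\widetilde{A}\Box\widetilde{A}\twoheadrightarrow \widetilde{A}\otimes_{\mathcal{C}}\widetilde{A}\to\widetilde{A}$, where the second 1-morphism admits a right adjoint of $\widetilde{A}$-$\widetilde{A}$-bimodules since $A$ is rigid over $\mathcal{C}$, and the first, the canonical quotient, admits a right adjoint of bimodules because $\mathcal{C}$ is separable. Composing these two adjunctions exhibits $\widetilde{A}$ as a rigid algebra in $\mathfrak{D}$; this transitivity of rigidity along the separable algebra $\mathcal{C}$ is the step I expect to require the most care, as it rests on the precise behaviour of the relative tensor product $\otimes_{\mathcal{C}}$ and its adjunctions.

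With $\widetilde{A}$ now known to be a rigid algebra in $\mathbf{Mod}(\mathcal{B})$, example \ref{ex:algebrasModB} identifies it with a $\mathcal{B}$-central multifusion 1-category and guarantees that it is separable. Theorem \ref{thm:bimodulefinitesemisimple} then makes $\mathbf{Bimod}_{\mathfrak{D}}(\widetilde{A})$ finite semisimple, hence so is $\mathbf{Bimod}_{\mathfrak{C}}(A)$ via the equivalence above. Applying the converse direction of theorem \ref{thm:bimodulefinitesemisimple} to $A$ finishes the proof that $A$ is separable.
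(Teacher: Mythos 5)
Your proposal is correct, and at the strategic level it coincides with the paper's proof: reduce to a connected fusion 2-category $\mathfrak{D}=\mathbf{Mod}(\mathcal{B})$ via theorems \ref{thm:MoritaF2C} and \ref{thm:Moritaconnected}, identify rigid algebras in $\mathfrak{C}\simeq\mathbf{Bimod}_{\mathfrak{D}}(\mathcal{C})$ with rigid algebras in $\mathfrak{D}$ receiving an algebra 1-morphism from $\mathcal{C}$, and invoke example \ref{ex:algebrasModB} in the connected case. The difference lies in how the two transport steps are justified. The paper disposes of both in one stroke by citing corollary 3.2.12 of \cite{D8}, which records exactly the correspondence you prove by hand: that an algebra in $(\mathbf{Bimod}_{\mathfrak{D}}(\mathcal{C}),\otimes_{\mathcal{C}})$ is an algebra $\widetilde{A}$ in $\mathfrak{D}$ under $\mathcal{C}$, that $\mathbf{Bimod}_{\mathfrak{C}}(A)\simeq\mathbf{Bimod}_{\mathfrak{D}}(\widetilde{A})$, and that rigidity and separability match up across this correspondence. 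Your direct argument for the rigidity transfer is sound: since $\mathcal{C}$ is separable, the projection $\widetilde{A}\Box\widetilde{A}\rightarrow\widetilde{A}\otimes_{\mathcal{C}}\widetilde{A}$ is part of a 2-condensation compatible with the outer $\widetilde{A}$-actions, so composing its bimodule right adjoint with the one furnished by rigidity of $A$ over $\mathcal{C}$ exhibits $\widetilde{A}$ as rigid; this is in essence a re-derivation of the cited corollary. For the return trip, where the paper cites corollary 3.2.12 of \cite{D8} a second time to conclude that the original algebra is separable, you instead apply theorem \ref{thm:bimodulefinitesemisimple} twice through the equivalence $\mathbf{Bimod}_{\mathfrak{C}}(A)\simeq\mathbf{Bimod}_{\mathfrak{D}}(\widetilde{A})$. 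This is a genuinely pleasant simplification: it converts separability into the property of finite semisimplicity of the bimodule 2-category, which transports trivially along a 2-equivalence, and thereby avoids having to carry the section of the counit $m\circ m^*\Rightarrow Id$ through the correspondence. The trade-off is that your route needs the equivalence of bimodule 2-categories explicitly, but your correspondence supplies that anyway, so nothing is lost.
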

\begin{proof}
Let $\mathfrak{C}$ be any fusion 2-category. Thanks to theorems \ref{thm:MoritaF2C} and \ref{thm:Moritaconnected}, there exists a connected fusion 2-category $\mathfrak{D}$, a separable algebra $A$ in $\mathfrak{D}$, and a monoidal equivalence $\mathfrak{C}\simeq\mathbf{Bimod}_{\mathfrak{D}}(A)$ of fusion 2-categories. Under the above equivalence, it follows from corollary 3.2.12 of \cite{D8} that any rigid algebra in $\mathfrak{C}$ corresponds to a rigid algebra $B$ in $\mathfrak{D}$ equipped with an algebra 1-homomorphism $A\rightarrow B$. But, as $\mathfrak{D}$ is connected, example \ref{ex:algebrasModB} implies that $B$ is a separable algebra in $\mathfrak{D}$. Thence, corollary 3.2.12 of \cite{D8} demonstrates that the corresponding rigid algebra in $\mathfrak{C}$ is separable as desired.
\end{proof}

\begin{Corollary}\label{cor:multialgebrarigidseparable}
Every rigid algebra in a multifusion 2-category is separable.
\end{Corollary}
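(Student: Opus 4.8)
The plan is to reduce the multifusion statement to the fusion case just established in Theorem \ref{thm:algebrarigidseparable}, using the block decomposition coming from the monoidal unit. First I would decompose the unit. Since $\mathfrak{C}$ is a multifusion 2-category, its monoidal unit splits as $I\simeq\boxplus_{i=1}^{n}I_i$ into simple summands, and these summands behave as orthogonal idempotents: $I_i\Box I_j\simeq 0$ for $i\neq j$ while $I_i\Box I_i\simeq I_i$. Setting $\mathfrak{C}_{ij}:=I_i\Box\mathfrak{C}\Box I_j$, rigidity together with the (self-)duality of the $I_i$ shows that $Hom_{\mathfrak{C}}$ between objects lying in distinct blocks vanishes, so that $\mathfrak{C}\simeq\boxplus_{i,j}\mathfrak{C}_{ij}$ as a finite semisimple 2-category, with each diagonal block $\mathfrak{C}_{ii}$ a fusion 2-category with unit $I_i$. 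A consequence I would use repeatedly is that every $1$-morphism and every $2$-morphism of $\mathfrak{C}$ decomposes blockwise.

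Next I would restrict the given rigid algebra $A$ to the diagonal. Writing $A_{ij}:=I_i\Box A\Box I_j$, one checks that the multiplication $m$ carries $\boxplus_k A_{ik}\Box A_{kj}$ into $A_{ij}$, so that each $A_{ii}$ inherits the structure of an algebra in $\mathfrak{C}_{ii}$ with unit $I_i\to A_{ii}$, and each off-diagonal $A_{ij}$ becomes a left $A_{ii}$- right $A_{jj}$-bimodule in $\mathfrak{C}_{ij}$. Since adjunction data also decompose blockwise, the $(i,i)$-component of the bimodule right adjoint $m^*$ exhibits $A_{ii}$ as a rigid algebra in the fusion $2$-category $\mathfrak{C}_{ii}$. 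By Theorem \ref{thm:algebrarigidseparable}, each diagonal corner $A_{ii}$ is therefore separable.

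Finally I would upgrade separability of the diagonal corners to separability of $A$ itself. The cleanest route is to show that $\mathbf{Bimod}_{\mathfrak{C}}(A)$ is finite semisimple and then invoke (the multifusion analogue of) Theorem \ref{thm:bimodulefinitesemisimple}. Blockwise decomposition identifies an $A$-$A$-bimodule $M$ with the collection $M_{ij}=I_i\Box M\Box I_j$, each $M_{ij}$ carrying the structure of an $(A_{ii},A_{jj})$-bimodule in $\mathfrak{C}_{ij}$; this yields an equivalence of $\mathbf{Bimod}_{\mathfrak{C}}(A)$ with the ``matrix'' $2$-category whose $(i,j)$ entry is the $2$-category of $(A_{ii},A_{jj})$-bimodules. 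As $A_{ii}$ and $A_{jj}$ are separable, each such entry is finite semisimple, hence so is $\mathbf{Bimod}_{\mathfrak{C}}(A)$, giving separability of $A$. Equivalently, one may construct a section of the counit $m\circ m^*\Rightarrow Id_A$ directly: on each diagonal block it is precisely the separability section of $A_{ii}$, while on each off-diagonal block the section exists because $A_{ij}$ is an $(A_{ii},A_{jj})$-bimodule and the semisimplicity forced by separability of $A_{ii},A_{jj}$ makes the corresponding component of the counit split.

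The step I expect to be the main obstacle is this last one: controlling the off-diagonal blocks and checking that the block-wise data assemble into a genuine $A$-$A$-bimodule $2$-morphism, rather than merely a blockwise collection of splittings. This is exactly the categorification of the classical fact that a finite linear category is separable as soon as all its endomorphism algebras are, and I would resolve it by leveraging the finite semisimplicity of the $(A_{ii},A_{jj})$-bimodule $2$-categories coming from separability of the diagonal corners, which both supplies the off-diagonal sections and guarantees their compatibility with the bimodule structure.
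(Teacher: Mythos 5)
Your block decomposition of $\mathfrak{C}$ and the conclusion that the diagonal corners $A_{ii}$ are rigid, hence separable, algebras in the fusion 2-categories $\mathfrak{C}_{ii}$ are fine. The gap is the assembly step that you yourself flag, and it is genuine: neither of your two routes closes it. First, the claimed equivalence between $\mathbf{Bimod}_{\mathfrak{C}}(A)$ and the matrix 2-category whose $(i,j)$ entry is the 2-category of $(A_{ii},A_{jj})$-bimodules in $\mathfrak{C}_{ij}$ is false. An $A$-$A$-bimodule $M$ carries, in addition to the corner actions on the $M_{ij}$, cross-action 1-morphisms $A_{ik}\Box M_{kj}\rightarrow M_{ij}$ induced by the off-diagonal components of $A$; these are extra data, neither determined by nor reconstructible from the matrix of corner bimodules. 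Concretely, take $\mathfrak{C}=\mathbf{End}(\mathbf{2Vect}\boxplus\mathbf{2Vect})$, so that every block $\mathfrak{C}_{ij}$ is $\mathbf{2Vect}$, and let $A=\underline{End}(\mathbf{Vect}\boxplus\mathbf{Vect})$ be the internal endomorphism algebra of the object $\mathbf{Vect}\boxplus\mathbf{Vect}$; then all four components $A_{ij}$ are $\mathbf{Vect}$ and each corner $A_{ii}$ is the trivial algebra. Here $\mathbf{Mod}_{\mathfrak{C}}(A)\simeq\mathbf{2Vect}\boxplus\mathbf{2Vect}$, and (by theorem 5.3.2 and corollary 5.4.4 of \cite{D8}) $\mathbf{Bimod}_{\mathfrak{C}}(A)\simeq \mathbf{End}_{\mathbf{End}(\mathbf{2Vect}\boxplus\mathbf{2Vect})}(\mathbf{2Vect}\boxplus\mathbf{2Vect})^{mop}\simeq\mathbf{2Vect}$, which is connected, whereas your matrix 2-category is $\mathbf{2Vect}^{\boxplus 4}$, which has four connected components; so $A$ is separable but your identification fails. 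Second, the direct blockwise construction of a section of $m\circ m^{*}\Rightarrow Id_A$ does not even produce a candidate $A$-$A$-bimodule 2-morphism, because $A\simeq\boxplus_{i,j}A_{ij}$ is a decomposition in $\mathfrak{C}$, not a decomposition of $A$ as an $A$-$A$-bimodule: the cross multiplications $A_{ik}\Box A_{kj}\rightarrow A_{ij}$ prevent the $A_{ij}$ from being sub-bimodules, and semisimplicity of the corner-bimodule 2-categories says nothing about compatibility of blockwise splittings with these cross actions. Indeed, the decategorified principle you invoke is false: for the algebra of upper-triangular $2\times 2$ matrices over $\mathds{k}$, both diagonal corners are separable and the off-diagonal corner is a semisimple bimodule over them, yet the multiplication admits no bimodule section. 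That algebra is of course not rigid, which is precisely the moral: any correct assembly must make essential use of the rigidity of $A$ itself, and neither of your routes does so beyond the bare existence of $m^{*}$.

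For contrast, the paper never truncates to corners. It splits $\mathfrak{C}$ into indecomposable multifusion summands by lemma 5.2.10 of \cite{D8} --- a direct sum of monoidal 2-categories, so $A$ genuinely decomposes as a direct sum of rigid algebras, one in each summand --- then uses that every indecomposable multifusion 2-category is Morita equivalent to a fusion 2-category (remark 5.4.2 of \cite{D8}), and reruns the proof of theorem \ref{thm:algebrarigidseparable}: rigid algebras are transported along this Morita equivalence via corollary 3.2.12 of \cite{D8}, reduced to the connected case, and transported back. The point is that the Morita transport carries the full matrix of $A$ (in the example above it produces a two-object multifusion 1-category, not the trivial algebra $A_{11}$), which is exactly the information your reduction discards. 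A repaired version of your argument would replace ``pass to $A_{11}$'' by ``transport $A$ along the Morita equivalence between the indecomposable summand and its fusion corner $\mathfrak{C}_{11}$''; at that point it becomes the paper's proof.
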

\begin{proof}
Every multifusion 2-categories can be split into a direct sum of finitely many indecomposable multifusion 2-categories by lemma 5.2.10 of \cite{D8}. In particular, this induces a decomposition of any algebra into a finite direct sum of algebras contained in exactly one indecomposable multifusion 2-category. It is therefore enough to show that any rigid algebra in an indecomposable multifusion 2-category is separable. In order to see this, note that every indecomposable multifusion 2-category is Morita equivalent to a fusion 2-category as can be seen from remark 5.4.2 of \cite{D8}. The result then follows from the proof of theorem \ref{thm:algebrarigidseparable} above.
\end{proof}

By combining the last corollary with theorem 5.3.4 of \cite{D4}, we get the following result.

\begin{Corollary}
Every finite semisimple module 2-category over a multifusion 2-category is separable.
\end{Corollary}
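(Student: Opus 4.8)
The plan is to reduce the statement directly to Corollary \ref{cor:multialgebrarigidseparable}, so that essentially all the work has already been done. Let $\mathfrak{C}$ be a multifusion 2-category and let $\mathfrak{M}$ be a finite semisimple left $\mathfrak{C}$-module 2-category. First I would invoke theorem 5.3.4 of \cite{D4}, which, since $\mathfrak{M}$ is finite semisimple, produces a rigid algebra $A$ in $\mathfrak{C}$ together with an equivalence $\mathfrak{M}\simeq \mathbf{Mod}_{\mathfrak{C}}(A)$ of left $\mathfrak{C}$-module 2-categories. By the definition recalled in the preliminaries, $\mathfrak{M}$ is separable precisely when the representing algebra $A$ is a separable algebra, and this property does not depend on the chosen $A$ (proposition 5.1.3 of \cite{D8}).

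Next, the algebra $A$ is a rigid algebra in the multifusion 2-category $\mathfrak{C}$, so Corollary \ref{cor:multialgebrarigidseparable} applies at once and shows that $A$ is separable. Consequently $\mathfrak{M}$ is separable, which is the desired conclusion.

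I expect no genuine obstacle here, since all the substantive content lives in Corollary \ref{cor:multialgebrarigidseparable} (and ultimately in Theorem \ref{thm:algebrarigidseparable}). The only points needing a brief check are bookkeeping rather than mathematical difficulty: that theorem 5.3.4 of \cite{D4} is available for multifusion, and not merely fusion, 2-categories, so that the representing rigid algebra $A$ indeed lies in $\mathfrak{C}$; and that the definition of a separable module 2-category, together with its independence of the representing algebra, carries over verbatim to the multifusion setting. Once these are noted, the proof is a one-line deduction.
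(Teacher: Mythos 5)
Your proof is correct and is exactly the paper's argument: the paper deduces this corollary by combining Corollary \ref{cor:multialgebrarigidseparable} with theorem 5.3.4 of \cite{D4}, precisely as you do, with separability of $\mathfrak{M}$ meaning separability of the representing rigid algebra $A$ (independent of the choice of $A$ by proposition 5.1.3 of \cite{D8}). The bookkeeping points you flag are indeed harmless, since theorem 5.3.4 of \cite{D4} and the definition of separable module 2-categories are stated for multifusion 2-categories.
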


\begin{Remark}
Provided we are working over an algebraically closed field of characteristic zero, we find a posteriori that all the results of section 5 of \cite{D8} hold even with the separability hypothesis removed. In particular, for any multifusion 2-category $\mathfrak{C}$, there is an equivalence between the Morita 3-category of rigid algebras in $\mathfrak{C}$ and the 3-category of finite semisimple left $\mathfrak{C}$-module 2-categories. This affirmatively answers the question raised in remark 5.3.9 of \cite{D4}.
\end{Remark}

\begin{Remark}
Let $\mathfrak{C}$ be a multifusion 2-category. As a consequence of the above results, one can show that the relative 2-Deligne tensor product of any finite semisimple $\mathfrak{C}$-module 2-categories exists. In particular, it then follows from \cite{JFS} that multifusion 2-categories, finite semisimple bimodule 2-categories, and bimodule morphisms form a symmetric monoidal 4-category, which we denote by $\mathbf{F2C}$. Let us also consider $\mathbf{BrFus}$, the Morita 4-category of braided multifusion 1-categories studied in \cite{BJS}. We expect that the assignment $$\mathbf{Mod}(-):\mathbf{BrFus}\rightarrow \mathbf{F2C}$$ given by sending a braided multifusion 1-category $\mathcal{B}$ to the associated multifusion 2-category $\mathbf{Mod}(\mathcal{B})$ extends to a symmetric monoidal 4-functor (see the second proof of theorem 1 of \cite{JF} for a related result). Theorem \ref{thm:Moritaconnected} proves that the 4-functor $\mathbf{Mod}$ is essentially surjective on objects. Moreover, it follows from theorem 5.1.2 of \cite{D8} and lemma 2.2.6 of \cite{D4} that $\mathbf{Mod}$ induces equivalence between $Hom$-3-categories. Thus, provided it exists, the symmetric monoidal 4-functor $\mathbf{Mod}$ is an equivalence. As braided multifusion 1-categories are fully dualizable objects of $\mathbf{BrFus}$ thanks to \cite{BJS}, it would then follow that multifusion 2-categories are fully dualizable objects of $\mathbf{F2C}$. We give another approach to establishing the full dualizability of multifusion 2-categories in remark \ref{rem:4dualizabilityhighercondensations} below.
\end{Remark}

\subsection{Fusion 2-Categories}

We work with a fixed multifusion 2-category $\mathfrak{C}$. The following definition categorifies definition 2.5.8 of \cite{DSPS13}.

\begin{Definition}\label{def:separableF2C}
The multifusion 2-category $\mathfrak{C}$ is called separable if it is separable as a finite semisimple left $\mathfrak{C}\boxtimes\mathfrak{C}^{mop}$-module 2-category.
\end{Definition}

We now associate a canonical rigid algebra to any multifusion 2-category. This construction should be compared with that of section 2.6 of \cite{DSPS13}, in which a canonical Frobenius algebra is associated to any fusion 1-category.

\begin{Construction}\label{con:canonicalalgebra}
As $\mathfrak{C}\boxtimes\mathfrak{C}^{mop}$ is a multifusion 2-category acting on the finite semisimple 2-category $\mathfrak{C}$, it follows from theorem 4.2.2 of \cite{D4} that $\mathfrak{C}$ can be given the structure of a $\mathfrak{C}\boxtimes\mathfrak{C}^{mop}$-enriched 2-category. More precisely, for any $C,D$ in $\mathfrak{C}$, the $Hom$-object $\underline{Hom}(C,D)$ in $\mathfrak{C}\boxtimes\mathfrak{C}^{mop}$ is characterized by the adjoint 2-natural equivalence $$Hom_{\mathfrak{C}}(H\Box C, D)\simeq Hom_{\mathfrak{C}\boxtimes\mathfrak{C}^{mop}}(H, \underline{Hom}(C,D)),$$ for every $H$ in $\mathfrak{C}\boxtimes\mathfrak{C}^{mop}$. It was established in proposition 4.1.1 of \cite{D4} that such an object $\underline{Hom}(C,D)$ of $\mathfrak{C}\boxtimes\mathfrak{C}^{mop}$ always exists. We write $\mathcal{R}_{\mathfrak{C}}$ for the algebra $\underline{End}(I)$ in $\mathfrak{C}\boxtimes\mathfrak{C}^{mop}$, and observe that $\mathcal{R}_{\mathfrak{C}}$ is rigid by theorem 5.2.7 of \cite{D4}. Moreover, the monoidal unit $I$ of $\mathfrak{C}$ generates $\mathfrak{C}$ under the action of $\mathfrak{C}\boxtimes\mathfrak{C}^{mop}$ in the sense of definition 5.3.1 of \cite{D4}. Thus, by theorem 5.3.4 of \cite{D4}, there is an equivalence $$\mathfrak{C}\simeq \mathbf{Mod}_{\mathfrak{C}\boxtimes\mathfrak{C}^{mop}}(\mathcal{R}_{\mathfrak{C}})$$ of left $\mathfrak{C}\boxtimes\mathfrak{C}^{mop}$-module 2-categories. For later use, let us also record that, if we assume that $\mathfrak{C}$ is a fusion 2-category, then $\mathcal{R}_{\mathfrak{C}}$ is connected. Namely, its unit 1-morphism corresponds to the identity 1-morphism $I\rightarrow I$ under the adjunction above.
\end{Construction}

Thanks to theorem \ref{thm:algebrarigidseparable} and corollary \ref{cor:multialgebrarigidseparable}, the rigid algebra $\mathcal{R}_{\mathfrak{C}}$ is in fact separable. This yields the following result, which categorifies corollary 2.6.8 of \cite{DSPS13}.

\begin{Corollary}\label{cor:separable}
Every multifusion 2-category is separable.
\end{Corollary}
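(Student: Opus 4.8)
The plan is to reduce the statement to Corollary \ref{cor:multialgebrarigidseparable}, since by construction the separability of $\mathfrak{C}$ as a multifusion 2-category is precisely the separability of a particular rigid algebra. First I would unwind Definition \ref{def:separableF2C}: by that definition, $\mathfrak{C}$ is separable if and only if it is separable as a finite semisimple left $\mathfrak{C}\boxtimes\mathfrak{C}^{mop}$-module 2-category, and, recalling the discussion of separable module 2-categories in the Morita theory section (where separability of $\mathfrak{M}$ is defined as separability of a presenting rigid algebra, independently of the chosen algebra), this holds exactly when the rigid algebra presenting $\mathfrak{C}$ as a module 2-category over $\mathfrak{C}\boxtimes\mathfrak{C}^{mop}$ is separable.

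Next I would invoke Construction \ref{con:canonicalalgebra}, which supplies exactly such a presentation: the canonical algebra $\mathcal{R}_{\mathfrak{C}}=\underline{End}(I)$ in $\mathfrak{C}\boxtimes\mathfrak{C}^{mop}$ is rigid and satisfies $\mathfrak{C}\simeq \mathbf{Mod}_{\mathfrak{C}\boxtimes\mathfrak{C}^{mop}}(\mathcal{R}_{\mathfrak{C}})$ as left $\mathfrak{C}\boxtimes\mathfrak{C}^{mop}$-module 2-categories. Since $\mathfrak{C}\boxtimes\mathfrak{C}^{mop}$ is again a multifusion 2-category, Corollary \ref{cor:multialgebrarigidseparable} applies directly to $\mathcal{R}_{\mathfrak{C}}$ and yields that it is separable. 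Combining this with the equivalence of module 2-categories above gives that $\mathfrak{C}$ is separable, completing the argument.

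The point worth emphasizing is that essentially all the difficulty has already been absorbed into Theorem \ref{thm:algebrarigidseparable} and its multifusion extension in Corollary \ref{cor:multialgebrarigidseparable}; once those are in hand, the present corollary is a formal consequence of the definitions together with the canonical presentation of Construction \ref{con:canonicalalgebra}. The only minor points to confirm are that $\mathfrak{C}\boxtimes\mathfrak{C}^{mop}$ is genuinely multifusion and that $\mathcal{R}_{\mathfrak{C}}$ is genuinely rigid therein, but both are already recorded (the rigidity of $\mathcal{R}_{\mathfrak{C}}$ in Construction \ref{con:canonicalalgebra} via theorem 5.2.7 of \cite{D4}), so I anticipate no real obstacle in assembling the proof.
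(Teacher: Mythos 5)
Your proposal is correct and matches the paper's own argument exactly: the paper likewise applies Corollary \ref{cor:multialgebrarigidseparable} to the canonical rigid algebra $\mathcal{R}_{\mathfrak{C}}$ of Construction \ref{con:canonicalalgebra}, whose module 2-category presents $\mathfrak{C}$ over $\mathfrak{C}\boxtimes\mathfrak{C}^{mop}$, and then concludes via Definition \ref{def:separableF2C}. Nothing is missing; the corollary is indeed a formal consequence once Theorem \ref{thm:algebrarigidseparable} and its multifusion extension are in place.
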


\begin{Remark}\label{rem:4dualizabilityhighercondensations}
We now comment on the relation between 4-dualizability and separability for multifusion 2-categories. We begin by recalling the setup considered in section 2 of \cite{JF}. Let us write $\mathbf{Cau2Cat}_{\mathds{k}}$ for the linear 3-category of Cauchy complete $\mathds{k}$-linear 2-categories, and note that the completed tensor product $\widehat{\otimes}$ endows this 3-category with a symmetric monoidal structure. We will assume that $\mathbf{Cau2Cat}_{\mathds{k}}$ is closed under colimits and that $\widehat{\otimes}$ preserves them. Then, thanks to the results of section 8 of \cite{JFS}, we can consider the symmetric monoidal 4-category $\mathbf{Mor}_1(\mathbf{Cau2Cat}_{\mathds{k}})$ of Cauchy complete monoidal 2-categories, Cauchy complete bimodule 2-categories, and bimodule morphisms. Theorem 1 of \cite{JF} then sketches a proof of the fact that a multifusion 2-category $\mathfrak{C}$ is a fully dualizable object, that is a 4-dualizable object, of $\mathbf{Mor}_1(\mathbf{Cau2Cat}_{\mathds{k}})$ if and only if it is 2-dualizable. More precisely, let $T:\mathfrak{C}\boxtimes\mathfrak{C}^{mop}\rightarrow \mathfrak{C}$ denote the canonical left $\mathfrak{C}\boxtimes\mathfrak{C}^{mop}$-module 2-functor. The proof of theorem 1 of \cite{JF} outlines that the multifusion 2-category $\mathfrak{C}$ yields a 4-dualizable object of $\mathbf{Mor}_1(\mathbf{Cau2Cat}_{\mathds{k}})$ if and only if there exists a left $\mathfrak{C}\boxtimes\mathfrak{C}^{mop}$-module 2-functor $\Delta:\mathfrak{C}\rightarrow \mathfrak{C}\boxtimes\mathfrak{C}^{mop}$ such that the composite left $\mathfrak{C}\boxtimes\mathfrak{C}^{mop}$-module 2-endofunctor $T\circ \Delta$ on $\mathfrak{C}$ can be extended to a 3-condensation in the sense of \cite{GJF} with splitting $Id_{\mathfrak{C}}$ as a $\mathfrak{C}\boxtimes\mathfrak{C}^{mop}$-module 2-functor.

We now argue that this is possible given that the multifusion 2-category $\mathfrak{C}$ is separable. Through the equivalence of left $\mathfrak{C}\boxtimes\mathfrak{C}^{mop}$-module 2-categories of construction \ref{con:canonicalalgebra}, the 2-functor $T$ is identified with the left $\mathfrak{C}\boxtimes\mathfrak{C}^{mop}$-module 2-functor $\mathfrak{C}\boxtimes\mathfrak{C}^{mop}\rightarrow \mathbf{Mod}_{\mathfrak{C}\boxtimes\mathfrak{C}^{mop}}(\mathcal{R}_{\mathfrak{C}})$ given by $H\mapsto H\Box \mathcal{R}_{\mathfrak{C}}$. We let $\Delta$ be the forgetful 2-functor $\mathbf{Mod}_{\mathfrak{C}\boxtimes\mathfrak{C}^{mop}}(\mathcal{R}_{\mathfrak{C}})\rightarrow \mathfrak{C}\boxtimes\mathfrak{C}^{mop}$. Then, the composite $T\circ\Delta:\mathbf{Mod}_{\mathfrak{C}\boxtimes\mathfrak{C}^{mop}}(\mathcal{R}_{\mathfrak{C}})\rightarrow \mathbf{Mod}_{\mathfrak{C}\boxtimes\mathfrak{C}^{mop}}(\mathcal{R}_{\mathfrak{C}})$ is identified with the 2-functor $H\mapsto H\Box \mathcal{R}_{\mathfrak{C}}$. But, theorem 5.1.2 of \cite{D8} exhibits an equivalence between the Morita 3-category of separable algebras in $\mathfrak{C}\boxtimes\mathfrak{C}^{mop}$ and the 3-category of separable left $\mathfrak{C}\boxtimes\mathfrak{C}^{mop}$-module 2-categories. Under this equivalence, the composite left $\mathfrak{C}\boxtimes\mathfrak{C}^{mop}$-module 2-functor $T\circ\Delta$ corresponds to the $\mathcal{R}_{\mathfrak{C}}$-$\mathcal{R}_{\mathfrak{C}}$-bimodule $\mathcal{R}_{\mathfrak{C}}\Box \mathcal{R}_{\mathfrak{C}}$. Conversely, the identity left $\mathfrak{C}\boxtimes\mathfrak{C}^{mop}$-module 2-functor $Id_\mathfrak{C}$ corresponds to the $\mathcal{R}_{\mathfrak{C}}$-$\mathcal{R}_{\mathfrak{C}}$-bimodule $\mathcal{R}_{\mathfrak{C}}$. Finally, as $\mathfrak{C}$ is separable, then $\mathcal{R}_{\mathfrak{C}}$ is a separable algebra, so that the $\mathcal{R}_{\mathfrak{C}}$-$\mathcal{R}_{\mathfrak{C}}$-bimodule $\mathcal{R}_{\mathfrak{C}}\Box \mathcal{R}_{\mathfrak{C}}$ can be extended to a 3-condensation with splitting $\mathcal{R}_{\mathfrak{C}}$. We therefore find that $T\circ\Delta$ can be extended to a 3-condensation with splitting $Id_{\mathfrak{C}}$, which proves the desired result.
\end{Remark}

We explain how to categorify the notion of the global (also called categorical) dimension of a fusion 1-category. If $\mathfrak{C}$ is a fusion 2-category, we have obtained in construction \ref{con:canonicalalgebra} a connected rigid algebra $\mathcal{R}_{\mathfrak{C}}$ in $\mathfrak{C}\boxtimes\mathfrak{C}^{mop}$. Then, via the construction given in section 3.2 of \cite{D7}, one can associate to $\mathcal{R}_{\mathfrak{C}}$ a well-defined scalar $\mathrm{Dim}_{\mathfrak{C}\boxtimes\mathfrak{C}^{mop}}(\mathcal{R}_{\mathfrak{C}})$, called its dimension.

\begin{Definition}
The dimension of a fusion 2-category $\mathfrak{C}$ is $$\mathrm{Dim}(\mathfrak{C}):= \mathrm{Dim}_{\mathfrak{C}\boxtimes\mathfrak{C}^{mop}}(\mathcal{R}_{\mathfrak{C}}).$$
\end{Definition}

Thanks to theorem \ref{thm:algebrarigidseparable}, the rigid algebra $\mathcal{R}_{\mathfrak{C}}$ is actually separable. Combining this observation with theorem 3.2.4 of \cite{D7} yields the following result, which categorifies theorem 2.3 of \cite{ENO1}.

\begin{Corollary}\label{cor:nonzerodimension}
The dimension $\mathrm{Dim}(\mathfrak{C})$ of any fusion 2-category $\mathfrak{C}$ is non-zero. 
\end{Corollary}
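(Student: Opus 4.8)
The plan is to reduce the statement directly to the separability criterion for connected rigid algebras, namely that a connected rigid algebra $A$ in a fusion 2-category is separable if and only if the scalar $\mathrm{Dim}_{\mathfrak{C}}(A)$ is non-zero (theorem 3.2.4 of \cite{D7}). The point is that $\mathrm{Dim}(\mathfrak{C})$ is by definition the invariant $\mathrm{Dim}_{\mathfrak{C}\boxtimes\mathfrak{C}^{mop}}(\mathcal{R}_{\mathfrak{C}})$ attached to the canonical rigid algebra $\mathcal{R}_{\mathfrak{C}}$, so once we know that $\mathcal{R}_{\mathfrak{C}}$ is a connected \emph{separable} algebra, this criterion forces $\mathrm{Dim}(\mathfrak{C})\neq 0$.

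First I would record that $\mathfrak{C}\boxtimes\mathfrak{C}^{mop}$ is itself a fusion 2-category: since $\mathfrak{C}$ is fusion, so is $\mathfrak{C}^{mop}$, and the 2-Deligne tensor product of two fusion 2-categories is again fusion, as recalled in the examples of section \ref{sub:exampleF2C}. Next, by Construction \ref{con:canonicalalgebra}, $\mathcal{R}_{\mathfrak{C}} = \underline{End}(I)$ is a rigid algebra in $\mathfrak{C}\boxtimes\mathfrak{C}^{mop}$, and it is connected because its unit 1-morphism corresponds to the identity $1$-morphism $I\rightarrow I$ under the defining adjunction. Applying Theorem \ref{thm:algebrarigidseparable} to the fusion 2-category $\mathfrak{C}\boxtimes\mathfrak{C}^{mop}$ then shows that the rigid algebra $\mathcal{R}_{\mathfrak{C}}$ is in fact separable.

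Finally I would invoke theorem 3.2.4 of \cite{D7} with $A = \mathcal{R}_{\mathfrak{C}}$: since $\mathcal{R}_{\mathfrak{C}}$ is a connected separable algebra, its dimension $\mathrm{Dim}_{\mathfrak{C}\boxtimes\mathfrak{C}^{mop}}(\mathcal{R}_{\mathfrak{C}})$ must be non-zero, and by definition this scalar is exactly $\mathrm{Dim}(\mathfrak{C})$. There is essentially no obstacle internal to this corollary: all the genuine difficulty has been front-loaded into Theorem \ref{thm:algebrarigidseparable}, whose proof leverages the Morita-theoretic classification of Theorem \ref{thm:Moritaconnected}. The only point to verify carefully is that the connectedness hypothesis required by theorem 3.2.4 of \cite{D7} genuinely holds, which is guaranteed by the final observation of Construction \ref{con:canonicalalgebra}.
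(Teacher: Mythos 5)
Your proposal is correct and matches the paper's argument exactly: the paper likewise observes that $\mathcal{R}_{\mathfrak{C}}$ is a connected rigid algebra in the fusion 2-category $\mathfrak{C}\boxtimes\mathfrak{C}^{mop}$, invokes Theorem \ref{thm:algebrarigidseparable} to conclude it is separable, and then applies theorem 3.2.4 of \cite{D7} (separability of a connected rigid algebra is equivalent to non-vanishing of its dimension) to deduce $\mathrm{Dim}(\mathfrak{C})\neq 0$. Your extra care in checking that $\mathfrak{C}\boxtimes\mathfrak{C}^{mop}$ is fusion and that the connectedness hypothesis holds is exactly the right bookkeeping, and is implicit in the paper's text.
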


\begin{Remark}
A notion of dimension for finite semisimple 2-categories was introduced in definition 1.2.28 of \cite{DR}. We do not know how this notion compares with ours in general, though these two notions agree in the examples considered below. However, we point out that there is a priori no reason to expect that they agree, as the definition given in \cite{DR} does not take into account the rigid monoidal structure.
\end{Remark}

\begin{Example}\label{ex:bosonicstronglyfusionseparable}
Let $\mathfrak{C}$ be a bosonic strongly fusion 2-category, so that $\mathfrak{C}$ is equivalent to $\mathbf{2Vect}_G^{\pi}$ for some finite group $G$ and 4-cocycle $\pi$ for $G$ with coefficients in $\mathds{k}^{\times}$. Without loss of generality, we set $\mathfrak{C}:=\mathbf{2Vect}_G^{\pi}$. Then, direct inspection shows that we have $$\mathfrak{C}\boxtimes\mathfrak{C}^{mop}= \mathbf{2Vect}_G^{\pi}\boxtimes (\mathbf{2Vect}_G^{\pi})^{mop}\simeq \mathbf{2Vect}_{G\times G^{op}}^{\pi\times \pi^{op}}$$ as fusion 2-categories. By construction \ref{con:canonicalalgebra}, we have that $\mathfrak{C}\simeq \mathbf{Mod}_{\mathfrak{C}\boxtimes\mathfrak{C}^{mop}}(\mathcal{R}_{\mathfrak{C}})$. Now, given two elements $g,h\in G$, we write $\mathbf{Vect}_{(g,h)}$ for the simple object of $\mathbf{2Vect}_{G\times G^{op}}^{\pi\times \pi^{op}}$ given by $\mathbf{Vect}$ with grading $(g,h)$. It follows from the definition above that $$\mathcal{R}_{\mathfrak{C}} = \boxplus_{g\in G} \mathbf{Vect}_{(g, g^{-1})}$$ as an object of $\mathfrak{C}\boxtimes\mathfrak{C}^{mop}$. On the other hand, let $\mathfrak{D}$ be the fusion sub-2-category of $\mathbf{2Vect}_{G\times G^{op}}^{\pi\times \pi^{op}}$ generated by the simple objects of the form $\mathbf{Vect}_{(g, g^{-1})}$ with $g\in G$. It is clear that $\mathfrak{D}\simeq\mathbf{2Vect}_G$, and that $\mathcal{R}_{\mathfrak{C}}$ is a connected rigid algebra in $\mathfrak{D}$. It therefore follows from corollary 3.3.7 of \cite{D7} that $\mathcal{R}_{\mathfrak{C}}$ has dimension $|G|$, so that $\mathrm{Dim}(\mathbf{2Vect}_G^{\pi})=|G|$. In this case, we note that the dimension of $\mathbf{2Vect}_G^{\pi}$ in the sense of \cite{DR} is also $|G|$.
\end{Example}

\begin{Example}\label{ex:dimensionconnectedfusion}
Let $\mathcal{B}$ be a braided fusion 1-category, and write $\mathrm{dim}(\mathcal{B})$ for its global dimension. We now compute the dimension of the connected fusion 2-category $\mathbf{Mod}(\mathcal{B})$. In order to do so, we consider $\mathbf{Mod}(\mathcal{B})$ as a left module 2-category over $\mathbf{Mod}(\mathcal{B})\boxtimes \mathbf{Mod}(\mathcal{B})^{mop}\simeq \mathbf{Mod}(\mathcal{B}\boxtimes\mathcal{B}^{rev})$. In this case, we find that the canonical rigid algebra $\mathcal{R}_{\mathbf{Mod}(\mathcal{B})}$ in $\mathbf{Mod}(\mathcal{B}\boxtimes\mathcal{B}^{rev})$ is given by $\mathcal{B}$ equipped with its canonical $\mathcal{B}\boxtimes\mathcal{B}^{rev}$-central structure. In particular, it follows from the refinement of proposition 3.3.3 of \cite{D7} given in proposition 3.2.3.3 of \cite{D:thesis} that $$\mathrm{Dim}(\mathbf{Mod}(\mathcal{B})) = 1/\mathrm{dim}(\mathcal{B}).$$ In particular, for connected fusion 2-categories, our notion of dimension coincides with that introduced in \cite{DR}. Further, the above equality shows that the dimension of a fusion 2-category is not invariant under Morita equivalence. Namely, for any finite group $G$, we have $\mathrm{Dim}(\mathbf{2Rep}(G)) = 1/|G|$, but we have seen above that $\mathrm{Dim}(\mathbf{2Vect}_G) = |G|$.
\end{Example}

\begin{Remark}
Let $\mathfrak{C}$ be a fusion 2-category, and $A$ a connected rigid algebra in $\mathfrak{C}$. Let us write $\mathfrak{D}:=\mathbf{Bimod}_{\mathfrak{C}}(A)$. We wonder whether the following equality always holds $$\frac{\mathrm{Dim}(\mathfrak{C})}{\mathrm{Dim}(\mathfrak{D})} = (\mathrm{Dim}_{\mathfrak{C}}(A))^2.$$ Namely, it holds in all of the above examples.
\end{Remark}

\subsection{Finite Semisimplicity of the Drinfeld Center}

In the previous section, we have associated a canonical separable algebra $\mathcal{R}_{\mathfrak{C}}$ to any multifusion 2-category $\mathfrak{C}$ such that $\mathfrak{C}\simeq \mathbf{Mod}_{\mathfrak{C}\boxtimes\mathfrak{C}^{mop}}(\mathcal{R}_{\mathfrak{C}})$ as left $\mathfrak{C}\boxtimes\mathfrak{C}^{mop}$-module 2-categories. It then follows from theorem 5.1.2 of \cite{D8} and lemma \ref{lem:module2funcenter} above that $$\mathbf{Bimod}_{\mathfrak{C}\boxtimes\mathfrak{C}^{mop}}(\mathcal{R}_{\mathfrak{C}})\simeq \mathscr{Z}(\mathfrak{C})$$ as monoidal 2-categories. In particular, by appealing to theorem \ref{thm:bimodulefinitesemisimple}, we get the following corollary. 

\begin{Corollary}\label{cor:F2Ccenter}
The Drinfeld center of any multifusion 2-category is a finite semisimple 2-category.
\end{Corollary}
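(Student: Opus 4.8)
The plan is to exhibit $\mathscr{Z}(\mathfrak{C})$ as a 2-category of bimodules over a separable algebra and then read off finite semisimplicity from Theorem \ref{thm:bimodulefinitesemisimple}. First I would invoke Construction \ref{con:canonicalalgebra}, which produces the canonical rigid algebra $\mathcal{R}_{\mathfrak{C}}$ in $\mathfrak{C}\boxtimes\mathfrak{C}^{mop}$ together with an equivalence $\mathfrak{C}\simeq\mathbf{Mod}_{\mathfrak{C}\boxtimes\mathfrak{C}^{mop}}(\mathcal{R}_{\mathfrak{C}})$ of left $\mathfrak{C}\boxtimes\mathfrak{C}^{mop}$-module 2-categories, where $\mathfrak{C}$ carries its canonical bimodule structure. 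Since $\mathcal{R}_{\mathfrak{C}}$ is a rigid algebra in the multifusion 2-category $\mathfrak{C}\boxtimes\mathfrak{C}^{mop}$, Corollary \ref{cor:multialgebrarigidseparable} guarantees that it is in fact separable.

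Next I would identify $\mathscr{Z}(\mathfrak{C})$ with the relevant bimodule 2-category. Lemma \ref{lem:module2funcenter} gives $\mathscr{Z}(\mathfrak{C})\simeq\mathbf{End}_{\mathfrak{C}\mathrm{-}\mathfrak{C}}(\mathfrak{C})$ as monoidal 2-categories; that is, the Drinfeld center is the dual tensor 2-category to $\mathfrak{C}\boxtimes\mathfrak{C}^{mop}$ taken with respect to the module 2-category $\mathfrak{C}$. Feeding the equivalence $\mathfrak{C}\simeq\mathbf{Mod}_{\mathfrak{C}\boxtimes\mathfrak{C}^{mop}}(\mathcal{R}_{\mathfrak{C}})$ into Theorem \ref{thm:dualfusion2category} then yields
$$\mathscr{Z}(\mathfrak{C})\simeq\mathbf{End}_{\mathfrak{C}\boxtimes\mathfrak{C}^{mop}}\big(\mathbf{Mod}_{\mathfrak{C}\boxtimes\mathfrak{C}^{mop}}(\mathcal{R}_{\mathfrak{C}})\big)\simeq\mathbf{Bimod}_{\mathfrak{C}\boxtimes\mathfrak{C}^{mop}}(\mathcal{R}_{\mathfrak{C}})^{mop}$$
as monoidal 2-categories; equivalently, one may quote Theorem 5.1.2 of \cite{D8} directly. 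Finally, because $\mathcal{R}_{\mathfrak{C}}$ is separable, Theorem \ref{thm:bimodulefinitesemisimple} shows that $\mathbf{Bimod}_{\mathfrak{C}\boxtimes\mathfrak{C}^{mop}}(\mathcal{R}_{\mathfrak{C}})$ is finite semisimple. Passing to the opposite monoidal structure leaves the underlying 2-category unchanged, so $\mathscr{Z}(\mathfrak{C})$ is finite semisimple, which is exactly the claim.

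The genuinely hard input is entirely upstream and is assumed here: it is the separability of $\mathcal{R}_{\mathfrak{C}}$, resting on Theorem \ref{thm:algebrarigidseparable} and hence on the Morita classification of Theorem \ref{thm:Moritaconnected}. Granting separability, the only point that will require care is bookkeeping: I must check that the $\mathfrak{C}\boxtimes\mathfrak{C}^{mop}$-action on $\mathfrak{C}$ used to manufacture $\mathcal{R}_{\mathfrak{C}}$ in Construction \ref{con:canonicalalgebra} is precisely the bimodule action underlying the equivalence $\mathscr{Z}(\mathfrak{C})\simeq\mathbf{End}_{\mathfrak{C}\mathrm{-}\mathfrak{C}}(\mathfrak{C})$ of Lemma \ref{lem:module2funcenter}, so that "$\mathfrak{C}\boxtimes\mathfrak{C}^{mop}$-module endofunctors of $\mathfrak{C}$'' and "$\mathcal{R}_{\mathfrak{C}}$-$\mathcal{R}_{\mathfrak{C}}$-bimodules'' are computed with respect to the same action. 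Once this compatibility is in place, the displayed chain of equivalences is formal and the corollary follows immediately.
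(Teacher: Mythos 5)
Your proposal is correct and follows essentially the same route as the paper: the canonical rigid algebra $\mathcal{R}_{\mathfrak{C}}$ of Construction \ref{con:canonicalalgebra} is separable by Theorem \ref{thm:algebrarigidseparable} and Corollary \ref{cor:multialgebrarigidseparable}, the center is identified with $\mathbf{Bimod}_{\mathfrak{C}\boxtimes\mathfrak{C}^{mop}}(\mathcal{R}_{\mathfrak{C}})$ via Lemma \ref{lem:module2funcenter} together with theorem 5.1.2 of \cite{D8}, and Theorem \ref{thm:bimodulefinitesemisimple} then yields finite semisimplicity. The only cosmetic difference is that you route the identification through Theorem \ref{thm:dualfusion2category}, which is stated for fusion 2-categories, so for a general multifusion $\mathfrak{C}$ one should indeed fall back on theorem 5.1.2 of \cite{D8} as you note, and the resulting $(-)^{mop}$ is harmless since it leaves the underlying 2-category unchanged.
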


We end by recording some additional properties of the Drinfeld center of a fusion 2-category, which follow from the above theorems.

\begin{Corollary}\label{cor:center2Deligne}
For every two multifusion 2-categories $\mathfrak{C}$ and $\mathfrak{D}$, we have $\mathscr{Z}(\mathfrak{C}\boxtimes\mathfrak{D})\simeq \mathscr{Z}(\mathfrak{C})\boxtimes \mathscr{Z}(\mathfrak{D})$ as braided monoidal 2-categories.
\end{Corollary}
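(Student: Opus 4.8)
The plan is to exhibit the canonical braided monoidal 2-functor of Lemma \ref{lem:CenterProductWeak} as the desired equivalence, thereby reducing the problem to checking that it is an equivalence of underlying 2-categories, which I would verify through the bimodule description of the Drinfeld center recalled at the start of this subsection. First I would observe that, since $\mathfrak{C}$ and $\mathfrak{D}$ are multifusion, Corollary \ref{cor:F2Ccenter} guarantees that $\mathscr{Z}(\mathfrak{C})$ and $\mathscr{Z}(\mathfrak{D})$ are finite semisimple, so their 2-Deligne tensor product is defined and coincides with $\mathscr{Z}(\mathfrak{C})\widehat{\otimes}\mathscr{Z}(\mathfrak{D})$. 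Lemma \ref{lem:CenterProductWeak} then supplies a canonical braided monoidal 2-functor $$\mathbf{P}:\mathscr{Z}(\mathfrak{C})\boxtimes\mathscr{Z}(\mathfrak{D})\rightarrow \mathscr{Z}(\mathfrak{C}\boxtimes\mathfrak{D})$$ commuting with the forgetful 2-functors to $\mathfrak{C}\boxtimes\mathfrak{D}$. As $\mathbf{P}$ is braided monoidal by construction, it suffices to prove that $\mathbf{P}$ is an equivalence of 2-categories.

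Next I would produce an abstract equivalence of the underlying 2-categories using the identifications $\mathscr{Z}(\mathfrak{C})\simeq\mathbf{Bimod}_{\mathfrak{C}\boxtimes\mathfrak{C}^{mop}}(\mathcal{R}_{\mathfrak{C}})$, and similarly for $\mathfrak{D}$ and for $\mathfrak{C}\boxtimes\mathfrak{D}$. Combining these with Lemma \ref{lem:bimodule2Deligne} gives $$\mathscr{Z}(\mathfrak{C})\boxtimes\mathscr{Z}(\mathfrak{D})\simeq\mathbf{Bimod}_{(\mathfrak{C}\boxtimes\mathfrak{C}^{mop})\boxtimes(\mathfrak{D}\boxtimes\mathfrak{D}^{mop})}(\mathcal{R}_{\mathfrak{C}}\boxtimes\mathcal{R}_{\mathfrak{D}}).$$ Using the symmetry of the 2-Deligne tensor product to reorder the factors as $(\mathfrak{C}\boxtimes\mathfrak{D})\boxtimes(\mathfrak{C}\boxtimes\mathfrak{D})^{mop}$, the crucial point is to identify $\mathcal{R}_{\mathfrak{C}}\boxtimes\mathcal{R}_{\mathfrak{D}}$ with the canonical algebra $\mathcal{R}_{\mathfrak{C}\boxtimes\mathfrak{D}}$. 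This I would deduce from the defining universal property of the internal $\underline{End}$ in Construction \ref{con:canonicalalgebra}: the regular bimodule $\mathfrak{C}\boxtimes\mathfrak{D}$ is the external product of the regular bimodules $\mathfrak{C}$ and $\mathfrak{D}$, so by Lemma \ref{lem:module2Deligne} and the uniqueness of internal Homs the adjunction characterizing $\underline{End}(I_{\mathfrak{C}}\boxtimes I_{\mathfrak{D}})$ factors as the product of the adjunctions defining $\underline{End}(I_{\mathfrak{C}})$ and $\underline{End}(I_{\mathfrak{D}})$, compatibly with the algebra structures. This yields $\mathcal{R}_{\mathfrak{C}}\boxtimes\mathcal{R}_{\mathfrak{D}}\simeq\mathcal{R}_{\mathfrak{C}\boxtimes\mathfrak{D}}$ and hence an equivalence $\mathscr{Z}(\mathfrak{C})\boxtimes\mathscr{Z}(\mathfrak{D})\simeq\mathscr{Z}(\mathfrak{C}\boxtimes\mathfrak{D})$ of 2-categories.

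Finally I would reconcile this abstract equivalence with $\mathbf{P}$. Both 2-functors commute with the conservative forgetful 2-functors to $\mathfrak{C}\boxtimes\mathfrak{D}$, and both send a generating object $Z\boxtimes W$, with $Z$ in $\mathscr{Z}(\mathfrak{C})$ and $W$ in $\mathscr{Z}(\mathfrak{D})$, to the object $Z\boxtimes W$ of $\mathfrak{C}\boxtimes\mathfrak{D}$ equipped with the half-braiding $b_Z\boxtimes b_W$. Since $\mathfrak{C}\boxtimes\mathfrak{D}$ is generated under Cauchy completion by such objects, $\mathbf{P}$ agrees with the abstract equivalence up to natural equivalence, and is therefore itself an equivalence of 2-categories; being braided monoidal, it gives the asserted braided monoidal equivalence. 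The main obstacle is precisely the identification $\mathcal{R}_{\mathfrak{C}}\boxtimes\mathcal{R}_{\mathfrak{D}}\simeq\mathcal{R}_{\mathfrak{C}\boxtimes\mathfrak{D}}$ as algebras (not merely as objects), together with the verification that the resulting 2-categorical equivalence is genuinely the braided functor $\mathbf{P}$ and not some unrelated one; everything else is a matter of assembling the cited lemmas.
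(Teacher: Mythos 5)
Your proposal is correct and follows essentially the same route as the paper: both rest on the identifications $\mathscr{Z}(\mathfrak{C})\simeq\mathbf{Bimod}_{\mathfrak{C}\boxtimes\mathfrak{C}^{mop}}(\mathcal{R}_{\mathfrak{C}})$, the compatibility $\mathcal{R}_{\mathfrak{C}\boxtimes\mathfrak{D}}\simeq\mathcal{R}_{\mathfrak{C}}\boxtimes\mathcal{R}_{\mathfrak{D}}$ (which the paper obtains from Construction \ref{con:canonicalalgebra} together with proposition 4.1 of \cite{D3}, in place of your universal-property argument), Lemma \ref{lem:bimodule2Deligne}, and the final check that the resulting composite agrees with the canonical braided monoidal 2-functor of Lemma \ref{lem:CenterProductWeak}. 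The only difference is one of presentation—you start from the canonical 2-functor and prove it is an equivalence, while the paper builds the abstract equivalence first and then matches it to the canonical functor—which is logically the same argument.
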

\begin{proof}
Let us write $\mathfrak{E}:= \mathfrak{C}\boxtimes \mathfrak{C}^{mop}\boxtimes \mathfrak{D}\boxtimes \mathfrak{D}^{mop}\simeq \mathfrak{C}\boxtimes\mathfrak{D}\boxtimes (\mathfrak{C}\boxtimes\mathfrak{D})^{mop}$. It follows from combining construction \ref{con:canonicalalgebra} and proposition 4.1 of \cite{D3} that the separable algebra $\mathcal{R}_{\mathfrak{C}\boxtimes \mathfrak{D}}$ in $\mathfrak{E}$ is equivalent to $\mathcal{R}_{\mathfrak{C}}\boxtimes \mathcal{R}_{\mathfrak{D}}$. In particular, there is an equivalence of monoidal 2-categories  $\mathscr{Z}(\mathfrak{C}\boxtimes \mathfrak{D})\simeq\mathbf{Bimod}_{\mathfrak{E}}(\mathcal{R}_{\mathfrak{C}}\boxtimes \mathcal{R}_{\mathfrak{D}})$. Furthermore, it follows from lemma \ref{lem:bimodule2Deligne} that there is an equivalence $\mathbf{Bimod}_{\mathfrak{E}}(\mathcal{R}_{\mathfrak{C}}\boxtimes \mathcal{R}_{\mathfrak{D}})\simeq \mathscr{Z}(\mathfrak{C})\boxtimes\mathscr{Z}(\mathfrak{D})$ of monoidal 2-categories. Finally, one checks that the composite monoidal 2-functor $\mathscr{Z}(\mathfrak{C})\boxtimes\mathscr{Z}(\mathfrak{D})\simeq\mathbf{Bimod}_{\mathfrak{E}}(\mathcal{R}_{\mathfrak{C}}\boxtimes \mathcal{R}_{\mathfrak{D}})\simeq \mathscr{Z}(\mathfrak{C}\boxtimes \mathfrak{D})$ agrees with the canonical braided monoidal 2-functor constructed in lemma \ref{lem:CenterProductWeak}. This finishes the proof of the result.
\end{proof}

Combining the above corollary with theorem \ref{thm:stronglyfusioninvertible} yields the following result.

\begin{Corollary}
The Drinfeld center of any fusion 2-category is equivalent as a braided fusion 2-category to the Drinfeld center of a strongly fusion 2-category.
\end{Corollary}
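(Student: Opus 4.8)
The plan is to combine the two results the corollary explicitly invokes: Corollary \ref{cor:center2Deligne}, which says the Drinfeld center is multiplicative under the 2-Deligne tensor product, and Theorem \ref{thm:stronglyfusioninvertible}, which exhibits any fusion 2-category as Morita equivalent to a product of a strongly fusion 2-category and an invertible one. First I would let $\mathfrak{C}$ be an arbitrary fusion 2-category. By Theorem \ref{thm:stronglyfusioninvertible}, there is a strongly fusion 2-category $\mathfrak{G}$ and an invertible fusion 2-category $\mathfrak{I}$ such that $\mathfrak{C}$ is Morita equivalent to $\mathfrak{G}\boxtimes\mathfrak{I}$. Theorem \ref{thm:MoritaInvarianceDrinfeldCenter} then gives an equivalence $\mathscr{Z}(\mathfrak{C})\simeq\mathscr{Z}(\mathfrak{G}\boxtimes\mathfrak{I})$ of braided monoidal 2-categories.

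Next I would apply Corollary \ref{cor:center2Deligne} to split $\mathscr{Z}(\mathfrak{G}\boxtimes\mathfrak{I})\simeq\mathscr{Z}(\mathfrak{G})\boxtimes\mathscr{Z}(\mathfrak{I})$ as braided monoidal 2-categories. Since $\mathfrak{I}$ is invertible, by definition $\mathscr{Z}(\mathfrak{I})\simeq\mathbf{2Vect}$, and $\mathbf{2Vect}$ is the unit for the 2-Deligne tensor product, so the right-hand factor drops out and we obtain $\mathscr{Z}(\mathfrak{G})\boxtimes\mathbf{2Vect}\simeq\mathscr{Z}(\mathfrak{G})$. Chaining these equivalences yields $\mathscr{Z}(\mathfrak{C})\simeq\mathscr{Z}(\mathfrak{G})$ as braided monoidal 2-categories, with $\mathfrak{G}$ strongly fusion, which is exactly the claim.

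The only real subtlety is bookkeeping on the level of braided structure: I must confirm that each equivalence is one of \emph{braided} monoidal 2-categories rather than merely monoidal. This is already guaranteed, since Theorem \ref{thm:MoritaInvarianceDrinfeldCenter} and Corollary \ref{cor:center2Deligne} are both stated as braided monoidal equivalences, and the identification $\mathscr{Z}(\mathfrak{I})\simeq\mathbf{2Vect}$ is likewise braided by definition of invertibility. I would also note in passing that $\mathscr{Z}(\mathfrak{G})$ is finite semisimple by Corollary \ref{cor:F2Ccenter}, and rigid by Corollary \ref{cor:Drinfeldrigid}, so it is genuinely a braided fusion 2-category; this justifies the phrase ``as a braided fusion 2-category'' in the statement. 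Since the argument is a direct concatenation of previously established results, there is no hard technical obstacle—the main point is simply to assemble the equivalences in the correct order and track that the braidings match at each stage.

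\begin{proof}
Let $\mathfrak{C}$ be a fusion 2-category. By theorem \ref{thm:stronglyfusioninvertible}, there exist a strongly fusion 2-category $\mathfrak{G}$ and an invertible fusion 2-category $\mathfrak{I}$ such that $\mathfrak{C}$ is Morita equivalent to $\mathfrak{G}\boxtimes\mathfrak{I}$. Applying theorem \ref{thm:MoritaInvarianceDrinfeldCenter}, we obtain an equivalence $\mathscr{Z}(\mathfrak{C})\simeq\mathscr{Z}(\mathfrak{G}\boxtimes\mathfrak{I})$ of braided monoidal 2-categories. By corollary \ref{cor:center2Deligne}, we have $\mathscr{Z}(\mathfrak{G}\boxtimes\mathfrak{I})\simeq\mathscr{Z}(\mathfrak{G})\boxtimes\mathscr{Z}(\mathfrak{I})$ as braided monoidal 2-categories. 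But $\mathfrak{I}$ is invertible, so $\mathscr{Z}(\mathfrak{I})\simeq\mathbf{2Vect}$ by definition, and as $\mathbf{2Vect}$ is the unit for the 2-Deligne tensor product, we conclude that $\mathscr{Z}(\mathfrak{G})\boxtimes\mathscr{Z}(\mathfrak{I})\simeq\mathscr{Z}(\mathfrak{G})$. Combining these equivalences yields $\mathscr{Z}(\mathfrak{C})\simeq\mathscr{Z}(\mathfrak{G})$ as braided monoidal 2-categories, with $\mathfrak{G}$ a strongly fusion 2-category. Finally, $\mathscr{Z}(\mathfrak{G})$ is a braided fusion 2-category by corollaries \ref{cor:Drinfeldrigid} and \ref{cor:F2Ccenter}.
\end{proof}
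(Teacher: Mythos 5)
Your proposal is correct and is exactly the paper's argument: the paper proves this corollary simply by "combining the above corollary [\ref{cor:center2Deligne}] with theorem \ref{thm:stronglyfusioninvertible}," i.e.\ the same chain $\mathscr{Z}(\mathfrak{C})\simeq\mathscr{Z}(\mathfrak{G}\boxtimes\mathfrak{I})\simeq\mathscr{Z}(\mathfrak{G})\boxtimes\mathscr{Z}(\mathfrak{I})\simeq\mathscr{Z}(\mathfrak{G})$ using Morita invariance (theorem \ref{thm:MoritaInvarianceDrinfeldCenter}) and the definition of invertibility. Your additional bookkeeping on braidings and the appeal to corollaries \ref{cor:Drinfeldrigid} and \ref{cor:F2Ccenter} to justify the phrase "braided fusion 2-category" only makes explicit what the paper leaves implicit.
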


\begin{Remark}
We note that there are non-trivial braided monoidal equivalences between the Drinfeld centers of strongly fusion 2-categories. For instance, given a finite group $G$ and a 4-cocycle $\pi$ for $G$ with coefficients in $\mathds{k}^{\times}$, any outer automorphism $\alpha$ of $G$ induces an equivalence $\mathbf{2Vect}_G^{\pi}\simeq \mathbf{2Vect}_G^{\alpha^*(\pi)}$ of fusion 2-categories. In addition, such an equivalence can be twisted using a 3-cocycle for $G$ with coefficients in $\mathds{k}^{\times}$. Then, by taking the Drinfeld center, we get an induced braided monoidal equivalence $$\mathscr{Z}(\mathbf{2Vect}_G^{\pi})\simeq \mathscr{Z}(\mathbf{2Vect}_G^{\alpha^*(\pi)}).$$ We expect that every braided monoidal equivalence between the Drinfel centers of bosonic strongly fusion 2-categories arises this way. On the other hand, there are more exotic braided monoidal equivalences between the Drinfeld centers of fermionic strongly fusion 2-categories as is explained in \cite{JF3}. The case of $\mathbf{2SVect}$ is discussed above in remark \ref{rem:braidedequivalencescenter}.
\end{Remark}

\begin{Remark}
Let us write $\mathscr{M}$ for the set of Morita equivalence classes of fusion 2-categories. This is a commutative monoidal under the 2-Deligne tensor product. We use $\mathscr{M}^{\times}$ to denote the maximal subgroup of $\mathscr{M}$. It follows from lemma \ref{lem:invertiblecharacterization} and theorem \ref{thm:WittEquivalence} (see also example 5.4.6 of \cite{D8}) that $\mathscr{M}^{\times}$ is isomorphic to $\mathcal{W}$, the Witt group of non-degenerate braided fusion 1-categories. Further, let us use $\mathbf{ZF2C}$ to denote the set of braided monoidal equivalence classes of Drinfeld centers of fusion 2-categories. Theorem \ref{thm:MoritaInvarianceDrinfeldCenter} and corollary \ref{cor:center2Deligne} imply that the map $\mathscr{Z}:\mathscr{M}/\mathcal{W}\rightarrow \mathbf{ZF2C}$ sending a Morita equivalence class of fusion 2-categories to its Drinfeld center is well-defined. As a generalization of proposition 4.1 of \cite{JFR}, we  conjecture that the map $\mathscr{Z}:\mathscr{M}/\mathcal{W}\rightarrow \mathbf{ZF2C}$ is a bijection.
\end{Remark}

\begin{Corollary}
For every multifusion 2-category $\mathfrak{C}$, the forgetful 2-functor $\mathbf{F}:\mathscr{Z}(\mathfrak{C})\rightarrow \mathfrak{C}$ is dominant, i.e.\ every object of $\mathfrak{C}$ is the splitting of a 2-condensation monad in $\mathfrak{C}$ supported on an object in the image of $\mathbf{F}$.
\end{Corollary}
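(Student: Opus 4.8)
The plan is to reduce the statement to the structure of the canonical algebra $\mathcal{R}_{\mathfrak{C}}$ and to the behaviour of free modules. Write $\mathcal{A}:=\mathfrak{C}\boxtimes\mathfrak{C}^{mop}$ and $\mathcal{R}:=\mathcal{R}_{\mathfrak{C}}$. Recall from construction \ref{con:canonicalalgebra} that $\mathfrak{C}\simeq\mathbf{Mod}_{\mathcal{A}}(\mathcal{R})$ as left $\mathcal{A}$-module 2-categories, and from the discussion preceding corollary \ref{cor:F2Ccenter} that $\mathscr{Z}(\mathfrak{C})\simeq\mathbf{Bimod}_{\mathcal{A}}(\mathcal{R})$ as monoidal 2-categories. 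First I would check that, under these two identifications, the forgetful 2-functor $\mathbf{F}$ corresponds to the 2-functor $\mathbf{Bimod}_{\mathcal{A}}(\mathcal{R})\to\mathbf{Mod}_{\mathcal{A}}(\mathcal{R})$ forgetting the left $\mathcal{R}$-action and retaining the right one; this is a direct unwinding of the equivalence $\mathscr{Z}(\mathfrak{C})\simeq\mathbf{End}_{\mathfrak{C}\text{-}\mathfrak{C}}(\mathfrak{C})$ of lemma \ref{lem:module2funcenter} together with the proof of proposition \ref{prop:DrinfeldIdentification}. With this in hand, it suffices to prove that every right $\mathcal{R}$-module is a condensate of $\mathbf{F}(M)$ for some $\mathcal{R}$-$\mathcal{R}$-bimodule $M$.

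Next I would introduce the free left module 2-functor $\mathbf{G}\colon\mathbf{Mod}_{\mathcal{A}}(\mathcal{R})\to\mathbf{Bimod}_{\mathcal{A}}(\mathcal{R})$ sending a right module $N$ to $\mathcal{R}\,\Box\, N$, with left $\mathcal{R}$-action by multiplication and residual right $\mathcal{R}$-action on $N$; this is left adjoint to $\mathbf{F}$, and $\mathbf{F}(\mathbf{G}(N))\simeq\mathcal{R}\,\Box\,N$ as a right module. Thus the central object witnessing each $N$ will be $\mathbf{G}(N)\in\mathscr{Z}(\mathfrak{C})$, and the task becomes exhibiting $N$ as a condensate of $\mathcal{R}\,\Box\,N$ inside $\mathfrak{C}$. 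Since the tensoring 2-functor $(-)\,\Box\,N$ preserves 2-condensations, this reduces in turn to the single statement that the monoidal unit $I$ of $\mathcal{A}$ is a condensate of $\mathcal{R}$: given a 2-condensation exhibiting $I$ as a condensate of $\mathcal{R}$, tensoring it on the right with $N$ and using $I\,\Box\,N\simeq N$ produces the desired 2-condensation, all of whose structure 1- and 2-morphisms are automatically morphisms of right $\mathcal{R}$-modules. Separability of $\mathcal{R}$ is used throughout to guarantee that the relevant 2-categories of (bi)modules are finite semisimple, so that the splittings produced genuinely exist.

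The crux is therefore to show that $I$ is a condensate of $\mathcal{R}$ in $\mathcal{A}$. When $\mathfrak{C}$ is fusion, construction \ref{con:canonicalalgebra} tells us that $\mathcal{R}=\underline{End}(I)$ is connected, so its unit 1-morphism $i\colon I\to\mathcal{R}$ is simple. Passing to a right adjoint $i^{*}\colon\mathcal{R}\to I$, which exists since $\mathcal{A}$ has adjoints for 1-morphisms, the unit of the adjunction is a nonzero 2-morphism $\mathrm{Id}_{I}\Rightarrow i^{*}\circ i$ out of the simple object $\mathrm{Id}_{I}$ of the fusion 1-category $End_{\mathcal{A}}(I)$; by semisimplicity it is split monic, and a chosen retraction supplies the 2-isomorphism data of a 2-condensation with $f=i^{*}$ and $g=i$. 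Composing the reductions above then shows that every object of $\mathfrak{C}$ is a condensate of an object in the image of $\mathbf{F}$. The main obstacle I anticipate is precisely this last step in the multifusion case, where the unit of $\mathfrak{C}$ is no longer simple and $\mathcal{R}$ need not be connected: here I would first split the monoidal unit into its simple summands and argue componentwise, which is the delicate point since condensates cannot cross connected components of $\mathfrak{C}$; the remaining bookkeeping, namely identifying $\mathbf{F}$ with the bimodule-forgetful functor and verifying the right-$\mathcal{R}$-module compatibilities after tensoring by $N$, is routine.
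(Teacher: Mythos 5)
Your construction is correct in the fusion case, but it takes a genuinely different route from the paper. The paper's proof is a three-line Morita-duality argument: $\mathbf{F}$ is dominant if and only if $\mathfrak{C}$ is indecomposable as a left $\mathscr{Z}(\mathfrak{C})$-module 2-category; the dual to $\mathscr{Z}(\mathfrak{C})$ with respect to $\mathfrak{C}$ is $\mathfrak{C}\boxtimes\mathfrak{C}^{mop}$ (in effect proposition \ref{prop:DrinfeldIdentification} together with corollary 5.4.4 of \cite{D8}); and corollary 5.2.5 of \cite{D8} then converts the fact that this dual is a fusion 2-category into the required indecomposability. You instead work on the algebra side of the Morita theory: you identify $\mathbf{F}$ with the forgetful 2-functor $\mathbf{Bimod}_{\mathcal{A}}(\mathcal{R})\rightarrow\mathbf{Mod}_{\mathcal{A}}(\mathcal{R})$ and, for each right module $N$, produce an explicit 2-condensation onto $N$ supported on $\mathbf{F}(\mathcal{R}\Box N)$ by tensoring the 2-condensation $(\mathcal{R},I,i^{*},i,\rho,\eta)$ with $N$. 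The pivotal step, that the unit $\eta:Id_I\Rightarrow i^{*}\circ i$ is split monic because $Id_I$ is a simple object of the finite semisimple 1-category $End_{\mathcal{A}}(I)$, is sound, and what your route buys is an explicit condensation witness (and, implicitly, a proof that $\mathbf{F}$ admits a section-like free functor $\mathbf{G}$), rather than an appeal to the indecomposability results of \cite{D8}.

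The multifusion case, which you flag as your anticipated obstacle, is however not a gap that any componentwise patch can close: the statement is false for general multifusion 2-categories, for exactly the reason you sense, namely that condensation cannot cross connected components. Take $\mathfrak{C}=\mathbf{End}(\mathbf{2Vect}\boxplus\mathbf{2Vect})$, whose objects are $2\times 2$ matrices of 2-vector spaces with componentwise 1- and 2-morphisms. The half-braiding condition forces any object of $\mathscr{Z}(\mathfrak{C})$ to be of the form $\mathrm{diag}(\mathcal{V},\mathcal{V})$, so $\mathscr{Z}(\mathfrak{C})\simeq\mathbf{2Vect}$ and the image of $\mathbf{F}$ consists of such diagonal objects; since every 1-morphism between $\mathrm{diag}(\mathcal{V},\mathcal{V})$ and the off-diagonal simple object $E_{12}$ is zero, $E_{12}$ splits no 2-condensation monad supported in the image of $\mathbf{F}$. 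In your language: when $\mathfrak{C}$ is multifusion, the unit $i:I\rightarrow\mathcal{R}_{\mathfrak{C}}$ of the algebra of construction \ref{con:canonicalalgebra} vanishes on the off-diagonal summands of $I$, so $\eta$ fails to be split monic and $I$ is not a condensate of $\mathcal{R}_{\mathfrak{C}}$. You should also note that the paper's own proof tacitly has the same restriction: it asserts that $\mathfrak{C}\boxtimes\mathfrak{C}^{mop}$ is a \emph{fusion} 2-category, which holds only when $\mathfrak{C}$ itself is fusion. So the corollary should be read with ``fusion'' in place of ``multifusion'', and in that generality your argument is a complete and self-contained alternative proof.
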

\begin{proof}
The monoidal 2-functor $\mathbf{F}$ is dominant if and only if $\mathfrak{C}$ is an indecomposable left $\mathscr{Z}(\mathfrak{C})$-module 2-category. But, the dual to $\mathscr{Z}(\mathfrak{C})$ with respect to $\mathfrak{C}$ is $\mathfrak{C}\boxtimes\mathfrak{C}^{mop}$, a fusion 2-category. The result thus follows from corollary 5.2.5 of \cite{D8}.
\end{proof}

\begin{Corollary}
Let $\mathfrak{C}$ be a multifusion 2-category, then $\mathscr{Z}(\mathfrak{C})$ is a factorizable braided multifusion 2-category, i.e.\ the canonical braided monoidal 2-functor $\mathscr{Z}(\mathfrak{C})\boxtimes \mathscr{Z}(\mathfrak{C})^{rev}\rightarrow\mathscr{Z}(\mathscr{Z}(\mathfrak{C}))$ is an equivalence.
\end{Corollary}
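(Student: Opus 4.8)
The plan is to first produce an abstract braided equivalence $\mathscr{Z}(\mathscr{Z}(\mathfrak{C}))\simeq\mathscr{Z}(\mathfrak{C})\boxtimes\mathscr{Z}(\mathfrak{C})^{rev}$ from the machinery already in place, and only afterwards to identify it with the canonical 2-functor. The starting point is that $\mathscr{Z}(\mathfrak{C})$ is Morita equivalent to $\mathfrak{C}\boxtimes\mathfrak{C}^{mop}$. Indeed, lemma \ref{lem:module2funcenter} exhibits $\mathscr{Z}(\mathfrak{C})\simeq\mathbf{End}_{\mathfrak{C}\boxtimes\mathfrak{C}^{mop}}(\mathfrak{C})$ as the dual tensor 2-category to $\mathfrak{C}\boxtimes\mathfrak{C}^{mop}$ with respect to the module 2-category $\mathfrak{C}$, so that $\mathscr{Z}(\mathfrak{C})$ and $\mathfrak{C}\boxtimes\mathfrak{C}^{mop}$ are Morita equivalent by theorem \ref{thm:MoritaF2C} (using remark 5.4.2 of \cite{D8} in the multifusion case). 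Moreover, $\mathscr{Z}(\mathfrak{C})$ is a braided multifusion 2-category: it is rigid by corollary \ref{cor:Drinfeldrigid} and finite semisimple by corollary \ref{cor:F2Ccenter}, so the results cited below genuinely apply to it.

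Next I would apply theorem \ref{thm:MoritaInvarianceDrinfeldCenter} to this Morita equivalence to obtain a braided monoidal equivalence $\mathscr{Z}(\mathscr{Z}(\mathfrak{C}))\simeq\mathscr{Z}(\mathfrak{C}\boxtimes\mathfrak{C}^{mop})$. Corollary \ref{cor:center2Deligne} then gives $\mathscr{Z}(\mathfrak{C}\boxtimes\mathfrak{C}^{mop})\simeq\mathscr{Z}(\mathfrak{C})\boxtimes\mathscr{Z}(\mathfrak{C}^{mop})$, and lemma \ref{lem:CenterOpRev} identifies the second factor with $\mathscr{Z}(\mathfrak{C})^{rev}$. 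Composing these three steps yields a braided monoidal equivalence $\mathscr{Z}(\mathscr{Z}(\mathfrak{C}))\simeq\mathscr{Z}(\mathfrak{C})\boxtimes\mathscr{Z}(\mathfrak{C})^{rev}$, which already shows the two sides are abstractly equivalent and, in particular, have matching simple objects and $Hom$-categories.

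The hard part will be to verify that the equivalence produced in this way coincides with the canonical braided monoidal 2-functor $\mathbf{G}\colon\mathscr{Z}(\mathfrak{C})\boxtimes\mathscr{Z}(\mathfrak{C})^{rev}\to\mathscr{Z}(\mathscr{Z}(\mathfrak{C}))$, rather than being merely some equivalence. To settle this I would compare the two along forgetful 2-functors and track half-braidings. On the one hand, post-composing $\mathbf{G}$ with the forgetful 2-functor $\mathscr{Z}(\mathscr{Z}(\mathfrak{C}))\to\mathscr{Z}(\mathfrak{C})$ recovers the 2-functor induced by the monoidal product $\Box$ on $\mathscr{Z}(\mathfrak{C})$, since by construction the half-braiding $\mathbf{G}$ places on $X\Box Y$ is assembled from the braiding of $\mathscr{Z}(\mathfrak{C})$. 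On the other hand, corollary \ref{cor:center2Deligne} is already arranged so that its equivalence agrees with the canonical functor of lemma \ref{lem:CenterProductWeak}, which commutes with the forgetful 2-functors, while the equivalence of theorem \ref{thm:MoritaInvarianceDrinfeldCenter} is the explicit 2-functor $\mathbf{L}$ built there. Tracing the half-braidings through $\mathbf{L}$ and through corollary \ref{cor:center2Deligne}, I would check that the composite sends a pair $(X,Y)$ to $X\Box Y$ equipped precisely with the half-braiding coming from the braiding of $\mathscr{Z}(\mathfrak{C})$, and likewise on 1- and 2-morphisms; since both the abstract equivalence and $\mathbf{G}$ are braided monoidal 2-functors, agreement of this underlying data together with the half-braidings forces them to agree. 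This bookkeeping of half-braidings is the genuine obstacle; once it is completed, $\mathbf{G}$ is an equivalence and $\mathscr{Z}(\mathfrak{C})$ is factorizable.
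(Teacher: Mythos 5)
Your proposal is correct and takes essentially the same route as the paper: the paper's proof likewise invokes the Morita equivalence between $\mathscr{Z}(\mathfrak{C})$ and $\mathfrak{C}\boxtimes\mathfrak{C}^{mop}$ and then applies theorem \ref{thm:MoritaInvarianceDrinfeldCenter} (implicitly combined with corollary \ref{cor:center2Deligne} and lemma \ref{lem:CenterOpRev}, which you spell out explicitly), concluding with the remark that inspection identifies the resulting equivalence with the canonical braided monoidal 2-functor. The half-braiding bookkeeping you single out as the genuine obstacle is exactly what the paper compresses into that final ``inspection'' sentence.
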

\begin{proof}
The multifusion 2-categories $\mathscr{Z}(\mathfrak{C})$ and $\mathfrak{C}\boxtimes\mathfrak{C}^{mop}$ are Morita equivalent, so that $\mathscr{Z}(\mathfrak{C})\boxtimes \mathscr{Z}(\mathfrak{C})^{rev}\simeq\mathscr{Z}(\mathscr{Z}(\mathfrak{C}))$ as braided monoidal 2-category thanks to theorem \ref{thm:MoritaInvarianceDrinfeldCenter}. Inspection shows that the braided monoidal 2-functor constructed in this proof agrees with the canonical one.
\end{proof}

\begin{Remark}
Let $\mathfrak{B}$ be a braided fusion 2-category. We use $\mathscr{Z}_{(2)}(\mathfrak{B})$ to denote its sylleptic center, as defined in section 5.1 of \cite{Cr}. We expect that a braided fusion 2-category $\mathfrak{B}$ is factorizable if and only if it is non-degenerate in the sense that there is an equivalence $\mathscr{Z}_{(2)}(\mathfrak{B})\simeq\mathbf{2Vect}$ of sylleptic monoidal 2-categories.
\end{Remark}

Our last result is related to conjecture 4.3 of \cite{JFR}. More precisely, we find that the quotient of the canonical inclusion of abelian groups $\mathcal{W}\hookrightarrow\mathcal{W}(\mathbf{Rep}(G))$ is identified with $H^4(G;\mathds{k}^{\times})$ as a set.

\begin{Proposition}\label{prop:bosonicWittgroups}
Let $G$ be a finite group. There are bijections of sets $$\mathcal{W}(\mathbf{Rep}(G))\cong \mathcal{W}\times H^4(G;\mathds{k}^{\times})\ \ and\ \ \widehat{\mathcal{W}}(\mathbf{Rep}(G))\cong \mathcal{W}\times H^4(G;\mathds{k}^{\times})/Out(G).$$
\end{Proposition}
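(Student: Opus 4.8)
The plan is to prove the first bijection $\mathcal{W}(\mathbf{Rep}(G))\cong\mathcal{W}\times H^4(G;\mathds{k}^{\times})$ and then descend to the second. Recall that $\widehat{\mathcal{W}}(\mathbf{Rep}(G))$ is by definition the quotient of $\mathcal{W}(\mathbf{Rep}(G))$ under the action of $Aut^{br}(\mathbf{Rep}(G))$, and that $Aut^{br}(\mathbf{Rep}(G))\cong Out(G)$ (symmetric tensor autoequivalences preserve the fibre functor up to isomorphism, so they are classified by $\mathrm{Aut}(G)$ modulo inner automorphisms). Thus, provided the first bijection is arranged to be $Out(G)$-equivariant for the trivial action on the factor $\mathcal{W}$ and the natural action on $H^4(G;\mathds{k}^{\times})$, the second bijection follows by passing to quotients. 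I would therefore concentrate on the first bijection and on the equivariance.

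First I would exhibit $\mathcal{W}$ as a split direct summand. There is a canonical inclusion $s:\mathcal{W}\hookrightarrow\mathcal{W}(\mathbf{Rep}(G))$ sending the class of a non-degenerate braided fusion $1$-category $\mathcal{C}$ to the class of $\mathcal{C}\boxtimes\mathbf{Rep}(G)$ with its evident embedding; this is a homomorphism, and lands in $\mathcal{W}(\mathbf{Rep}(G))$ since $\mathcal{Z}_{(2)}(\mathcal{C}\boxtimes\mathbf{Rep}(G))\simeq\mathcal{Z}_{(2)}(\mathcal{C})\boxtimes\mathbf{Rep}(G)\simeq\mathbf{Rep}(G)$. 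In the other direction, de-equivariantization along $\mathbf{Rep}(G)$ gives a homomorphism $d:\mathcal{W}(\mathbf{Rep}(G))\to\mathcal{W}$, $[\mathcal{A}]\mapsto[\mathcal{A}\boxtimes_{\mathbf{Rep}(G)}\mathbf{Vect}]$, whose target is non-degenerate by proposition 4.30 of \cite{DGNO}; it is well defined on Witt classes and multiplicative because de-equivariantization is compatible with the relative tensor product and sends relative centres to Drinfeld centres, as in \cite{DNO}. Since $(\mathcal{C}\boxtimes\mathbf{Rep}(G))\boxtimes_{\mathbf{Rep}(G)}\mathbf{Vect}\simeq\mathcal{C}$, we have $d\circ s=\mathrm{id}$, so $\mathcal{W}(\mathbf{Rep}(G))\cong\mathcal{W}\oplus\ker(d)$ as abelian groups. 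It then suffices to produce a bijection of sets $\ker(d)\cong H^4(G;\mathds{k}^{\times})$, as this upgrades the splitting to the asserted product of sets.

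The core is the identification $\ker(d)\cong H^4(G;\mathds{k}^{\times})$, where I would bring in the $2$-categorical machinery of the paper. A class of $\mathcal{W}(\mathbf{Rep}(G))$ lies in $\ker(d)$ precisely when the associated connected fusion $2$-category $\mathbf{Mod}(\mathcal{A})$ has trivial invertible part, i.e.\ $d([\mathcal{A}])=0$. By theorem \ref{thm:stronglyfusioninvertible} such a $2$-category is Morita equivalent to a bosonic strongly fusion $2$-category; by corollary \ref{cor:omegacenter} one has $\Omega\mathscr{Z}(\mathbf{Mod}(\mathcal{A}))\simeq\mathcal{Z}_{(2)}(\mathcal{A})\simeq\mathbf{Rep}(G)$, and since $\mathscr{Z}$ is Morita invariant (theorem \ref{thm:MoritaInvarianceDrinfeldCenter}) the same group $G$ is recovered, up to isomorphism, from the strongly fusion representative. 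By theorem A of \cite{JFY} the bosonic strongly fusion $2$-categories are exactly the $\mathbf{2Vect}_G^{\pi}$ with $\pi\in H^4(G;\mathds{k}^{\times})$, which yields the candidate invariant $\ker(d)\to H^4(G;\mathds{k}^{\times})$, $[\mathcal{A}]\mapsto\pi$. Surjectivity is immediate, since each $\mathbf{2Vect}_G^{\pi}$ is Morita equivalent to a connected fusion $2$-category $\mathbf{Mod}(\mathcal{A}_{\pi})$ with $\mathcal{Z}_{(2)}(\mathcal{A}_{\pi})\simeq\mathbf{Rep}(G)$ and trivial de-equivariantization, hence represents a class of $\ker(d)$.

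The main obstacle is injectivity: I must show that the central Morita class of $\mathbf{2Vect}_G^{\pi}$ determines $\pi\in H^4(G;\mathds{k}^{\times})$ exactly. I would prove this at the level of $1$-categories, using the de-equivariantization equivalence of \cite{DGNO} to identify $\ker(d)$, once the central structure is remembered, with the set of $G$-actions on the trivial non-degenerate braided fusion $1$-category $\mathbf{Vect}$ up to equivalence; these are the braided monoidal $2$-functors from the discrete $2$-group $G$ into the braided automorphism $2$-group of $\mathbf{Vect}$, whose only nontrivial homotopy group is $\mathds{k}^{\times}$ in degree two, and such functors are classified by $H^4(G;\mathds{k}^{\times})$, in agreement with the classification of \cite{JFY} under $\mathcal{A}_{\pi}\leftrightarrow\mathbf{2Vect}_G^{\pi}$. (Routing injectivity through the Drinfeld centre instead would require the expected but still open classification of braided equivalences of $\mathscr{Z}(\mathbf{2Vect}_G^{\pi})$ discussed after corollary \ref{cor:F2Ccenter}, which is why I prefer the $1$-categorical argument.) Granting this, $\ker(d)\cong H^4(G;\mathds{k}^{\times})$ and the first bijection is established. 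Finally, to descend to $\widehat{\mathcal{W}}(\mathbf{Rep}(G))$ I would observe that forgetting the central structure is exactly quotienting by $Aut^{br}(\mathbf{Rep}(G))\cong Out(G)$: an outer automorphism reparametrizes the embedding $\mathbf{Rep}(G)\hookrightarrow\mathcal{A}$, thereby twisting the residual $G$-action and changing $\pi$ by the natural $Out(G)$-action on $H^4(G;\mathds{k}^{\times})$, while leaving $\mathcal{A}\boxtimes_{\mathbf{Rep}(G)}\mathbf{Vect}$, and hence its class in $\mathcal{W}$, unchanged because all fibre functors on $\mathbf{Rep}(G)$ are isomorphic. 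This gives $\widehat{\mathcal{W}}(\mathbf{Rep}(G))\cong\mathcal{W}\times H^4(G;\mathds{k}^{\times})/Out(G)$, completing the proof.
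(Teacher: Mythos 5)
Your splitting step is fine: the section $s:\mathcal{W}\rightarrow\mathcal{W}(\mathbf{Rep}(G))$ and the de-equivariantization retraction $d$ are plausibly well defined on Witt classes (this much can be extracted from \cite{DNO} and \cite{DGNO}), so $\mathcal{W}(\mathbf{Rep}(G))\cong\mathcal{W}\oplus\ker(d)$, and reducing the second bijection to an $Out(G)$-equivariance statement is also sound. The genuine gap is the step you yourself single out as the crux, namely $\ker(d)\cong H^4(G;\mathds{k}^{\times})$, and your proposed $1$-categorical argument for it cannot be repaired. A class lies in $\ker(d)$ when its de-equivariantization is Witt-\emph{trivial}, i.e.\ braided equivalent to some Drinfeld center $\mathcal{Z}(\mathcal{C})$, not when it is $\mathbf{Vect}$; and no nontrivial class in $\ker(d)$ has \emph{any} representative de-equivariantizing to $\mathbf{Vect}$. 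Indeed, if $\mathcal{A}\boxtimes_{\mathbf{Rep}(G)}\mathbf{Vect}\simeq\mathbf{Vect}$, then $\mathrm{FPdim}(\mathcal{A})=|G|$, so the full subcategory $\mathbf{Rep}(G)\subseteq\mathcal{A}$ has the same Frobenius--Perron dimension as $\mathcal{A}$, whence $\mathcal{A}\simeq\mathbf{Rep}(G)$. Consistently, the equivariantization dictionary of \cite{DGNO} only involves actions of $G$ by braided autoequivalences, and the $2$-group of braided autoequivalences of $\mathbf{Vect}$ is trivial (the only autoequivalence is the identity, and the identity functor has no nontrivial monoidal natural automorphisms); so the ``$G$-actions on $\mathbf{Vect}$'' accessible to \cite{DGNO} form a single point, not $H^4(G;\mathds{k}^{\times})$. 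Your homotopy bookkeeping is also off by one: a target with $\mathds{k}^{\times}$ in degree two would classify $H^3(G;\mathds{k}^{\times})$; the group $H^4(G;\mathds{k}^{\times})$ classifies maps from $G$ into the automorphism $3$-group of $\mathbf{Vect}$ taken inside the Morita $4$-category of braided fusion $1$-categories (equivalently, of $\mathbf{2Vect}$ among fusion $2$-categories), where $\mathds{k}^{\times}$ sits in degree three.

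Once ``action'' is interpreted in that Morita-theoretic sense --- the only sense in which the classification by $H^4(G;\mathds{k}^{\times})$ and the comparison with \cite{JFY} are correct --- identifying $\ker(d)$ with such actions is no longer a consequence of de-equivariantization: it is precisely the statement that the central Morita class of $\mathbf{2Vect}_G^{\pi}$ determines $\pi$, i.e.\ the injectivity you set out to prove, so the argument becomes circular. This is where unavoidable $2$-categorical work enters, and it is what the paper's proof supplies: starting from a Morita equivalence between $\mathbf{2Vect}_{G_1}^{\pi_1}\boxtimes\mathbf{Mod}(\mathcal{B}_1)$ and $\mathbf{2Vect}_{G_2}^{\pi_2}\boxtimes\mathbf{Mod}(\mathcal{B}_2)$, it recovers $G_1\cong G_2$ from the Drinfeld center via theorem \ref{thm:MoritaInvarianceDrinfeldCenter}, corollary \ref{cor:center2Deligne} and \cite{KTZ}; it uses the component count of lemma \ref{lem:criterionbimoduleconnected} to force the algebra implementing the equivalence into the identity component, which both extracts the Witt equivalence of $\mathcal{B}_1$ and $\mathcal{B}_2$ and permits cancellation of the invertible factors; and it then argues via lemmas \ref{lem:criterionbimoduleconnected} and \ref{lem:dualconnectedcomponent} that a Morita equivalence between $\mathbf{2Vect}_{G_1}^{\pi_1}$ and $\mathbf{2Vect}_{G_2}^{\pi_2}$ must be implemented by an honest monoidal equivalence, forcing $f^{*}\pi_2$ to be cohomologous to $\pi_1$. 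Note that your perceived dichotomy (either the still-open description of braided autoequivalences of $\mathscr{Z}(\mathbf{2Vect}_G^{\pi})$, or a $1$-categorical shortcut) is a false one: the paper's route uses neither. Finally, a smaller repair: in your surjectivity claim, ``trivial de-equivariantization'' must be weakened to ``Witt-trivial de-equivariantization'', again because of the dimension count above.
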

\begin{proof}
Let $G_1$ and $G_2$ be finite groups. In addition, let $\pi_1$ be a 4-cocycle for $G_1$ with coefficients in $\mathds{k}^{\times}$, and $\pi_2$ be a 4-cocycle for $G_2$ with coefficients in $\mathds{k}^{\times}$. Let also $\mathcal{B}_1$ and $\mathcal{B}_2$ be non-degenerate braided fusion 2-categories. We define $$\mathfrak{C}_1:=\mathbf{2Vect}_{G_1}^{\pi_1}\boxtimes\mathbf{Mod}(\mathcal{B}_1)\ \ \mathrm{and}\ \ \mathfrak{C}_2:=\mathbf{2Vect}_{G_2}^{\pi_2}\boxtimes\mathbf{Mod}(\mathcal{B}_2).$$ We examine under what conditions the two fusion 2-categories $\mathfrak{C}_1$ and $\mathfrak{C}_2$ are Morita equivalent.

It follows from corollary \ref{cor:center2Deligne}, theorem \ref{thm:MoritaInvarianceDrinfeldCenter}, and theorem 1.1 of \cite{KTZ} that if $\mathfrak{C}_1$ and $\mathfrak{C}_2$ are Morita equivalent, then $G_1\cong G_2$. Furthermore, there must exist a connected separable algebra $A$ in $\mathfrak{C}_1$ and a equivalence of fusion 2-categories $\mathbf{F}:\mathbf{Bimod}_{\mathfrak{C}_1}(A)\simeq\mathfrak{C}_2.$ The algebra $A$ is necessarily contained in $\mathfrak{C}_1^0$. Namely, if it was not, then it would follow from lemma \ref{lem:criterionbimoduleconnected} that $\mathbf{Bimod}_{\mathfrak{C}_1}(A)$ has strictly fewer than $|G_2|$ connected components. Thus, we find that $A$ is an algebra in $\mathbf{Mod}(\mathcal{B}_1)$, so that, together with $\mathbf{F}^0$, they witness a Witt equivalence between $\mathcal{B}_1$ and $\mathcal{B}_2$. As a consequence, we find that $$\mathfrak{C}_1\boxtimes\mathbf{Mod}(\mathcal{B}_1^{rev})\ \ \mathrm{and}\ \ \mathfrak{C}_2\boxtimes\mathbf{Mod}(\mathcal{B}_1^{rev})$$ are Morita equivalent. But, using the fact that $\mathbf{Mod}(\mathcal{B}_1)\boxtimes\mathbf{Mod}(\mathcal{B}_1^{rev})$ is Morita equivalent to $\mathbf{2Vect}$, we see that the above fusion 2-categories are Morita equivalent to $\mathbf{2Vect}_{G_1}^{\pi_1}$, and $\mathbf{2Vect}_{G_2}^{\pi_2}$ respectively. In particular, $\mathbf{2Vect}_{G_1}^{\pi_1}$ and $\mathbf{2Vect}_{G_2}^{\pi_2}$ are Morita equivalent. As $G_1\cong G_2$, it follows readily from lemmas \ref{lem:criterionbimoduleconnected} and \ref{lem:dualconnectedcomponent} that such a Morita equivalence has to be implemented by an equivalence of fusion 2-categories $$\mathbf{2Vect}_{G_1}^{\pi_1}\simeq \mathbf{2Vect}_{G_2}^{\pi_2},$$ that is, there exists an isomorphism of groups $f:G_1\cong G_2$ such that $f^*\pi_2$ is cohomologous to $\pi_1$.

Conversely, if there exists an isomorphism of groups $f:G_1\cong G_2$ such that $f^*\pi_2$ is cohomologous to $\pi_1$, and $\mathcal{B}_1$ and $\mathcal{B}_2$ are Witt equivalent, then $\mathfrak{C}_1$ and $\mathfrak{C}_2$ are manifestly Morita equivalent. Combining theorems \ref{thm:stronglyfusioninvertible} and \ref{thm:Moritaconnected} together with corollary \ref{cor:Moritaclassesconnected} yields the second isomorphism. The first ones follows similarly by working with $\mathbf{Mod}(\mathbf{Rep}(G))$-central fusion 2-categories as introduced in remark \ref{rem:centralMoritaequivalence}. Namely, in this case, the $\mathbf{Mod}(\mathbf{Rep}(G))$-central structures completely determine the isomorphism $G_1\cong G_2$.
\end{proof}

\bibliography{bibliography.bib}

\end{document}